\definecolor{green}{rgb}{0,0.8,0} 
\newtheorem{theorem}{Theorem}[section]
\newtheorem{corollary}[theorem]{Corollary}
\newtheorem{lemma}[theorem]{Lemma}
\newtheorem{proposition}[theorem]{Proposition}
\theoremstyle{definition}
\newtheorem{definition}[theorem]{Definition}
\theoremstyle{remark}
\newtheorem{remark}[theorem]{Remark}
\numberwithin{equation}{section}
\newcommand{\tA}{{\tilde A}}
\newcommand{\tphi}{{\tilde \phi}}
\newcommand{\A}{{\mathbf A}}
\renewcommand{\H}{{\mathcal H}}
\newcommand{\nrm}[1]{\Vert#1\Vert}
\newcommand{\abs}[1]{\vert#1\vert}
\newcommand{\brk}[1]{\langle#1\rangle}
\newcommand{\set}[1]{\{#1\}}
\newcommand{\dist}{\mathrm{dist}}
\newcommand{\supp}{{\mathrm{supp}}}
\renewcommand{\Im}{\mathrm{Im}}
\newcommand{\aeq}{\sim}
\newcommand{\aleq}{\lesssim}
\newcommand{\ageq}{\gtrsim}
\newcommand{\lap}{\Delta}
\newcommand{\ud}{d}
\newcommand{\rd}{\partial}
\newcommand{\nb}{\nabla}
\newcommand{\snabla}{ {\slash\!\!\!\!\nabla} }
\newcommand{\bb}{\Big}
\newcommand{\alp}{\alpha}
\newcommand{\bt}{\beta}
\newcommand{\dlt}{\delta}
\newcommand{\eps}{\epsilon}
\newcommand{\veps}{\varepsilon}
\newcommand{\Lmb}{\Lambda}
\newcommand{\sgm}{\sigma}
\newcommand{\tht}{\theta}
\newcommand{\omg}{\omega}
\newcommand{\bfm}{{\bf m}}
\newcommand{\bfA}{{\bf A}}
\newcommand{\bfD}{{\bf D}}
\newcommand{\bbC}{\mathbb C}
\newcommand{\C}{\mathbb C}
\newcommand{\bbR}{\mathbb R}
\newcommand{\R}{\mathbb R}
\newcommand{\bbS}{\mathbb S}
\newcommand{\bbZ}{\mathbb Z}
\newcommand{\calC}{\mathcal C}
\newcommand{\calE}{\mathcal E}
\newcommand{\calF}{\mathcal F}
\newcommand{\calH}{\mathcal H}
\newcommand{\calL}{\mathcal L}
\newcommand{\calM}{\mathcal M}
\newcommand{\calN}{\mathcal N}
\newcommand{\calP}{\mathcal P}
\newcommand{\calQ}{\mathcal Q}
\newcommand{\pfstep}[1]{\vskip.5em \noindent {\bf #1.}}
\newcommand{\qdeq}{\quad \phantom{=}}
\setlist[enumerate]{leftmargin=*, label=(\arabic*)}
\setlist[itemize]{leftmargin=2em}
\newcommand{\dless}{\ll}
\newcommand{\covD}{\bfD}
\newcommand{\opLap}{\lap_{\omg^{\perp}}}
\newcommand{\oL}{L^{\omg}}
\newcommand{\oPi}{\Pi^{\omg}}
\newcommand{\Diff}{\mathrm{Diff}}
\newcommand{\DS}{DS^1}
\newcommand{\rst}{\!\upharpoonright}		
\begin{document}

\title[]{Energy dispersed solutions for the $(4+1)$-dimensional
Maxwell-Klein-Gordon  equation}
\author{Sung-Jin Oh}%
\address{Department of Mathematics, UC Berkeley, Berkeley, CA, 94720}%
\email{sjoh@math.berkeley.edu}%

\author{Daniel Tataru}%
\address{Department of Mathematics, UC Berkeley, Berkeley, CA, 94720}%
\email{tataru@math.berkeley.edu}%


\date{\today}%
\begin{abstract}
  This article is devoted to the mass-less energy critical
  Maxwell-Klein-Gordon system in $4+1$ dimensions. In earlier work of
  the second author, joint with Krieger and Sterbenz, we have proved
  that this problem has global well-posedness and scattering in the
  Coulomb gauge for small initial data. This article is the second of
  a sequence of three papers of the authors, whose goal is to show
  that the same result holds for data with arbitrarily large energy.
  Our aim here is to show that large data solutions persist for as
  long as one has small energy dispersion; hence failure of global
  well-posedness must be accompanied with a non-trivial energy
  dispersion.
\end{abstract}
\maketitle
\setcounter{tocdepth}{1}
\tableofcontents

\section{Introduction}
This article is concerned with the mass-less energy critical
Maxwell-Klein-Gordon system (MKG) in the $4+1$ dimensional Minkowski
space $\bbR^{1+4}$ equipped with the standard Lorentzian metric $\bfm
= \text{diag}(-1,1,1,1,1)$ in the standard rectilinear coordinates
$(x^{0}, \ldots, x^{4})$.  This system is generated by adding a scalar
field component to the standard Maxwell Lagrangian,
\[
\mathcal{S}_{\mathrm{M}}[A_{\alpha}]: = \int_{\R^{1+4}}
\frac{1}{4}F_{\alpha\beta} F^{\alpha\beta} \,dxdt;
\]
to obtain
\[
\mathcal{S}[A_{\alpha}, \phi]: = \int_{\R^{1+4}}
\frac{1}{4}F_{\alpha\beta} F^{\alpha\beta} + \frac{1}{2}
\covD_{\alpha}\phi\overline{\covD^{\alpha}\phi} \,dxdt;
\]

Here $\phi: \R^{1+4}\rightarrow \C$ is a scalar function, and
$A_{\alpha}: \R^{1+4}\rightarrow \R$ is a real-valued connection
1-form, with curvature
\[
F_{\alpha\beta}: = \partial_{\alpha}A_{\beta}
- \partial_{\beta}A_{\alpha}.
\]
The connection 1-form $A_\alpha$ is then used to define the covariant
derivative
\[
\covD_{\alpha}\phi:= (\partial_{\alpha} + iA_{\alpha})\phi.
\]
Introducing the covariant wave operator
\[
\Box_{A}: = \covD^{\alpha} \covD_{\alpha}
\]
with the standard convention for raising/lowering and summing indices,
we can write the \emph{Maxwell-Klein-Gordon system} in the form
\begin{equation}\label{MKG-nogauge}
  \left\{ \begin{aligned}
      & \partial^{\beta}F_{\alpha\beta} = - J_\alpha,
      \\
      & \Box_A\phi = 0
    \end{aligned}
  \right.
\end{equation}
where the currents $J_\alpha$ are defined as
\begin{equation}\label{currents}
  J_\alpha:=- \Im(\phi\overline{\covD_\alpha \phi}) \ .
\end{equation}
 
The MKG system admits a positive definite formally conserved energy
functional,
\begin{equation}\label{energy}
  \calE[A,\phi](t) = \calE_{\set{t} \times \bbR^{4}} [A,\phi] : = \int_{\set{t} \times \R^4}\big(\frac{1}{4}\sum_{\alpha,\beta}F_{\alpha\beta}^2 +
  \frac{1}{2}\sum_{\alpha}| \covD_{\alpha}\phi|^2\big)\,dx
\end{equation}
and is also invariant under the scaling
\[
\phi(t, x) \rightarrow \lambda^{-1} \phi(\lambda^{-1} t, \lambda^{-1}
x),\,A_{\alpha}(t, x)\rightarrow \lambda^{-1} A(\lambda^{-1} t,
\lambda^{-1} x).
\]
Thus the $4+1$-MKG system is energy critical.

In order to state this system as a formally well-posed initial value
problem, we need to take into account its gauge invariance. If
$(A_{\alpha}, \phi)$ is a solution, then so is $(A_{\alpha}
- \partial_{\alpha}\chi, e^{i\chi}\phi)$ for any real-valued scalar
function $\chi$.  In the gauge covariant setting, it is natural to
define an \emph{initial data set} for MKG to consist of a pair of
1-forms $(a_{j}, e_{j})$ and complex-valued functions $f, g$ on
$\bbR^{4}$. We say that $(a, e, f, g)$ is the initial data set for a
solution $(A, \phi)$ if
\begin{equation*}
  (A_{j}, F_{0j}, \phi, \covD_{t} \phi) \rst_{\set{t = 0}} = (a_{j}, e_{j}, f, g), 
\end{equation*}
where the latin indices only run over the spatial variables $x^{1},
\ldots, x^{4}$.  The energy of the set $(a, e, f, g)$, denoted by
$\calE[a, e, f, g]$, is defined in the obvious way from
\eqref{energy}.  The $\alp = 0$ component of the MKG system imposes
the \emph{Gauss} (or \emph{constraint}) \emph{equation} for initial
data sets, namely
\begin{equation} \label{eq:MKG-gauss} \rd^{\ell} e_{\ell} = - J_{0} =
  \Im(f \overline{g}).
\end{equation}

To eliminate the gauge ambiguity, we add to the above system a single
scalar gauge condition. Here we follow the approach in
\cite{Krieger:2012vj} and work with the \emph{global Coulomb gauge}
\begin{equation}\label{Coulomb}
  \sum_{j=1}^4\partial_j A_j = 0
\end{equation}
where latin summation indices are used for summations which are only
with respect to spatial variables.  Using this gauge, the MKG system
can be written explicitly in the following form
\begin{equation} \label{MKG} \left\{ \begin{aligned} \Box A_i \ &= \
      \mathcal{P}_i J_x
      \\
      \Box_A \phi \ &= \ 0
    \end{aligned}
  \right.
\end{equation}
for the dynamic variables $(A_i,\phi)$.  The operator $\mathcal{P}$ is
the Leray projection onto divergence free vector fields,
\[
\mathcal{P}_{j} v \ = \ I - \rd_{j} \Delta^{-1} \rd^{\ell} v_{\ell}
\]
The second equation in \eqref{MKG} requires also the temporal
component $A_0$, which is determined in an elliptic fashion, together
with its time derivative, by
\begin{equation}
  \begin{aligned}
    \Delta A_0 \ = \ J_0, \qquad \Delta \partial_t A_0 \ = \ \rd^i
    J_i.
    \label{MKGa0}
  \end{aligned}
\end{equation}
Note that the first equation is precisely the Gauss equation.  These
equations uniquely determine both $A_0$ and $\partial_t A_0$ at fixed
time.

Well-posedness theory of MKG at (scaling) sub-critical regularity have
been studied extensively in various gauges. In dimensions $2+1$ and
$3+1$, this system is energy sub-critical, and hence global
well-posedness follows from an appropriate local well-posedness
result; see \cite{Cu, MR649158, MR649159, MR2784611, Klainerman:1994jb,
  Machedon:2004cu, MR579231, Selberg:2010ig} and references
therein. In $\bbR^{1+4}$, almost optimal local well-posedness of
a model problem closely related to MKG and the Yang-Mills system was proved in \cite{Klainerman:1999do};
this result was then further refined in \cite{MR1916561, Ste}. 
For a more detailed survey of
earlier works on MKG, see \cite[Section 1.3]{OT3}.

The subject of this article, as well as its companions \cite{OT1,
  OT3}, is the energy critical MKG-CG problem in 4+1 dimensions.
Given an arbitrary finite energy data set for the MKG problem, there
exists an unique gauge equivalent data set of related size which
satisfies the Coulomb gauge condition; see \cite[Section~3]{OT1}.
Hence the main question question now is to decide whether each finite
energy MKG-CG initial data set can be extended to a global-in-time
solution for the MKG-CG system. This is analogous to the celebrated
\emph{threshold conjecture} for energy critical wave maps, which has
been recently answered in the affirmative \cite{Krieger:2009uy,
  MR2657817, MR2657818, Tao:2008wn, Tao:2008tz, Tao:2008wo,
  Tao:2009ta, Tao:2009ua} (see also \cite{Lawrie:2015rr}).

The small data global well-posedness result was first obtained in high
dimension $n \geq 6$ by Rodnianski-Tao \cite{MR2100060}. The low
dimensional result $n \geq 4$ was obtained more recently by
Krieger-Sterbenz-Tataru~\cite{Krieger:2012vj}. The theorem in
\cite{Krieger:2012vj} asserts the following:
\begin{theorem}[\cite{Krieger:2012vj}]\label{t:small}
  There exists a universal constant $\eps_{\ast} > 0$ such that the
  following hold.
  \begin{enumerate}
  \item (Existence and uniqueness) Let $(a, e, f, g)$ be a $C^\infty$
    Coulomb data set (i.e., $\rd^{\ell} a_{\ell} = 0$) satisfying
    \begin{equation}
      \calE[a, e, f, g] < \epsilon_{*}^{2} \ .
      \label{small-energy} \end{equation}
    Then the
    MKG-CG  system \eqref{Coulomb}-\eqref{MKG} admits a unique global smooth solution
    $({A}, \phi)$ on $\R^{1+4}$ with these data. 
  \item (Continuous dependence) In addition, for every compact time
    interval $J$ containing $0$, the data-to-solution operator extends
    continuously on the set \eqref{small-energy} to a map
    \[
    \calH^{1}(\bbR^{4}) \ni (a, e, f, g) \to ({A}, \phi) \in C(J; \dot
    H^1(\R^4)) \cap \dot C^1(J; L^2(\R^4))
    \]
    where the space $\calH^{1} = \calH^{1}(\bbR^{4})$ of finite energy
    initial data sets is defined by the norm
    \begin{equation} \label{fin-en-id} \nrm{(a, e, f, g)}_{\calH^{1}}
      := \nrm{a}_{\dot{H}^{1}} + \nrm{e}_{L^{2}} +
      \nrm{f}_{\dot{H}^{1}} + \nrm{g}_{L^{2}}.
    \end{equation}
%
  \end{enumerate}
\end{theorem}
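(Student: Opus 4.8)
The plan is to run a Picard-type iteration for the dynamical variables $(A_i, \phi)$ in function spaces adapted to the wave equation, in the spirit of the critical wave maps theory. First I would fix the iteration scheme: a space $S^1$ for $(A_i,\phi)$ built from the energy norm $\dot H^1 \times L^2$ together with $\dot X^{1,1/2}_\infty$-type, Strichartz, and null-frame components, summed over dyadic frequency and angular blocks, and a dual-type space $N$ for the nonlinearities, arranged so that the free wave propagator maps $\dot H^1 \times L^2$ data and $N$-forcing into $S^1$. The temporal potential $A_0$ and $\rd_t A_0$ are not dynamical: they are solved elliptically from \eqref{MKGa0} and estimated directly in terms of $\calE$ and $\nrm{\phi}_{S^1}$; the genuine unknowns are $A_i$ and $\phi$ in the Coulomb gauge \eqref{Coulomb}.

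Second, I would treat the curvature equation $\Box A_i = \mathcal{P}_i J_x$ perturbatively. Since $J_\alpha = -\Im(\phi\,\overline{\covD_\alpha \phi})$, after the Leray projection the quadratic part of $\mathcal{P}_i J$ is a sum of null forms of $Q_{ij}$-type acting on $\phi$, plus a cubic term of schematic form $|\phi|^2 A$. The bilinear null-form estimate $\nrm{\mathcal{P}_i J_x}_{N} \aleq \nrm{\phi}_{S^1}^2$ together with the cubic bound then closes the $A_i$-equation and is essentially standard given the spaces.

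Third — and this is the crux — I would handle the covariant wave equation $\Box_A \phi = 0$. Expanding, $\Box_A \phi = \Box \phi + 2i A^\alpha \rd_\alpha \phi + i(\rd^\alpha A_\alpha)\phi - A^\alpha A_\alpha \phi$; the Coulomb condition kills $\rd^\ell A_\ell$ but not the $A^0$ contribution, and, more seriously, the paradifferential interaction $2i A^\alpha_{<k}\rd_\alpha \phi_k$ (low-frequency $A$ hitting a frequency-$k$ piece of $\phi$) sits exactly at critical scaling and is \emph{not} perturbatively small. The remedy is renormalization: one constructs a real phase $\psi_{<k}$ from the low-frequency part of $A$ — morally solving $\Box \psi_{<k} \approx$ (the dangerous symbol) via the microlocal parametrix of Rodnianski–Tao \cite{MR2100060} and \cite{Krieger:2012vj} — so that conjugating by the nonlocal, frequency-localized multiplier $e^{-i\psi_{<k}} P_k$ removes the non-perturbative paradifferential term, replacing it by controllable errors. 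The main analytic labor is then: (i) the core fixed-time and space-time bounds for $\psi_{<k}$ and for the conjugation operators $e^{\pm i\psi_{<k}} P_k$, especially the $L^1_t L^\infty_x$-type and $L^2$-disposability estimates, which exploit the null structure of $A$ and bilinear/Strichartz inequalities; and (ii) the renormalized linear estimate $\nrm{\phi}_{S^1} \aleq \nrm{(f,g)}_{\dot H^1 \times L^2} + \nrm{(\text{errors})}_{N}$, with the errors bounded by $\nrm{(A,\phi)}_{S^1}$ to a power $\ge 2$.

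Fourth, with these multilinear estimates I would close a continuity/bootstrap argument: from the hypothesis $\calE[a,e,f,g] < \eps_{\ast}^2$ and the bounds above one recovers $\nrm{(A,\phi)}_{S^1} \le \frac{1}{2} C \eps_{\ast}$ from the a priori assumption $\le C\eps_{\ast}$, giving global existence together with the uniform $S^1$ bound (hence scattering, since $S^1 \subset L^\infty_t (\dot H^1 \times L^2)$ plus decay). The smooth data claim follows because the $S^1$ bound controls a subcritical norm, precluding finite-time breakdown, so persistence of regularity yields $C^\infty$. For continuous dependence I would establish Lipschitz bounds for differences of solutions in a slightly weaker topology, upgrade to continuity $\calH^1 \to C(J;\dot H^1) \cap \dot C^1(J;L^2)$ by a frequency-envelope argument, and approximate arbitrary finite-energy Coulomb data by smooth Coulomb data. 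The principal obstacle throughout is the third step: the construction of $\psi_{<k}$ and the proof of the parametrix and disposability bounds for the renormalization is where essentially all the difficulty — and the dimensional restriction $n \ge 4$ — is concentrated.
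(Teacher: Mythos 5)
Your outline matches the strategy of \cite{Krieger:2012vj} (and the parametrix machinery reproduced in Section~\ref{s:para} here) faithfully: $S^1$/$N$ function spaces built from energy, $X^{s,b}$, Strichartz and null-frame components, an elliptic solve for $A_0$, bilinear null-form bounds for the Maxwell equation, renormalization of the non-perturbative paradifferential interaction $2iA^{\alpha}_{<k}\rd_\alpha P_k\phi$ via pseudodifferential conjugation by $e^{\pm i\psi_\pm}$, and a continuity/bootstrap to close. The one imprecision worth flagging is the origin of the phase: $\psi_\pm$ is not produced by inverting $\Box$ against the dangerous symbol (indeed $\Box\psi_\pm=0$, since $A$ is a free wave), but by integrating the angularly truncated magnetic potential along null rays so that the transport identity $\oL_\mp\psi_\pm = \pm\sum_{\ell}\oPi_{>2^{\sigma\ell}}(\omega\cdot P_\ell A)$ of \eqref{eq:oLPsi} holds, which is exactly what makes the leading conjugation error cancel the dangerous paradifferential term.
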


The last statement allows us to define the following notion of finite
energy solutions:
\begin{definition} \label{def:adm-sol} Let $I$ be a time interval. We
  define the space $C_{t} \calH^{1}(I \times \bbR^{4})$ by the norm
  \begin{equation*}
    \nrm{(A, \phi)}_{C_{t} \calH^{1}(I \times \bbR^{4})} 
    = \mathop{\mathrm{ess\,sup}}_{t \in I} \bb( \sup_{\mu} \nrm{A_{\mu}[t]}_{\dot{H}^{1} \times L^{2}} + \nrm{\phi[t]}_{\dot{H}^{1} \times L^{2}} \bb) \ .
  \end{equation*}
  We say that a pair $(A, \phi) \in C_{t} \calH^{1}(I \times
  \bbR^{4})$ is an \emph{admissible $C_{t} \calH^{1}$ solution} to MKG
  on $I$ if there exists a sequence $(A^{(n)}, \phi^{(n)})$ of
  classical\footnote{By \emph{classical}, we mean that $A, \phi \in
    \cap_{n, m=0}^{\infty} C_{t}^{m} (I; H^{n})$.} solutions to MKG on $I
  \times \bbR^{4}$ such that
  \begin{equation*}
    \nrm{(A, \phi) - (A^{(n)}, \phi^{(n)})}_{C_{t} \calH^{1}(J \times \bbR^{4})} \quad \hbox{ as } n \to \infty
  \end{equation*}
  for every compact subinterval $J \subseteq I$.
\end{definition}

In the process of proving the above result in \cite{Krieger:2012vj},
stronger spaces $S^1, Y^{1} \subset C(\dot H^1) \cap \dot C^1 (L^2)$
are introduced, and it is shown that the above solutions obeys the
bound
\begin{equation}
  \nrm{A_{0}}_{Y^{1}} + \| (A_{x},\phi)\|_{S^1} \lesssim \nrm{(a, e, f, g)}_{\calH^{1}}
\end{equation}
with a continuous (but not uniformly continuous) data-to-solution map
on each compact time interval.  We provide the definition\footnote{We
  remark that the precise definition of $S^{1}$ differs in
  \cite{Krieger:2012vj}, \cite{OT1} and in the present paper. The
  difference is however minor, and all the theorems stated here hold
  with respect to any of these three definitions. See
  Remark~\ref{rem:S1}.} of the spaces $S^{1}$ and $Y^{1}$ in
Section~\ref{s:sn}.

Our goal, in a sequence of three papers, is to prove that a similar
result holds for all finite energy data.  The three steps in our proof
are as follows:

\begin{description}
\item[Global Coulomb gauge \cite{OT1}] Here we use the above small
  data result to show that the large data problem is locally
  well-posed in the Coulomb gauge, and that the solution can be
  extended for as long as energy concentration does not occur.

\item[Energy dispersed solutions (present paper)] Here we prove a more
  refined continuation criterion, namely that the solution can be
  extended for as long as it remains energy dispersed.  Moreover, if
  the solution already exists up to $t = \infty$, then we prove that
  small energy dispersion implies scattering.

\item[Blow-up analysis \cite{OT3}] Here we complete the proof of the
  large data well-posedness result, showing that no blow-up is allowed
  at the tip of a light cone. We also prove the corresponding
  scattering result.
\end{description}

At least in a broad outline, the second and third step above follow
the scheme successfully developed in \cite{MR2657817} and
\cite{MR2657818} in the context of wave maps. The first step in
\cite{OT1} is specific to the MKG problem, and is due to the long
range effect of the Gauss equation as well as the inherent gauge
ambiguity of MKG. Precisely, in order to truncate a large energy
initial data into small energy data sets, the Gauss equation
$\rd^{\ell} e_{\ell} = \Im(f \overline{g})$ must be taken into
account.  Furthermore, the local gauges given by the small data result
applied to these truncated data differ in their common domains, and
need to be aggregated into a single global Coulomb gauge.  An overview
of the whole sequence is provided in \cite[Sections~2 and 3]{OT3}.

\begin{remark} 
  To understand the issue of gauge invariance clearly, it is
  advantageous to take a more geometric point of view and consider
  $\phi$ as a section of a complex line bundle $L$ with structure
  group $U(1) = \set{e^{i \chi} : \chi \in \bbR}$ over $\bbR^{1+4}$,
  and $A$ as a connection on $L$. Since the base manifold $\bbR^{1+4}$
  is contractible, $L$ is always topologically trivial; hence $\phi$
  can be identified with a $\bbC$-valued scalar function, and
  $A$ with a real-valued 1-form on $\bbR^{4}$ by using the trivial
  connection $\ud$ as a reference. The choice of a gauge then
  corresponds to a particular choice of bases on the fibers to
  describe $(A, \phi)$. This viewpoint is taken to some extent in the
  other papers of the series \cite{OT1, OT3} to facilitate the usage
  of local gauges. In the present paper, however, we need not worry
  about such issues, as we work exclusively in the global Coulomb
  gauge.
\end{remark}

Roughly speaking, the main result in \cite{OT1} is local
well-posedness of MKG-CG for data with any finite energy $E$, with a
lower bound on the lifespan in terms of the \emph{energy concentration
  scale}
\begin{equation} \label{eq:ecs-def} r_{c} = r_{c}[a, e, f, g] := \sup
  \set{r > 0 : \forall x \in \bbR^{4}, \, \calE_{B_{r}(x)} [a, e, f,
    g] < \dlt_{0}(E, \eps_{\ast}^{2})}
\end{equation}
where $\dlt_{0}(E, \eps_{\ast}^{2}) > 0$ is some fixed
function\footnote{In \cite{OT1} we use $\dlt_{0}(E,
  \eps_{\ast}^{2})\approx \eps_{\ast}^6 E^{-2}$ for $E >
  \eps_{\ast}^2$.} , $\eps_{\ast}^{2}$
is the threshold in Theorem~\ref{t:small} and $\calE_{B_{r}(x)}$ is
the energy measured on the ball $B_{r}(x)$ of radius $r$ centered at
$x$. Observe that $r_{c}[a, e, f, g] > 0$ for any $(a, e, f, g) \in
\calH^{1}$.

The result in \cite{OT1} also admits a formulation in terms of the
$S^1$, $Y^{1}$ norms; for that we need a generalization of these norms
to bounded time intervals, which we denote by $S^1[t_0,t_1]$,
$Y^{1}[t_{0}, t_{1}]$ (see Section~\ref{subsec:intervals} for the
definition). The precise statement is as follows.
\begin{theorem}[Large energy local well-posedness theorem in global
  Coulomb gauge \cite{OT1}]\label{t:local}
  Let $(a, e, f, g)$ be an $\calH^{1}$ initial data set satisfying the
  global Coulomb gauge condition $\rd^{\ell} a_{\ell} = 0$ with energy
  $\calE[a, e, f, g] \leq E$. Let $r_{c} = r_{c}[a, e, f, g]$ be
  defined as in \eqref{eq:ecs-def}. Then the following statements
  hold:
  \begin{enumerate}
  \item (Existence and uniqueness) There exists a unique admissible
    $C_{t} \calH^{1}$ solution $(A, \phi)$ to MKG-CG on $[-r_{c},
    r_{c}] \times \bbR^{4}$ with $(a, e, f, g)$ as its initial data.
  \item (A-priori $S^{1}$ regularity) We have the additional regularity
    properties
    \begin{equation*}
      A_{0} \in Y^{1}[-r_{c}, r_{c}], \quad A_{x}, \phi \in S^{1}[-r_{c}, r_{c}].
    \end{equation*}
  \item (Persistence of regularity) The solution $(A, \phi)$ is
    classical if $(a, e, f, g)$ is classical.\footnote{Here, by
      \emph{classical} we mean $a, e, f, g \in \cap_{n=0}^{\infty}
      H^{n}$.}
  \item (Continuous dependence) Consider a sequence $(a^{(n)},
    e^{(n)}, f^{(n)}, g^{(n)})$ of $\calH^{1}$ Coulomb initial data
    sets such that
    \begin{equation*}
      \nrm{(a^{(n)} - a, e^{(n)} - e, f^{(n)}- f, g^{(n)} - g)}_{\calH^{1}} \to 0 \quad \hbox{ as } n \to \infty.
    \end{equation*}
    Then the lifespan of $(A^{(n)}, \phi^{(n)})$ eventually contains
    $[-r_{c}, r_{c}]$, and we have
    \begin{equation*}
      \nrm{A_{0} - A^{(n)}_{0}}_{Y^{1}[-r_{c}, r_{c}]}
      + \nrm{(A_{x} - A^{(n)}_{x}, \phi - \phi^{(n)})}_{S^{1}[-r_{c}, r_{c}]} \to 0 \quad \hbox{ as } n \to \infty. 
    \end{equation*}
  \end{enumerate}

\end{theorem}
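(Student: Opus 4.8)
The plan is to deduce Theorem~\ref{t:local} from the small-energy theory of Theorem~\ref{t:small} by a localization at the energy concentration scale, exploiting finite speed of propagation. By the scaling symmetry of MKG we may rescale so that $r_{c}[a,e,f,g] = 1$; the energy is still $\le E$, and for every $r < 1$ and every $x_{0} \in \bbR^{4}$ we have $\calE_{B_{r}(x_{0})}[a,e,f,g] < \dlt_{0}(E, \eps_{\ast}^{2})$, with $\dlt_{0}$ chosen small relative to $\eps_{\ast}^{2}$. For each $x_{0}$ I would build, from the restriction of the data to $B_{1}(x_{0})$, a small-energy Coulomb data set on $\bbR^{4}$, apply Theorem~\ref{t:small} to obtain a global solution $(A^{(x_{0})}, \phi^{(x_{0})})$, and observe --- by finite speed of propagation and the uniqueness in Theorem~\ref{t:small} --- that any admissible $C_{t}\calH^{1}$ solution of the original problem must coincide with $(A^{(x_{0})}, \phi^{(x_{0})})$ inside the cone $\{(t,x) : |x - x_{0}| < 1 - |t|\}$. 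Covering $\{t = 0\}$ by the balls $B_{1}(x_{0})$ and taking the union of the corresponding cones covers the slab $[-1,1] \times \bbR^{4}$; the candidate solution there is obtained by gluing the patch solutions.

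Two obstructions appear in the localization. First, naive spatial truncation destroys both the Coulomb condition $\rd^{\ell} a_{\ell} = 0$ and the Gauss constraint $\rd^{\ell} e_{\ell} = \Im(f \br g)$, and moreover the energy controls $\nb \phi$ on $B_{2}(x_{0})$ only through the covariant derivative $\covD_{x}\phi$. I would handle this by first passing, on $B_{2}(x_{0})$, to an Uhlenbeck-type local gauge in which $\nrm{a}_{\dot H^{1}(B_{2}(x_{0}))} \lesssim \nrm{F}_{L^{2}(B_{2}(x_{0}))} \lesssim \dlt_{0}^{1/2}$ is small --- so that cutting off $\phi$ by a bump function $\chi$ supported in $B_{2}(x_{0})$ and equal to $1$ on $B_{1}(x_{0})$ is harmless --- and then repairing the constraints by Hodge/elliptic corrections: replace $(\chi a, \chi e)$ by $(\chi a - \nb \Dlt^{-1}(\rd^{\ell}\chi\, a_{\ell}),\ \chi e - \nb \psi)$ with $\Dlt \psi := \rd^{\ell}(\chi e_{\ell}) - \Im(\chi^{2} f \br g)$. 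Since the right-hand side defining $\psi$ and the defect $\rd^{\ell}\chi\, a_{\ell}$ are supported in the annulus $B_{2}(x_{0}) \setminus B_{1}(x_{0})$, $L^{2}$-boundedness of the Riesz transforms together with Sobolev and Hardy inequalities on $\bbR^{4}$ show the corrections are $O(\dlt_{0}^{1/2})$ in energy; after a harmless global Coulomb normalization the result is a genuine $\calH^{1}$ Coulomb data set of energy $< \eps_{\ast}^{2}$, and Theorem~\ref{t:small} applies.

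The heart of the matter is the second obstruction: the patch solutions $(A^{(x_{0})}, \phi^{(x_{0})})$ live in mutually incompatible (local Coulomb) gauges, so before they can be glued they must be brought into a common gauge. On the overlap of two cones the two patch solutions differ by a $U(1)$ transformation $e^{i\chi_{01}}$, and since both solve $\rd^{\ell}A_{\ell} = 0$ there, $\chi_{01}$ is harmonic on the overlap, hence smooth and controlled in the relevant norms by the $S^{1}$ bounds of Theorem~\ref{t:small}; the $\chi_{01}$ form a cocycle, so --- $\bbR^{1+4}$ being contractible --- a \v{C}ech-type synthesis reconciles all patches into a single gauge, and a final global elliptic normalization $\Dlt \Psi = \rd^{\ell} A_{\ell}$ turns that into the global Coulomb gauge. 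This produces the admissible $C_{t}\calH^{1}$ solution of part~(1) on $[-1,1] \times \bbR^{4}$. Part~(2) follows by transporting the patchwise $S^{1}$, $Y^{1}$ estimates through these (controlled) gauge transformations and summing over a locally finite cover, using finite speed of propagation together with a spatial partition-of-unity argument adapted to the $S^{1}$, $Y^{1}$ structure. I expect this gauge-aggregation step --- and, inside it, propagating the nonlocal $S^{1}$ norm through the spatial partition of unity --- to be the main obstacle, since it is precisely where the long-range character of the Coulomb gauge clashes with the spatial cutoffs and where the problem departs from the wave maps template.

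Finally, uniqueness in the admissible class follows from finite speed of propagation plus the uniqueness in Theorem~\ref{t:small}: two admissible solutions with the same data agree in every cone, hence on all of $[-1,1] \times \bbR^{4}$. Part~(3) is inherited patch by patch from Theorem~\ref{t:small}, since the Uhlenbeck gauge change and the elliptic corrections preserve smoothness. For part~(4), the continuous dependence of Theorem~\ref{t:small} applied to each patch, combined with Lipschitz stability of the constraint repair and of the gauge transformations under $\calH^{1}$ perturbations of the data, gives convergence of the approximating solutions, with a diagonal argument controlling the tails (the total energy outside a large ball being small uniformly in $n$). The claim that the lifespan of $(A^{(n)}, \phi^{(n)})$ eventually contains $[-r_{c}, r_{c}]$ uses that for $r < r_{c}$ one has the strict bound $\sup_{x} \calE_{B_{r}(x)}[a,e,f,g] < \dlt_{0}$, which persists for the approximants when $n$ is large because $\calH^{1}$ convergence of the data forces $L^{1}$ convergence of the energy densities, so that $r_{c}[a^{(n)}, e^{(n)}, f^{(n)}, g^{(n)}] \ge r$; letting $r \uparrow r_{c}$ closes the argument. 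The details are carried out in \cite{OT1}.
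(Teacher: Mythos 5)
The paper does not prove Theorem~\ref{t:local} here; it cites it as the main result of the companion paper~\cite{OT1}, and the only account given is the brief description in the introduction (truncating large data into small data sets while respecting the Gauss constraint, applying Theorem~\ref{t:small} patchwise, and aggregating the resulting local gauges into a single global Coulomb gauge). Your sketch reproduces exactly that strategy, including the two genuine obstructions the paper flags --- that naive spatial truncation destroys the Gauss constraint and the Coulomb condition, and that the patchwise small-data solutions live in incompatible gauges and must be reconciled --- and you correctly defer the technical core (transporting the nonlocal $S^{1}$, $Y^{1}$ norms through spatial cutoffs and gauge changes) to~\cite{OT1}. One small remark: in the Coulomb gauge the equality of two local solutions on overlapping cones need not be a pure phase $e^{i\chi_{01}}$ with $\chi_{01}$ harmonic --- the residual gauge freedom is a full $U(1)$ gauge transformation, not necessarily harmonic, and part of the work in~\cite{OT1} is precisely showing that after a final elliptic normalization the aggregated connection is again globally Coulomb; your ``Čech synthesis plus global elliptic normalization'' captures this, but the intermediate claim that $\chi_{01}$ is harmonic would need to be stated more carefully (it is harmonic only up to the spatial components of $A$ being divergence free in both gauges \emph{globally}, which is not literally the case for the localized patch data). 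This does not alter the overall plan, which is consistent with what the paper attributes to~\cite{OT1}.
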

In other words, this result says that even if the initial data is
large, we can continue the solution as a global Coulomb solution with
good $S^1$ bounds for as long as energy does not concentrate to
arbitrarily small balls.

Our main result here is based on the notion of energy dispersion
introduced in \cite{MR2657817}. Adapted to our context, the energy
dispersed norm we use is
\begin{equation} \label{ed} \| \phi\|_{ED(t_1,t_2)} = \sup_k 2^{-k}
  \|(P_k\phi, 2^{-k} P_k \phi_t) \|_{L^\infty[(t_1,t_2)\times \R^4]}
\end{equation}
We measure the energy dispersion only for $\phi$, and not for $A$.
%
The main theorem is as follows:

\begin{theorem}[Energy Dispersed Regularity Theorem]\label{t:ed}
  There exist two functions $1\ll F(E)$ and $0 < \epsilon(E)\ll 1$ of
  the energy \eqref{energy} such that the following statement is true:

  If $(A,\phi)$ is an admissible $C_{t} \calH^{1}$ solution to MKG-CG
  on the open interval $(t_1,t_2)$ with energy $\leq E$ and energy
  dispersion at most $\epsilon(E)$, i.e.,
  \begin{equation*}
    \nrm{\phi}_{ED(t_{1}, t_{2})} \leq \epsilon(E),
  \end{equation*}
  then the following a-priori bound holds:
  \begin{equation} \label{S_est-ed} \nrm{ (A_{x}, \phi) }_{S(t_1,t_2)}
    \leq F(E) .
  \end{equation}
\end{theorem}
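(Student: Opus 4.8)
The plan is a continuity argument in the time interval, powered by the large-energy local theory of Theorem~\ref{t:local}, in which the energy dispersion hypothesis enters twice: once quantitatively, to bound by a function of $E$ alone the number of subintervals on which $\phi$ can be made subcritical, and once structurally, to render the most dangerous nonlinear interaction perturbative on each such subinterval. By Definition~\ref{def:adm-sol} and persistence of regularity it suffices to prove \eqref{S_est-ed}, with $F(E)$ independent of the interval, for classical solutions on an arbitrary compact $[s_1,s_2]\subset(t_1,t_2)$, and then let $[s_1,s_2]\uparrow(t_1,t_2)$. Fix $t_0\in(t_1,t_2)$ and let $I$ be the maximal closed subinterval of $(t_1,t_2)$ containing $t_0$ on which $\nrm{(A_x,\phi)}_{S[\,\cdot\,]}\le 2F(E)$. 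Since the $S$-norm of a classical solution is lower semicontinuous in the interval endpoints, $I$ is relatively closed; so to conclude $I=(t_1,t_2)$ it suffices to prove the strict improvement $\nrm{(A_x,\phi)}_{S[I]}\le F(E)$, which together with Theorem~\ref{t:local} permits a slight extension.

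\emph{Subdivision into a controlled number of subcritical pieces.} On $I$ one partitions $I=\bigcup_{m=1}^{N}I_m$ into consecutive subintervals so that on each $I_m$ the free evolution of $\phi[t_{m-1}]$ — and hence, after a short-time bootstrap based on Theorem~\ref{t:small}, the field $\phi$ itself — has $S^1$ norm below the small-data threshold $\eps_\ast$, while $A_x$ is merely of size $\lesssim_E 1$ (its free evolution is globally bounded by $E^{1/2}$). The decisive point is that $N$ is bounded by a function $N(E)$ of the energy alone, \emph{independently of the a priori bound} $2F(E)$: interpolating the Strichartz components of the $S^1$ norm of a free wave against the $L^\infty$-type information encoded in $\nrm{\phi}_{ED(t_1,t_2)}\le\epsilon(E)$ shows that the free evolution of any datum of energy $\le E$ and energy dispersion $\le\epsilon(E)$ splits into $\le N(E)$ intervals of $S^1$ norm $\le\eps_\ast$, and since $\nrm{\phi}_{ED}$ controls $\phi[t]$ at every $t$ this applies uniformly at all breakpoints. (A merely $F(E)$-dependent bound on $N$, obtained from $\ell^2$-orthogonality of the $S$-norm, would not suffice to close the bootstrap below.)

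\emph{Nonlinear estimates on one piece.} This is the heart of the matter. On a subinterval $I_m$ with $\nrm{\phi}_{S^1[I_m]}\le\eps_\ast$ and $\nrm{A_x}_{S^1[I_m]}\lesssim_E 1$, the equations $\Box A_x=\mathcal{P}_x J_x$, $\Delta A_0=J_0$ and $\Delta\partial_t A_0=\partial^i J_i$ are closed by the (energy-dispersion-free) bilinear null-form and elliptic estimates of \cite{Krieger:2012vj, OT1}, giving $\nrm{A_x}_{S^1[I_m]}+\nrm{A_0}_{Y^1[I_m]}\lesssim_E 1$. For $\phi$, the equation $\Box_A\phi=0$ contains the magnetic interaction $2iA^\alpha\partial_\alpha\phi$, whose low$\times$high piece ($A$ low frequency, $\phi$ high frequency) is \emph{not} perturbative when $E$ is large. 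Following the renormalization of Krieger–Sterbenz–Tataru, we conjugate by a frequency-localized approximate-gauge phase $e^{-i\psi}$ built from the low frequencies of $A$; this removes precisely that interaction, is bounded on $S^1$ with a constant depending on $E$ only, and produces a renormalized field solving $\Box(e^{-i\psi}\phi)=\mathcal{F}$ with a genuinely perturbative $\mathcal{F}$. Every surviving term of $\mathcal{F}$ can be arranged to carry a factor of $\phi$ estimated so as to gain a power $\nrm{\phi}_{ED[I_m]}^{\theta}$ for some $\theta>0$, so that
\[
\nrm{\phi}_{S^1[I_m]}\ \lesssim_E\ \nrm{(e^{-i\psi}\phi)[t_{m-1}]}_{\dot{H}^{1}\times L^{2}}\ +\ \epsilon(E)^{\theta}\,\nrm{\phi}_{S^1[I_m]}\ +\ (\text{lower order}),
\]
and, choosing $\epsilon(E)$ small relative to the $E$-dependent constants, the nonlinear term is absorbed, yielding $\nrm{\phi}_{S^1[I_m]}\lesssim_E 1$ and closing the per-piece bootstrap.

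\emph{Concatenation and conclusion.} Gluing the $N(E)$ pieces, $\nrm{(A_x,\phi)}_{S[I]}$ is bounded by an $\ell^2$-type combination $\big(\sum_m\nrm{(A_x,\phi)}_{S^1[I_m]}^2\big)^{1/2}$ plus interaction terms across the breakpoints, the latter again rendered small by energy dispersion together with the per-piece control; using $N=N(E)$ and the per-piece bounds one obtains $\nrm{(A_x,\phi)}_{S[I]}\le F(E)$ for a suitable $F(E)$ determined by $N(E)$ and the $E$-dependent constants above (with $\epsilon(E)$ then chosen small enough for every absorption to go through). This closes the global bootstrap, and letting $[s_1,s_2]\uparrow(t_1,t_2)$ gives \eqref{S_est-ed}; when $t_2=\infty$, scattering follows in the standard way from finiteness of the global $S^1$ norm. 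The main obstacle is the $\phi$-estimate on a single piece: constructing and bounding the renormalizing phase $e^{-i\psi}$ so that the magnetic interaction is exactly cancelled and the residual nonlinearity is subcritical, and, crucially, ensuring that \emph{every} residual interaction — including the delicate parallel/resonant ones — genuinely gains a power of the energy-dispersion norm, so that the $E$-dependence of the renormalization can be defeated by taking $\epsilon(E)$ small. The counting $N(E)$, the bilinear estimates for $A$, and the gluing are comparatively routine alongside this.
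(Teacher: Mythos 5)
Your proposal misses the central mechanism of the paper — induction on energy — and the step you rely on to avoid it cannot work.

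\textbf{The fatal gap.} You claim that by subdividing $I$ into $N(E)$ pieces one can arrange $\nrm{\phi}_{S^1[I_m]}\le\eps_\ast$ on each piece, so that Theorem~\ref{t:small} applies. This is impossible: the $S^1$ norm dominates $\nrm{\nb\phi}_{L^\infty L^2[I_m]}$, which is bounded below by the fixed-time energy of $\phi$ at any $t\in I_m$, and the energy is conserved. So for a large-energy solution, $\nrm{\phi}_{S^1[I_m]}\gtrsim E^{1/2}$ for \emph{every} nonempty subinterval, regardless of its length and regardless of how small the energy dispersion is. (Energy dispersion controls $L^\infty$-type norms of dyadic pieces, not the $L^2$ energy.) The interpolation you invoke to get $N(E)$ pieces of $S^1$ norm $\le\eps_\ast$ therefore has no content. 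The paper's ``weak divisibility'' result (Theorem~\ref{t:structure}\ref{item:structure:5}) is precisely the optimal version of what you are after: subdivision into $O_{F(E)}(1)$ pieces where the $S^1$ norm is $O_E(1)$ — large, \emph{not} below threshold.

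\textbf{The second gap.} Even granting a per-piece bound, your renormalization step asserts that ``every surviving term of $\mathcal{F}$ carries a factor of $\phi$'' gaining a power of $\nrm{\phi}_{ED}$. But the low$\times$high paradifferential interaction $A^{free,\alpha}_{<k-m}\rd_\alpha P_k\phi$ does \emph{not} gain from the energy dispersion of $\phi$: it is an unbalanced interaction, and (as the paper explicitly notes in Section~\ref{s:dec}) small energy dispersion only improves balanced interactions in the bilinear estimates. The unbalanced trilinear terms can at best be bounded by time-divisible norms. Controlling the $A^{free}\cdot\nabla\phi$ term requires the microlocal parametrix of Theorem~\ref{t:para-free}, whose smallness comes from the frequency gap $m$, at the price of constants that blow up like $2^{Cm}\lesssim_E 1$. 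This $E$-dependence is exactly what prevents the na\"\i ve argument from closing for all $E$ at once, and it is why the paper runs an induction on energy: one compares the given solution with energy $E+c_0$ to a frequency-truncated solution $(\tA,\tphi)$ of energy $E$, bounds the low-frequency and high-frequency differences separately using the frequency gap, and chooses the increment $c_0(E)$ small enough (independently of $F(E)$) so that the $E$-dependent parametrix constant can be absorbed. Without the truncation/induction structure, you have no way to make the parametrix error defeat your own constants.

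In short, the correct source of the ``large structure'' is a frequency cut of the data at energy level $E$, not a time cut at $S^1$-level $\eps_\ast$; and the correct source of smallness for the dominant paradifferential interaction is the frequency gap $m$, not the energy dispersion.
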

We remark that \eqref{S_est-ed} implies the bound (see
Theorem~\ref{t:structure})
\begin{equation*}
  \nrm{A_{0}}_{Y^{1}(t_{1}, t_{2})} + \nrm{(A_{x}, \phi)}_{S^{1}(t_{1}, t_{2})} \aleq_{F(E)} 1.
\end{equation*}
We also prove a continuation and scattering result, which may be
applied in conjunction with Theorem~\ref{t:ed}.
\begin{theorem}[Continuation and scattering of solutions with finite
  $S^{1}$ norm] \label{thm:cont-scat} Let 
  $(A, \phi)$ be an admissible $C_{t} \calH^{1}$ solution to MKG-CG on
  $[0, T_{+}) \times \bbR^{4}$, with $0 < T_{+} \leq \infty$, obeying the bound
  \begin{equation*}
    \nrm{A_{0}}_{Y^{1}[0, T_{+})} + \nrm{(A_{x}, \phi)}_{S^{1}[0, T_{+})} < \infty.
  \end{equation*}
  Then the following statements hold.
  \begin{enumerate}
  \item If $T_{+} < \infty$, then $(A, \phi)$ extends to an admissible
    $C_{t} \calH^{1} $ solution with finite $S^{1}$ norm past $T_{+}$.
  \item If $T_{+} = \infty$, then $(A_{x}, \phi)$ scatters as $t \to
    \infty$ in the following sense: There exists a solution
    $(A_{x}^{(\infty)}, \phi^{(\infty)})$ to the linear system
    \begin{equation*}
      \Box A_{j}^{(\infty)} = 0, \quad (\Box + 2 i A^{free}_{\ell} \rd^{\ell}) \phi^{(\infty)} = 0,
    \end{equation*}
    with initial data $A^{(\infty)}_{x}[0], \phi^{(\infty)} [0] \in
    \dot{H}^{1} \times L^{2}$ such that
    \begin{equation*}
      \nrm{A_{x}[t] - A_{x}^{(\infty)}[t]}_{\dot{H}^{1} \times L^{2}}
      + \nrm{\phi[t] - \phi^{(\infty)}[t]}_{\dot{H}^{1} \times L^{2}}
      \to 0 \quad \hbox{ as } T \to \infty. 
    \end{equation*}
    Here $A^{free}_{x}$ is a homogeneous wave with\footnote{This choice is  somewhat robust, in that one can freely perturb $A_x^{free}[0]$ by any function in $\ell^1(H^1 \times L^2)$ 
where $\ell^1$ stands for dyadic summation in frequency. In particular one can take 
$A_x^{free} = A_x^{(\infty)}$.}
  $A^{free}_{x}[0] =
    A_{x}[0]$.
  \end{enumerate}
  Analogous statements hold in the past time direction as well.
\end{theorem}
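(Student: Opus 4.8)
The proof is a fairly standard continuation/scattering argument built on the function space estimates. Write $\mathcal{S} := \nrm{A_{0}}_{Y^{1}[0,T_{+})} + \nrm{(A_{x},\phi)}_{S^{1}[0,T_{+})} < \infty$. The structural input we rely on, forged en route to Theorem~\ref{t:ed}, is twofold: \emph{(i)} certain space-time ``interaction'' norms of $(A,\phi)$ that capture the non-perturbative part of the nonlinear self-interaction are dominated by $S^{1}$ and enjoy an absolute continuity (divisibility) property in the time interval; \emph{(ii)} on any interval $I$ on which this interaction norm is $\leq \eps_{1}$, with $0 < \eps_{1} \ll 1$ chosen depending on $\mathcal{S}$, the solution is, in the $S^{1}$ topology, a small perturbation of the appropriate linear evolution issued from the left endpoint of $I$ --- for $A_{x}$ the homogeneous wave, for $\phi$ the magnetic-covariant linear evolution with connection $A^{free}$ (the free wave with the same spatial data) --- with the expected $N$-norm bound on the Duhamel remainder. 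By \emph{(i)}, $[0,T_{+})$ splits into finitely many consecutive intervals $I_{1},\dots,I_{M}$ on each of which the interaction norm is $\leq \eps_{1}$.

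\textbf{Part (1): $T_{+} < \infty$.} Apply \emph{(ii)} on the last interval $I_{M} = [T_{M-1},T_{+})$. By the Duhamel representation and the $N$-bound on the nonlinearity, for $T_{M-1}\le t < t' < T_{+}$ the difference $(A_{x},\phi)[t'] - (A_{x},\phi)[t]$ is controlled in $\dot H^{1}\times L^{2}$ by the difference of two linear evolutions over $[t,t']$ plus $\nrm{\calN}_{N[t,t']}$, where $\calN$ denotes the nonlinear source; the former tends to $0$ since a homogeneous wave of $\calH^{1}$ data is continuous up to the \emph{finite} time $T_{+}$, and the latter tends to $0$ by divisibility of the $N$-norm over the shrinking interval $[t,t']$. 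Since $A_{0}$ and $\rd_{t} A_{0}$ are recovered elliptically from $(\phi,\covD_{t}\phi)$ via \eqref{MKGa0}, it follows that $(A_{x}, F_{0x}, \phi, \covD_{t}\phi)[t]$ converges in $\calH^{1}$ to a data set $(a,e,f,g)$, which satisfies the Coulomb condition $\rd^{\ell} a_{\ell} = 0$ and the Gauss constraint (limits of quantities satisfying them) and has energy $\le E$. Now invoke Theorem~\ref{t:local} with initial time $T_{+}$: it produces an admissible $C_{t}\calH^{1}$ solution with finite $S^{1}$, $Y^{1}$ norms on $[T_{+}-r_{c},T_{+}+r_{c}]$, $r_{c} = r_{c}[a,e,f,g] > 0$. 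Uniqueness (Theorem~\ref{t:local}(1)) identifies it with $(A,\phi)$ on $[T_{+}-r_{c},T_{+})$, hence yields an admissible extension past $T_{+}$ whose $S^{1}$ norm is finite on compact subintervals by the standard gluing property of $S^{1}$ over overlapping intervals together with Theorem~\ref{t:structure}.

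\textbf{Part (2): $T_{+} = \infty$.} For $A_{x}$, the bilinear $N$-estimates for $\mathcal{P}_{j} J_{x}$, with $J_{j} = -\Im(\phi\,\overline{\covD_{j}\phi})$, applied on the partition together with divisibility give $\nrm{\mathcal{P}_{j} J_{x}}_{N[T,\infty)}\to 0$ as $T\to\infty$; hence the Duhamel integral correcting $A_{x}[0]$ to $A^{(\infty)}_{x}[0]$ converges in $\dot H^{1}\times L^{2}$, and the free wave $A^{(\infty)}_{x}$ it generates satisfies $\nrm{A_{x}[t]-A^{(\infty)}_{x}[t]}_{\dot H^{1}\times L^{2}}\lesssim\nrm{\mathcal{P}_{j} J_{x}}_{N[t,\infty)}\to 0$. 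For $\phi$, the low--high magnetic interaction $-2iA^{free}_{\ell}\rd^{\ell}\phi$ is not perturbative, so it is retained on the left-hand side; writing $\Box_{A}\phi = 0$ as $(\Box + 2iA^{free}_{\ell}\rd^{\ell})\phi = \calN^{\mathrm{pert}}$, the remainder again obeys $\nrm{\calN^{\mathrm{pert}}}_{N[T,\infty)}\to 0$ by \emph{(i)}--\emph{(ii)}. Using the $\dot H^{1}\times L^{2}$-boundedness of the magnetic linear propagator (from the parametrix underlying $S^{1}$), the asymptotic data $\phi^{(\infty)}[0]$ --- $\phi[0]$ corrected by the absolutely convergent Duhamel integral of $\calN^{\mathrm{pert}}$ --- is well defined, and the solution $\phi^{(\infty)}$ of $(\Box + 2iA^{free}_{\ell}\rd^{\ell})\phi^{(\infty)} = 0$ with this data obeys $\nrm{\phi[t]-\phi^{(\infty)}[t]}_{\dot H^{1}\times L^{2}}\to 0$. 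The robustness noted in the footnote --- perturbing $A^{free}_{x}[0]$ within $\ell^{1}(H^{1}\times L^{2})$ alters the magnetic interaction only by a term perturbative in $N$ --- permits the normalization $A^{free}_{x} = A^{(\infty)}_{x}$. The past-directed statements follow by the reflection $t\mapsto -t$.

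\textbf{Main obstacle.} Everything above is soft once \emph{(i)} and \emph{(ii)} are granted; the real content is precisely those two facts, i.e.\ that the non-perturbative dynamics localizes to finitely many time intervals and that off them the flow is $S^{1}$-close to the appropriate (magnetic) linear evolution with a quantitatively small Duhamel error --- exactly the ingredients assembled from the atomic null-frame and bilinear estimates in the proof of Theorem~\ref{t:ed}. A secondary but nontrivial point is the uniform $\dot H^{1}\times L^{2}$ mapping properties of the magnetic linear propagator needed to make sense of $\phi^{(\infty)}$.
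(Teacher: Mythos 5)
Part~(2) of your argument runs along the same lines as the paper's (which asserts only that scattering is an easy consequence of Theorem~\ref{t:structure}(\ref{item:structure:3})--(\ref{item:structure:4}) and divisibility of the source norms): you show the Duhamel tails for $A_x^{nl}$ and for the covariant correction to $\phi$ vanish, and you correctly identify the need for the uniform energy bounds of the paradifferential magnetic linear flow from Theorem~\ref{t:para-free} in order to make sense of $\phi^{(\infty)}$.

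Part~(1), however, is done genuinely differently in the paper. You argue that the data is Cauchy in $\calH^1$ as $t\to T_+^-$ and then apply Theorem~\ref{t:local} at $T_+$. The paper never passes to a limit of the data: it invokes the frequency envelope bound of Theorem~\ref{t:structure}(\ref{item:structure:2}), $\nrm{A_0}_{Y^1_{c^2}}+\nrm{(A_x,\phi)}_{S^1_c}\leq\tilde F$ with $c\in\ell^2$, observes that these envelope norms control $\nrm{\nb(\cdot)}_{L^\infty L^2_c}$, and splits at a high frequency $\ell$ so that the high part has small energy \emph{uniformly in $t$} while the low part is $L^\infty$-controlled by Bernstein. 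This yields a uniform lower bound $\tilde r>0$ on the energy concentration scale $r_c[(A,\phi)(t)]$ across the whole lifespan, so Theorem~\ref{t:local} applied at some $t$ within $\tilde r/2$ of $T_+$ produces the continuation directly. Your route buys an explicit trace of the solution at $T_+$; the paper's route is shorter and more robust, since it produces a uniform extension radius and sidesteps the convergence-of-data step entirely.

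One imprecision worth flagging in your Part~(1): you invoke ``divisibility'' to conclude $\nrm{\calN}_{N[t,t']}\to 0$ over a shrinking interval. The divisibility bound \eqref{n-div} gives square-summability over a \emph{fixed} partition, which does not by itself yield smallness of the $N$-norm on a short subinterval; the $N$-space contains the modulation-localized piece $X_1^{0,-1/2}$, which is nonlocal in time. What is actually needed is the continuity-in-$I$ assertion of Proposition~\ref{p:intervals}(\ref{item:intervals:1}) for $\nrm{\chi_I f}_N$ (or a density argument). The paper's route to Part~(1) does not require this; Part~(2) still does, but there one leans on the Lebesgue-in-time norms $L^2\dot H^{-1/2}$, $L^{9/5}\dot H^{-4/9}$ in the refined source bounds, whose tails vanish by dominated convergence, to carry the bulk of the work.
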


Our strategy for proving Theorem~\ref{t:ed} is to use an induction on
energy argument; this is imposed by the requirement to renormalize
paradifferential interactions of the solution with itself. This is
somewhat similar to the proof of the corresponding result for wave
maps in \cite{MR2657817}. See also \cite{koch2014dispersive} for an
exposition of this argument in the context of wave maps, and Section 2
in the main paper of the sequence \cite{OT3} for a brief summary of
our strategy.

\begin{remark} 
  We remark that the same results hold in all higher dimensions for
  data in the scale invariant space $\dot H^{\frac{d}2-1} \times \dot
  H^{\frac{d}2-2}$.  We have chosen to restrict our exposition to the
  more difficult case $d = 4$ in order to keep the notations simple,
  but our analysis easily carries over to higher dimension $d \geq
  5$. The main difference in higher dimension is that we no longer
  have a conserved energy which is equivalent to the critical Sobolev
  norm. However, the small energy dispersion guarantees that the
  critical energy is almost conserved.
\end{remark}


\begin{remark} 
  We note that an independent proof of global well-posedness and
  scattering of MKG-CG has been recently announced by
  Krieger-L\"uhrman, following a version of the Bahouri-G\'erard
  nonlinear profile decomposition \cite{MR1705001} and Kenig-Merle
  concentration compactness/rigidity scheme \cite{MR2257393,
    MR2461508} developed by Krieger-Schlag \cite{Krieger:2009uy} for
  the energy critical wave maps.
\end{remark}
\subsection{Notation and Conventions}
We use the asymptotic notation $A \aleq B$ and $A = O(B)$ to mean $A
\leq CB$ for some $C > 0$.  We write $A \ll B$ if the implicit
constant should be regarded as small. The dependence of the constant
is specified by a subscript.

Our convention regarding indices is as follows. The greek indices
$\alp, \bt, \ldots$ run over $0, \ldots, 4$, whereas the latin
indices $i, j, \ldots$ only run over the spatial indices $1, \ldots,
4$. We raise and lower indices using the Minkowski metric, and sum
over repeated upper and lower indices.

We refer to each directional derivative by $\rd_{\mu}$, and the full
space-time gradient by $\nb$. We denote the (gauge) covariant
derivative by $\covD_{\mu} = \rd_{\mu} + i A_{\mu}$. For (Fourier)
multipliers and pseudodifferential operators, it is convenient to use
$D_{\mu} = \frac{1}{i}\rd_{\mu}$, whose symbol is $\xi_{\mu}$.

\subsubsection*{Global small constants}
We introduce a string of globally defined small constants, which are
used in our main argument contained in
Sections~\ref{s:energy}-\ref{s:multi}:
\begin{equation*}
  0 < \dlt_{\ast\ast} \ll \dlt_{\ast} \ll \dlt_{0} \ll \dlt_{1} \ll c \ll \dlt \ll 1.
\end{equation*}
Logically, each constant is chosen to be small enough depending on the
one to the immediate right. For the convenience of the reader, we
summarize the role of each constant as follows: $\dlt$ is the exponent
for dyadic gains in bilinear and multilinear estimates, most which
come from \cite{Krieger:2012vj}; $c$ enters in the gain in large
frequency gaps $m$; $\dlt_{1}$ is used for the gain in small energy
dispersion; $\dlt_{0}$ is reserved for the definition of admissible
frequency envelopes; $\dlt_{\ast}$ and $\dlt_{\ast \ast}$ are the
small constants used in the induction on energy argument in
Section~\ref{s:induction}.

\subsubsection*{Littlewood-Paley projections}
Let $m_{\leq 0}(r)$ be a smooth cutoff that equals $1$ on $\set{r \leq
  1}$ and vanishes on $\set{r \geq 2}$. For $k \in \bbZ$, let $m_{\leq
  k}(r) := m_{\leq 0}(r / 2^{k})$ and $m_{k}(r) := m_{\leq k}(r) -
m_{\leq k-1}(r)$; then $\supp \, m_{k} \subseteq \set{2^{k-1} \leq r
  \leq 2^{k+1}}$ and forms a locally finite partition of unity, i.e.,
$\sum_{k} m_{k} = 1$. Using the space-time Fourier transform $\calF$,
we define various dyadic (or \emph{Littlewood-Paley}) projections as
follows:
\begin{align*}
  P_{k} \varphi = \calF^{-1}[m_{k}(\abs{\xi}) \calF[\varphi]], \quad
  Q_{j} \varphi = \calF^{-1}[m_{j}(\abs{\abs{\tau} - \abs{\xi}})
  \calF[\varphi]], \quad S_{\ell} \varphi =
  \calF^{-1}[m_{k}(\abs{(\tau, \xi)}) \calF[\varphi]].
\end{align*}
We also define $Q^{\pm}_{j} := Q^{\pm} Q_{j}$, where $Q_{\pm} :=
\calF^{-1}[ 1_{[0, \infty)}(\pm \tau) \calF[\varphi]]$ restricts to
the $\pm$ frequency half-space. For an interval $I \subseteq \bbZ$, we
define $P_{I} = \sum_{k \in I} P_{k}$, etc.  At one place, we allow
$P_{k}$ to depend continuously on $k \in \bbR$; see the definition of
$(\tA[0], \tphi[0])$ in Section~\ref{s:induction}.
\subsubsection*{Frequency envelopes}
For some more accurate bounds at various places we need to keep better
track of the frequency distribution of norms. This is done using the
language of frequency envelopes. An \emph{admissible frequency
  envelope} will be any sequence $\{c_k\}_{k \in \mathbb Z}$ of
positive numbers which is slowly varying,
\[
c_j /c_k \leq 2^{\delta_{0} |j-k|}
\]
with a small universal constant $\delta_{0}$. Given such a sequence
and a norm $X$, we define the norm
\[
\|\phi\|_{X_c} = \sup_k c_k^{-1} \| P_{k} \phi\|_{X}.
\]
We say that $c$ is a frequency envelope for the data $(A_{x}[0],
\phi[0])$ if for every $k \in \bbZ$, we have
\begin{equation*}
  \nrm{(P_{k}A_{x}[0], P_{k}\phi[0])}_{\dot{H}^{1} \times L^{2}} \leq c_{k}.
\end{equation*}
Given any $A_{x}[0], \phi[0] \in \dot{H}^{1} \times L^{2}$, we may
construct such a $c$ by convolving with $2^{-\dlt_{0}\abs{\cdot}}$,
i.e.,
\begin{equation*}
  c_{k} := \sum_{k'} 2^{-\dlt_{0}\abs{k - k'}} \nrm{(P_{k'}A_{x}[0], P_{k'}\phi[0])}_{\dot{H}^{1} \times L^{2}} \ .
\end{equation*}
By Young's inequality, we have $\nrm{c}_{\ell^{2}} \aleq
\nrm{(A_{x}[0], \phi[0])}_{\dot{H}^{1} \times L^{2}} $.

\subsection{Structure of the paper}

In Section~\ref{s:energy}, we begin with some elliptic gauge related
fixed time estimates. In particular these will help us relate the full
nonlinear gauge independent energy with the linear energy associated
to the MKG-CG system.

In the following section we switch to space-time analysis, and define
the function spaces $S^1$ and $N$; with minor changes this follows
\cite{Krieger:2012vj}. We also recall some useful estimates from
\cite{Krieger:2012vj}, and add to that some additional properties
related to the interval decomposition of the $S^1$ and $N$ spaces.

In Section~\ref{s:dec} we describe the decomposition of the
nonlinearity, and state the main bilinear and multilinear bounds which
enter into the proof of our main result. To overcome difficulties
related to large data, here we consider two additional classes of
estimates, namely energy dispersed bounds and time divisible
estimates.

In Section~\ref{s:large} we consider MKG waves of finite $S^{1}$ norm,
and we establish further regularity properties for such waves. Based
on these properties, we establish Theorem~\ref{thm:cont-scat}.  We
also consider the special case of MKG waves with small energy
dispersion, and show that some other norms of such waves must also be
small.

Section~\ref{s:induction} contains the proof of our main result in
Theorem~\ref{t:ed}.  This is achieved using an induction of energy
argument, following the principles introduced in \cite{MR2657817}.

The following two sections contain the proof of the bilinear and the
trilinear estimates, where, in addition to results from
\cite{Krieger:2012vj}, we bring in the energy dispersion and divisible
norms.  Heuristically, we will see that the role played by the small
energy dispersion is to improve all the balanced frequency
interactions in the bilinear estimates in Section~\ref{s:bi}.  In the
trilinear estimates in Section~\ref{s:multi}, there are possibly large
unbalanced frequency interactions for which the small energy
dispersion does not seem effective. Nevertheless, we show that the
bulk can be bounded by a time divisible norm. This property allows us
to carry out an induction on energy scheme as in
Section~\ref{s:induction}.

Finally, the last section contains our paradifferential parametrix
construction, based on those in \cite{Krieger:2012vj, MR2100060}.
While very different technically, at the conceptual level this is
similar to the argument in \cite{MR2657817}.  The main idea there is
that a large frequency gap, rather than the small energy dispersion,
is used to control the large paradifferential term.

\subsection*{Acknowledgements}
Part of the work was carried out during the trimester program `Harmonic Analysis and Partial Differential Equations' at the Hausdorff Institute for Mathematics in Bonn; the authors thank the institute for hospitality.
S.-J. Oh is a Miller Research Fellow, and acknowledges the Miller
Institute for support. D. Tataru was partially supported by the NSF
grant DMS-1266182 as well as by the Simons Investigator grant from the
Simons Foundation.

\section{Fixed time elliptic bounds and the energy}
\label{s:energy}
While the energy \eqref{energy} $\calE[A,\phi]$ of the MKG system is
gauge independent, when considering the system in the Coulomb gauge it
is convenient to view $(A_x,\phi)$ as the main dynamic variable, while
$A_0$ and $\partial_t A_0$ are derived quantities obtained via the
equations \eqref{MKGa0}.  Correspondingly, we view
\begin{equation*}
  (A_{x}[0], \phi[0]) = (A_{x}, \rd_{t} A_{x}, \phi, \rd_{t} \phi)(0)
\end{equation*}
as the initial data for the MKG-CG system, and determine the gauge
covariant initial data set $(a, e, f, g)$ via \eqref{MKGa0}. We remark
that $(A_{x}[0], \phi[0])$ can be freely prescribed up to the Coulomb
condition $\rd^{\ell} A_{\ell}(0) = 0$.  In this context, it is
convenient to work with the linear energy
\begin{equation}
  E_{lin} [A_x,\phi](t) 
  = E_{lin} (A_x[t],\phi[t]) 
  := \frac{1}{2} \int \sum_{\substack{\mu=0, \ldots, 4 \\ j = 1, \ldots, 4}} \abs{\rd_{\mu} A_{j}(t)}^{2} + \sum_{\mu = 0, \ldots, 4}\abs{\rd_{\mu} \phi(t)}^{2} \, \ud x.
\end{equation}
In order to justify this, we need to show that $A_0$ is indeed
uniquely determined by $(A_x,\phi)$ at each time, and that the two
energies are in some sense comparable. This is the goal of the main
result here. In the process, we will also obtain some further
solvability estimates for the equations \eqref{MKGa0} for $A_0$ that
will also come in handy in the context of space-time bounds. We have:

\begin{proposition} \label{p:energy} The following statements hold.
  \begin{enumerate}
  \item \label{item:energy:1} Let $(A_{x}, F_{0x}, \phi, \covD_{t}
    \phi)(0)$ be a finite energy initial data set for the MKG-CG
    system.  Then $(A_x[0],\phi[0]) \in \dot{H}^{1} \times L^{2}$ and
    we have the estimate
    \begin{equation}\label{elin<e}
      E_{lin} (A_x[0],\phi[0]) \lesssim \calE[A,\phi]+ \calE[A,\phi]^2,
    \end{equation}
    where $\calE[A, \phi]$ denotes the energy of the initial data set
    $(A_{x}, F_{0x}, \phi, \covD_{t} \phi)(0)$.
  \item \label{item:energy:2} Conversely, suppose that $ (A_x[0],\phi[0]) \in
    {\dot H^1 \times L^2}$. Then there exist unique solutions
    $(A_0,\partial_t A_0) \in {\dot H^1 \times L^2} $ for the
    equations \eqref{MKGa0}, depending smoothly on $ (A_x[0],\phi[0])$
    in the above topologies.  Further, $\calE(A,\phi)$ depends
    smoothly on $(A_x[0],\phi[0]) $, and we have the energy relation
    \begin{equation} \label{e<elin} \calE[A,\phi] \lesssim E_{lin}
      (A_x[0],\phi[0]) + E_{lin} (A_x[0],\phi[0])^2 .
    \end{equation}

  \item \label{item:energy:3} Assume in addition that $\phi[0]$ obeys
    the fixed time energy dispersion bound
    \begin{equation} \label{eq:energy:ED} \nrm{\phi[0]}_{ED} :=
      \sup_{k} 2^{-k} \nrm{(P_{k} \phi, 2^{-k} P_{k} \rd_{t}
        \phi)(0)}_{L^{\infty}(\bbR^{4})} \leq \eps
    \end{equation}
    with $\epsilon \ll_{\calE[A, \phi]} 1$. Then we have
    \begin{equation}
      \calE[A,\phi] =  E_{lin} (A_x[0],\phi[0]) + O_{\calE[A, \phi]}(\epsilon^\frac14) .
    \end{equation}
  \end{enumerate}
\end{proposition}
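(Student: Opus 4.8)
The plan is to expand $\calE[A,\phi]$ around $E_{lin}(A_x[0],\phi[0])$ and reduce everything to a single bilinear estimate. Writing $F_{ij}=\rd_i A_j-\rd_j A_i$, $F_{0j}=\rd_t A_j-\rd_j A_0$, $\covD_\mu\phi=\rd_\mu\phi+iA_\mu\phi$, completing the squares in $F_{\alpha\beta}^2$ and $|\covD_\alpha\phi|^2$, and using the Coulomb conditions $\rd^\ell A_\ell=0$ and $\rd^\ell\rd_t A_\ell=0$ (the latter being forced by the Gauss constraint \eqref{eq:MKG-gauss} together with the definition \eqref{MKGa0} of $A_0$) to kill the cross terms $\int\rd_i A_j\,\rd_j A_i\,dx$ and $\int\rd_t A_j\,\rd_j A_0\,dx$, one obtains
\[
\calE[A,\phi]-E_{lin}(A_x[0],\phi[0]) = \frac12\nrm{A_0}_{\dot H^1}^2 + \frac12\int (A_0^2+|A_x|^2)|\phi|^2\,dx - \int A_0\,\Im(\phi\overline{\rd_t\phi})\,dx - \int A^j\,\Im(\phi\overline{\rd_j\phi})\,dx .
\]
(All integrations by parts are legitimate at the $\dot H^1\times L^2$ regularity by density from smooth data and the continuity asserted in part~\eqref{item:energy:2}.) It therefore suffices to show that each of the four error terms on the right is $O_\calE(\epsilon^{1/4})$.

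\noindent\textbf{Step 2: the key bilinear estimate.} The technical heart is the bound
\[
\nrm{\phi\,\psi}_{\dot H^{-1}(\bbR^4)} \aleq_\calE \epsilon^{1/4}\,\nrm{\psi}_{L^2} \qquad \text{for all }\psi\in L^2(\bbR^4),
\]
where $\phi$ satisfies $\nrm{\phi}_{\dot H^1}\aleq_\calE 1$ (from part~\eqref{item:energy:1}) and $\nrm{\phi}_{ED}\leq\epsilon$. I would prove it by a Littlewood--Paley decomposition of $\phi$, $\psi$ and of a dual test function $\chi\in\dot H^1$. The elementary input is that \eqref{eq:energy:ED} together with Bernstein, interpolated against the $\dot H^1$ bound, gives $\nrm{P_k\phi}_{L^\infty}\aleq 2^k\epsilon$ and $\nrm{P_k\phi}_{L^4}\aleq\epsilon^{1/4}\nrm{P_k\phi}_{\dot H^1}^{3/4}$. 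In every frequency interaction in which a $\phi$-factor can be placed onto the output (or onto the larger input) frequency, doing so through the $L^4$ bound reads off the factor $\epsilon^{1/4}$; the resulting frequency sums close because the sequences $\{\nrm{P_k\phi}_{\dot H^1}^{3/4}\}\in\ell^{8/3}$, $\{\nrm{P_k\psi}_{L^2}\}\in\ell^2$, $\{\nrm{P_k\chi}_{\dot H^1}\}\in\ell^2$ satisfy $\frac38+\frac12+\frac12>1$. The only delicate case is the high$\times$high $\to$ low interaction, where no $\phi$-factor is forced onto the output: there one has two competing bounds for the relevant Littlewood--Paley piece --- one gaining a power of $\epsilon$ (place $\phi$ in $L^\infty$ via \eqref{eq:energy:ED}) but losing a frequency-gap factor, and one gaining the frequency gap (Bernstein) but no $\epsilon$ --- and one takes a geometric mean with weight $\tfrac14$ on the first, producing simultaneously $\epsilon^{1/4}$ and a summable decay in the frequency gap. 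This is precisely where the lossy exponent $\tfrac14$ enters.

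\noindent\textbf{Step 3: conclusion.} Granting Step~2, all four error terms follow. The equation for $A_0$ reads $(-\Delta+|\phi|^2)A_0=\Im(\phi\overline{\rd_t\phi})$; testing against $A_0$ and using the coercivity $\langle(-\Delta+|\phi|^2)u,u\rangle\geq\nrm{u}_{\dot H^1}^2$ gives $\nrm{A_0}_{\dot H^1}\leq\nrm{\Im(\phi\overline{\rd_t\phi})}_{\dot H^{-1}}\aleq_\calE\epsilon^{1/4}$, and similarly $\nrm{\rd_t A_0}_{L^2}\aleq_\calE\epsilon^{1/4}$ from the second equation in \eqref{MKGa0} (which also needs $\nrm{A_x\phi}_{L^2}\aleq_\calE\epsilon^{1/4}$, itself a consequence of Step~2 via $\nrm{A_j\phi}_{L^2}=\sup_{\nrm{\psi}_{L^2}\leq1}|\langle A_j,\phi\overline{\psi}\rangle|\leq\nrm{A_j}_{\dot H^1}\sup_\psi\nrm{\phi\overline{\psi}}_{\dot H^{-1}}$). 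Consequently $\tfrac12\nrm{A_0}_{\dot H^1}^2$, the term $\int A_0^2|\phi|^2\,dx\aleq\nrm{A_0}_{\dot H^1}^2\nrm{\phi}_{\dot H^1}^2$, the term $\int A_0\,\Im(\phi\overline{\rd_t\phi})\,dx\leq\nrm{A_0}_{\dot H^1}\nrm{\Im(\phi\overline{\rd_t\phi})}_{\dot H^{-1}}$, and the quartic term $\int|A_x|^2|\phi|^2\,dx=\nrm{A_x\phi}_{L^2}^2$ are all $O_\calE(\epsilon^{1/2})$; finally $\int A^j\,\Im(\phi\overline{\rd_j\phi})\,dx\leq\nrm{A_x}_{\dot H^1}\nrm{\Im(\phi\overline{\nabla\phi})}_{\dot H^{-1}}\aleq_\calE\epsilon^{1/4}$ by Step~2 applied with $\psi=\rd_j\phi$. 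Adding up yields $\calE[A,\phi]=E_{lin}(A_x[0],\phi[0])+O_\calE(\epsilon^{1/4})$, the last term being the bottleneck.

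\noindent\textbf{Main obstacle.} The only genuine difficulty is the frequency bookkeeping in Step~2: the energy dispersion can only be converted into a bound on $\nrm{P_k\phi}_{L^4}$ carrying a fractional ($3/4$) power of $\nrm{P_k\phi}_{\dot H^1}$, which sits exactly at the borderline of $\ell^1$-summability over dyadic frequencies; hence in the high$\times$high $\to$ low regime one cannot afford either competing estimate on its own and must interpolate them, carefully balancing the $\epsilon$-gain against the frequency-gap decay --- this forces the exponent $\tfrac14$. Everything else --- the algebraic identity of Step~1, the coercive solvability of $A_0$, and the H\"older/Sobolev/duality manipulations of Step~3 --- is routine.
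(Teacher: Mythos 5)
Your proposal addresses only part~\eqref{item:energy:3}, treating parts~\eqref{item:energy:1}--\eqref{item:energy:2} as given; that is reasonable, since those parts are routine. For part~\eqref{item:energy:3} your overall strategy is essentially the paper's: quadratically expand $\calE$ around $E_{lin}$, reduce to a bilinear smallness bound, and prove that bound by Littlewood--Paley trichotomy, interpolating the energy dispersion against $\dot H^1$ to harvest $\epsilon^{1/4}$. Your core estimate $\|\phi\psi\|_{\dot H^{-1}}\aleq\epsilon^{1/4}\|\psi\|_{L^2}$ is the dual formulation of the paper's $\|A\phi\|_{L^2}\aleq\epsilon^{1/4}$, and the elliptic step (coercivity of $-\Delta+|\phi|^2$) is exactly Lemma~\ref{l:ell}. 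One stylistic remark: in Step~1 you kill the cross term $\int\rd_t A_j\,\rd_j A_0$ using $\rd^\ell\rd_t A_\ell=0$. Your derivation of this from the Gauss constraint and $\Delta A_0=J_0$ is in fact correct, but the paper instead absorbs the cross term into the $E_{lin}^{1/2}\|\nabla A_0\|_{L^2}$-sized error without invoking the constraint, which is slightly more robust.

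There is, however, a genuine gap in the summation argument sketched at the end of Step~2. In the unbalanced low$\times$high regime (the paper's $A_j\phi_k$ with $j<k$, equivalently $\chi$ at the low frequency $j$ and $\phi,\psi$ at the comparable high frequency $k$), the trilinear form is a \emph{double} sum over $(j,k)$, and the closing claim ``$\frac38+\frac12+\frac12>1$'' only certifies $\ell^1$-summability when all three factors sit on the same dyadic index. The $L^4\times L^4\times L^2$ H\"older split carries no decay in $j-k$, so $\sum_{j<k}\|P_j\chi\|_{\dot H^1}$ is unbounded in $k$. Even after upgrading to the decaying split $L^8\times L^{8/3}\times L^2$ (which gains $2^{(j-k)/2}$), Cauchy--Schwarz in $j$ leaves the single sum $\sum_k\|P_k\phi\|_{\dot H^1}^{3/4}\|P_k\psi\|_{L^2}$, which can diverge for general $\phi\in\dot H^1$, $\psi\in L^2$ (take $\|P_k\phi\|_{\dot H^1}=\|P_k\psi\|_{L^2}\sim k^{-1/2-\delta}$). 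The correct closing argument --- which is what the paper does --- is to keep the \emph{total} norms $\|\phi\|_{\dot H^1}^{3/4}\|\phi\|_{ED}^{1/4}$ rather than the $P_k$-pieces in the estimate for $\|A_j\phi_k\|_{L^2}$, apply Cauchy--Schwarz in the output frequency $k$ (pairing $\psi_k$ against $P_k(\chi\phi)$), and close the resulting $\ell^2_k$ sum by Young's inequality using the $\ell^2$-summability of $\{\|P_j\chi\|_{\dot H^1}\}_j$ together with the $2^{(j-k)/2}$ decay. This is a fixable but real imprecision: without it your sketch does not yield the bilinear estimate as stated. (A side remark: it is this low$\times$high regime, not the high$\times$high$\to$low one, that the paper runs at the $\theta=1/4$ interpolation weight; the balanced high$\times$high case already yields $\epsilon^{1/2}$.)
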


\begin{proof}
  All estimates here are at fixed time, so we dispense with the time
  variable from the notations. We denote the two energies $\calE[A,
  \phi]$ and $E_{lin}(A_{x}[0], \phi[0])$ simply by $E$ and $E_{lin}$,
  respectively.

 \ref{item:energy:1}). We begin with the spatial components of the
  energy, where we have
  \[
  \frac{1}{2} \sum_{1 \leq j < k \leq 4} \| \rd_{j} A_{k} - \rd_{k}
  A_{j} \|_{L^2}^2 \leq E.
  \]
  Combined with the gauge condition $\rd^{j} A_{j} =0$, this gives the
  linear elliptic bound
  \[
  \frac{1}{2} \nrm{A_{x}}_{\dot{H}^{1}}^{2} = \frac{1}{2} \sum_{1 \leq
    j, k \leq 4} \nrm{\rd_{j} A_{k}}_{L^{2}}^{2}\leq E,
  \]
  and Sobolev embeddings further yield
  \[
  \|A_x\|_{L^4}^2 \lesssim E.
  \]
  On the other hand we also have
  \[
  \frac{1}{2} \| \covD_x \phi\|_{L^2}^2 \leq E.
  \]
  By the diamagnetic inequality and Sobolev embeddings we obtain
  \[
  \| \phi\|_{L^4}^2 \lesssim \| \nabla |\phi|\|_{L^2}^2 \leq \|
  \covD_x \phi\|_{L^2}^2 \lesssim E.
  \]
  Then we can further estimate
  \[
  \| \phi\|_{\dot H^1}^2 = \| \nabla_x \phi\|_{L^2}^2 \leq \| \covD_x
  \phi\|_{L^2}^2 + \| A_x \phi\|_{L^2}^2 \lesssim E+ E^2
  \]

  Next we turn our attention to the temporal components. We first have
  \[
  \frac{1}{2} \| \partial_t A_x - \nabla_x A_0\|_{L^2}^2 \leq E.
  \]
  Applying the divergence and using the Coulomb gauge condition we
  obtain
  \[
  \|A_0\|_{\dot H^1}^2 \lesssim \|\Delta A_0\|_{\dot H^{-1}}^2
  \lesssim E.
  \]
  As the energy $E$ also controls $\| \covD_t \phi \|_{L^2}^2$,
  arguing as above we also obtain
  \[
  \| \partial_t \phi \|_{L^2}^2 \lesssim E+ E^2,
  \]
  which concludes the proof of \eqref{elin<e}.

\bigskip

  \ref{item:energy:2}). We begin with the analysis of the first equation
  in \eqref{MKGa0}, which is rewritten as
  \[
  (-\Delta + |\phi|^2) A_0 = - \Im(\phi \overline{\rd_{t} \phi}).
  \]
  We first need to know that this equation is solvable. More
  generally, we consider the inhomogeneous problem
  \begin{equation}\label{eq-a0f}
    (-\Delta + |\phi|^2) u = f
  \end{equation}
  The solvability of this equation is dealt with via the following
  fixed time lemma:

\begin{lemma}\label{l:ell}
  Consider the equation \eqref{eq-a0f} with $\phi \in \dot H^1$. Set
  $E_0 = \|\phi\|_{\dot H^1}$. Then
  \begin{enumerate}[label=(\alph*)]
  \item \label{item:ell:1} If $f \in \dot H^{-1}$ then there exists a
    unique solution $u \in \dot H^1$, satisfying
    \begin{equation}
      \| u\|_{\dot H^1} \lesssim \|f\|_{\dot H^{-1}} 
    \end{equation}
    Further, the map $(\phi,f) \to u$ is smooth in the $\dot H^1
    \times \dot H^{-1} \to \dot H^1$ topology.

  \item \label{item:ell:2} If $f \in H^{-\frac12}$ then there exists a
    unique solution $u \in \dot H^\frac32$, satisfying
    \begin{equation}
      \| u \|_{\dot H^\frac32} \lesssim_{E_0} \|f\|_{\dot H^{-\frac12}} 
    \end{equation}
    Further, the map $(\phi,f) \to A_0$ is smooth in the $\dot H^1
    \times \dot H^{-\frac12} \to \dot H^\frac32$ topology.

  \item \label{item:ell:3} In addition, for any frequency envelope $c
    \in \ell^2$ we have the bounds
    \begin{equation}
      \| u\|_{\dot H^1_c} \lesssim_{E_0} \|f\|_{\dot H^{-1}_c} , 
      \qquad \| u \|_{\dot H^\frac32_c} \lesssim_{E_0} \|f\|_{\dot H^{-\frac12}_c} 
    \end{equation} 
  \end{enumerate}

\end{lemma}

\begin{proof}
  \ref{item:ell:1} By Sobolev embeddings we have
  \[
  \| |\phi|^2\|_{\dot H^1 \to \dot H^{-1}} \lesssim \|\phi\|_{L^4}^2
  \lesssim \| \phi\|_{\dot H^1}^2
  \]
  Hence the operator $-\Delta +|\phi|^2$ is bounded from ${\dot H^1
    \to \dot H^{-1}}$.  It is also self-adjoint and coercive, so the
  bound
  \[
  \| u\|_{\dot H^1} \leq \|f\|_{\dot H^{-1}}
  \]
  immediately follows. The regularity of the map $(\phi,f) \to u$ is
  obtained in a similar manner, by looking at the linearized equation.

  \medskip

  \ref{item:ell:2} More generally, we will take $f \in
  \dot{H}^{\sigma}$ and prove that we can solve for $u \in
  \dot{H}^{\sigma+2}$ for any $ -2 < \sigma < 0$. This in particular
  easily implies the frequency envelope bounds in part
  \ref{item:ell:3}. By duality it suffices to consider the case $-1
  \leq \sigma < 0$.

  To solve the problem perturbatively in $\dot H^{\sigma+2}$ it
  suffices to construct a multiplier $\Lambda$ so that $\Lambda(\xi)
  \approx_{E_0} |\xi|^{\sigma+1}$ and
  \begin{equation}\label{Lambda}
    \| \Lambda |\phi|^2 \Lambda^{-1} - |\phi|^2\|_{  \dot H^{1} \to \dot H^{-1}} \ll 1
  \end{equation}
  Then we can rewrite the equation as
  \[
  (-\Delta + |\phi|^2) \Lambda u = \Lambda f - ( \Lambda |\phi|^2
  \Lambda^{-1} - |\phi|^2) \Lambda u,
  \]
  and the above estimate allows us to solve the above equation
  perturbatively based on the $\dot H^1$ solvability in part
  \ref{item:ell:1}.

  By duality and a Littlewood-Paley decomposition, \eqref{Lambda}
  would follow if we had the stronger bound
  \begin{equation} \label{eq:ell:pf-goal} I = \sum_{k_i} \left| \int
      \Lambda u_{k_1} \Lambda^{-1} u_{k_2} \phi_{k_3} \phi_{k_4} -
      u_{k_1} u_{k_2} \phi_{k_3} \phi_{k_4} dx \right| \ll \|
    u\|_{\dot H^1}^2
  \end{equation}
  We will denote each summand on the left hand side by $I(k_{1},
  k_{2}, k_{3}, k_{4})$.  To achieve \eqref{eq:ell:pf-goal} we will
  choose $\Lambda$ radial, with the property that $\Lambda(r)$ is
  non-decreasing and
  \[
  \Lambda(s) \leq \Lambda(r) \left(\frac{s}{r}\right)^{\sgm+1}, \qquad
  s > r.
  \]
  Estimating each dyadic contribution using Sobolev embeddings we have
  \[
  \left| \int u_{k_1} u_{k_2} \phi_{k_3} \phi_{k_4} dx \right |
  \lesssim 2^{-(k_{max} - k_{min})} \|u_{k_1}\|_{\dot H^1}
  \|u_{k_2}\|_{\dot H^1} \|\phi_{k_3}\|_{\dot H^1}
  \|\phi_{k_4}\|_{\dot H^1}
  \]
  and similarly
  \[
  \left| \int \Lmb u_{k_1} \Lmb^{-1} u_{k_2} \phi_{k_3} \phi_{k_4} dx
  \right | \lesssim 2^{\sgm(k_{max} - k_{min})} \|u_{k_1}\|_{\dot H^1}
  \|u_{k_2}\|_{\dot H^1} \|\phi_{k_3}\|_{\dot H^1}
  \|\phi_{k_4}\|_{\dot H^1}
  \]
  where $k_{max} = \max\set{k_{1}, \ldots, k_{4}}$, $k_{min} =
  \min\set{k_{1}, \ldots, k_{4}}$.  Hence contributions from widely
  separated frequencies are small.  To measure that, we fix a
  frequency gap parameter $m$ (which will be chosen depending only on
  $E_{0}$) and split
  \[
  I = I_{close} + I_{far} : = \sum_{k_{max} - k_{min} < m} I(k_{1},
  \ldots, k_{4}) + \sum _{k_{max} - k_{min} \geq m} I(k_{1}, \ldots,
  k_{4}).
  \]
  For $I_{far}$ we have
  \[
  I_{far} \lesssim 2^{\sigma \frac{m}{2}} \| u\|_{\dot H^1}^2
  \|\phi\|_{\dot H^1}^{2}
  \]
  which can be made sufficiently small by choosing $m$ large enough
  compared to $E_{0}$.  For $I_{close}$ we use the off-diagonal decay
  to obtain
  \[
  I_{close} \lesssim_{m} \| u\|_{\dot H^1}^2
  \|\phi\|_{B^{1,2}_{\infty}}^{2}
  \]
  Hence only the large dyadic parts of $\phi$ have nontrivial
  contributions.  To account for those, we choose a finite set of
  dyadic indices $K \subset \mathbb Z$ outside of which we have
  \begin{equation} \label{eq:ell:pf-Kc}
    \|\phi\|_{B^{1,2}_{\infty}(K^c)} := \sup_{k \in K^{c}} 2^{k}
    \nrm{\phi_{k}}_{L^{2}} \ll_{E_0} 1.
  \end{equation}
  Note that the number of indices in $K$ can be bounded by a constant
  depending only on $E_{0}$.  Since $I_{close}$ only allows
  interactions of frequencies at most $m$ apart, it is natural to
  expand $K$ by $m$ to $K^{m} := \set{k + k' : k \in K, \abs{k'} \leq
    m}$. Then all unfavorable (i.e., large) interactions occur only
  for frequencies within $K^{m}$, i.e.,
  \begin{align*}
    I_{close} & \leq \sum_{k_{i} : [k_{min}, k_{max}] \cap K = \emptyset} I(k_{1}, \ldots, I_{k_{4}}) + \sum_{k_{i} : [k_{min}, k_{max}] \cap K \neq \emptyset} I(k_{1}, \ldots, I_{k_{4}}) \\
    & \aleq_{m} \nrm{u}_{\dot{H}^{1}}^{2}
    \nrm{\phi}_{B^{1,2}(K^{c})}^{2} + \sum_{k_{i} : [k_{min}, k_{max}]
      \subseteq K^{m}} I(k_{1}, \ldots, I_{k_{4}}).
  \end{align*}
  The first term on the last line is small enough thanks to
  \eqref{eq:ell:pf-Kc}. The second term can be eliminated altogether
  by refining the choice of $\Lmb$.  Precisely, we set $\Lmb(r)$ to be
  a piecewise smooth function which is constant for $\log_{2} r \in
  K^{2m}$ and equals an appropriate constant multiple of $r^{\sgm+1}$
  outside. Then it is easy to check that $I(k_{1}, \ldots, k_{4}) = 0$
  if $k_{1}, k_{2} \in K^{m}$; hence \eqref{eq:ell:pf-goal}
  follows. Furthermore, since $m$ and the number of indices in $K$ are
  bounded by $E_{0}$, it follows that $\Lmb(r) \approx_{E_{0}}
  r^{\sgm+1}$ as required.
%
%

  \medskip

  \ref{item:ell:3} For $\sgm_{0} = -1, -\frac{1}{2}$ and $f = f_k$  we claim that
  \begin{equation} \label{eq:ell-off-diag}
    \nrm{\phi_{j}}_{\dot{H}^{\sgm_{0}+2}} \aleq_{E_{0}} 2^{-\dlt
      \abs{j - k}} \nrm{f_{k}}_{\dot{H}^{\sgm_{0}}} \, .
  \end{equation}
  In fact, a similar bound holds for any $-2 < \sgm_{0} < 0$ with
  $\dlt > 0$ depending on $\sgm$.  By linearity, we may fix $k$, and
  by scaling (which leaves $E_{0}$ invariant), we may assume that $k =
  0$. Then the bound \eqref{eq:ell-off-diag} follows by applying
  \ref{item:ell:2} with $- 2 < \sgm < \sgm_{0}$ and $\sgm_{0} < \sgm <
  0$ to control the solution $\phi$ in upper and lower Sobolev spaces,
  which implies that $\phi_{j}$ decays in $L^{2}$ away from $j =
  0$. \qedhere
\end{proof}

We now continue the proof of part \ref{item:energy:2} of
Proposition~\ref{p:energy}.  From part \ref{item:ell:1} of the above
lemma we obtain the estimate
\[
\| A_0\|_{\dot H^1} \lesssim \|\phi\|_{\dot H^1} \|\partial_t
\phi\|_{L^2} \lesssim E_{lin} .
\]
Then, using the embedding $\dot H^1 \subset L^4$, we directly obtain
the estimate \eqref{e<elin}.

\bigskip

\ref{item:energy:3}). Comparing $E$ with $E_{lin}$ we have
\[
E = E_{lin} + E_{lin}^\frac12 O(\| A \phi\|_{L^2} + \| \nabla
A_0\|_{L^2})+ O(\| A \phi\|_{L^2}^2 + \| \nabla A_0\|_{L^2}^2)
\]
therefore it suffices to establish the bounds
\begin{equation}\label{E-diff}
  \| A \phi\|_{L^2} \lesssim_{E_{lin}} \epsilon^\frac14, \qquad   \| \nabla A_0\|_{L^2}
  \lesssim _{E_{lin}} \epsilon^\frac14.
\end{equation}
The first is easily obtained using the standard Littlewood-Paley
trichotomy.  For high-low interactions we have
\[
\| A_j \phi_k\|_{L^2} \lesssim 2^{k-j} \| A_j\|_{L^2} \|\phi_k\|_{ED},
\qquad j > k
\]
For low-high interactions we have
\[
\| A_j \phi_k\|_{L^2} \lesssim \| A_j\|_{L^8} \|\phi_k\|_{L^\frac83}
\lesssim 2^{\frac18(j-k)} \| A_j\|_{\dot H^1} \| \phi
\|^{\frac34}_{\dot H^1} \|\phi\|_{ED}^{\frac14} , \qquad j < k
\]
Finally for high-high interactions we have
\[
\| P_j (A_k \phi_k)\|_{L^2} \lesssim 2^{\frac12(j-k)} \| A_j\|_{\dot
  H^1} \| \phi \|^{\frac12}_{\dot H^1} \|\phi\|_{ED}^{\frac12} ,
\qquad j \leq k .
\]
In all cases we have favorable off-diagonal decay, so the $l^2$ dyadic
summation for the output is inherited from $A$. Hence the first bound
in \eqref{E-diff} follows.

For the second bound in \eqref{E-diff} we use the lemma to reduce it
to
\[
\| \phi \partial_t \phi\|_{\dot H^{-1}} \lesssim_{E_{lin}}
\epsilon^{\frac14} .
\]
The argument for this is similar to the one above, and is left for the
reader.
\end{proof}

\section{Space-time function spaces}
\label{s:sn}

\subsection{The $S^1$, $N$, $Z$ and $Y^{1}$ spaces} 
We begin our discussion with the function spaces introduced in \cite{Krieger:2012vj},
namely $S^1$ for the MKG waves $(A,\phi)$ and $N$ for the inhomogeneous terms 
in both the $\Box$ and the $\Box_A$ equation. These are spaces of functions defined 
over all of $\R^{n+1}$, together with the related spaces $S$ and $N^*$.
They are all defined via their dyadic subspaces, with norms 
\[
\| \phi\|_{X}^2 = \sum_k \|\phi_k\|_{X_k}^2, \qquad X \in \{ S,S^1,N\} 
\]
We recall the definition of their norms. With minor modifications at high modulations, we follow
\cite{Krieger:2012vj}. For $N_k$ we set
\begin{equation}
  N_k \ = \ {L^1 L^2} +  X_1^{0,-\frac{1}{2}},
\label{n}
\end{equation}
where
\begin{equation*}
\nrm{\phi}_{X^{s, b}_{r}} := \bb( \sum_{k} \big( \sum_{j} (2^{sk} 2^{bj} \nrm{P_{k} Q_{j} \phi}_{L^{2} L^{2}})^{r} \big)^{\frac{2}{r}}\bb)^{\frac{1}{2}} .
\end{equation*}
The $N_{k}$ norm is the same as in \cite{Krieger:2012vj}.
%

The $S_k$ space is a strengthened version of $N_k^*$,
\begin{equation} \label{s-vs-n}
 X_1^{0,\frac{1}{2}}   \subseteq S_k\subseteq L^\infty L^2  \cap X_\infty^{0,\frac{1}{2}} = N_{k}^{\ast}, 
\end{equation}
while $S_k^1$ is defined as 
\begin{equation}\label{s1}
  \| \phi\|_{S_k^1} = \| \nabla \phi\|_{S_k} 
  + 2^{-\frac{k}2} \|\Box \phi\|_{L^2 L^{2}}
 + 2^{-\frac{4k}9} \|\Box \phi\|_{L^{\frac95} L^2} .
\end{equation}
Compared to \cite{Krieger:2012vj} we have loosened the $\ell^{1}$ summability of the $\Box^{-1} L^{2}L^{2}$ norm and added the $\Box^{-1} L^{\frac95} L^2$
norm above. Both of these modifications are of interest only at high modulations.  The
exact exponent $9/5$ is not really important, for our purposes it only
matters that it is less than two and greater than $5/3$.

\begin{remark} \label{rem:S1}
In \cite{OT1}, yet another definition of the $S^{1}$ norm is employed, namely 
\begin{equation*}
\nrm{\phi}_{S_k^1} = \| \nabla \phi\|_{S_k} 
  +2^{-\frac{k}2} \|\Box \phi\|_{L^2 L^{2}}.
\end{equation*} 
Our justification for keeping the same notation $S^{1}$ (besides notational simplicity) is that the difference among these three definitions is minor. 
For a solution to MKG-CG, one can easily pass from one definition to another using the high modulation bounds in Propositions \ref{p:axnl-high-mod} and \ref{p:phi-high-mod}. In particular, in every theorem stated in the introduction, statements with respect to one of these definitions of $S^{1}$ easily implies those with respect to others.
\end{remark}

We now recall the definition of the space $S_{k}$ from \cite{Krieger:2012vj}. The space $S_k$ scales like free waves with $L^{2} \times \dot{H}^{-1}$ initial data, and is defined by
\begin{equation}
  \| \phi\|_{S_k}^2 \ = \ \| \phi\|_{S^{str}_k}^2 + \|\phi\|_{S^{ang}_k}^2
  +  \|\phi\|_{X_\infty^{0,\frac{1}{2}}}^2  \ , \notag
\end{equation}
where:
\begin{equation}\label{str_and_defn}
\begin{aligned}
\nrm{\phi}_{S^{str}_{k}} \ =& \sup_{2 \leq q, r, \leq \infty, \ \frac{1}{q}+ \frac{3/2}{r}\leq \frac{3}{4}} 2^{(\frac{1}{q}+\frac{4}{r}-2)k} \nrm{(\phi, 2^{-k} \rd_{t} \phi)}_{L^q L^r} \ , \quad
  \nrm{\phi}_{S^{ang}_{k}} = \sup_{l < 0} \nrm{\phi}_{S^{ang}_{k, k+2l}} \ , \\
 \nrm{\phi}_{S^{ang}_{k, j}}^{2}
 =& \sum_{\omega}\| P^\omega_l
    Q_{<k+2l}\phi\|_{S_k^\omega(l)}^2 \qquad \hbox{ with } l = \lceil \frac{j-k}{2} \rceil. 
\end{aligned}
\end{equation}
The $S^{str}_{k}$ norm controls all admissible Strichartz norms on $\bbR^{1+4}$. The $\omg$-sum in the definition of $S^{ang}_{k, j}$ is over a covering of $\bbS^{3}$ by caps $\omg$ of diameter $2^{l}$ with uniformly finite overlaps, and the symbols of $P^{\omg}_{l}$ form a smooth partition of unity associated to this covering.
The angular sector norm $S_{k}^{\omg}(l)$ combines the null frame space as in wave maps \cite{Tao:2001gb, MR1827277} with additional square-summed norms over smaller radially directed blocks $\calC_{k'}(l')$ of dimensions $2^{k'} \times (2^{k'+l'})^{3}$. We first define

\begin{align}
  \| \phi\|_{P\!W^\pm_\omega(l)} \ &=\ \inf_{\phi=\int \!\!
    \phi^{\omega'} } \int_{|\omega-\omega'|\leqslant 2^{l}}
  \| \phi^{\omega'} \|_{L^2_{\pm\omega' }(L^\infty_{(\pm\omega')^\perp}
    )} d\omega' \ , \notag\\
  \| \phi\|_{N\!E} \ &= \ \sup_\omega \|\snabla_\omega
    \phi\|_{L^\infty_{ \omega} (L^2_{\omega^\perp})} \ , \notag
\end{align}
where the norms are with respect to $\ell_\omega^\pm = t\pm
\omega\cdot x$ and the transverse variable in the $(\ell^{\pm}_{\omg})^{\perp}$ hyperplane (i.e., constant $\ell^{\pm}_{\omg}$ hyperplanes). Moreover,  $\snabla_\omega$ denotes tangential derivaties on the $(\ell^+_\omega)^\perp$ hyperplane. As in \cite{Krieger:2012vj}, we set:
\begin{multline}
  \| \phi\|_{S_k^\omega(l)}^2 \ = \ \| \phi\|_{S_k^{str}}^2 +
  2^{-2k}\|\phi\|_{N\!E}^2 + 2^{-3k}\sum_\pm
  \| Q^\pm  \phi\|_{P\!W^\mp_\omega(l) }^2 \\
  + \sup_{\substack{k'\leqslant k ,   l'\leqslant 0\\
      k+2l\leqslant k'+l'\leqslant k+l }} \sum_{\mathcal{C}_{k'}(l') }
  \Big( \|P_{\mathcal{C}_{k'}(l')} \phi\|_{S_k^{str}}^2
  + 2^{-2k}\| P_{\mathcal{C}_{k'}(l')} \phi\|_{N\!E}^2\\
  + 2^{-2k'-k}\|P_{\mathcal{C}_{k'}(l')} \phi\|_{L^2(L^\infty)}^2 +
  2^{-3(k'+l')}\sum_\pm \| Q^\pm P_{\mathcal{C}_{k'}(l')}
    \phi\|_{P\!W^\mp_\omega(l) }^2 \Big) \ , \label{Sl_def}
\end{multline}
where the $\calC_{k'}(l')$ sum runs over a covering of $\bbR^{4}$ by the blocks $\calC_{k'}(l')$ with uniformly finite overlaps, and the symbols of $P_{\calC_{k'}(l')}$ form an associated partition of unity.
We also define the smaller space $S_k^\sharp \subset S_k$ (see the
bound \eqref{lin-S} below) by
\[
\| u \|_{S_k^\sharp} = \| \Box u\|_{N_k} + \|\nabla u\|_{L^\infty L^2}.
\]
On occasion we need to separate the two characteristic cones
$\{ \tau = \pm |\xi|\}$. Thus we define the spaces $N_{k, \pm}$, $S^{\sharp}_{k, \pm}$ and $N^{\ast}_{k, \pm}$ in an obvious fashion, so that
\begin{equation*}
N_k = N_{k,+} \cap  N_{k,-}, \quad
S_k^\sharp = S_{k,+}^\sharp + S_{k,-}^\sharp, \quad
N^*_k = N^*_{k,+} + N_{k,-}^* \ .
\end{equation*}

Next we describe an auxiliary space of the type $L^1(L^\infty)$ which
will be useful for decomposing the nonlinearity:
\begin{equation}
  \|\phi\|_{Z} \ =\  \sum_k  \| P_k\phi \|_{Z_k} \ , \ \	
  \| \phi\|_{Z_k}^2 \ =\  \sup_{l<C} 
  \sum_{\omega }2^{l}\|P^\omega_lQ_{k+2l} \phi \|_{L^1(L^\infty)}^2
  \ . \notag
\end{equation}
Note that as defined this space already scales like $\dot{H}^1$ free
waves. In addition, note the following useful embedding which is a direct
consequence of Bernstein's inequality:
\begin{equation}
  \Box^{-1}  L^1(L^2) \ \subseteq \ Z \ . \label{B_embed}
\end{equation}
Finally, the function space for $A_0$ is simple to describe,
since the $A_0$ equation is elliptic:
\begin{equation}
\|A_0\|_{Y^1}^2 = \| \nabla A_0\|_{L^\infty L^2}^2 + \| \nabla A_0\|_{L^2 
\dot H^{\frac12}}^2  \ , \notag
\end{equation}
where we recall that $\nb$ denotes the full space-time gradient.

Let $E$ denote the linear energy space, i.e.,
\begin{equation*}
E = \dot{H}^{1} \times L^{2} .
\end{equation*}
One of the results in \cite{Krieger:2012vj} asserts that we have linear solvability for the 
d'Alembertian in our setting.  

\begin{proposition}
 We have  the linear estimates
 \begin{align}
 \nrm{\nb \phi}_{S}
 &\lesssim \  \| \phi[0]\|_{E} + \|\Box \phi\|_{N}  \ , 	\label{lin-S} \\
 \|\phi\|_{S^1} \ 
 &\lesssim \  \| \phi[0]\|_{E} + \|\Box \phi\|_{N \cap L^{2} \dot{H}^{-\frac{1}{2}} \cap L^{\frac{9}{5}} \dot{H}^{-\frac{4}{9}}}  \ . 	\label{lin}
  \end{align}
\end{proposition}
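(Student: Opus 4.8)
The plan is to observe that the entire depth of the proposition sits in \eqref{lin-S}, which is nothing but the linear estimate for $\Box$ established in \cite{Krieger:2012vj}: the spaces $S_k$ and $N_k$ used here are literally those of \cite{Krieger:2012vj}, so I would invoke that result directly, pausing only to reconcile the Littlewood--Paley and summation conventions. For orientation I would recall its proof structure. After a dyadic decomposition and rescaling to unit frequency, one writes $\phi_k = \phi_k^{hom} + \phi_k^{inh}$, where $\phi_k^{hom}$ is the free evolution of the data $\phi_k[0]$ and $\phi_k^{inh}$ is the Duhamel term with source $P_k \Box \phi$. The homogeneous bound $\nrm{\nb \phi_k^{hom}}_{S_k} \aleq \nrm{\phi_k[0]}_{E}$ is exactly the assertion that a unit-frequency free wave lies in $S_k$; unpacking the definition of $S_k$, this is a finite list of free-wave estimates --- the energy identity, the admissible Strichartz estimates packaged into $S_k^{str}$, the $N\!E$ and $P\!W^{\pm}_{\omega}(l)$ null-frame bounds, and the square-function bounds over the radial blocks $\calC_{k'}(l')$ --- each proved in \cite{Krieger:2012vj}. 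For $\phi_k^{inh}$ I would split $N_k = L^1 L^2 + X_1^{0,-\frac12}$, treating the $L^1 L^2$ component by the Duhamel formula and Minkowski's inequality (which reduces it to the homogeneous bound) and the $X_1^{0,-\frac12}$ component by the standard $X^{s,b}$ duality/transference argument, again from \cite{Krieger:2012vj}.

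Granting \eqref{lin-S}, I would deduce \eqref{lin} by bookkeeping. By definition $\nrm{\phi}_{S^1}^2 = \sum_k \nrm{\phi_k}_{S^1_k}^2$, with
\[
\nrm{\phi_k}_{S^1_k} = \nrm{\nb \phi_k}_{S_k} + 2^{-\frac{k}{2}} \nrm{\Box \phi_k}_{L^2 L^2} + 2^{-\frac{4k}{9}} \nrm{\Box \phi_k}_{L^{\frac95} L^2}.
\]
Summing the first term in $\ell^2_k$ and invoking \eqref{lin-S} gives $\big( \sum_k \nrm{\nb \phi_k}_{S_k}^2 \big)^{\frac12} = \nrm{\nb \phi}_{S} \aleq \nrm{\phi[0]}_{E} + \nrm{\Box \phi}_{N}$. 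For the remaining two terms I would use that $P_k$ commutes with $\Box$, so $\Box \phi_k = P_k(\Box \phi)$, and that the $\ell^2_k$ sums of $2^{-k/2} \nrm{P_k \Box \phi}_{L^2 L^2}$ and of $2^{-4k/9} \nrm{P_k \Box \phi}_{L^{\frac95} L^2}$ are, by the definition of the norms appearing on the right of \eqref{lin} (and, for the first, by Plancherel in $x$, in agreement with the classical $L^2_t \dot{H}^{-\frac12}_x$ norm), exactly $\nrm{\Box \phi}_{L^2 \dot{H}^{-\frac12}}$ and $\nrm{\Box \phi}_{L^{\frac95} \dot{H}^{-\frac49}}$. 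A triangle inequality in $\ell^2_k$ then yields \eqref{lin}.

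I expect the only genuine obstacle to be the content of \eqref{lin-S} itself --- the fixed-frequency linear parametrix for $\Box$ and the proof that free waves, and Duhamel terms with $N_k$ sources, land in $S_k$ --- but since this is quoted wholesale from \cite{Krieger:2012vj}, in the present paper it reduces to a citation plus a convention check; everything else above is elementary. The one point worth stating explicitly is that the norms here differ from those in \cite{Krieger:2012vj} only at high modulations and only in the definition of $S^1_k$ (the loosened $\ell^1$ summability of the $\Box^{-1} L^2 L^2$ piece and the added $\Box^{-1} L^{\frac95} L^2$ piece); since neither $S_k$ nor $N_k$ is altered, \eqref{lin-S} transfers verbatim, and the two extra high-modulation terms of $S^1_k$ are absorbed precisely as above.
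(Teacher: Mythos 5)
Your proposal is correct and follows the paper's own two-line justification exactly: \eqref{lin-S} is just the embedding $S^{\sharp} \subset S$ imported wholesale from \cite{Krieger:2012vj}, and \eqref{lin} is the immediate bookkeeping consequence, which is precisely your citation-plus-convention-check route. One small imprecision worth flagging: the classical norms $L^2 \dot{H}^{-1/2}$ and $L^{9/5}\dot{H}^{-4/9}$ on the right of \eqref{lin} are not \emph{by definition} $\ell^2$ dyadic sums; for $p=2$ the equivalence with $\big(\sum_k 2^{-k}\|P_k\Box\phi\|_{L^2 L^2}^2\big)^{1/2}$ does hold by Plancherel as you note, but for $p = 9/5 < 2$ you only get the one-sided bound $\big(\sum_k 2^{-8k/9}\|P_k\Box\phi\|_{L^{9/5}L^2}^2\big)^{1/2} \lesssim \|\Box\phi\|_{L^{9/5}\dot{H}^{-4/9}}$ via the generalized Minkowski inequality (using $9/5 < 2$), which is fortunately exactly the direction needed here.
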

Here \eqref{lin-S} is the embedding $S^{\sharp} \subset S$, whereas \eqref{lin} follows immediately from \eqref{lin-S}.

\subsection{Interval localization} \label{subsec:intervals} So far, we
have described the global setting in \cite{Krieger:2012vj}.  However,
in this article we work on compact time intervals, therefore we also
need suitable interval localized function spaces.  This is not
straightforward, since our function spaces are defined using
modulation localizations, which are nonlocal in time. To start with,
we take the easy way out and define
\begin{equation}\label{sn(i)-def}
  \| \phi\|_{S^1[I]} = \inf_{\phi = \tphi_{|I}}  \|\tphi\|_{S^1},  \qquad
  \| f \|_{N[I]} = \inf_{f = \tilde f_{|I}}  \|\tilde f\|_{N}
\end{equation}
However, the next result allows us to simplify somewhat these
definitions:

\begin{proposition} \label{p:intervals}
  \begin{enumerate}
  \item \label{item:intervals:1} Consider a time interval $I$, and its
    characteristic function $\chi_I$. Then we have the bounds
    \begin{equation}\label{sn(i)}
      \|\chi_I \phi\|_{S} \lesssim \|\phi\|_{S}, \qquad \|\chi_I f\|_{N} \lesssim \| f\|_{N},
    \end{equation}
    The latter norm is also continuous as a function of $I$. We also
    have the linear estimates
    \begin{align}
      \|\nb \phi\|_{S[I]} \ &\lesssim \| \phi[0]\|_{E} + \|\Box \phi\|_{N[I]} , \label{lin-S(i)} \\
      \|\phi\|_{S^1[I]} \ &\lesssim \| \phi[0]\|_{E} + \|\Box
      \phi\|_{(N \cap L^{2} \dot{H}^{-\frac{1}{2}} \cap
        L^{\frac{9}{5}} \dot{H}^{-\frac{4}{9}}) [I]} . \label{lin(i)}
    \end{align}
  
  \item \label{item:intervals:2} Consider any partition $I = \bigcup
    I_k$. Then the $N$ norm is interval divisible, i.e.
    \begin{equation}\label{n-div}
      \sum_k \| f \|_{N[I_k]}^2 \lesssim \|f\|_{N[I]}^2 
    \end{equation}
    and the $S$ and $S^1$ norms are interval square summable, i.e.
    \begin{equation}\label{s-sum}
      \| \phi\|_{S[I]}^2 \lesssim \sum_{k} \|\phi\|_{S[I_k]}^2,
      \qquad \| \phi\|_{S^1[I]}^2 \lesssim \sum_{k} \|\phi\|_{S^1[I_k]}^2
    \end{equation}
  \end{enumerate}
\end{proposition}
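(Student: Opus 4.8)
\medskip

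The plan is to reduce everything to two basic facts: (a) multiplication by the sharp cutoff $\chi_I$ is bounded on each of the dyadic spaces $S_k$ and $N_k$, uniformly in $k$ and $I$; and (b) the square-summability of the dyadic pieces is stable under this truncation. Granting (a), part \ref{item:intervals:1} follows: for the $S$ and $N$ bounds \eqref{sn(i)} one squares and sums over $k$, using that $\chi_I$ commutes with the spatial Littlewood--Paley projections $P_k$ (so $\chi_I \phi$ has dyadic pieces $\chi_I \phi_k$). Continuity of $I \mapsto \|\chi_I f\|_N$ then comes from dominated convergence applied to $f_k = P_k Q_j f \in L^2 L^2$ pieces, together with the uniform bound to handle the tail of the $k$-sum. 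The linear estimates \eqref{lin-S(i)}, \eqref{lin(i)} are immediate consequences of their global counterparts \eqref{lin-S}, \eqref{lin}: given $\Box \phi = f$ on $I$, extend $f$ to $\tilde f$ with $\|\tilde f\|_N \lesssim 2\|f\|_{N[I]}$ (and similarly for the auxiliary norms, using that each is defined via an infimum over extensions), solve the global inhomogeneous wave equation with data $\phi[0]$, and restrict back to $I$; the restriction only decreases the $S[I]$ norm by definition \eqref{sn(i)-def}. For part \ref{item:intervals:2}: \eqref{s-sum} is trivial from the definition, since any extension $\tilde\phi$ of $\phi$ on $I$ restricts to an extension on each $I_k$, giving $\sum_k \|\phi\|_{S[I_k]}^2 \le \sum_k \|\tilde\phi_{|I_k}\|_S^2 \le \|\tilde\phi\|_S^2$ — wait, that inequality goes the wrong way, so one instead takes near-optimal extensions $\tilde\phi^{(k)}$ on each $I_k$, glues using $\chi_{I_k}$, and invokes \eqref{sn(i)} piece by piece. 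The divisibility \eqref{n-div} is the dual statement and is proved by the same gluing argument applied to $N$.

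\medskip

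The heart of the matter, and the main obstacle, is fact (a) — boundedness of $\chi_I \cdot$ on $S_k$ and $N_k$ — because these spaces are built from modulation cutoffs $Q_j$, which are nonlocal in time, so truncation in time spreads modulation. Multiplication by $\chi_I$ has Fourier multiplier $\widehat{\chi_I}(\tau) \sim 1/\tau$ in the $\tau$ variable, which is exactly on the borderline of $L^\infty_\tau$ failing; one must exploit the structure of the spaces. For $N_k = L^1 L^2 + X_1^{0,-1/2}$, the $L^1 L^2$ component is preserved trivially ($\|\chi_I g\|_{L^1 L^2} \le \|g\|_{L^1 L^2}$), so the work is in the $X_1^{0,-1/2}$ part: one shows $\chi_I$ maps $X_1^{0,-1/2}$ to $N_k$ (not necessarily back to $X_1^{0,-1/2}$), by splitting the multiplier $\widehat{\chi_I}$ at frequency $2^j$, estimating the low-modulation-output piece by $L^1 L^2$ via Bernstein in time and the high-modulation piece by $X_1^{0,-1/2}$ directly, summing the resulting geometric-type series in $j$. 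For $S_k$, one uses the dual characterization $S_k \hookrightarrow L^\infty L^2 \cap X_\infty^{0,1/2}$ together with the embedding $X_1^{0,1/2} \hookrightarrow S_k$, and handles each constituent norm in the definition \eqref{str_and_defn}, \eqref{Sl_def} separately: the Strichartz components $S_k^{str}$ are controlled since $\chi_I$ is bounded on every mixed norm $L^q L^r$; the $X_\infty^{0,1/2}$ and null-frame/$PW$/$NE$ components require the same modulation-splitting argument as for $N_k$, noting that on a fixed dyadic modulation shell the null-frame norms are comparable to Strichartz-type norms up to acceptable losses.

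\medskip

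A technical point I would flag: the $Z_k$ norm and the $S^\sharp_k$ norm also appear in the ambient theory, but they are not needed here; \eqref{lin-S(i)} and \eqref{lin(i)} only reference $S$, $S^1$, and $N$, so I would not chase interval-localization of $Z$. Also, in establishing \eqref{n-div} and \eqref{s-sum} one should be careful that the partition $I = \bigcup I_k$ may be infinite; the gluing argument still works because the $\chi_{I_k}$ are disjointly supported in time, so when one reassembles $f = \sum_k \chi_{I_k} f$ and applies \eqref{sn(i)} the overlaps contribute nothing, and the $\ell^2$-orthogonality in $L^2 L^2$ (the base space underlying all the $X^{s,b}$ components) upgrades the bound to the square-summed form without loss. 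The continuity assertion for $\|\chi_I f\|_N$ should be read as continuity in, say, the endpoints of $I$ with respect to the Lebesgue measure of the symmetric difference, which is what the dominated-convergence argument above delivers.
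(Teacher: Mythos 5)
Your overall framework — reduce to the dyadic blocks $S_k$, $N_k$, establish boundedness of $\chi_I\cdot$ there, and bootstrap to the interval-localized linear estimates and the gluing statements — is the right one, and matches the paper's strategy at a high level. The $N_k$ analysis you sketch (split $\widehat{\chi_I}$ at the modulation scale, place the low-modulation-output piece in $L^1L^2$, the rest in $X^{0,-1/2}_1$) is also in line with the reference to \cite{MR2657817} that the paper invokes. However, there are two concrete gaps, both in the $S$ part.

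First, the claim that ``on a fixed dyadic modulation shell the null-frame norms are comparable to Strichartz-type norms up to acceptable losses'' is not correct, and it is precisely the place where you would otherwise have to do work. If it were true, the whole $S^{ang}_{k,j}$ component would be subsumed into $X^{0,1/2}_1 \subset S_k$, but the $PW$ and $NE$ parts of \eqref{Sl_def} genuinely carry more information than any $L^2$-modulation norm; the inclusion $X^{0,1/2}_1 \subset S_k$ cannot be reversed at a fixed modulation shell. In the paper's argument the splitting
\[
Q^+_{<j}(\chi_I\phi) = Q_{<j-30}\chi_I\, Q^+_{<j-2}\phi + Q^+_{<j}\bigl(Q_{>j-30}\chi_I\, Q^+_{<j-2}\phi\bigr) + Q^+_{<j}\bigl(\chi_I\, Q^+_{>j-2}\phi\bigr)
\]
isolates two error terms (estimated in $L^2L^2$, which is fine) and one \emph{main} term, $Q_{<j-30}\chi_I\cdot Q^+_{<j-2}\phi$, where the cutoff has small modulation. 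This term cannot be estimated in $X^{0,1/2}_1$ without loss; it is handled by a separate lemma (quoted from \cite[Lemma~7.1]{OT1}) showing that multiplication by a bounded function of \emph{small modulation} preserves the $S^{ang}_{k,j}$ norm. Your outline never treats this main term.

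Second, in part \ref{item:intervals:2}, the assertion that disjoint time supports of $\chi_{I_k}$ make the $\ell^2$-summation of the $S$ pieces painless is not correct, because the modulation cutoffs $Q^+_{<j}$ are nonlocal in time: after applying $Q^+_{<j}$, the pieces $Q^+_{<j}(\chi_{I_m}\phi)$ have overlapping supports and are not orthogonal. The paper's fix is a quantitative almost-orthogonality argument: for short intervals (length $< 2^{-j}$) one simply drops $Q^+_{<j}$ and uses $X^{0,1/2}_1$; for long intervals one introduces a partition of unity $1 = \tilde\chi_{I_m} + \sum_{l>0}\tilde\chi_{I_m}^l$ at the $2^{-j}$ scale and exploits that $\tilde\chi_{I_m}^l Q^+_{<j} \chi_{I_m}$ has $L^\infty L^2 \to L^2L^2$ operator norm $\lesssim 2^{-j/2}2^{-Nl}$ by separation of supports. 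Without this decay, the square sum does not close. The same issue propagates to your sketch of \eqref{n-div}: you call it ``the dual statement'' of \eqref{s-sum}, but in the paper it is the other way around (the $N^*$ part of \eqref{s-sum} is obtained by duality from \eqref{n-div}), and \eqref{n-div} itself is again borrowed from \cite{MR2657817}, not obtained by gluing in $N$.

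The rest of part \ref{item:intervals:1} — the Strichartz components, the $L^\infty L^2$ and $X^{0,1/2}_\infty$ parts by duality, the derivation of \eqref{lin-S(i)} and \eqref{lin(i)} from the global estimates by extension/restriction — is correct and coincides with what the paper does.
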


We remark that a consequence of part \ref{item:intervals:1} is that,
up to equivalent norms, we can replace the arbitrary extensions in
\eqref{sn(i)-def} by the zero extension in the $N$ case, respectively
by homogeneous waves with $(\phi, \rd_{t} \phi)$ as the data at each
endpoint outside $I$ in the $S^1$ case.

\begin{proof}
  \ref{item:intervals:1}). It suffices to prove the desired bounds for
  frequency dyadic pieces of $\phi$ and $f$.  In the $N$ case it also
  suffices to work with the space $L^1 L^2 + X_1^{0,-\frac{1}{2}}$.
  But in this case this is exactly the proof of (158) in
  \cite{MR2657817}, where just steps 1 and 2 are needed.

  By duality, we have the same bound for $L^\infty L^2 \cap
  X_\infty^{0,\frac{1}{2}}$, which is a part of the $S$ norm. We now
  consider the remaining parts of the $S$ norm. The only difficulty is
  with the $S^{ang}_k$ norm, due to the modulation
  localization. Fixing a modulation scale $j = k+2l$, we consider
  either $Q^+_{<j} (\chi_{I} \phi)$ or $Q^-_{<j} (\chi_I\phi)$. There
  are two cases to consider:

  (i) Short intervals, $|I| < 2^{-j}$. Then
  \[
  \| Q^+_{<j} (\chi_{I} \phi)\|_{S_{k, j}^{ang}} \lesssim \| Q^+_{<j}(
  \chi_{I} \phi)\|_{X^{0,\frac12}_1} \lesssim 2^{\frac{j}2} \|
  \chi_{I} \phi\|_{L^2 L^{2}} \lesssim \|\phi\|_{L^\infty L^2}
  \]

  (ii) Long intervals, $|I |> 2^{-j}$. Then we write
  \begin{equation} \label{eq:interval:pf-1} Q^+_{<j} \chi_{I} \phi =
    Q_{< j-30} \chi_{I} Q^+_{< j-2} \phi + Q^+_{<j} ( Q_{> j-30}
    \chi_{I} Q^+_{< j-2} \phi) + Q^+_{<j} ( \chi_{I} Q^+_{> j-2} \phi)
  \end{equation}
  For the first term we use the bound
  \begin{equation*}
    \nrm{Q_{<j-30} \chi_{I} Q^{+}_{<j-2} \phi}_{S^{ang}_{k, j}}
    \aleq \nrm{Q_{<j-30} \chi_{I}}_{L^{\infty}} \nrm{\phi}_{S_{k}}
    \aleq \nrm{\phi}_{S_{k}}
  \end{equation*}
  which was proved\footnote{Technically speaking,
    \cite[Lemma~7.1]{OT1} is stated for $\chi_{I}$ which decays in
    space, but we may simply approximate $\chi_{I}$ by smooth
    compactly supported functions.} in \cite[Lemma~7.1]{OT1}. The
  other two terms in \eqref{eq:interval:pf-1} are estimated in $L^2
  L^{2}$ as in (i): For the second term, we use
  \[
  \begin{split}
    \| Q^+_{<j} ( Q_{> j-30} \chi_{I} Q^+_{< j-2} \phi) \|_{S_{k,
        j}^{ang}} \lesssim & \ 2^{\frac{j}2} \| Q_{> j-30} \chi_{I}
    Q^+_{< j-2} \phi\|_{L^2 L^{2}} \lesssim 2^{\frac{j}2} \| Q_{>
      j-30} \chi_{I}\|_{L^2} \|Q^+_{< j-2} \phi\|_{L^\infty L^2}
    \\
    \lesssim & \ \|\phi\|_{L^\infty L^2}.
  \end{split}
  \]
  In the last inequality, we used the bound
  \begin{equation*}
    \nrm{Q_{j} \chi_{I}}_{L^{2}} \aleq 2^{-\frac{j}{2}},
  \end{equation*} 
  which follows from Plancherel in $t$ and the fact that the Fourier
  transform of the $\chi_{I}$ is a suitable rescaling and modulation
  of $\sin \tau / \tau$. Finally, the third term in
  \eqref{eq:interval:pf-1} is treated as follows:
  \[
  \| Q^+_{<j} ( \chi_{I} Q^+_{> j-2} \phi) \|_{S_{k, j}^{ang}}
  \lesssim 2^{\frac{j}2} \| Q^+_{> j-2} \phi\|_{L^2 L^{2}} \lesssim \|
  \phi\|_{S_{k}} \, .
  \]


  \ref{item:intervals:2}). The $N$ bound \eqref{n-div} is exactly as in
  Proposition 5.4 in (159) in \cite{MR2657817}.  The $S^1$ bound
  \eqref{s-sum} reduces easily to the corresponding $S$ bound.  The
  bound \eqref{s-sum} for the $N^*$ part of the $S$ norm follows by
  duality from \eqref{n-div}.  Of the remaining components of the $S$
  part we have the same difficulty as in part \ref{item:intervals:1},
  namely with the modulation localizations occurring in the
  $S_k^{ang}$ norms.  The solution is also the same as in part
  \ref{item:intervals:1}; precisely that for each modulation scale $j$
  we split the intervals into short and long, and estimate the two
  contributions as above:

  (i) Short intervals, $|I_m| < 2^{-j}$. Then the modulation
  localization operator $Q^+_{<j} $ can cause significant overlapping
  of outputs coming from inputs in different intervals $I_m$. Hence
  our strategy is to harmlessly discard $Q^+_{<j} $ as follows:
  \[
  \| Q^+_{<j}(\sum_m \chi_{I_m} \phi)\|_{S_{k, j}^{ang}}^2 \lesssim
  2^{\frac{j}2} \| \sum_m \chi_{I_m} \phi\|_{L^2}^2 \lesssim \sum_{m}
  \|\chi_{I_m}\phi\|_{L^\infty L^2}^{2}
  \]

  (ii) Long intervals, $|I_m |> 2^{-j}$. Then for each $m$ we use a
  partition of unity adapted to $I_m$ to write
  \[
  1 = \tilde \chi_{I_m} + \sum_{l>0} \tilde \chi_{I_m}^l
  \]
  where $\tilde \chi_{I_m} $ is a smooth cutoff selecting a $2^{-j}$
  neighborhood of $I_m$, while $ \chi_{I_m}^l $ select the region at
  distance $2^{-j+l}$ from $I_m$. Correspondingly, we write
  \[
  Q^+_{<j} \sum_{m} \chi_{I_m} \phi = \sum_{m} \tilde \chi_{I_m}
  Q^+_{<j} (\chi_{I_m} \phi) + \sum_{l > 0} \sum_m \tilde \chi_{I_m}^l
  Q^+_{<j} (\chi_{I_m} \phi)
  \]
  Now we estimate each of the sums above. For the first one we use the
  fact that the bump functions $ \tilde \chi_{I_m} $ have finite
  overlapping to write
  \[
  \|\sum_{m} \tilde \chi_{I_m} Q^+_{<j} (\chi_{I_m} \phi) \|_{S_{k,
      j}^{ang}}^2 \lesssim \sum_{m} \| \tilde \chi_{I_m} Q^+_{<j}
  (\chi_{I_m} \phi) \|_{S_{k, j}^{ang}}^2 \lesssim \sum_{m} \|
  Q^+_{<j} (\chi_{I_m} \phi) \|_{S_{k, j}^{ang}}^2
  \]

  which suffices thanks to part \ref{item:intervals:1}. On the other
  hand, in the second sum, for each $l$ we have at most $2^l$
  overlapping bump functions. So we obtain
  \[
  \begin{split}
    \|\sum_m \tilde \chi_{I_m}^l Q^+_{<j} (\chi_{I_m} \phi) \|_{S_{k,
        j}^{ang}}^2 \lesssim & \ 2^l \sum_m \| \tilde \chi_{I_m}^l
    Q^+_{<j} (\chi_{I_m} \phi) \|_{S_{k, j}^{ang}}^2 \lesssim 2^l
    2^{j} \sum_m \| \tilde \chi_{I_m}^l Q^+_{<j} (\chi_{I_m} \phi)
    \|_{L^2}^{2}
    \\
    \lesssim & \ 2^l \sum_m 2^{-2 Nl} \| \chi_{I_m} \phi \|_{L^\infty
      L^2}^{2}
  \end{split}
  \]
  which again suffices. Here, at the last stage, we have used the fact
  that the operator $\tilde \chi_{I_m}^l Q^+_{<j} \chi_{I_m}$ has a
  $2^{-\frac{j}2} 2^{-Nl}$ norm from $L^\infty L^{2}$ to $L^2 L^{2}$,
  which is due to the separation of supports of the two cutoff
  functions. \qedhere
\end{proof}

Last but not least, we consider the effect of extension on some of our
Strichartz or energy dispersed norms; the role of these norms in our
work will be explained in Section~\ref{subsec:smallness}.  For an
interval $I$ we denote by $\chi_I^k$ a generalized cutoff function,
which is adapted to the $2^k$ frequency scale:
\[
\chi_I^k(t) = (1+ 2^k \dist(t,I))^{-N} .
\]
For a function $\phi_I$ in $I$ we denote by $\phi_I^{ext}$ its
extension as homogeneous waves. Then we have:

\begin{proposition}\label{p:ext}
  Assume that $|I| \geq 2^{-k}$. Then the following estimates hold for
  $\phi_I$ localized at frequency $2^k$: {
    \begin{align}
      \| \chi_I^k (\phi_I^{ext}, 2^{-k} \rd_{t} \phi_{I}^{ext})
      \|_{L^{p} L^{q}}
      \lesssim & \| \phi_I \|_{L^{p} L^{q}[I]} + 2^{(\frac{1}{2} - \frac{1}{p} - \frac{4}{q}) k} \nrm{\Box \phi_{I} }_{L^{2} L^{2}[I]} , \label{inhom-ext-pre} \\
      \chi_I^k(t) \| \phi_I^{ext}(t) \|_{ED} \lesssim & \| \phi_I
      \|_{ED[I]} , \label{ed-ext}
    \end{align}
    where $(p, q)$ is any pair of admissible Strichartz exponents on
    $\bbR^{1+4}$.  }
\end{proposition}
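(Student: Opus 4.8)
The plan is to separate the contribution of $I$ itself, where $\chi_{I}^{k}\equiv 1$ and both estimates are immediate, from that of the homogeneous continuations outside $I$, which I control by playing the polynomial decay of $\chi_{I}^{k}$ against dispersive bounds for frequency-$2^{k}$ free waves. Write $I = [t_{0}, t_{1}]$; by time reflection it is enough to treat the right continuation. On $[t_{0}, t_{1}]$ the left side of either estimate reduces to the corresponding norm of $\phi_{I}$ on $I$ (with the $2^{-k}\partial_{t}$ component absorbed into the right side, as explained below), so the content is entirely in the region $t > t_{1}$, where $\phi_{I}^{ext}$ is the homogeneous wave with data $\phi_{I}[t_{1}]$.

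For \eqref{ed-ext} one argues directly. Since $\phi_{I}^{ext}$ is localized at spatial frequency $\sim 2^{k}$, the supremum over dyadic scales in $\nrm{\cdot}_{ED}$ is trivial, so $\nrm{\phi_{I}^{ext}(t)}_{ED}\aeq 2^{-k}\nrm{(\phi_{I}^{ext}, 2^{-k}\partial_{t}\phi_{I}^{ext})(t)}_{L^{\infty}_{x}}$, and likewise $\nrm{\phi_{I}}_{ED[I]}\aeq 2^{-k}\sup_{s\in I}\nrm{(\phi_{I}, 2^{-k}\partial_{t}\phi_{I})(s)}_{L^{\infty}_{x}}$. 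The one input is the dual of the dispersive estimate: for a frequency-$2^{k}$ homogeneous wave $v$ and all $t,s$,
\[
  \nrm{(v, 2^{-k}\partial_{t}v)(t)}_{L^{\infty}_{x}} \aleq (1 + 2^{k}\abs{t-s})^{\frac{d-1}{2}}\, \nrm{(v, 2^{-k}\partial_{t}v)(s)}_{L^{\infty}_{x}},
\]
equivalently, the $L^{1}_{x}$ norm of the convolution kernel of $e^{it\abs{D}}P_{k}$ grows like $(1 + 2^{k}\abs{t})^{\frac{d-1}{2}}$, the classical obstruction to uniform-in-time $L^{\infty}$ dispersive decay. Applying this with $s = t_{1}$ and using that $\phi_{I}^{ext}$ matches $\phi_{I}[t_{1}]$ at $t_{1}$ gives, for $t > t_{1}$,
\[
  \chi_{I}^{k}(t)\, \nrm{\phi_{I}^{ext}(t)}_{ED} \aleq (1 + 2^{k}\dist(t,I))^{\frac{d-1}{2} - N}\, \nrm{\phi_{I}}_{ED[I]} \aleq \nrm{\phi_{I}}_{ED[I]},
\]
as $N > \frac{d-1}{2} = \frac32$.

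For \eqref{inhom-ext-pre} I first reduce $\phi_{I}^{ext}$ to a global homogeneous wave. Because $\phi_{I}^{ext}$ matches $\phi_{I}$ in value and first time derivative at both endpoints, $\Box\phi_{I}^{ext} = 1_{I}\Box\phi_{I}$ distributionally (no surface terms), so $\phi_{I}^{ext} = \Phi_{0} + \Box^{-1}_{t_{0}}[1_{I}\Box\phi_{I}]$, where $\Phi_{0}$ is the homogeneous wave with $\Phi_{0}[t_{0}] = \phi_{I}[t_{0}]$ and $\Box^{-1}_{t_{0}}$ is the Duhamel propagator vanishing for $t\leq t_{0}$. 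The Duhamel term is localized at frequency $\sim 2^{k}$, so by the interval-localized linear estimate \eqref{lin(i)} — concretely the $L^{2}\dot H^{-\frac12}$ component of the $S^{1}$ norm, which for frequency $2^{k}$ reads $2^{-\frac{k}{2}}\|\cdot\|_{L^{2}L^{2}}$ — its $L^{p}L^{q}$ norm, and that of its $2^{-k}\partial_{t}$, is $\aleq 2^{(\frac12 - \frac1p - \frac4q)k}\nrm{\Box\phi_{I}}_{L^{2}L^{2}[I]}$, which is exactly the second term on the right of \eqref{inhom-ext-pre}; the same absorbs the high-modulation ($\gtrsim 2^{k}$) part of the $2^{-k}\partial_{t}$ component of $\phi_{I}$ on $I$ via Bernstein. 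Restricting the identity to $I$ gives $\nrm{\Phi_{0}}_{L^{p}L^{q}[I]}\aleq \nrm{\phi_{I}}_{L^{p}L^{q}[I]} + 2^{(\frac12 - \frac1p - \frac4q)k}\nrm{\Box\phi_{I}}_{L^{2}L^{2}[I]}$, so everything comes down to showing, for an arbitrary homogeneous wave $\Phi$ of spatial frequency $\sim 2^{k}$ with $\abs{I}\geq 2^{-k}$ (applied to $\Phi_{0}$ and to its time derivative), the ``reverse Strichartz'' bound
\[
  \nrm{\chi_{I}^{k}\, \Phi}_{L^{p}L^{q}} \aleq \nrm{\Phi}_{L^{p}L^{q}[I]}.
\]
On $I$ this is trivial. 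Outside $I$ decompose into dyadic windows $J_{n}$ ($n\geq 0$) of length $\aeq 2^{n-k}$ at distance $\aeq 2^{n-k}$ from $I$, on which $\chi_{I}^{k}\aleq 2^{-Nn}$, and it suffices to show $\nrm{\Phi}_{L^{p}L^{q}(J_{n})}\aleq 2^{Cn}\nrm{\Phi}_{L^{p}L^{q}(I)}$ for a fixed exponent $C = C(p,q)$, after which $\sum_{n}2^{(C-N)n}$ converges for $N$ large.

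The crux is this last inequality. For a single incoming spherical wave it is elementary — a frequency-$2^{k}$ wave spread over a shell of radius $\aeq 2^{n-k}$ at times in $I$ amplifies its $L^{q}_{x}$ norm only by a fixed power of $2^{n}$ upon refocusing, the $L^{p}_{t}$ factor being compensated because a genuinely longer window forces a more dispersed, hence smaller, profile. For general data one runs a $TT^{\ast}$ argument: the square of the left side (with the sharp cutoff $1_{J_{n}}$ in place of $\chi_{I}^{k}$) is a bilinear form in the Cauchy data whose kernel is the wave propagator between times in $I$ and times in $J_{n}$, which is estimated by interpolating the trivial $L^{2}\to L^{2}$ bound against the $L^{1}\to L^{\infty}$ dispersive bound with its $(1+2^{k}\abs{t-s})^{-\frac{d-1}{2}}$ decay; reversing the reduction recovers the stated bound. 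The remaining ingredients — the $C^{1}$ matching at the endpoints, the estimate for the frequency-localized source $1_{I}\Box\phi_{I}$ (which uses $\abs{I}\geq 2^{-k}$), and the modulation split on $I$ — are routine; note that the hypothesis $\abs{I}\geq 2^{-k}$ is exactly what provides a reference window at the $2^{-k}$ scale inside $I$ against which the reverse estimates are calibrated.
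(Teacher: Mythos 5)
Your plan mirrors the paper's in outline: reduce to a homogeneous wave via Duhamel (the paper phrases this as "by the inhomogeneous Strichartz estimates, this reduces to the case $\Box\phi=0$"), then prove a reverse estimate on $I$ together with a polynomial-growth propagation bound off $I$, and sum. Your treatment of \eqref{ed-ext} via the $L^1_x$-kernel growth $\|e^{i(t-s)|D|}P_k\|_{L^\infty\to L^\infty}\aleq (1+2^k|t-s|)^{\frac{d-1}{2}}$ is correct and in fact cleaner (and sharper, $t^{3/2}$ vs.\ $t^4$) than the paper's finite-speed-of-propagation $+$ energy $+$ Bernstein route.

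The gap is in the ``reverse Strichartz'' step for \eqref{inhom-ext-pre}. You reduce to showing, for a homogeneous wave $\Phi$ at frequency $\sim 2^k$ and $|I|\geq 2^{-k}$, the bound $\nrm{\Phi}_{L^pL^q(J_n)}\aleq 2^{Cn}\nrm{\Phi}_{L^pL^q(I)}$, and then invoke a $TT^*$ argument ``estimated by interpolating the $L^2\to L^2$ bound against the $L^1\to L^\infty$ dispersive bound.'' This does not work as stated: for $(p,q)\neq(2,2)$ the square of the $L^pL^q$ norm on $J_n$ is not a bilinear form in the Cauchy data, so there is no $TT^*$ kernel to estimate; and more fundamentally, $TT^*$ produces the \emph{forward} inequality $\nrm{\Phi}_{L^pL^q}\aleq\nrm{\Phi[0]}_{L^2}$, whereas here you need a coercivity/observability lower bound on the restriction $\Phi|_I$ in $L^pL^q(I)$. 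That lower bound is the real content and must be proved separately. The paper's proof supplies it as \eqref{ds-exta}: $\|\phi_I[0]\|_{L^q}\aleq\|\phi_I\|_{L^pL^q[I]}$, which is established at frequency $1$ on $I=[0,1]$ by selecting two times $t_1\in[0,1/3]$, $t_2\in[2/3,1]$ where the $L^q$ norm of $e^{it_j|D|}f_+ + e^{-it_j|D|}f_-$ is controlled, and then inverting the $2\times 2$ system using the linear independence of the symbols $(e^{it_j|D|},e^{-it_j|D|})_{j=1,2}$ on $\{|\xi|\aeq 1\}$. This is a genuine lemma (not a soft $TT^*$ fact) and is the piece your argument is missing; once you have it, energy conservation and frequency-$1$ Bernstein give $\nrm{\Phi}_{L^pL^q(J_n)}\aleq 2^{Cn}\nrm{\Phi[0]}_{L^2}\aleq 2^{Cn}\nrm{\Phi}_{L^pL^q(I)}$ and the dyadic sum against $\chi_I^k$ closes as you intend.
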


\begin{proof}
  By rescaling, we can take $k = 0$. It suffices to consider the case
  when $I$ has the minimal length, i.e., $\abs{I} = 1$; the general
  case is then easily recovered by applying the same proof to
  unit-length intervals at each end of $I$. By translation invariance,
  we may take $I = [0, 1]$.

  We first consider the bound \eqref{inhom-ext-pre}. It suffices to
  show that for any Strichartz norm $L^p L^q$ and $\phi_I$ localized
  at frequency $1$ we have
  \begin{equation}\label{ds-ext}
    \| \chi_I^{0} (\phi_I^{ext}, \rd_{t} \phi_{I}^{ext}) \|_{L^p L^q} \lesssim \|  \phi_I \|_{L^p L^q[I]} + \|\Box \phi_I\|_{L^2[I]} .
  \end{equation}
  By the inhomogeneous Strichartz estimates, this reduces to the case
  when $\Box \phi = 0$.

  We prove this in two steps. First we notice that the Cauchy data at
  time $0$ satisfies
  \begin{equation}\label{ds-exta}
    \| \phi_{I}[0]\|_{L^q} \lesssim  \|  \phi_I \|_{L^p L^q[I]}
  \end{equation}
  Equivalently, we have to show that for functions $f_{\pm}$ localized
  at frequency $1$ we have
  \[\| f_{\pm} \|_{L^q} \lesssim \|e^{it|D|} f_+ + e^{-it|D|}
  f_-\|_{L^p([0,1];L^q)}
  \]
  We may easily find $t_{1} \in [0, 1/3]$ and $t_{2} \in [2/3, 1]$
  such that the $L^{q}$ norm of $e^{i t_{j} \abs{D}} f_{+} + e^{-i
    t_{j} \abs{D}} f_{-}$ is bounded by the right hand side. Then the
  desired conclusion follows from the linear independence of the
  symbols $(e^{i t_{j} \abs{D}}, e^{- i t_{j} \abs{D}})$ for $j=1,2$.

  Secondly, we have the bound
  \begin{equation} \label{ds-extb} \| (\phi,\partial_t \phi)(t)
    \|_{L^q} \lesssim (1+t^8)^{\frac12 - \frac1p} \| (\phi,\partial_t
    \phi)(0)\|_{L^q}, \qquad 2 \leq q \leq \infty
  \end{equation}
  This is trivial for $q = 2$.  In the case $q = \infty$, for a fixed
  $(t, x) \in \bbR^{1+4}$, by finite speed of propagation we may
  truncate the initial data for $\phi[0]$ outside a ball of radius
  $C(1 + t)$ without changing $(\phi, \rd_{t} \phi)(t,x)$. Then the
  desired bound for $\abs{(\phi, \rd_{t} \phi)(t,x)}$ follows from
  H\"older's inequality (to control the energy with the $L^{\infty}$
  norm), the energy estimate and Bernstein's inequality.

  Putting together \eqref{ds-exta} and \eqref{ds-extb} we obtain
  \eqref{ds-ext}.  Finally, the bound \eqref{ed-ext} follows from
  \eqref{ds-extb} with $q = \infty$.

\end{proof}

\subsection{Smallness: energy dispersion and
  divisibility} \label{subsec:smallness} Since our goal is to work
with large data MKG-CG solutions, it is crucial to have at our
disposal sufficient tools to gain smallness in appropriate settings.
One such source of smallness in this article is the \emph{energy
  dispersion}, which is used as an a-priori bound. Another venue for
gaining smallness is to partition the time in finitely many
subintervals, on each of which the norm is small. A space-time norm
for which this procedure works is said to be \emph{divisible}. In this
short subsection, we provide heuristic explanation of both concepts
and their use in our context.

We start by discussing the use of energy dispersion as a source of
smallness.  For application, it is useful to quantify the smallness of
the energy dispersion norm $\nrm{\cdot}_{ED}$ in comparison with the
norm $\nrm{\cdot}_{S^{1}}$, which is stronger and have the same
scaling. We therefore define:
\begin{definition} \label{def:eps-ed} For any interval $I \subseteq
  \bbR$ and $\veps >0$, we say that $\phi \in S^{1}[I]$ is
  \emph{$\veps$-energy dispersed} (with respect to the $S^{1}$ norm)
  if
  \begin{equation} \label{eq:eps-ed} \nrm{\phi}_{ED(I)} \leq \veps
    \nrm{\phi}_{S^{1}[I]}.
  \end{equation}
\end{definition}
Observe that the $S^{1}$ norm is stronger than the $ED$ norm by
Bernstein's inequality, i.e.,
\begin{equation*}
  \nrm{\phi}_{ED(I)} \aleq \nrm{\nb \phi}_{L^{\infty} L^{2}[I]} \aleq \nrm{\phi}_{S^{1}[I]}.
\end{equation*}
Hence the dimensionless quantity $\veps > 0$ can be thought of as
measuring the improvement relative to Bernstein's inequality.

Roughly speaking, small energy dispersion improves balanced frequency
bilinear interactions. In \cite{MR2657817}, this improvement was
obtained by interpolating the $ED$ norm with the Wolff-Tao bilinear
estimate in $L^{p} L^{p}$ with $p < 2$. In the present setting, as we
have stronger dispersion due to higher dimensionality, we can achieve
the same end by simply interpolating the $ED$ norm with Strichartz
norms. Indeed, the following linear lemma covers essentially all of
our usage of small energy dispersion:
\begin{lemma} \label{l:eps-ed-int} Let $\phi \in S^{1}[I]$ be
  $\veps$-energy dispersed. Then for any $k \in \bbZ$ and any non-sharp
  pair of Strichartz exponents $(p, q) \in [2, \infty]$ (i.e.,
  $\frac{2}{p} + \frac{3}{q} < \frac{3}{2}$ and $p \neq
  2$), we have
  \begin{equation*}
    \sup_{k} \nrm{(P_{k} \phi, 2^{-k} P_{k} \rd_{t} \phi)}_{L^{p} L^{q} [I]} \leq \veps^{\dlt_{1}}
2^{(2- \frac{1}{p}-\frac{4}{q})k} \nrm{\phi}_{S^{1}[I]},
  \end{equation*}
  where $\dlt_{1} = \dlt_{1}(p, q) > 0$.
\end{lemma}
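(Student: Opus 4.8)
The plan is to prove the estimate one dyadic block at a time, by interpolating two bounds for the pair $g_{k} := (P_{k}\phi, 2^{-k} P_{k}\rd_{t}\phi)$: the pointwise-in-$(t,x)$ bound that is exactly the content of the energy dispersion hypothesis, and an admissible Strichartz bound built into the $S^{1}$ norm. Since $P_{k}$ acts only in the spatial variable, both $g_{k}$ and $\nrm{\phi}_{ED(I)}$ are defined directly on $I \times \bbR^{4}$, so no extension is needed for the first bound; for the Strichartz bound I would pass to an almost-optimal extension $\tld{\phi}$ of $\phi$ (cf.\ Propositions~\ref{p:intervals}, \ref{p:ext}) and restrict back to $I$. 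Moreover, by the MKG scaling $\phi(t,x) \mapsto 2^{-k}\phi(2^{-k}t, 2^{-k}x)$, which leaves both $\nrm{\cdot}_{S^{1}}$ and $\nrm{\cdot}_{ED}$ invariant --- hence preserves the $\veps$-energy-dispersed hypothesis \eqref{eq:eps-ed} --- it suffices to treat $k = 0$; the general-$k$ estimate then follows by restoring the dyadic weight produced by this rescaling.

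At $k = 0$ the two inputs are: first, from Definition~\ref{def:eps-ed} and the definition \eqref{ed} of $\nrm{\cdot}_{ED}$,
\[
\nrm{g_{0}}_{L^{\infty} L^{\infty}[I]} \ \leq\ \nrm{\phi}_{ED(I)} \ \leq\ \veps\, \nrm{\phi}_{S^{1}[I]};
\]
and second, for any admissible Strichartz pair $(p_{0}, q_{0})$ on $\bbR^{1+4}$ (i.e.\ $p_{0}, q_{0} \in [2,\infty]$ with $\tfrac{1}{p_{0}} + \tfrac{3}{2q_{0}} \leq \tfrac{3}{4}$), using that the $S^{str}_{0}$ component of the $S^{1}$ norm controls all admissible Strichartz norms of $(P_{0}\tld\phi, P_{0}\rd_{t}\tld\phi)$,
\[
\nrm{g_{0}}_{L^{p_{0}} L^{q_{0}}[I]} \ \aleq\ \nrm{\phi}_{S^{1}[I]}.
\]
No modulation localizations enter this second bound, so the interval restriction needs nothing beyond Propositions~\ref{p:intervals} and \ref{p:ext}.

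The interpolation is then simply H\"older's inequality in the mixed-norm space $L^{p}_{t} L^{q}_{x}$: writing $\tfrac1p = \tfrac{1-\theta}{p_{0}}$, $\tfrac1q = \tfrac{1-\theta}{q_{0}}$ --- which forces $(p_{0}, q_{0})$ onto the ray from $(\infty,\infty)$ through $(p, q)$ --- one obtains
\[
\nrm{g_{0}}_{L^{p} L^{q}[I]} \ \aleq\ \nrm{g_{0}}_{L^{p_{0}} L^{q_{0}}[I]}^{\,1-\theta}\, \nrm{g_{0}}_{L^{\infty} L^{\infty}[I]}^{\,\theta} \ \aleq\ \veps^{\theta}\, \nrm{\phi}_{S^{1}[I]}.
\]
It remains to choose $(p_{0}, q_{0})$ --- equivalently $\theta \in (0,1]$ --- admissibly: I need $(\tfrac1{p_{0}}, \tfrac1{q_{0}}) = \tfrac1{1-\theta}(\tfrac1p, \tfrac1q)$ to stay in the admissible region. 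This is exactly where the hypotheses enter: non-sharpness gives $\tfrac1p + \tfrac3{2q} < \tfrac34$ strictly and also forces $q > 2$, while $p \neq 2$ keeps $(\tfrac1p, \tfrac1q)$ off the face $\{p_{0} = 2\}$; hence $(\tfrac1p, \tfrac1q)$ is strictly interior, so it can be dilated outward from the origin by a factor $>1$, and one sets $\dlt_{1}(p,q) := \theta$.

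This argument is soft and I do not anticipate a real obstacle; the only delicate points are the bookkeeping of the dyadic $2^{(\cdot)k}$ factors under rescaling, and --- the one substantive point --- the admissible choice of $(p_{0}, q_{0})$, where $p = 2$ must be excluded since it would force $p_{0} < 2$, not a valid Strichartz exponent. One could also interpolate the energy dispersion bound against the entire $S^{str}_{k}$ scale in one stroke, obtaining the estimate uniformly over all non-sharp $(p,q)$ by essentially the same proof.
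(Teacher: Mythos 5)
Your argument is correct and is precisely what the paper's one-sentence justification describes: interpolate the $L^\infty_{t,x}$ bound from energy dispersion against the $S^{str}_k$ component of the $S^1$ norm, using non-sharpness of $(p,q)$ to leave room for an admissible auxiliary pair $(p_0,q_0)$ strictly between it and $(\infty,\infty)$. One caveat on the step you flag but leave implicit (``restoring the dyadic weight''): carrying out the bookkeeping --- either via your rescaling, or directly from $\nrm{P_k\phi}_{S^1_k}\geq\nrm{\nb P_k\phi}_{S^{str}_k}$ combined with the $2^{(\frac1q+\frac4r-2)k}$ weight in $S^{str}_k$ and the factor $2^{k}$ cost of passing from $\nb P_k\phi$ to $(P_k\phi,2^{-k}\rd_t P_k\phi)$ --- yields the weight $2^{(1-\frac1p-\frac4q)k}$, not $2^{(2-\frac1p-\frac4q)k}$ as printed. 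The printed exponent appears to be a typo: the ``$1$'' version is the one consistent with the energy endpoint $(p,q)=(\infty,2)$, where it reduces to $\nrm{(P_k\phi,2^{-k}P_k\rd_t\phi)}_{L^\infty L^2}\aleq 2^{-k}\nrm{\phi}_{S^1}$, and it is what is actually used when the lemma is invoked in the proof of Proposition~\ref{p:axnl-high-mod} with $(p,q)=(4,\tfrac{64}{21})$. So your method recovers the intended estimate; just be aware that the dyadic factor you would obtain differs from the one in the statement.
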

As discussed, this lemma easily follows by interpolating the $ED$ norm
with the $S^{str}_{k}$ component of the $S^{1}$ norm, which is
possible thanks to the non-sharpness of $(q, r)$. We will often
combine this lemma with H\"older's inequality to gain smallness for
multilinear estimates.

We now turn to the use of divisibility in our work.
The bound \eqref{n-div} shows that the $N$ norm is divisible.
However, the $S^1$ norm is not\footnote{See however the result in
  Theorem~\ref{t:structure}\ref{item:structure:5}} divisible, and this
is a source of trouble. Our workaround is to introduce a weaker norm,
denoted $\DS$, which collects a subset of the components of the $S^1$
norm which are divisible. This is defined as follows:
\begin{equation}
  \| \phi \|_{\DS[I]} = \| (|D|^\frac16 \phi, |D|^{-\frac{5}{6}} \rd_{t} \phi)\|_{L^2 L^{6}[I]} 
  + \| (|D|^\frac56 \phi, |D|^{-\frac{1}{6}} \rd_{t} \phi)\|_{L^{10} L^\frac{30}{13} [I]} 
  + \||D|^{-\frac12} \Box u \|_{L^2 L^{2}[I]} .
\end{equation}
Precisely, we may include here any divisible Strichartz norm as long
as we stay away from the $L^\infty L^2$ endpoint (i.e., the energy).
To gain divisibility for $A_{0}$, we use the norm
\begin{equation*}
  \nrm{\nb A_{0}}_{L^{2} \dot{H}^{\frac{1}{2}}[I]} = \nrm{(A_{0}, \rd_{t} A_{0})}_{L^{2} \dot{H}^{\frac{3}{2}} \times L^{2} \dot{H}^{\frac{1}{2}}[I]},
\end{equation*}
which is a divisible component of the $Y^{1}$ norm.

By Proposition~\ref{p:ext}, we see that the homogeneous wave extension
$\phi^{ext}_{I}$ of a function $\phi_{I}$ in $I$ obeys the bound
\begin{equation} \label{inhom-ext} \nrm{\chi_{I}^{k}
    \phi_{I}^{ext}}_{\DS} \aleq \nrm{\phi_{I}}_{\DS[I]}
\end{equation}
when $\phi_{I}$ is localized at frequency $2^{k}$ and $\abs{I} \geq
2^{-k}$.

Our strategy will be to use as much as possible the divisible norms
(such as $\DS$ or $L^{2} \dot{H}^{\frac{3}{2}}$) in our bilinear and
multilinear estimates, and try to prove smallness for the remainder.

\section{The decomposition of the nonlinearity}
\label{s:dec}

Recalling the definition of the currents $J_\alpha = - \Im
(\phi\overline{D_{\alpha} \phi})$ we write the MKG-CG system again
here as:
\begin{subequations}
  \begin{align}
    \Box A_i \ &= \ \mathcal{P}_i J_x \ ,
  \label{MKGa}\\
    \Box_A \phi \ &= \ 0  \  \label{MKGphi}
  \end{align}
\end{subequations}
The second equation also includes $A_0$ and $\partial_t A_0$, which are 
obtained from the elliptic equations
\begin{equation}
  \begin{aligned}
    \Delta A_0 \ = \  J_0, \qquad 
    \Delta \partial_t A_0 \ = \ \nabla^i J_i.
    \label{MKGa0+}
  \end{aligned}
\end{equation}
We now discuss the bounds for each of the components of MKG equation.
For the purpose of this section, all analysis is done in a fixed time interval $I=[0,T]$.

We remark that for the most part,  bilinear and trilinear estimates for the nonlinearities
were already proved in \cite{Krieger:2012vj} in the context of the small data problem.
Our goal here is to understand when and how we can regain smallness 
in the study of the large data. As discussed in Section~\ref{subsec:smallness}, there are two such sources of smallness:

a) Arising from norm divisibility for either $A$ or $\phi$, where a
large but divisible norm is made small by selecting a suitable time
interval partition. Here we seek to use the $\DS$ part of the $S^1$
norm to measure the bulk of the nonlinearities.

b) Arising from small energy dispersion for $\phi$. This is often
considered coupled with the additional high modulation bound
\begin{equation}\label{box-phi}
\| \Box \phi\|_{L^2 \dot H^{-\frac12}} \leq \veps^{\dlt_{1}} \|\phi\|_{S^{1}}
\end{equation}
which for MKG-CG solutions is an easy consequence of the $\veps$-energy dispersion; see Theorem~\ref{t:structure-ed} below.

Two easy ways to gain the two types of estimates above is by using 
suitable Strichartz estimates. Precisely, for divisibility we need $L^pL^q$ 
norms with $p < \infty$. On the other hand for energy dispersion we need
$p > 2$, as well as non-sharp pairs of exponents $(p,q)$, so that Lemma~\ref{l:eps-ed-int} is applicable. Often we can 
fulfill both at once, and prove the two types of estimates simultaneously.

\subsection{The terms $A_x$.}
We decompose $A_i$ into a free and a nonlinear component,
\[
A_i = A^{free}_i + A^{nl}_i
\]
where 
\[
\Box A_i^{free} = 0, \qquad A_i^{free}[0] = A_{i}[0] 
\]
and 
\[
\Box A_i^{nl} =  \mathcal{P}_i J_x, \qquad A_i^{nl}[0] = 0 
\]

Given the expression of the currents $J_\alpha = - \Im(\phi \partial_\alpha
\bar \phi) + A_{\alpha} |\phi|^2$, we will think of $A_x^{nl}$ given by the
above equation as a multilinear expression in $\phi$ and $A$, i.e., $A_{i}^{nl}=
\A_i(\phi,\phi,A)$. We can also extend this to a symmetric quadratic
form in the first two variables, $\A_i(\phi_1,\phi_2,A)$.  We also
split it into a quadratic and a cubic part,
\begin{align*}
\A_i(\phi_1,\phi_2,A) 
=& \A_i^{2}(\phi_1,\phi_2) + \A_x^{3}(\phi_1,\phi_2,A) \\
=& - \frac{1}{2} \Box^{-1} \calP_{i}(\phi_{1} \rd_{x} \overline{\phi_{2}} + \overline{\rd_{x} \phi_{1}} \phi_{2}) 
	+ \frac{1}{2} \Box^{-1} \calP_{i} (\phi_{1} \overline{\phi_{2}} A_{x} + \overline{\phi_{1}} \phi_{2} A_{x}),
\end{align*}
where $\Box^{-1} f$ denotes the solution to the inhomogeneous wave equation $\Box u = f$ with $u[0] = 0$.
The $N$ bounds we need for $\Box A_x$ are as follows:
\begin{proposition} \label{p:ax}
Let $\phi_{1}, \phi_{2}, A$ be test functions defined on a time interval $I$ containing $0$. 
\begin{enumerate}
\item \label{item:ax:1} For all admissible frequency envelopes $c,d,e$ we have 
\begin{equation} \label{axnl}
\| \Box \A_x^{2}(\phi_1,\phi_2) \|_{N_{cd}[I]}
+\| \nb \A_x^{2}(\phi_1,\phi_2) \|_{S_{cd}[I]} 
\lesssim \|\phi_1\|_{S^1_c[I]} \|\phi_2\|_{S^1_d[I]}
\end{equation}
respectively
\begin{equation} \label{axnl3}
\| \Box \A_x^{3}(\phi_1,\phi_2,A) \|_{L^1 L^2_{cde}[I]}
+\| \nb \A_x^{3}(\phi_1,\phi_2, A) \|_{S_{cde}[I]} 
\lesssim \|\phi_1\|_{\DS_c[I]} \|\phi_2\|_{\DS_d[I]} \|A\|_{\DS_e[I]}
\end{equation}
 
\item \label{item:ax:2} Further, for each $m > 0$ there is a decomposition 
\[
\A_x^{2}(\phi_1,\phi_2) = \A_{x,small}^{2}(\phi_1,\phi_2) +   \A_{x,large}^{2}(\phi_1,\phi_2)
\]
so that we have
\begin{equation} \label{axnl-small}
\| \Box \A_{x,small}^{2}(\phi_1,\phi_2) \|_{N_{cd}[I]}
+\| \nb \A_{x,small}^{2}(\phi_1,\phi_2) \|_{S_{cd}[I]}  
\lesssim 2^{-cm}  \|\phi_1\|_{S^1_c[I]} \|\phi_2\|_{S^1_d[I]}
\end{equation}
respectively
\begin{equation} \label{axnl-large}
\| \Box \A_{x,large}^{2}(\phi_1,\phi_2) \|_{N_{cd}[I]} 
+ \| \nb \A_{x,large}^{2}(\phi_1,\phi_2) \|_{S_{cd}[I]}  
\lesssim 2^{Cm}  \|\phi_1\|_{\DS_c[I]} \|\phi_2\|_{\DS_d[I]}
\end{equation}

\item \label{item:ax:3} In addition, if  $\phi_1$  is $\veps$-energy dispersed and satisfies 
\eqref{box-phi} then
\begin{equation} \label{axnl-eps}
\| \Box \A_x^{2}(\phi_1,\phi_2) \|_{N_{c}[I]}
+ \| \nb \A_x^{2}(\phi_1,\phi_2) \|_{S_{c}[I]} 
\lesssim \veps^{\dlt_{1}}  \|\phi_1\|_{S^1[I]} \|\phi_2\|_{S^1_c[I]}
\end{equation}
and
\begin{equation} \label{axnl-eps3}
\| \Box \A_x^{3}(\phi_1,\phi_2,A) \|_{N_{de}[I]}
+ \| \nb \A_x^{3}(\phi_1,\phi_2,A) \|_{S_{de}[I]} 
\lesssim  \veps^{\dlt_{1}}   \|\phi_1\|_{S^1[I]} \|\phi_2\|_{S^1_d[I]}\|A\|_{S^1_e[I]}
\end{equation}
\end{enumerate}
\end{proposition}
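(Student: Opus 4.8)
The plan is to reduce every bound to an $N$-norm (really $L^1 L^2$ and null-form) estimate on the multilinear expression, and then to invoke the corresponding bilinear and trilinear estimates of \cite{Krieger:2012vj}, tracking two refinements: the frequency-envelope bookkeeping, and the replacement of certain $S^1$ factors by the divisible $\DS$ norm or, in part \ref{item:ax:3}, by non-sharp Strichartz norms. The first reduction: both $\A_x^{2}$ and $\A_x^{3}$ have the form $\Box^{-1}(\cdots)$ with vanishing Cauchy data, so by the interval-localized linear estimate \eqref{lin-S(i)} the $\nb \A_x^{2}$ bound in $S_{cd}[I]$ (resp. the $\nb \A_x^{3}$ bound in $S_{cde}[I]$) follows from the bound on $\Box \A_x^{2} = -\tfrac12 \calP_x(\phi_1 \rd_x \overline{\phi_2} + \overline{\rd_x \phi_1}\phi_2)$ in $N_{cd}[I]$ (resp. on $\Box \A_x^{3}$ in $L^1 L^2_{cde}[I]\subseteq N_{cde}[I]$). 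The Leray projection $\calP_x$ is a zeroth-order multiplier, hence harmless on dyadic pieces, and interval localization is handled by Proposition \ref{p:intervals}: since $\Box \A_x^{2}$ is a temporally local (though spatially nonlocal) expression in $\phi_1,\phi_2$, I may replace the inputs by global extensions of $\phi_1|_I,\phi_2|_I$ with comparable frequency-localized $S^1$ norms (the homogeneous-wave extensions of Proposition \ref{p:intervals}), apply the global estimate, and restrict.

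For part \ref{item:ax:1}, the quadratic bound \eqref{axnl} and its envelope form is, after these reductions, the bilinear estimate for $\Box A_x$ from \cite{Krieger:2012vj}; the frequency-envelope version is automatic since that estimate is proved dyadically with off-diagonal exponential decay in the gaps among the output and input frequencies. The cubic bound \eqref{axnl3} follows from a Littlewood--Paley trichotomy: in each frequency configuration one controls $P_k(\phi_1 \overline{\phi_2} A_x)$ in $L^1 L^2$ by H\"older in time and Strichartz in space, using the two divisible Strichartz components of the $\DS$ norm together with Bernstein to absorb the frequency weights; the resulting off-diagonal decay again upgrades to the envelope statement. This is essentially in \cite{Krieger:2012vj}, modulo the use of $\DS$ instead of $S^1$ norms.

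For part \ref{item:ax:2}, I would revisit the dyadic proof of \eqref{axnl} and split the sum over configurations $(k_1,k_2,k_{\mathrm{out}})$ according to whether the spread $k_{\max}-k_{\min}$ exceeds $m$. The configurations with spread $\geq m$ form $\A_{x,small}^{2}$; there \cite{Krieger:2012vj} supplies a gain $2^{-\dlt(k_{\max}-k_{\min})}\leq 2^{-\dlt m}\leq 2^{-cm}$ (as $c\ll\dlt$), yielding \eqref{axnl-small} with $S^1$ norms. The remaining configurations (spread $<m$) form $\A_{x,large}^{2}$: here the only interaction that genuinely needs the null-frame $S^{ang}$ machinery -- the high$\times$high$\to$low one -- is absent, since output and input frequencies are now all comparable up to $2^m$, so each of the $O(m^2)$ configurations is handled by plain Strichartz (the $\DS$ norm) and Bernstein, with the total loss $2^{Cm}$ absorbing the summation and the Bernstein exponents; this gives \eqref{axnl-large}.

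For part \ref{item:ax:3}, I would combine the previous two parts. Fix $m=m(\veps)$ with $2^{-cm}\approx \veps^{\dlt_{1}}$ and write $\A_x^{2}=\A_{x,small}^{2}+\A_{x,large}^{2}$ at this scale. The small part is already $\aleq \veps^{\dlt_{1}}\|\phi_1\|_{S^1[I]}\|\phi_2\|_{S^1_c[I]}$ by \eqref{axnl-small}. For the large (balanced) part, $\phi_1$ enters only through balanced interactions, so it may be placed in a non-sharp Strichartz norm, and Lemma \ref{l:eps-ed-int} converts this into a factor $\veps^{\dlt_{1}(p,q)}$; choosing $(p,q)$ sufficiently non-sharp makes this power beat the $2^{Cm}=\veps^{-O(\dlt_{1})}$ loss, leaving a net positive power of $\veps$ -- this is the one place the $\veps$-energy dispersion enters. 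The cubic bound \eqref{axnl-eps3} is handled identically on top of \eqref{axnl3}, placing $\phi_1$ in a non-sharp Strichartz norm and $\phi_2,A$ in $\DS$. The main obstacle I expect is precisely the unbalanced (high$\times$low) interactions in part \ref{item:ax:3}: there $\phi_1$ cannot be placed in a $p>2$ Strichartz space by a naive H\"older split, so the $\veps^{\dlt_{1}}$ gain must come either from the frequency-gap gain already carried by $\A_{x,small}^{2}$ or, for the low-modulation high-frequency piece of $\phi_1$, from a further modulation decomposition feeding the high-modulation bound \eqref{box-phi}; reconciling these mechanisms while keeping the power of $\veps$ positive throughout the exponent bookkeeping is the crux.
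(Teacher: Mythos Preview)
Your reduction to the $N$ bound via \eqref{lin-S(i)} and your handling of part \ref{item:ax:1} and the cubic terms are fine and match the paper. The real gap is in your treatment of part \ref{item:ax:2}, specifically your claim that once the frequencies are balanced (spread $<m$) ``each of the $O(m^2)$ configurations is handled by plain Strichartz (the $\DS$ norm) and Bernstein,'' because the null-frame machinery is only needed for high$\times$high$\to$low. This is false: even at balanced frequencies the null form $P_k\,\partial_j\Delta^{-1}Q_{ij}(\phi_{k_1},\bar\phi_{k_2})$ with $|k-k_i|<m$ cannot be placed in $N_k$ using only admissible Strichartz norms. A direct check shows that $L^1L^2$ is unreachable by any H\"older split into two Strichartz factors (e.g.\ $(2,4)$ is not admissible in $4+1$), and the $X^{0,-1/2}_1$ route fails to sum over low output modulations without the angular gain from the null structure. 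The paper's decomposition is therefore finer than yours: the balanced piece is further split by \emph{modulation}. The low-modulation balanced part $Q_{<k-Cm}P_k\mathcal N(Q_{<k-Cm}\phi_{k_1},Q_{<k-Cm}\bar\phi_{k_2})$ is placed in $\A^2_{x,small}$ via the null-form improvement \eqref{null-small}, which gives the $2^{-cm}$ gain but requires the $S^1$ (not $\DS$) norms. Only the high-modulation remainder --- output or one input at modulation $\geq k-Cm$ --- goes into $\A^2_{x,large}$, and it is precisely here that the $\DS$ norm suffices: one uses either the $L^4L^4$ Strichartz component or the $\|\Box\phi\|_{L^2L^2}$ component of $\DS$, together with the $L^{9/5}L^2$ bound on $\Box\phi$ from the $S^1$ norm for the energy-dispersed version.

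A second point you gloss over: the modulation projections $Q_{<k-Cm}$ are nonlocal in time, so you cannot simply extend and restrict as in Proposition~\ref{p:intervals}. The paper handles this by a further case split on the interval length. If $|I|\leq 2^{-k+m}$ one bypasses modulations entirely and bounds the balanced piece directly in $L^1L^2[I]$, paying with a factor $(2^k|I|)^{1/2}\leq 2^{Cm}$ and using $L^4L^4$ Strichartz (this goes into ``large''). If $|I|>2^{-k+m}$ one extends the inputs by homogeneous waves and controls the time-nonlocal tails of $Q_{>k-Cm}$ using the weighted cutoffs $\chi_I^k$ and Proposition~\ref{p:ext}. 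Your identification of the difficulty in part \ref{item:ax:3} as lying in the unbalanced interactions is also misplaced: those are already absorbed into $\A^2_{x,small}$ with the $2^{-cm}$ gain, which becomes $\veps^{\dlt_1}$ after optimizing $m$. The genuine work for \eqref{axnl-eps} is in the high-modulation balanced part, where one needs the hypothesis \eqref{box-phi} exactly when the energy-dispersed input $\phi_1$ carries the high modulation, and a non-sharp Strichartz norm (e.g.\ $L^{9/4}L^\infty$) on $\phi_1$ otherwise.
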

\begin{remark} \label{rem:ax-S}
The $\nrm{\nb(\cdot)}_{S}$ norm bounds follow immediately from the control of $\nrm{\Box(\cdot)}_{N}$ thanks to \eqref{lin-S(i)} and the fact that the initial data vanish for $\bfA^{2}_{x}, \bfA^{3}_{x}$. As we see from \eqref{s1}, this norm is slightly weaker than the main `solution norm' $S^{1}$ for high modulations; nevertheless these bounds will prove useful in the proof of the multilinear estimates in Section~\ref{s:multi}. 
\end{remark}
\begin{remark} \label{rem:disp-freq-loc}
Given a test function $\phi_{1}$ on $I$ which is $\veps$-energy dispersed and obeys \eqref{box-phi}, the bounds \eqref{axnl-eps} and \eqref{axnl-eps3} still hold with the same right hand sides if we replace $\phi_{1}$ by its frequency projection (e.g., $P_{< k_{\ast}} \phi_{1}$ or $P_{\geq k_{\ast}} \phi_{1}$) on the left hand side. This fact will be evident from the proof. The same remark applies to all the other estimates in this section that rely on $\veps$-energy dispersion.
\end{remark}
This proposition is proved in Section~\ref{s:bi}. 

We also state high modulation bounds for $\Box A_{x}$, which do not require a null structure nor an extra decomposition:
\begin{proposition} \label{p:axnl-high-mod}
Let $\phi_{1}, \phi_{2}, A$ be test functions defined on a time interval $I$ containing $0$. 
For all admissible frequency envelopes $c, d, e$ we have
\begin{align} 
	\nrm{\Box \A_{x}^{2}(\phi_{1}, \phi_{2})}_{(L^{2} \dot{H}^{-\frac{1}{2}} \cap L^{\frac{9}{5}} \dot{H}^{-\frac{4}{9}})_{cd}[I]} 
	\aleq & \nrm{\phi_{1}}_{DS^{1}_{c}[I]} \nrm{\phi_{2}}_{DS^{1}_{d}[I]} \label{axnl-hm} \\
	\nrm{\Box \A_{x}^{3}(\phi_{1}, \phi_{2}, A)}_{(L^{2} \dot{H}^{-\frac{1}{2}} \cap L^{\frac{9}{5}} \dot{H}^{-\frac{4}{9}})_{de}[I]} 
	\aleq & \nrm{\phi_{1}}_{DS^{1}_{c}[I]} \nrm{\phi_{2}}_{DS^{1}_{d}[I]} \nrm{A}_{DS^{1}_{e}[I]} \label{axnl-hm3} 
\end{align}
In addition, if $\phi_{1}$ is $\veps$-energy dispersed, then
\begin{align} 
	\nrm{\Box \A_{x}^{2}(\phi_{1}, \phi_{2})}_{(L^{2} \dot{H}^{-\frac{1}{2}} \cap L^{\frac{9}{5}} \dot{H}^{-\frac{4}{9}})_{d}[I]} 
	\aleq & \veps^{\dlt_{1}} \nrm{\phi_{1}}_{S^{1}[I]} \nrm{\phi_{2}}_{S^{1}_{d}[I]} \label{axnl-hm-eps} \\
	\nrm{\Box \A_{x}^{3}(\phi_{1}, \phi_{2}, A)}_{(L^{2} \dot{H}^{-\frac{1}{2}} \cap L^{\frac{9}{5}} \dot{H}^{-\frac{4}{9}})_{de}[I]} 
	\aleq & \veps^{\dlt_{1}} \nrm{\phi_{1}}_{S^{1}[I]} \nrm{\phi_{2}}_{S^{1}_{d}[I]} \nrm{A}_{S^{1}_{e}[I]} \label{axnl-hm-eps3} 
\end{align}
\end{proposition}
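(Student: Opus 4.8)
The starting point is the schematic identities
$\Box \A^{2}_{x}(\phi_{1}, \phi_{2}) = -\tfrac{1}{2}\calP(\phi_{1} \rd_{x}\br{\phi_{2}} + \br{\rd_{x}\phi_{1}}\phi_{2})$ and $\Box \A^{3}_{x}(\phi_{1}, \phi_{2}, A) = \tfrac{1}{2}\calP(\phi_{1}\br{\phi_{2}} A + \br{\phi_{1}}\phi_{2} A)$: the quantities to be bounded are a bilinear form carrying a single derivative and a derivative-free trilinear form, and $\calP$ is bounded on every $L^{q}_{x}$, $1 < q < \infty$. The point is that the target norms $L^{2}\dot{H}^{-\frac{1}{2}}$ and $L^{\frac{9}{5}}\dot{H}^{-\frac{4}{9}}$ have spatial regularity strictly above $\dot{H}^{-1}$, the value dictated by scaling from two $\dot{H}^{1}$ inputs; this surplus is precisely what lets one dispense with the null-form cancellation in the currents and with any auxiliary decomposition. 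The plan is to run a purely Littlewood--Paley argument, using only frequency- and modulation-localized Strichartz and bilinear $L^{2}$ estimates from the $S_{k}$ calculus of \cite{Krieger:2012vj}, together with Bernstein's inequality.

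First I would reduce to dyadic estimates: using $\nrm{f}_{L^{2}\dot{H}^{-\frac{1}{2}}}^{2} \aeq \sum_{k_{0}} 2^{-k_{0}} \nrm{P_{k_{0}} f}_{L^{2} L^{2}}^{2}$ and, via Minkowski's inequality in $t$ (legitimate since $\tfrac{9}{5} < 2$), $\nrm{f}_{L^{\frac{9}{5}}\dot{H}^{-\frac{4}{9}}}^{2} \aleq \sum_{k_{0}} 2^{-\frac{8}{9}k_{0}} \nrm{P_{k_{0}} f}_{L^{\frac{9}{5}} L^{2}}^{2}$, it suffices to bound each dyadic block $\nrm{P_{k_{0}}(P_{k_{1}}\phi_{1}\,\rd_{x} P_{k_{2}}\phi_{2})}$ (resp.\ its trilinear analogue) in $L^{2}L^{2}$ and $L^{\frac{9}{5}}L^{2}$, with an $\ell^{2}$-summable output in $k_{0}$. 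For a fixed frequency configuration I would split the output modulation: when it exceeds $2^{\max(k_{1},k_{2})}$ the estimate is elementary, relying only on the modulation-weighted $L^{2}$ bounds built into $S_{k}$ (via $S_{k} \subseteq X^{0,\frac{1}{2}}_{\infty}$) and on Strichartz or energy for the remaining factor; for the low-modulation output one applies H\"older in $(t,x)$, distributing the factors into admissible Strichartz norms $L^{p}L^{q}$ (i.e.\ $2 \leq p,q \leq \infty$, $\tfrac{2}{p}+\tfrac{3}{q} \leq \tfrac{3}{2}$) controlled by the $S^{str}_{k}$ part of $S^{1}_{k}$, supplemented by bilinear $L^{2}_{t,x}$ estimates in the high$\times$high$\to$low regime. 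Bernstein both absorbs the derivative and supplies an exponential off-diagonal gain $2^{-\dlt\abs{k_{i}-k_{j}}}$ in each frequency gap, which yields the $\ell^{2}$ summation in $k_{0}$ and, via the convolution inequality for slowly varying envelopes, the frequency-envelope bounds \eqref{axnl-hm}, \eqref{axnl-hm3}. The cubic estimates \eqref{axnl-hm3}, \eqref{axnl-hm-eps3} are easier, carrying no derivative and needing only a threefold H\"older split.

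For the $\DS$ versions of \eqref{axnl-hm}, \eqref{axnl-hm3} one repeats this but may place each input only in the Strichartz norms appearing in $\DS$ --- the $\dot{H}^{1}$-rescaled $L^{2}L^{6}$ and $L^{10}L^{\frac{30}{13}}$, together with $\abs{D}^{-\frac{1}{2}}\Box(\cdot) \in L^{2}L^{2}$ --- which works precisely because the target exponents $2$ and $\tfrac{9}{5}$ are non-sharp and bounded away from $\infty$, so the required H\"older triples remain simultaneously admissible and, having $p < \infty$, divisible. For the $\veps$-energy dispersed bounds \eqref{axnl-hm-eps}, \eqref{axnl-hm-eps3} one instead places the distinguished input $\phi_{1}$ in a non-sharp Strichartz norm with $p > 2$ and applies Lemma~\ref{l:eps-ed-int} to trade it, at the cost of a factor $\veps^{\dlt_{1}}$, for $\nrm{\phi_{1}}_{S^{1}}$, the remaining inputs staying in $S^{1}$ through $S^{str}$; as in Remark~\ref{rem:disp-freq-loc}, the argument localizes freely in the frequency of $\phi_{1}$.

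The hard part will be the balanced high$\times$high$\to$low interaction in the bilinear term, where the single derivative lands on a high-frequency factor and the two inputs have nearly opposite frequencies: here a naive Strichartz--H\"older split fails to sum over the low output frequency $k_{0}$ (indeed the weight $2^{-\frac{8}{9}k_{0}}$ in the $L^{\frac{9}{5}}\dot{H}^{-\frac{4}{9}}$ case diverges as $k_{0}\to-\infty$), and one must invoke the improved dyadic bilinear $L^{2}_{t,x}$ estimates built into the $S_{k}$ framework of \cite{Krieger:2012vj}, which gain a factor $2^{-c(\min(k_{1},k_{2})-k_{0})}$ that one checks is sufficient to beat the negative target weight and restore convergence of the sum over the high input frequencies. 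Since this is a harmonic-analytic bilinear estimate rather than a structural cancellation, no null form of the currents is needed at any stage, consistent with the assertion that these are ``high modulation'' bounds.
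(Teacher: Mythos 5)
Your overall plan --- reduce to dyadic estimates, bound each frequency block by Bernstein plus non-sharp Strichartz norms, and use Lemma~\ref{l:eps-ed-int} to convert one Strichartz factor into an $\veps^{\dlt_1}$-gain for the energy-dispersed variants --- is essentially what the paper does, and your initial observation that the surplus of $\tfrac12$ (resp.\ $\tfrac49$) derivatives in the target space over the scaling threshold $\dot{H}^{-1}$ is what makes the null structure irrelevant is exactly the right guiding intuition. The modulation splitting you introduce in the second paragraph (output modulation above or below $2^{\max(k_1,k_2)}$), however, is unnecessary baggage: because the single Bernstein--Strichartz estimate already yields the off-diagonal gain $2^{-\dlt\max|k-k_i|}$ that drives the $\ell^2$ and frequency-envelope summations, no separation of modulation regimes and no $X^{0,\frac12}$-type bound is needed anywhere.

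The concrete error is in your last paragraph. The high$\times$high$\to$low case with $k_1\approx k_2 = K \gg k_0$ is \emph{not} a hard case, and no bilinear $L^2_{t,x}$ estimate from the $S_k$ calculus is required. What you missed is that Bernstein should also be applied at the \emph{output} projection $P_{k_0}$ (and not only to "absorb the derivative" on the high-frequency inputs): placing each factor in a non-sharp Strichartz norm $L^q_t L^r_x$ with $r < 4$ makes the product land in $L^{q/2}_t L^{r/2}_x$ with $r/2 < 2$, and then
\begin{equation*}
	\nrm{P_{k_0}(\cdot)}_{L^2_x} \aleq 2^{4k_0(\frac{2}{r}-\frac{1}{2})}\nrm{\cdot}_{L^{r/2}_x}
\end{equation*}
produces a strictly positive power of $2^{k_0}$ which beats the negative weights $2^{-k_0/2}$ and $2^{-4k_0/9}$. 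For instance placing both factors in $|D|^{\frac{9}{16}}(\cdot)\in L^4 L^{64/21}$ --- admissible and strictly non-sharp --- gives, for $L^2\dot{H}^{-\frac12}$, a total factor $2^{-k_0/2}\cdot 2^{5k_0/8}\cdot 2^{-K/8} = 2^{-(K-k_0)/8}$, which is precisely the advertised $2^{-\dlt\max|k-k_i|}$ gain. So the "naive Strichartz--Hölder split" you diagnose as failing in fact succeeds; the alleged divergence over $k_0\to-\infty$ disappears once the output Bernstein step is included, and following your outline would mislead the reader into reproving bilinear $L^2$ estimates that do not appear in the actual argument.
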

\begin{proof} 
The whole proposition is a simple consequence of Bernstein's inequality, non-sharp Strichartz estimates and Lemma~\ref{l:eps-ed-int}. For instance, the $L^{2} \dot{H}^{-\frac{1}{2}}$ norm of the Littlewood-Paley piece $P_{k} \Box \bfA^{2}_{x}(\phi_{k_{1}}, \phi_{k_{2}})$ (where $\phi_{k_{i}}$ is a shorthand for $P_{k_{i}} \phi_{i}$) is bounded as follows:
\begin{align*}
	\nrm{P_{k} \Box \bfA^{2}_{x}(\phi_{k_{1}}, \phi_{k_{2}})}_{L^{2} \dot{H}^{-\frac{1}{2}}[I]}
	\aleq & \nrm{P_{k} (\phi_{k_{1}} \rd_{x} \overline{\phi}_{k_{2}})}_{L^{2} \dot{H}^{-\frac{1}{2}}[I]}
		+ \nrm{P_{k} (\rd_{x}  \overline{\phi}_{k_{1}} \phi_{k_{2}})}_{L^{2} \dot{H}^{-\frac{1}{2}}[I]} \\
	\aleq & 2^{-\dlt \max \set{ \abs{k - k_{i}} } } 
			\prod_{i=1,2} \nrm{\abs{D}^{\frac{9}{16}}\phi_{k_{i}}}_{L^{4} L^{\frac{64}{21}}[I]}
\end{align*}
where the off-diagonal gain arises from applying Bernstein's inequality to the lowest frequency. As $(4, \frac{64}{21})$ is a non-sharp Strichartz estimate, this bound suffices for both \eqref{axnl-hm} and \eqref{axnl-hm-eps} (via Lemma~\ref{l:eps-ed-int}). Similarly, for $P_{k} \Box \bfA^{3}_{x}(\phi_{k_{1}}, \phi_{k_{2}}, A_{k_{3}})$ (where $A_{k_{3}} = P_{k_{3}} A$), we have
\begin{align*}
	\nrm{P_{k} \Box \bfA^{3}_{x}(\phi_{k_{1}}, \phi_{k_{2}}, A_{k_{3}})}_{L^{2} \dot{H}^{-\frac{1}{2}}[I]}
	\aleq & 2^{-\dlt \max \set{ \abs{k - k_{i}} } } 
			\bb( \prod_{i=1,2} \nrm{\abs{D}^{\frac{1}{6}}\phi_{k_{i}}}_{L^{6} L^{4}[I]} \bb) \nrm{\abs{D}^{\frac{1}{6}}A_{k_{3}}}_{L^{6} L^{4}[I]}
\end{align*}
The argument for the $L^{\frac{9}{5}} \dot{H}^{-\frac{4}{9}}$ norm is analogous. \qedhere
\end{proof}

\subsection{The term $A_0$.}
Here we consider bounds for both $A_0$ and its time derivative, which
are given by \eqref{MKGa0+}.  The first equation can be written in a
more explicit form as
\begin{equation}\label{eq-a0}
(-\Delta + |\phi|^2) A_0 = \Im(\phi \partial_t \bar \phi)
\end{equation}
which was analyzed earlier in Lemma~\ref{l:ell}.  As an immediate
corollary of Lemma~\ref{l:ell} we obtain the following estimate for
$A_0$:
\begin{equation}\label{a0-start}
\|A_0\|_{L^\infty \dot H^{1}_{c^{2}}[I]} + \|A_0\|_{L^2 \dot H^{\frac32}_{c^{2}}[I]} \lesssim_{\nrm{\phi}_{L^{\infty} \dot{H}^{1}[I]}} \| \phi\|^2_{S^1_{c}[I]} \ .
\end{equation}
Given this bound, we return to the equations \eqref{MKGa0+} and view
them simply as Laplace equations, whose solutions are quadratic
expressions in $\phi$,
\[
A_0= \A_0(\phi,\phi,A) \qquad \partial_t A_0= \partial_t \A_0(\phi,\phi,A)
\]
which are given by 
\begin{align*}
\A_0(\phi,\phi,A_0)=& \A_{0}^{2}(\phi, \phi) + \A_{0}^{3}(\phi, \phi, A_{0}) \\
=& -\Delta^{-1} \Im(\phi \rd_{t} \overline{\phi}) + \Delta^{-1} (\phi \overline{\phi} A_{0}), \\
\partial_0 \A_0(\phi,\phi,A_{x}) 
=& \rd_{0} \A_{0}^{2}(\phi, \phi) + \rd_{0} \A_{0}^{3}(\phi, \phi, A_{x}) \\
=& - \Delta^{-1} \partial^j   \Im (\phi \rd_j \overline{\phi}) + \Delta^{-1} \partial^{j} (\phi \overline{\phi} A_{j}) .
\end{align*}
We also extend these to symmetric quadratic forms in the first two variables $\phi_{1}, \phi_{2}$.
 Our estimates for $A_{0}$ and $\rd_{0} A_{0}$ are as follows:
\begin{proposition}\label{p:a0}
Let $\phi_{1}, \phi_{2}, A$ be test functions defined on a time interval $I$. Let $c, d, e$ be admissible frequency envelopes.
\begin{enumerate}
\item For any exponent $2 \leq p \leq \infty$, we have 
\begin{equation} \label{a0}
\begin{split}
\| \A_0^{2}(\phi_1,\phi_2) \|_{L^p \dot H^{1+\frac{1}{p}}_{cd}[I]} \lesssim & \  \|\phi_1\|_{\DS_c[I]} \|\phi_2\|_{\DS_d[I]}
\\
\| \A_0^{3}(\phi_1,\phi_2,A_0) \|_{L^p \dot H^{1 + \frac{1}{p}}_{cde}[I]} \lesssim & \  \|\phi_1\|_{\DS_c[I]} \|\phi_2\|_{\DS_d[I]}
\|A_0\|_{L^{p} \dot H^{1+\frac{1}{p}}_{e} [I]} \ ,
\end{split}
\end{equation}
\begin{equation} \label{d0a0}
\begin{split}
\| \partial_t \A_0^{2}(\phi_1,\phi_2) \|_{L^p \dot H^{\frac{1}{p}}_{cd}[I]} \lesssim & \
\|\phi_1\|_{\DS_c[I]} \|\phi_2\|_{\DS_c[I]}
\\
\| \partial_t \A_0^{3}(\phi_1,\phi_2,A_x) \|_{L^p \dot H^{\frac{1}{p}}_{cde}[I]} \lesssim & \
\|\phi_1\|_{\DS_c[I]} \|\phi_2\|_{\DS_d[I]} \nrm{A_{x}}_{\DS_{e}[I]} \ .
\end{split}
\end{equation}

\item In addition, if  $\phi_1$  is $\veps$-energy dispersed then
\begin{equation} \label{a0-disp}
\begin{split}
\| \A_0(\phi_1,\phi_2,A_0) \|_{L^{p} \dot H^{1+\frac{1}{p}}_c[I]} \lesssim & \ \veps^{\dlt_{1}} \|\phi_1\|_{S^1[I]} \|\phi_2\|_{S^1_c[I]}(1+
\|A_0\|_{L^{p} \dot H^{1+\frac1p} [I]} )
\\
\| \partial_t \A_0(\phi_1,\phi_2,A_x) \|_{L^p \dot H^{\frac{1}{p}}_c[I]} \lesssim & \ \veps^{\dlt_{1}}
\|\phi_1\|_{S^1[I]} \|\phi_2\|_{S^1_c[I]}(1+\|A_x\|_{S^1[I]} ) \ .
\end{split}
\end{equation}
\end{enumerate}
\end{proposition}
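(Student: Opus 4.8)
The plan is to reduce each of the four estimates to a single product bound, and then prove that bound by a Littlewood--Paley decomposition combined with H\"older's inequality in the divisible Strichartz components of $\DS$. For the quadratic term $\A_{0}^{2}(\phi_{1},\phi_{2}) = -\Delta^{-1}\Im(\phi_{1}\rd_{t}\overline{\phi_{2}})$ (and its symmetrization), the operator $\abs{D}^{1+\frac1p}\Delta^{-1}$ has symbol $-\abs{\xi}^{-1+\frac1p}$, so \eqref{a0} for this term is equivalent to
\[
\nrm{\abs{D}^{-1+\frac1p}(\phi_{1}\rd_{t}\overline{\phi_{2}})}_{L^{p}L^{2}[I]} \aleq \nrm{\phi_{1}}_{\DS_{c}[I]}\nrm{\phi_{2}}_{\DS_{d}[I]}.
\]
Since $\abs{D}^{\frac1p}\Delta^{-1}\partial^{j}$ again has net order $-1+\frac1p$, the bound \eqref{d0a0} for $\rd_{t}\A_{0}^{2} = -\Delta^{-1}\partial^{j}\Im(\phi_{1}\rd_{j}\overline{\phi_{2}})$ reduces to the same estimate with $\rd_{t}\overline{\phi_{2}}$ replaced by a spatial derivative, and the cubic estimates reduce to the analogous product bound with an extra factor $A_{0}$ (resp.\ $A_{x}$). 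In each case I decompose all inputs into Littlewood--Paley pieces: the output frequency $2^{k}$ is comparable to at least the second largest input frequency, and Bernstein's inequality applied to the lowest frequency input produces an off-diagonal gain $2^{-\dlt\max_{i}\abs{k-k_{i}}}$, so the frequency envelope subscripts are recovered from the single-$k$ estimate by dyadic summation, exactly as in Proposition~\ref{p:axnl-high-mod}.

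For the single-$k$ bound I use the two divisible Strichartz pieces of $\DS$, namely $\nrm{\abs{D}^{\frac16}\phi}_{L^{2}L^{6}}$, $\nrm{\abs{D}^{\frac56}\phi}_{L^{10}L^{\frac{30}{13}}}$ for undifferentiated inputs and $\nrm{\abs{D}^{-\frac56}\rd_{t}\phi}_{L^{2}L^{6}}$, $\nrm{\abs{D}^{-\frac16}\rd_{t}\phi}_{L^{10}L^{\frac{30}{13}}}$ for the input carrying a derivative. Interpolating between these two endpoints in both the time and the space exponent places each factor in an $L^{a}L^{b}$ space with $a\in[2,10]$ chosen so that the time exponents add to $p$, while Bernstein's inequality in $x$ (using that the product is localized at frequency $\aleq 2^{k}$) makes up the discrepancy between $\abs{D}^{-1+\frac1p}:L^{b'}_{x}\to L^{2}_{x}$ and the actual spatial exponents. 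In the cubic estimates the third factor is inserted through the Sobolev embedding $\dot{H}^{1+\frac1p}(\bbR^{4})\hookrightarrow L^{q(p)}$ for $A_{0}$, respectively through its own $\DS$ Strichartz pieces for $A_{x}$. The high modulation component $\nrm{\abs{D}^{-\frac12}\Box\phi}_{L^{2}L^{2}}$ of $\DS$ is needed only for the modulation-localized contributions where the Strichartz norms do not directly apply, as in Proposition~\ref{p:axnl-high-mod}.

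For the $\veps$-energy dispersed bounds \eqref{a0-disp} I let $\phi_{1}$ be the dispersed factor: wherever $\phi_{1}$ or $\rd_{t}\phi_{1}$ was estimated above in a $\DS$ Strichartz norm, I use instead a nearby non-sharp admissible pair (e.g.\ one built from $(6,4)$, for which the sharpness condition is strict and $q\neq2$), so that Lemma~\ref{l:eps-ed-int} supplies the gain $\veps^{\dlt_{1}}\nrm{\phi_{1}}_{S^{1}}$ at the cost of an arbitrarily small loss in the frequency exponent, absorbed by the off-diagonal decay. The remaining factors are then controlled by the stronger right-hand norms $\nrm{\phi_{2}}_{S^{1}_{c}}$, $\nrm{A_{x}}_{S^{1}}$, $\nrm{A_{0}}_{L^{p}\dot{H}^{1+\frac1p}}$ appearing in \eqref{a0-disp}, recalling that $S^{1}$ dominates every admissible Strichartz norm, including sharp ones; the factor $1+\nrm{A_{0}}$ (resp.\ $1+\nrm{A_{x}}_{S^{1}}$) accounts for the quadratic part of $\A_{0}$, which involves no $A$. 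Since Lemma~\ref{l:eps-ed-int} controls the pair $(P_{k}\phi_{1},2^{-k}P_{k}\rd_{t}\phi_{1})$, the argument is unaffected when the derivative (from the symmetrization or from $\rd_{x}$) falls on the dispersed factor; and reasoning as in Remark~\ref{rem:disp-freq-loc} these bounds persist under frequency projection of $\phi_{1}$.

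The main obstacle is the exponent bookkeeping required to obtain the estimates uniformly over the whole range $2\leq p\leq\infty$ while keeping only the weak norm $\DS$ --- deliberately chosen to stay away from the $L^{\infty}L^{2}$ (energy) endpoint --- on the right. For $p$ close to $2$ one simply splits the time integrability between the two $\DS$ factors, but this fails for large $p$; the cleanest remedy is to prove \eqref{a0}, \eqref{d0a0} at $p=2$ via the $L^{2}$-in-time pieces of $\DS$ and then propagate to larger $p$ by interpolation within the family $L^{p}\dot{H}^{1+\frac1p}$, treating its $L^{\infty}$ endpoint through the fixed-time elliptic bounds of Lemma~\ref{l:ell}. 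A secondary point is the circular appearance of $A_{0}$ on both sides of the cubic estimate in \eqref{a0}; this is harmless, since downstream the bound is used only in a smallness-absorption argument and is therefore needed exactly in the stated form. Finally one must check that the off-diagonal factor $2^{-\dlt\max_{i}\abs{k-k_{i}}}$ stays strictly positive in every frequency configuration (high--low, low--high, balanced, and the three trilinear analogues), which is precisely where the two-derivative smoothing from $\Delta^{-1}$ is essential.
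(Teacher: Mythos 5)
Your core strategy --- Littlewood--Paley decomposition, Bernstein for the off-diagonal gain, H\"older in time via the two Strichartz components of $\DS$, and Lemma~\ref{l:eps-ed-int} for the energy-dispersed gain --- is exactly what the paper points to when it says the proof is ``similar to Proposition~\ref{p:axnl-high-mod}''. So the framework is the right one.

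There is, however, a genuine gap in how you propose to reach the upper end of the $p$-range. After interpolating the $L^{2}L^{6}$ and $L^{10}L^{30/13}$ pieces of $\DS$, each factor $\phi_{i}$ lands in $L^{a_{i}}_{t}L^{b_{i}}_{x}$ only for $a_{i}\in[2,10]$; the H\"older split $\frac1p=\frac1{a_1}+\frac1{a_2}$ therefore produces an $L^{p}_{t}$ output only for $p\leq 5$. Your proposed remedy --- supply the $p=\infty$ endpoint via Lemma~\ref{l:ell} and then interpolate --- cannot work with only $\DS$ on the right. Lemma~\ref{l:ell} is a fixed-time elliptic estimate: to bound $\nrm{\A_{0}^{2}(t)}_{\dot H^{1}}$ it needs $\nrm{\phi_{1}(t)\rd_{t}\overline{\phi_{2}}(t)}_{\dot H^{-1}}$, hence $\nrm{(\phi_{i},\rd_{t}\phi_{i})(t)}_{\dot H^{1}\times L^{2}}$ at every fixed $t$ --- that is, the $L^{\infty}_{t}L^{2}_{x}$ energy component of $S^{1}$. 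This is precisely the component that $\DS$ was built to drop in order to be divisible, and it does not appear on the right of \eqref{a0}. Without a valid $p=\infty$ endpoint, the interpolation has nothing to interpolate against, and your argument covers only the Strichartz-accessible range of $p$.

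The same H\"older count undermines the cubic step as you describe it: inserting $A_{0}$ via fixed-time Sobolev as $A_{0}\in L^{p}_{t}L^{q(p)}_{x}$ spends the whole $L^{p}_{t}$ budget on $A_{0}$ and forces $\phi_{1}\overline{\phi_{2}}$ into $L^{\infty}_{t}$, again unavailable from $\DS$. One needs additional time integrability for $A_{0}$ (e.g.\ interpolating the $L^{2}\dot H^{3/2}$ and $L^{\infty}\dot H^{1}$ pieces of $Y^{1}$) in order to leave a share of the $L^p_t$ budget for the $\phi_{i}$ factors. Note that in the paper's applications only the $L^{2}_{t}$ case of these estimates is used (for divisibility of the $L^{2}\dot H^{3/2}$ piece of $Y^1$), with the $L^{\infty}_{t}$ piece obtained separately from Lemma~\ref{l:ell} together with $L^\infty_t$ energy control of $\phi$; so it is worth stating explicitly the range of $p$ (and which norms of $A_0$) your argument actually requires, rather than claiming the full interval.
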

We omit the proof, as it is similar to Proposition~\ref{p:axnl-high-mod}. 


\subsection{The $\phi$ equation}

We will split  the $\phi$ equation into  a leading 
 order paradifferential approximation plus a perturbative part.
The paradifferential approximation  is given by
\begin{equation}
  \Box_{A}^{p,m} \ = \ \Box + 2 i \sum_k P_{<k-m} A^{\alpha} \partial_{\alpha} P_{k} \ . 
\label{para_wave}
\end{equation}
Here we retain the freedom to choose $m$ arbitrarily large later on.
Then the operator $\Box_A$ is written as
\begin{equation}\label{phieq-nonlin}
 \Box_A =  \Box_{A}^{p,m} +  {\mathcal M}^{m}_A
\end{equation}
where $\mathcal M_A^{m} = \mathcal M_{A}^{m, 2} + \mathcal M_{A, A}^{m, 3}$ is given by 
\begin{equation}\label{Mdef}
\begin{split}
\mathcal  M_A^{m, 2} \psi  = &   \ 2 i  \sum_k
  P_{\geq k-m}A^{\alpha}\partial_\alpha P_k  \psi -  i \partial_t A_0 \psi \ ,\\
\mathcal  M_{A, B}^{m, 3} \psi = &  A^{\alpha} B_\alpha \psi \ .
\end{split}
\end{equation}

The operator $\mathcal M_A^m$ will play a perturbative role in our
analysis, just based on the $S^1$ and $L^2 \dot H^\frac12$ bounds for
the coefficients $A_x$, $\nabla A_0$.  Precisely, for its quadratic and cubic parts we have:
\begin{proposition}\label{p:ma}
Let $A, B, \psi$ be test functions defined on a time interval $I$. Let $c, d, e$ be admissible frequency envelopes.

\begin{enumerate}
\item \label{item:ma:1} 
The cubic part $\mathcal  M_{A, B}^{m,3} $ satisfies the bound
\begin{equation}\label{man}
  \| \mathcal  M_{A, B}^{m,3} \psi\|_{N_{c d e}[I]} 
  \lesssim 2^{Cm} \| (A_x, \nb A_{0})\|_{(\DS \times L^{2} \dot{H}^{\frac{1}{2}})_{c}[I]}
  				\| (B_x, \nb B_{0})\|_{(\DS \times L^{2} \dot{H}^{\frac{1}{2}})_{d}[I]} \|\psi\|_{S^1_e}
\end{equation}
where $\| (A_x, \nb A_{0})\|_{(\DS \times L^{2} \dot{H}^{\frac{1}{2}})_{c}[I]}$ is a shorthand for $(\|A_x\|_{\DS_{c}[I]} + \|  \nabla A_0\|_{L^2 \dot H^{\frac12}_{c}[I]})$.

\item \label{item:ma:2}
The quadratic part $\mathcal  M_A^{m,2} $
admits a decomposition 
\begin{equation}\label{man-dec}
  \mathcal M_A^{m,2} =  \mathcal M_{A,small}^{m,2}+ \mathcal M_{A,large}^{m,2}
\end{equation}
so that we have
\begin{equation}\label{man-small}
  \| \mathcal  M_{A,small}^{m,2} \psi\|_{N_{cd}[I]} 
  \lesssim 2^{-cm} \| (A_x, \nb A_{0})\|_{(S^{1} \times L^{2} \dot{H}^{\frac{1}{2}})_{c}[I]} \|\psi\|_{S^1_d[I]}
\end{equation}
while 
\begin{equation}\label{man-large}
  \| \mathcal  M_{A,large}^{m,2} \psi\|_{N_{cd}[I]} 
    \lesssim 2^{Cm} \| (A_x, \nb A_{0})\|_{(\DS \times L^{2} \dot{H}^{\frac{1}{2}})_{c}[I]} \|\psi\|_{S^1_d[I]} \ .
\end{equation}

\item \label{item:ma:3} Further, if $\psi$ is $\veps$-energy dispersed and obeys \eqref{box-phi}, then the
quadratic and cubic parts of $\mathcal M_A^m $ satisfy
\begin{equation}\label{man-disp}
  \| \mathcal  M_A^{m,2} \psi\|_{N_c[I]} 
   \lesssim \bb( 2^{Cm} \veps^{\dlt_{1}} \nrm{A_{x}}_{S^{1}_{c}[I]} + \| \nb A_{0}\|_{L^{2} \dot{H}^{\frac{1}{2}}_{c}[I]} \bb) \|\psi\|_{S^1[I]} 
\end{equation}
as well as 
\begin{equation}\label{man-dispa}
  \| \mathcal  M_{A, B}^{m,3} \psi\|_{N_{cd}[I]} 
   \lesssim 2^{Cm} \veps^{\dlt_{1}} \| (A_x, \nb A_{0})\|_{(S^{1} \times L^{2} \dot{H}^{\frac{1}{2}})_{c}[I]}
  				\| (B_x, \nb B_{0})\|_{(S^{1} \times L^{2} \dot{H}^{\frac{1}{2}})_{d}[I]} \|\psi\|_{S^1[I]} \ .
\end{equation}
\end{enumerate}
\end{proposition}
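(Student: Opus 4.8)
The plan is to reduce all three parts to dyadic bilinear and trilinear estimates of the type established in \cite{Krieger:2012vj}, supplemented by the off-diagonal frequency decay those estimates provide and, for part~\ref{item:ma:3}, by Lemma~\ref{l:eps-ed-int}; throughout I write $\psi_k=P_k\psi$, $A_{k'}=P_{k'}A$ and use the explicit formulas \eqref{Mdef}. Consider first the cubic term of item~\ref{item:ma:1}. Since $\mathcal M_{A,B}^{m,3}\psi=A^\alpha B_\alpha\psi$ carries no $m$-dependence, the factor $2^{Cm}$ in \eqref{man} is only a safe overestimate; I would split $A^\alpha B_\alpha=A_jB^j-A_0B_0$. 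For the spatial product one places $A_j,B^j$ in divisible Strichartz norms controlled by $\DS$, so that after a Littlewood--Paley decomposition $A_jB^j$ lies in some $L^pL^q$ with $p<\infty$ and off-diagonal decay coming from Bernstein at the lowest frequency; for the temporal product one uses $A_0,B_0\in L^2\dot H^{\frac32}\subseteq L^2L^8$ (equivalently $\nb A_0,\nb B_0\in L^2\dot H^{\frac12}$). In either case multiplying by $\psi$ in a suitable Strichartz norm controlled by $S^1$ lands in $L^1L^2\subseteq N$, the output frequency envelope being dominated by the product of the input envelopes at the relevant index; this gives \eqref{man}. The dispersed bound \eqref{man-dispa} is the same computation, except that the factor $\psi$ (or a derivative of it) is always placed in a \emph{non-sharp} Strichartz norm, so that Lemma~\ref{l:eps-ed-int} supplies the gain $\veps^{\dlt_1}$ once one first splits off $Q_{\geq j}\psi$ using \eqref{box-phi}; divisibility of $A,B$ is then unnecessary, so their full $S^1$ and $L^2\dot H^{\frac12}$ norms suffice.

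For the quadratic term, the summand $-i\partial_tA_0\psi$ of \eqref{Mdef} is a pure product, bounded by $\|\partial_tA_0\|_{L^2\dot H^{\frac12}}\|\psi\|_{S^1}$ into $N$ just as in the cubic case; I would place it entirely in $\mathcal M_{A,large}^{m,2}$, which also accounts for the second summand on the right of \eqref{man-disp}. For the main piece $2i\sum_k P_{\geq k-m}A^\alpha\partial_\alpha P_k\psi$ one runs a Littlewood--Paley trichotomy in the output frequency $k'$, the frequency $k_A\geq k-m$ of $A$, and the frequency $k$ of $\psi$, and lets $g$ be the gap between $k'$ and the smallest input frequency ($g=k-k'$ in the high$\times$high$\to$low case, $g=k_A-k$ when $A$ is strictly the higher factor). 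I would declare the contributions with $g\geq m$ to be $\mathcal M_{A,small}^{m,2}$ and those with $g<m$ to be $\mathcal M_{A,large}^{m,2}$. For the small part one invokes the basic \cite{Krieger:2012vj} bound for the null expression $A^\alpha\partial_\alpha(\cdot)$ (using $\partial^j A_j=0$), which controls $\|P_{k'}(A_{k_A}^\alpha\partial_\alpha\psi_k)\|_{N_{k'}}$ by $2^{-\dlt g}\|A_{k_A}\|_{S^1}\|\psi_k\|_{S^1}$; since $\dlt\gg c$ in our constant hierarchy, summing the remaining shells gives \eqref{man-small}, and rerunning it with the additional dispersion gain contributes $\lesssim 2^{-cm}\veps^{\dlt_1}\|A_x\|_{S^1_c}\|\psi\|_{S^1}$ toward \eqref{man-disp}. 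Note that the genuinely paradifferential low--high interactions ($A$ at frequency below $\psi$) occur here only for $k_A\in[k-m,k]$, hence with $g\leq m$, so they fall entirely into the large part and never require renormalization. For the large part there are only $O(m)$ admissible frequency triples per output shell, so it is enough to bound one term $\|P_{k'}(A_{k_A}^\alpha\partial_\alpha\psi_k)\|_N$ crudely, with $A_{k_A}$ in a divisible Strichartz norm controlled by $\DS$ and $\partial_\alpha\psi_k$ in a Strichartz norm controlled by $S^1$ --- the generous Strichartz range in $4+1$ dimensions closes the exponents in $L^1L^2\subseteq N$ --- and summing the $O(m)$ shells produces the $2^{Cm}$ of \eqref{man-large}. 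For \eqref{man-disp} one repeats this but places a derivative of $\psi$ in a non-sharp Strichartz norm and uses Lemma~\ref{l:eps-ed-int} (again after separating $Q_{\geq j}\psi$ via \eqref{box-phi}) to extract $\veps^{\dlt_1}$, retaining the full $S^1$ norm of $A_x$.

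I expect the difficulty here to be organizational rather than any single hard estimate. The first delicate point is to arrange the small/large split so that exactly those interactions for which the \cite{Krieger:2012vj} bilinear estimates lack off-diagonal decay --- the near-diagonal and near-paradifferential interactions of $A_x$ with $\psi$ --- land in the divisible $\DS$ bucket, so that they can later be made small by subdividing the time interval at the price of the harmless factor $2^{Cm}$. The second is to verify, within the $4+1$-dimensional Strichartz calculus, that the exponents close up both for the null-structure-free balanced term $A_x^\alpha\partial_\alpha\psi$ and for the comparatively weak factor $\partial_tA_0\in L^2\dot H^{\frac12}$ when mapping into $N$, and to check in parallel that Lemma~\ref{l:eps-ed-int} applies to the $\psi$-Strichartz norm occurring in every term of part~\ref{item:ma:3} once the high-modulation piece $Q_{\geq j}\psi$ has been removed via \eqref{box-phi}.
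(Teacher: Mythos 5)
Your treatment of the cubic term $A^\alpha B_\alpha\psi$, of the $\partial_t A_0\,\psi$ summand, and of the unbalanced ($g\geq m$) interactions via \eqref{null} all match the paper's proof in spirit. The gap is in how you propose to handle the \emph{balanced} interactions, i.e.\ the bucket you place entirely into $\mathcal M_{A,\mathrm{large}}^{m,2}$.

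You claim that for $k_A\sim k\sim k'$ one can bound $\|P_{k'}(A_{k_A}^j\partial_j\psi_k)\|_N$ crudely by $\|A_{k_A}\|_{\DS}\,\|\psi_k\|_{S^1}$ because ``the generous Strichartz range in $4+1$ dimensions closes the exponents in $L^1L^2\subseteq N$.'' This is false for balanced frequencies. If one writes $\|A_{k_A}\partial_j\psi_k\|_{L^1L^2}\lesssim\|A_{k_A}\|_{L^{q_1}L^{r_1}}\|\nabla\psi_k\|_{L^{q_2}L^{r_2}}$, then one needs $\tfrac1{q_1}+\tfrac1{q_2}=1$ and $\tfrac1{r_1}+\tfrac1{r_2}\geq\tfrac12$ (with no help from Bernstein, since the frequencies are comparable). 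But admissibility of $(q_i,r_i)$ in $\bbR^{1+4}$ requires $\tfrac1{q_i}+\tfrac{3/2}{r_i}\leq\tfrac34$, and adding these two constraints gives $\bigl(\tfrac1{q_1}+\tfrac1{q_2}\bigr)+\tfrac32\bigl(\tfrac1{r_1}+\tfrac1{r_2}\bigr)\geq 1+\tfrac34=\tfrac74>\tfrac32$, a contradiction. The $\DS$ norm consists of sharp Strichartz exponents (and the $\Box^{-1}L^2L^2$ piece, which you are not invoking), so replacing $S^1$ by $\DS$ does not help. Nor does targeting $X^{0,-\frac12}_1$ instead of $L^1L^2$, since low output modulations give an unfavourable factor. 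In short, for the balanced diagonal block $A\cdot\nabla\psi$ you \emph{cannot} land in $N$ by pure H\"older/Strichartz; you must use either the Coulomb null structure or a high modulation somewhere.

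This is precisely what the paper's decomposition does and what your version omits. After isolating the balanced shells, the paper extends $A,\psi$ off $I$ as free waves and further splits by modulation: the piece where all three factors have modulation $< 2^{k-Cm}$ is estimated via \eqref{null-small} (gaining $2^{-cm}$ from the null form) and belongs in $\mathcal M_{A,\mathrm{small}}^{m,2}$, \emph{not} the large part; the complementary piece, where at least one modulation is $\gtrsim 2^{k-Cm}$, is the true ``large'' contribution and is estimated by exploiting that high modulation --- high output modulation puts the product in $X^{0,-\frac12}_1$ via $L^2L^2$ Strichartz; high $\psi$-modulation uses $\|\Box\psi\|_{L^2L^2}$ (this is where \eqref{box-phi} is really needed, not merely as a ``splitting'' device); high $A$-modulation uses the $\Box^{-1}L^{9/5}L^2$ component of $S^1$. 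Until you incorporate this modulation decomposition, the estimate \eqref{man-large} (and a fortiori \eqref{man-disp}) does not close.

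A smaller issue: in \eqref{man-disp} the term $\|\nabla A_0\|_{L^2\dot H^{1/2}_c}\|\psi\|_{S^1}$ appears \emph{without} the factor $\veps^{\delta_1}$; your plan to absorb all of $\partial_t A_0\,\psi$ into the $\veps^{\delta_1}$-small output contradicts that, and indeed the balanced low-high case $\partial_tA_0\,\psi$ cannot be made dispersion-small since the high frequency sits on $\psi$ but it is $A_0$ that must be measured with the envelope. Your proposal should track this distinction.
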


This result is proved in Section~\ref{s:bi}. We remark the different
roles of $\mathcal M_{A,small}^{m,2}$ versus $\mathcal
M_{A,large}^{m,2} $. The first one is small, and thus directly
perturbative. The second is not small, but is instead estimated using
only a divisible norm of $A$; thus we can partition time into finitely
many intervals where it is small.

\bigskip

Our next goal is to compare the operators $ \Box_{A}^{p,m}$ and
$\Box_{A^{free}}^{p,m}$, where we use the convention $A^{free}_{0} = 0$. 
We define the bilinear operator $\Diff_{A}^{m} \psi$ by
\begin{equation*}
	\Diff^{m}_{A} \psi = \Box^{p, m}_{A} - \Box = 2 i \sum_{k} P_{<k-m} A^{\alp} \rd_{\alp} P_{k} \psi.
\end{equation*}
Hence we have the decomposition
\begin{equation*}
	\Box_{A}^{p, m} = \Box_{A^{free}}^{p, m} + \Diff^{m}_{A^{nl}} 
\end{equation*}
  For the last term, we no longer use only the $S^1$ and
$L^2 \dot H^\frac12$ bounds for $A_x$ and $\nabla A_0$, but instead 
we rely on the fact that $A_x$ and $A_0$ come from the equations \eqref{MKGa}, \eqref{MKGa0+}.
Thus, we replace $\Diff^{m}_{A^{nl}} \psi$ with the multilinear operator
\[
\Diff_{\A}^m(\phi,\phi,A) =  2i \sum_k P_{< k-m}  \A^{\alpha} (\phi,\phi,A) 
\partial_\alpha P_k \ .
\]
As before, we extend this operator to a symmetric quadratic form in the first two inputs.  
For the multilinear operator $\Diff_{\A}^{m}(\phi, \phi, A) \psi$, we have the following estimates:
\begin{proposition}\label{p:diff}
Let $\phi_{1}, \phi_{2}, \psi, A$ be test functions on a time interval $I$ containing $0$. Let $c, d, e$ be admissible frequency envelopes.
\begin{enumerate}
\item \label{item:diff:1} 
The quadratic and cubic parts of the operator $
  \Diff_{\A}^m(\phi,\phi,A) $ satisfy the bounds
\begin{equation}\label{diffa}
  \| \Diff_{\A}^{m,2}(\phi_1,\phi_2)  \psi\|_{N_f[I]} \lesssim 
 \|\phi_1\|_{S^1_c[I]} \|\phi_2\|_{S^1_d[I]} \|\psi\|_{S^1_e[I]}
\end{equation}
respectively
\begin{equation}\label{diffaa}
  \| \Diff_{\A}^{m,3} (\phi_1,\phi_2,A)  \psi\|_{N_f[I]} \lesssim 
 \nrm{(A_{x}, \nb A_{0})}_{(DS^{1} \times L^{2} \dot{H}^{\frac{1}{2}})[I]}
 \|\phi_1\|_{\DS_c[I]}\|\phi_2\|_{\DS_d[I]} \|\psi\|_{S^{1}_e[I]}
\end{equation}
where 
\begin{equation} \label{diffa-freqenv}
f(k) = e(k) \| c_{\leq k-m}\|_{\ell^2} \| d_{\leq k-m}\|_{\ell^2} \ .
\end{equation}

\item \label{item:diff:2} Further, for each $m > 0$,  $\Diff_{\A}^{m,2}$  admits a decomposition 
\begin{equation}\label{diffa-dec}
 \Diff_{\A}^{m,2} = \Diff_{\A,small}^{m,2} + \Diff_{\A,large}^{m,2}   
\end{equation}
so that $\Diff_{\A,small}^{m,2} $ satisfies a better bound,
\begin{equation}\label{diffa-small}
  \|\Diff_{\A,small}^{m,2} (\phi,\phi)  \psi\|_{N_c[I]} \lesssim 2^{-cm} \|\phi\|_{S^1[I]}^2 \|\psi\|_{S^1_c[I]}
\end{equation}
while $  \Diff_{\A,large}^{m,2}   $ is estimated directly in a divisible norm,
\begin{equation}\label{diffa-large}
\| \Diff_{\A,large}^{m,2} (\phi,\phi)  \psi\|_{N_c[I]}  \lesssim 2^{Cm} \|\phi\|_{\DS[I]}^2\|\psi\|_{S^1_c[I]}
\end{equation}
\end{enumerate}
\end{proposition}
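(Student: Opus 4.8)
The plan is to prove the three parts in turn, reducing the quadratic estimate \eqref{diffa} to the trilinear paradifferential null-form bound already contained (up to a frequency-envelope refinement) in \cite{Krieger:2012vj}, and deriving the cubic estimate \eqref{diffaa} together with the \emph{large} piece in \eqref{diffa-large} from the divisible-norm bounds of Propositions~\ref{p:ax} and \ref{p:a0} by soft product estimates. As a first step I would record the reduction: since
\[
\Diff_{\A}^{m}(\phi_{1},\phi_{2},A)\psi = 2i\sum_{k} P_{<k-m}\A^{\alp}(\phi_{1},\phi_{2},A)\,\rd_{\alp}P_{k}\psi,
\]
with $\A^{\alp}$ at frequency $<2^{k-m}$ and $\rd_{\alp}P_{k}\psi$ at frequency $\aeq 2^{k}$, each summand is frequency-localized near $2^{k}$; hence by almost orthogonality it suffices to bound $\nrm{P_{<k-m}\A^{\alp}(\phi_{1},\phi_{2},A)\,\rd_{\alp}P_{k}\psi}_{N_{k}[I]}$ for each $k$ and square-sum. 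The shape \eqref{diffa-freqenv} of $f$ then emerges as pure bookkeeping: the output inherits the envelope factor $e(k)$ from $P_{k}\psi$, while the internal frequencies of $\phi_{1},\phi_{2}$ feeding $P_{<k-m}\A^{\alp}$ lie below $k-m$ in the dominant low--low regime, giving the truncated norms $\nrm{c_{\leq k-m}}_{\ell^{2}}$ and $\nrm{d_{\leq k-m}}_{\ell^{2}}$ via Cauchy--Schwarz; the high--high (to low output) regime is controlled by the off-diagonal decay implicit in the bilinear bounds \eqref{axnl}, \eqref{axnl3}, \eqref{a0}, \eqref{d0a0} combined with the slowly-varying property of admissible envelopes.

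For the quadratic part I would substitute the explicit form of $\A^{2}$ and observe that \eqref{diffa} is then exactly a trilinear estimate of the type proved in the parametrix/renormalization analysis of \cite{Krieger:2012vj}. The essential structural input is the null structure of $\A^{2,\alp}(\phi_{1},\phi_{2})\,\rd_{\alp}P_{k}\psi$: for spatial $\alp$ the operator $\A_{x}^{2}=-\tfrac12\Box^{-1}\calP(\phi_{1}\rd_{x}\overline{\phi_{2}}+\overline{\rd_{x}\phi_{1}}\,\phi_{2})$ carries the divergence-free constraint of $\calP$, which contracts against $\rd_{j}$ to produce the cancellation in parallel (null-cone aligned) interactions needed to land the product in $N_{k}$; for $\alp=0$ the term $\A_{0}^{2}=-\Delta^{-1}\Im(\phi_{1}\rd_{t}\overline{\phi_{2}})$ is elliptic, concentrated at modulation $\aleq$ frequency, with $L^{\infty}\dot H^{1}$ and $L^{2}\dot H^{3/2}$ control as in \eqref{a0-start}, and is paired with $\rd_{t}P_{k}\psi$ by Strichartz and null-frame estimates, the high-modulation part of the output being absorbed into the $X_{1}^{0,-\frac12}$ component of $N_{k}$. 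Reproducing this bound requires the full machinery of the $S^{1}$, $N$ spaces --- separation of the two characteristic cones, modulation decomposition, bilinear $L^{2}$ estimates for the currents, and the null-frame norms --- while the envelope refinement sits on top as a bookkeeping layer.

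For the cubic part $\Diff_{\A}^{m,3}(\phi_{1},\phi_{2},A)\psi$ no null structure or frequency gap is needed, since $\A_{x}^{3}$ and $\A_{0}^{3}$ are genuinely cubic and better behaved. I would bound $P_{<k-m}\A^{3}$ in a divisible Strichartz/energy-type norm using \eqref{axnl3}, \eqref{a0}, \eqref{d0a0} (exploiting for the temporal part that $A_{0}$ is elliptic), pair it with $\rd_{\alp}P_{k}\psi$ in a dual Strichartz norm controlled by $S_{k}^{1}$, put the product in $L^{1}L^{2}\subseteq N_{k}$, and carry out the frequency summation using Bernstein on the low-frequency factor; since the paradifferential cutoff only restricts the frequency content and no gain is sought, the resulting bound \eqref{diffaa} is uniform in $m$. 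For the splitting \eqref{diffa-dec}--\eqref{diffa-large} I would define $\Diff_{\A,small}^{m,2}$ and $\Diff_{\A,large}^{m,2}$ by inserting the decomposition $\A_{x}^{2}=\A_{x,small}^{2}+\A_{x,large}^{2}$ of Proposition~\ref{p:ax}\ref{item:ax:2}, routing the always-divisible $\A_{0}^{2}$ into the \emph{large} part; the \emph{small} piece inherits the $2^{-cm}$ gain of \eqref{axnl-small} when fed into the trilinear estimate of the previous paragraph, giving \eqref{diffa-small}, while the \emph{large} piece is estimated exactly as in the cubic case from the $\DS$ bound \eqref{axnl-large}, losing $2^{Cm}$ from the frequency localizations, which yields \eqref{diffa-large}.

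The main obstacle is the quadratic null-form estimate \eqref{diffa}: it is essentially the trilinear bound underpinning the renormalization in \cite{Krieger:2012vj}, so its proof inevitably invokes the full null-frame apparatus of the $S^{1}$, $N$ spaces. By comparison the elliptic $A_{0}$ contributions, the entire cubic analysis, and the \emph{small}/\emph{large} decomposition are soft; the one delicate bookkeeping point throughout is the passage from the bilinear frequency envelopes in \eqref{axnl}, \eqref{axnl3}, \eqref{a0}, \eqref{d0a0} to the truncated envelope \eqref{diffa-freqenv}.
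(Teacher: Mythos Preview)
Your treatment of part \ref{item:diff:1} is fine in outline --- the paper agrees that \eqref{diffa} and \eqref{diffaa} are essentially the trilinear bounds from \cite{Krieger:2012vj}, with the envelope \eqref{diffa-freqenv} being bookkeeping. The gap is in part \ref{item:diff:2}.

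You propose to define $\Diff_{\A,small}^{m,2}$ and $\Diff_{\A,large}^{m,2}$ by inserting the bilinear splitting $\A_{x}^{2}=\A_{x,small}^{2}+\A_{x,large}^{2}$ from Proposition~\ref{p:ax}\ref{item:ax:2}, and then to ``feed'' the $2^{-cm}$ gain of \eqref{axnl-small} into the trilinear estimate \eqref{diffa}. This does not work, for the simple reason that the trilinear bound \eqref{diffa} does \emph{not} factor through any $S^{1}$ or $N$ norm of $\A^{2}$. There is no estimate of the form $\nrm{\Diff_{A}^{m}\psi}_{N}\aleq \nrm{A_{x}}_{S^{1}}\nrm{\psi}_{S^{1}}$; the best bilinear bound available is \eqref{imh}, which needs $\nrm{A_{x}}_{\ell^{1}S^{1}}$ (and this $\ell^{1}$ is exactly what forces the parametrix in Section~\ref{s:para}, cf.~\eqref{null(Afree)}). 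So knowing $\nrm{\nb\A_{x,small}^{2}}_{S}\aleq 2^{-cm}$ buys you nothing for \eqref{diffa-small}, and the $N$-bound \eqref{axnl-large} on the large piece does not produce \eqref{diffa-large} either.

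What the paper actually does in Section~\ref{s:multi} is decompose the \emph{trilinear} expression directly, not the intermediate $\A^{2}$. The ingredients are: the $\H^{\ast}/(I-\H^{\ast})$ modulation splitting from \cite{Krieger:2012vj}; the auxiliary $Z$ norm and its improved low-modulation bounds \eqref{axz-low}, \eqref{a0z-low}; a genuinely trilinear cancellation estimate \eqref{cubic} for the piece $\H^{\ast}\Diff^{m}_{\H^{m}\A^{2}}\psi$; and a frequency-balance decomposition $\A^{2}=\A^{2,hh(m)}+\A^{2,med(m)}+\A^{2,hl(m)}$. The $hh(m)$ and $hl(m)$ parts go into the small piece via off-diagonal decay. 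The temporal $med(m)$ part and the high-modulation portion of the spatial $med(m)$ part are placed in the large piece by bounding $\A^{2}$ directly in $L^{1}L^{\infty}$ (not in $N$ or $S$), which then pairs with $\nrm{\nb\psi_{k}}_{L^{\infty}L^{2}}$ to land in $L^{1}L^{2}\subset N$. An additional subtlety is that the modulation cutoffs are nonlocal in time, so on a compact interval $I$ one has to extend $\phi$ outside $I$ as free waves and control the tails via Proposition~\ref{p:ext}. None of this is visible from the bilinear decomposition in Proposition~\ref{p:ax}.
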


This result is proved in Section~\ref{s:multi}. Again, we remark that
the large part is estimated using a divisible norm, which can be made
small by subdividing the time interval. We also remark that here we
are concerned with unbalanced frequency interactions, so the energy
dispersion plays no role.

\bigskip

For the gradient terms in $\Box_{A^{free}}^{p,m}$  we only have the following 
dyadic bound from \cite{Krieger:2012vj}:

\begin{proposition}
For a divergence free homogeneous wave $A$ we  have the dyadic bound
\begin{equation}\label{null(Afree)}
	\| P_{k} A^j \partial_j P_{l} \psi\|_{N_l} \lesssim \|P_{k} A[0]\|_{E} \|  P_{l} \psi\|_{S^1}, \qquad \hbox{ for }k < l .
\end{equation} 
\end{proposition}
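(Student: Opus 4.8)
The plan is to treat \eqref{null(Afree)} as a low--high null--form estimate, exploiting the divergence--free condition $\rd^{j}A_{j} = 0$, and to reduce it to the cone--geometry and null--frame arguments already present in \cite{Krieger:2012vj}. First I would use the scaling symmetry to set $l = 0$, so that $k < 0$, $A$ is a divergence--free free wave with spatial frequency $\aeq 2^{k} \dless 1$, and I may normalize $\nrm{P_{k}A[0]}_{E} = 1$, $\nrm{P_{0}\psi}_{S^{1}} = 1$; the goal becomes $\nrm{P_{k}A^{j}\rd_{j}P_{0}\psi}_{N_{0}} \aleq 1$. Since $\Box A = 0$, the linear estimate \eqref{lin-S} gives $\nrm{\nb P_{k}A}_{S_{k}} \aleq 1$ and, more importantly, $\wht{A}$ is supported exactly on the light cone $\set{\tau = \pm\abs{\xi}}$. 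Recalling the convention $A^{free}_{0} = 0$ used in $\Box_{A^{free}}^{p,m}$, the quantity $P_{k}A^{j}\rd_{j}P_{0}\psi$ is a low--high product with output frequency $\aeq 1$ up to rapidly decaying tails, which are harmless in $N_{0}$. I would then decompose $A$, $P_{0}\psi$ and the output into $\pm$ wave components and dyadic modulations $Q^{\pm}_{j}$, and reduce matters to bounding each resulting piece with a gain that is summable in the modulation indices.

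The crux is the null structure. Because $\rd^{j}A_{j} = 0$, one has $\abs{\wht{A^{j}}(\eta)\,\xi_{j}} \aleq \abs{\wht{A}(\eta)}\,\abs{\xi}\,\angle(\eta, \xi)$, i.e.\ the bilinear symbol vanishes to first order when the spatial frequency of $\psi$ is parallel to that of $A$; equivalently, extending $A$ by $A_{0} = 0$ makes it a spacetime divergence--free $1$--form supported on the null cone, so that $A^{\alpha}\rd_{\alpha}\psi$ is a genuine null form. Since $A$ sits exactly on the cone, the standard geometry of the cone shows that for the nontrivial (same--signed) interaction, if the output and the $\psi$--factor have modulation $\aleq 2^{j_{\max}}$ then $\angle(\eta, \xi) \aleq 2^{\frac{1}{2}(j_{\max} - k)}$; in the opposite--signed case the output modulation is automatically $\aleq 2^{k}$, which places it in the hyperbolic regime below. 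After a further decomposition of $A$ and $P_{0}\psi$ into angular caps of size $2^{l'}$ with $l'$ dictated by $j_{\max}$, the matter reduces to a sum over caps of transversal bilinear interactions each carrying the angular gain $2^{l'}$ coming from $\rd^{j}A_{j} = 0$.

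It remains to estimate the individual pieces, and I would split into two regimes according to the size of the largest modulation. In the elliptic regime $2^{j_{\max}} \gtrsim 1$, I would place the output in the $X_{1}^{0,-\frac{1}{2}}$ component of $N_{0}$ (see \eqref{n}) and bound its $L^{2}L^{2}$ norm by H\"older's inequality, using Bernstein's inequality together with an admissible Strichartz estimate for the low--frequency factor $P_{k}A$ (for instance $\nrm{P_{k}A}_{L^{2}L^{6}} \aleq 2^{-k}$, which follows from $\nrm{\nb P_{k}A}_{S_{k}} \aleq 1$), an admissible Strichartz norm of $\nb P_{0}\psi$ controlled by $\nrm{\psi}_{S^{1}_{0}}$, and the modulation weight $2^{-\frac{1}{2}j_{0}}$; the surplus powers of $2^{k}$ and $2^{j_{\max}}$ produced by Bernstein and by the angular null--form factor provide the smallness and the dyadic summability. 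In the hyperbolic regime $2^{j_{\max}} \dless 1$, I would put $P_{0}\psi$ into the null--frame, $NE$ and square--summed cap components of $S^{\omega}_{0}(l')$ appearing in \eqref{Sl_def}, and estimate each cap of $P_{k}A$ in the dual $PW^{\mp}_{\omega}(l')$ norm, where $P_{k}A$ is a superposition of plane waves controlled by its energy via Bernstein in the two directions transverse to $\omega$. As in the null--form estimates of \cite{Krieger:2012vj} (compare also \cite{MR2657817}), the angular gain $2^{l'}$ from the divergence--free condition compensates the loss coming from the derivative $\rd_{j}$ falling on the high--frequency factor, and produces both the convergence of the cap sum and the dyadic summation in $k$. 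I expect this last regime --- organizing the cap and null--frame bookkeeping so that only the \emph{energy} norm of $A$, rather than $\nrm{\cdot}_{S^{1}}$, is used --- to be the main technical obstacle; it is, however, exactly the content of the corresponding estimate in \cite{Krieger:2012vj}, so \eqref{null(Afree)} may also simply be quoted from there.
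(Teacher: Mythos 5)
Your proposal is correct and takes essentially the same route as the paper: the paper states this as a dyadic bound quoted directly from \cite{Krieger:2012vj} without proof, and you likewise reduce it to (and ultimately cite) the corresponding null--frame bilinear estimate from that reference. Your additional sketch of the underlying mechanism (rescaling to $l=0$, exploiting that $\rd^j A_j=0$ gives the angular gain $\angle(\eta,\xi)$, using the cone support of the free wave $A$ to tie the angle to the modulations, and splitting into an elliptic regime handled by Strichartz/Bernstein and a hyperbolic regime handled by the $PW$/$NE$/cap components of $S_k^\omega(l)$) is a faithful outline of the content of \cite{Krieger:2012vj}, with only a harmless numerological imprecision in the quoted $L^2L^6$ Strichartz constant.
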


Due to the lack of $\ell^2$ dyadic summation with respect to $k$ in the above bound,
 the gradient terms in $\Box_{A^{free}}^{p,m}$ need to be treated in a nonperturbative 
manner.    This issue was addressed in the small data case  in \cite{Krieger:2012vj} 
by  constructing  a microlocal parametrix. Here we adopt the same strategy, but 
using a different source for the smallness, namely the frequency gap $m$:

\begin{theorem}\label{t:para-free}
  Let $\Box_A^{p,m}$ be the paradifferential gauge-covariant wave operator
  defined on line \eqref{para_wave}, and suppose that $\Box
  A^{free}=0$ with $\| A^{free}[0]\|_{\dot{H}^1\times L^2}\leq
  E$. If $m$ is sufficiently large, $m \gg_E 1$,  then we have the linear bound:
  \begin{equation}\label{para-free}
    \| \phi\|_{S^1} \ \lesssim_E \ \|\phi[0]\|_{E} +
 \| \Box_{A^{free}}^{p,m} \phi\|_{N \cap L^{2} \dot{H}^{-\frac{1}{2}} \cap L^{\frac{9}{5}} \dot{H}^{-\frac{4}{9}}}. 
  \end{equation}
\end{theorem}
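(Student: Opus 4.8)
\smallskip
\noindent\textbf{Proof proposal.}
The plan is to adapt the microlocal renormalization (parametrix) construction of Rodnianski--Tao \cite{MR2100060} and Krieger--Sterbenz--Tataru \cite{Krieger:2012vj} to the large-energy regime, extracting the required smallness from the frequency gap $m$ rather than from smallness of the energy $E$. Concretely, the point is to produce, frequency by frequency and on each characteristic cone, a pseudodifferential conjugation which turns the paradifferential gauge-covariant operator $\Box^{p,m}_{A^{free}}$ of \eqref{para_wave} into the flat $\Box$ modulo a perturbative error that becomes negligible once $m \gg_E 1$; the bound \eqref{para-free} then follows from the free linear estimate \eqref{lin} by absorbing that error into the left-hand side.

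First I would reduce to a single dyadic piece $\phi_k = P_k\phi$ and, using $S_k = S_{k,+} + S_{k,-}$ and $N_k = N_{k,+}\cap N_{k,-}$, to a single cone, say $\tau = |\xi|$; the high-modulation part $Q_{\geq k-m}\phi_k$ is perturbative directly via $X^{0,\pm\frac12}_1 \subset S_k,N_k$, so one may assume $\phi_k = Q_{<k-m}\phi_k$. In $\Diff^m_{A^{free}}\phi_k = 2i\sum_k P_{<k-m}A^{free,\alpha}\rd_\alpha P_k\phi$ I would separate the part of $A^{free}$ whose spatial frequency is angularly transversal to the null frequency of $\phi_k$ from the angularly aligned part. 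The transversal interactions enjoy an additional angular gain — coming from the null structure of $\Diff^m$ together with the divergence-free condition $\rd^\ell A^{free}_\ell = 0$ — which restores $\ell^2$-summability in the frequency of $A^{free}$ and makes them directly perturbative (this upgrades the dyadic bound \eqref{null(Afree)}, whose only defect is the missing $\ell^2$ summation in the low frequency); only the angularly aligned, resonant interactions require renormalization.

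Next I would construct the renormalization phase $\psi^\pm_{<k-m}(t,x,\xi)$, built only from the angularly aligned, same-cone part of $A^{free}_{<k-m}$, as the solution of the transport equation $\xi^{\pm,\alpha}\rd_\alpha \psi^\pm = \xi^{\pm,\alpha} A^{free}_{<k-m,\alpha}$ along the null lines dual to $\xi^\pm = (\pm|\xi|,\xi)$; the divergence-free structure of $A^{free}$ provides the angular null-form cancellation in $\xi^{\pm,\alpha}A^{free}_\alpha$ that, after an appropriate angular decomposition, renders $\psi^\pm$ and its relevant symbol derivatives controllable purely in terms of $E$. Writing $e^{i\psi^\pm_{<k-m}}$ for the associated pseudodifferential operator and $u_k^\pm = (e^{i\psi^\pm_{<k-m}})^{-1}(Q_\pm\phi_k)$ for the renormalized variable, the two technical inputs are: (i) $e^{\pm i\psi^\pm_{<k-m}}$, its inverse, and their first-order symbol-derivative variants are bounded on $S_k$, $N_k$ and $N^\ast_k$ with constants $\aleq_E 1$ (so that $\nrm{\phi_k}_{S^1_k} \aeq_E \nrm{u_k^\pm}_{S^1_k}$, and similarly for the $N$-side norms including $L^{2}\dot{H}^{-\frac12}$ and $L^{\frac95}\dot{H}^{-\frac49}$), and they act essentially diagonally in frequency so that the $\ell^2$ summation in $k$ is preserved up to a constant; and (ii) the conjugation identity
\[
\Box u_k^\pm = (e^{i\psi^\pm_{<k-m}})^{-1}\, Q_\pm\bb(\Box^{p,m}_{A^{free}}\phi_k\bb) + \mathcal{E}_k^\pm,
\]
where the error $\mathcal{E}_k^\pm$ collects the mismatch between the exact aligned magnetic interaction and the transport equation solved by $\psi^\pm$, the quadratic-in-$\psi^\pm$ commutator terms from $\Box e^{i\psi^\pm}$, and the quadratic self-interaction $A^{free,\alpha}A^{free}_\alpha$; each of these admits a bound in $(N \cap L^{2}\dot{H}^{-\frac12}\cap L^{\frac95}\dot{H}^{-\frac49})_k$ of the form $2^{-\dlt m}\, C(E)\, \nrm{\phi}_{S^1}$.

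Finally I would assemble the estimate: apply \eqref{lin} to $u_k^\pm$, move $(e^{i\psi^\pm_{<k-m}})^{-1}$ across the right-hand side using (i), sum over $k$ and $\pm$ using the near-diagonality, and invoke (ii) to obtain
\[
\nrm{\phi}_{S^1} \aleq_E \nrm{\phi[0]}_{E} + \nrm{\Box^{p,m}_{A^{free}}\phi}_{N \cap L^{2}\dot{H}^{-\frac12}\cap L^{\frac95}\dot{H}^{-\frac49}} + 2^{-\dlt m}\, C(E)\, \nrm{\phi}_{S^1};
\]
choosing $m$ large depending on $E$ absorbs the last term into the left-hand side and yields \eqref{para-free}. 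The main obstacle is steps (i) and (ii): establishing that the renormalization operators are bounded on the \emph{full} $S_k$ norm — in particular on the null-frame, $P\!W$ and $N\!E$ components of \eqref{Sl_def} — and that the conjugation error genuinely gains a factor $2^{-\dlt m}$ with all $E$-dependence cleanly separated out. This is the technical heart of the matter and amounts to a large-data refinement of the parametrix constructions in \cite{Krieger:2012vj, MR2100060}, where one must carefully track the $m$-gain (rather than a gain coming from the smallness of $E$) throughout the oscillatory-integral and angular-decomposition estimates.
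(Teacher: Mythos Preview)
Your overall strategy matches the paper's: reduce to a frequency-localized problem, construct a pseudodifferential renormalization $e^{\pm i\psi_{\pm}}$ that conjugates $\Box^{p,m}_{A^{free}}$ to $\Box$ up to errors gaining $2^{-\delta m}$, borrow the $E$-dependent boundedness statements (your item (i)) from \cite{Krieger:2012vj}, and close by absorption. The key point---that the smallness now comes from the frequency gap $m$ rather than from small energy---is exactly right, as is the final assembly.

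However, you have the angular decomposition inverted, and this is a genuine gap. You propose to build $\psi^{\pm}_{<k-m}$ from the \emph{angularly aligned} part of $A^{free}$ and to treat the \emph{transversal} part as directly perturbative via the null structure. In the paper (and in \cite{Krieger:2012vj, MR2100060}) it is the opposite: the phase is built from the \emph{large-angle} part,
\[
\psi_{\ell,\pm} \;=\; \pm\, \oL_{\pm}\, \opLap^{-1}\, \oPi_{>2^{\sigma\ell}}(\omega \cdot P_\ell A),
\]
and the \emph{small-angle} remainder $\oPi_{\leq 2^{\sigma\ell}}(\omega \cdot P_\ell A)$ is the leading parametrix error. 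The reason is that $\opLap = \Delta - (\omega\cdot\nabla_x)^2$ degenerates in the $\omega$-direction, so $\opLap^{-1}$ applied to the aligned part is unbounded; excluding angles $\aleq 2^{\sigma\ell}$ is precisely what makes $\psi_{\pm}$ a usable symbol (this is the ``symbol smoothing'' referred to in Section~\ref{s:para}). Correspondingly, the null-form gain from the Coulomb gauge, $|\omega\cdot\hat A(\eta)| \aleq \angle(\omega,\eta)\,|\hat A(\eta)|$, is \emph{small} for aligned frequencies, not transversal ones; it is what makes the small-angle remainder perturbative with a gain of order $2^{-\frac12\sigma m}$ (the $\Diff_1$ term in the paper's error analysis). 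As you have written it, the transport equation for $\psi^{\pm}$ cannot be solved with controlled symbol bounds.

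A secondary point: you invoke an exact inverse $(e^{i\psi^\pm_{<k-m}})^{-1}$ and a renormalized variable $u_k^\pm$. The paper does not construct an inverse; it proves \emph{approximate unitarity}, namely that $e^{-i\psi_\pm}_{<0}(t,x,D)\, e^{i\psi_\pm}_{<0}(D,y,s) - I$ maps $L^2_x \to 2^{-(1-\delta_0)m} L^2_x$ and $N_0 \to 2^{-\delta_1 m} N_0$, and then packages the argument as an approximate parametrix (Theorem~\ref{t:app}) rather than a change of variables. Your scheme becomes correct once you replace the exact inverse by this approximate-unitarity statement, which is in any case the substantive content of your item (i).
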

Section ~\ref{s:para} is devoted to the proof of this result. 

Finally, we end this section with estimates that are relevant for high modulation bounds for $\phi$. As before, no null structure is necessary. 
\begin{proposition} \label{p:phi-high-mod}
Let $A, B, \psi$ be test functions defined on a time interval $I$. For all admissible frequency envelopes $c, d, e$, we have
\begin{align} 
	\nrm{\calM^{m,2}_{A} \psi}_{(L^{2} \dot{H}^{-\frac{1}{2}} \cap L^{\frac{9}{5}} \dot{H}^{-\frac{4}{9}})_{cd}[I]}
	\aleq &\nrm{(A_{x}, \nb A_{0})}_{(\DS \times L^{2} \dot{H}^{\frac{1}{2}})_{c}[I]} \nrm{\psi}_{\DS_{d}[I]} \label{phi-main-hm} \\
	\nrm{\calM^{m, 3}_{A, B} \psi}_{(L^{2} \dot{H}^{-\frac{1}{2}} \cap L^{\frac{9}{5}} \dot{H}^{-\frac{4}{9}})_{cde}[I]}
	\aleq &	\nrm{(A_{x}, \nb A_{0})}_{(\DS \times L^{2} \dot{H}^{\frac{1}{2}})_{c}[I]} 		\label{phi-cubic-hm}  \\
		& \times	\nrm{(B_{x}, \nb B_{0})}_{(\DS \times L^{2} \dot{H}^{\frac{1}{2}})_{d}[I]}
			\nrm{\psi}_{\DS_{e}[I]} \ .  \notag
\end{align}
For every $m > 0$, we also have the bound
\begin{equation} \label{phi-para-hm} 
	\nrm{\Diff^{m}_{A} \psi}_{(L^{2} \dot{H}^{-\frac{1}{2}} \cap L^{\frac{9}{5}} \dot{H}^{-\frac{4}{9}})_{cd}[I]}
	\aleq \nrm{(A_{x}, \nb A_{0})}_{(\DS \times L^{2} \dot{H}^{\frac{1}{2}})_{c}[I]} \nrm{\psi}_{\DS_{d}[I]} 
\end{equation}
with an implicit constant independent of $m$.

In addition, if $\psi$ is $\veps$-energy dispersed, then
\begin{align} 
	\nrm{\calM^{m, 2}_{A} \psi}_{(L^{2} \dot{H}^{-\frac{1}{2}} \cap L^{\frac{9}{5}} \dot{H}^{-\frac{4}{9}})_{c}[I]}
	\aleq & \veps^{\dlt_{1}} \nrm{(A_{x}, \nb A_{0})}_{(S^{1} \times L^{2} \dot{H}^{\frac{1}{2}})_{c}[I]} \nrm{\psi}_{S^{1}[I]} \label{phi-main-hm-eps} \\
	\nrm{\Diff_{A}^{m} \psi}_{(L^{2} \dot{H}^{-\frac{1}{2}} \cap L^{\frac{9}{5}} \dot{H}^{-\frac{4}{9}})_{c}[I]}
	\aleq& \veps^{\dlt_{1}} \nrm{(A_{x}, \nb A_{0})}_{(S^{1} \times L^{2} \dot{H}^{\frac{1}{2}})_{c}[I]} \nrm{\psi}_{S^{1}[I]}  \label{phi-para-hm-eps} \\
	\nrm{\calM^{m, 3}_{A, B} \psi}_{(L^{2} \dot{H}^{-\frac{1}{2}} \cap L^{\frac{9}{5}} \dot{H}^{-\frac{4}{9}})_{cd}[I]}
	\aleq &\veps^{\dlt_{1}} 	\nrm{(A_{x}, \nb A_{0})}_{(S^{1} \times L^{2} \dot{H}^{\frac{1}{2}})_{c}[I]}  \label{phi-cubic-hm-eps}   \\
	& \phantom{\veps^{\dlt_{1}}} \times
					\nrm{(B_{x}, \nb B_{0})}_{(S^{1} \times L^{2} \dot{H}^{\frac{1}{2}})_{d}[I]}
					\nrm{\psi}_{S^{1}[I]} 	\ .										\notag
\end{align}
\end{proposition}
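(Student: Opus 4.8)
The proof will follow the template of Proposition~\ref{p:axnl-high-mod}: since the norms $L^{2}\dot{H}^{-\frac12}$ and $L^{\frac95}\dot{H}^{-\frac49}$ are purely Lebesgue-based, no null structure is needed, and everything reduces to Bernstein's inequality, non-sharp Strichartz estimates and H\"older. First I would reduce each bound to a single Littlewood--Paley piece of the output: for $\calM^{m,2}_{A}\psi$ one expands $P_{k}(\calM^{m,2}_{A}\psi)$ as a sum over $P_{k_{1}}A$, $P_{k_{2}}\psi$; for $\Diff^{m}_{A}\psi$ as the same sum restricted to $k_{1}\leq k_{2}-m$; and for $\calM^{m,3}_{A,B}\psi$ as a sum over $P_{k_{1}}A$, $P_{k_{2}}B$, $P_{k_{3}}\psi$. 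The paradifferential cutoffs $P_{<k-m}$, $P_{\geq k-m}$ merely restrict the range of the $k_{i}$'s and are never otherwise exploited; this is why the constant in \eqref{phi-para-hm} is independent of $m$.

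For a fixed dyadic interaction, the plan is to place the output in $L^{2}L^{2}$ (for the $\dot{H}^{-\frac12}$ component, where a factor $2^{\frac{k}{2}}$ is available) or in $L^{\frac95}L^{2}$ (for the $\dot{H}^{-\frac49}$ component, where $2^{\frac{4k}{9}}$ is available), distribute the factors by H\"older into a product of space--time Lebesgue norms, and estimate each factor: $(P_{k_{2}}\psi, 2^{-k_{2}}\rd_{t}P_{k_{2}}\psi)$ and $P_{k_{1}}A_{x}$ by the $\DS$-controlled Strichartz norms $L^{2}L^{6}$ and $L^{10}L^{\frac{30}{13}}$ (with the $|D|$-weights built into the $\DS$ norm, interpolated and upgraded in the spatial exponent by Bernstein), and $P_{k_{1}}\nb A_{0}$ --- in particular $\rd_{t}A_{0}$, and $A_{0}$ itself after one Bernstein at its frequency --- by the $L^{2}\dot{H}^{\frac12}$ norm together with the Sobolev embedding $\dot{H}^{\frac12}(\bbR^{4})\hookrightarrow L^{\frac83}$. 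The derivative $\rd_{\alpha}$ falling on $P_{k}\psi$ in $\calM^{m,2}$ and $\Diff^{m}$ contributes a factor $2^{k}$, which is exactly matched by the negative $|D|$-weights in the $\DS$ norm. In every case Bernstein applied to the lowest of the participating frequencies produces an off-diagonal gain $2^{-\dlt\max_{i}|k-k_{i}|}$, and combining this with the slowly varying property of admissible frequency envelopes, a routine dyadic summation yields the stated envelope-indexed bounds.

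For the energy-dispersed estimates \eqref{phi-main-hm-eps}, \eqref{phi-para-hm-eps} and \eqref{phi-cubic-hm-eps}, $\psi$ plays the role of the $\veps$-energy dispersed factor. Here I would arrange the H\"older split so that $(P_{k_{2}}\psi, 2^{-k_{2}}\rd_{t}P_{k_{2}}\psi)$ is measured in a \emph{non-sharp} admissible Strichartz pair $(p,q)$ with $p\neq 2$ --- there is ample room to do this while keeping the other factors in admissible ranges --- and then invoke Lemma~\ref{l:eps-ed-int} to extract the factor $\veps^{\dlt_{1}}$ at the price of replacing the $\DS$ norm of $\psi$ by its $S^{1}$ norm. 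The remaining factors are then measured in $S^{1}$-admissible Strichartz norms (recall that $S^{1}$ dominates the full admissible Strichartz family), which explains why the right-hand sides of these three estimates involve the $S^{1}$ norms of $A_{x}$, $B_{x}$ and $\psi$.

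The work is essentially bookkeeping, and the two places where it is tight are the following. First, the $L^{\frac95}\dot{H}^{-\frac49}$ component sits strictly below $L^{2}$ in time, so in the H\"older split one is forced to spend the $L^{10}L^{\frac{30}{13}}$ Strichartz norm (rather than $L^{2}L^{6}$) on one of the factors --- this is precisely the reason the exponent in the $\DS$ norm was taken strictly between $\frac53$ and $2$. Second, the interactions in which the lower-order coefficient $A_{0}$ appears \emph{without} a spatial derivative --- the pieces $A^{0}\rd_{0}P_{k}\psi$ inside $\calM^{m,2}_{A}$ and $\Diff^{m}_{A}$, and the term $-i\rd_{t}A_{0}\psi$ --- are the most delicate, since $\nb A_{0}$ is controlled only in the $L^{2}$-in-time norm $L^{2}\dot{H}^{\frac12}$; for these one uses Bernstein at the (necessarily lower) frequency of $A_{0}$ and exploits the algebraic structure of the operator, grouping $-i\rd_{t}A_{0}\psi$ together with the $A^{0}\rd_{0}$ terms so as to extract a total time derivative of $A_{0}\psi$, and, where needed, using that $A_{0}$ solves the elliptic equation \eqref{MKGa0+}. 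In the energy-dispersed case the only additional point to verify is that a non-sharp Strichartz pair for $\psi$ survives the split, which it does comfortably.
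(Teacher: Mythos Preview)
Your overall template --- dyadic decomposition, H\"older, Bernstein for the off-diagonal gain, and non-sharp Strichartz via Lemma~\ref{l:eps-ed-int} for the energy-dispersed bounds --- is the same as the paper's and is correct for the $A_{x}$ contributions and for the cubic term.

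The genuine gap is in your treatment of the $A_{0}$ contributions. Both devices you propose fail here. First, the ``total time derivative'' grouping does not exist: in $\calM^{m,2}_{A}$ the $A_{0}$ pieces are $-2i\,P_{\geq k-m}A_{0}\,\rd_{t}P_{k}\psi$ and $-i\,\rd_{t}A_{0}\,\psi$, and with coefficients $2$ and $1$ (plus a frequency projection on one but not the other) they cannot be combined into a multiple of $\rd_{t}(A_{0}\psi)$; and even if they could, a pure time derivative gives no gain in the $L^{2}\dot H^{-1/2}$ or $L^{9/5}\dot H^{-4/9}$ norms. Second, the proposition is stated for a \emph{generic} test function $A$, so the elliptic equation \eqref{MKGa0+} is simply not available. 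Your claim that $A_{0}$ is ``necessarily at lower frequency'' is also wrong for $\calM^{m,2}_{A}$ (where the $A$-factor sits at frequency $\geq k-m$) and for the undifferentiated term $-i\,\rd_{t}A_{0}\,\psi$ (which carries no frequency restriction at all).

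The paper's route for the $A_{0}$ terms is both simpler and different from what you outline. For $A_{0}\,\rd_{t}\psi$ and $\rd_{t}A_{0}\,\psi$ one keeps $\nb A_{0}$ in $L^{2}\dot H^{1/2}$ (the only available norm) and places $\psi$ in the non-sharp Strichartz space $L^{\infty}L^{5/2}$; for the cubic $A_{0}$ pieces one uses the trace/interpolation bound $\nrm{A_{0}}_{L^{4}\dot H^{5/4}}\aleq\nrm{\nb A_{0}}_{L^{2}\dot H^{1/2}}$ together with $\psi\in L^{\infty}L^{5/2}$. Since $(\infty,5/2)$ is non-sharp, Lemma~\ref{l:eps-ed-int} applies directly to extract $\veps^{\dlt_{1}}$ from $\psi$, and divisibility of the whole product is inherited from the $L^{2}\dot H^{1/2}$ factor on $\nb A_{0}$. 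Note that this puts $\psi$ in an $S^{1}$-controlled norm rather than in $\DS$ for the $A_{0}$ terms; what matters for the applications is only that the resulting bilinear bound is divisible, and it is.
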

\begin{proof} 
The proof is similar to Proposition~\ref{p:axnl-high-mod} and \ref{p:a0}. We sketch the case of the $L^{2} \dot{H}^{-\frac{1}{2}}$ norm and leave the case of the $L^{\frac{9}{5}} \dot{H}^{-\frac{4}{9}}$ norm (which is a simple variant) to the reader.

Compared to the frequency dyadic estimates in the proof of Proposition~\ref{p:axnl-high-mod}, it suffices to note that the following estimates hold:
\begin{gather*}
	\nrm{P_{k} (A_{k_{1}} \rd_{t} \psi_{k_{2}})}_{L^{2} \dot{H}^{-\frac{1}{2}}}
\!	+ \nrm{P_{k} (\rd_{t} A_{k_{1}} \psi_{k_{2}})}_{L^{2} \dot{H}^{-\frac{1}{2}}}
\!	\aleq  2^{-\dlt \max \set{\abs{k - k_{i}}}} \nrm{\nb A_{k_{1}}}_{L^{2} \dot{H}^{\frac{1}{2}}} \nrm{\abs{D}^{-\frac{2}{5}} \nb \psi_{k_{2}}}_{L^{\infty} L^{\frac{5}{2}}} \\
	\nrm{P_{k}(A_{k_{1}} B_{k_{2}} \psi_{k_{3}})}_{L^{2} \dot{H}^{-\frac{1}{2}}} 
	\aleq  2^{-\dlt \max \set{\abs{k - k_{i}}}} \nrm{A_{k_{1}}}_{L^{4} \dot{H}^{\frac{5}{4}}} 
										\nrm{B_{k_{2}}}_{L^{4} \dot{H}^{\frac{5}{4}}}
										\nrm{\abs{D}^{\frac{3}{5}} \psi_{k_{3}}}_{L^{\infty} L^{\frac{5}{2}}}
\end{gather*}
where we omitted $[I]$ and used the shorthands $A_{k} = P_{k} A_{0}$,
$B_{k} = P_{k} B_{0}$ and $\psi_{k} = P_{k} \psi$.  The off-diagonal
gain is again due to the freedom of choosing where to apply
Bernstein's inequality.  Moreover, by interpolation with the Sobolev
trace theorem, note that
\begin{equation*}
	\nrm{A_{k}}_{L^{4} \dot{H}^{\frac{5}{4}}} \aleq \nrm{\nb A_{k}}_{L^{2} \dot{H}^{\frac{1}{2}}} \, .
\end{equation*}
Since $(\infty, \frac{5}{2})$ is a non-sharp Strichartz exponent, the above estimates suffice for both divisibility and $\veps$-energy dispersed bounds (via Lemma~\ref{l:eps-ed-int}).
\end{proof}

\section{The structure of finite $S^{1}$ norm MKG waves.}
\label{s:large}

Here we consider an MKG solution $(A,\phi)$ on a time interval $I = [0,T]$, with finite 
$S^1$ norm for $(A_x,\phi)$.  Our main result is an accurate characterization 
of such maps:

\begin{theorem}\label{t:structure}
  Let $(A,\phi)$ be an admissible $C_{t} \calH^{1}$ solution to the MKG system
  \eqref{MKG} in the Coulomb gauge \eqref{Coulomb} on the time interval $I = [0, T]$ which has energy
  $E$ and $S^1$ norm $F$, i.e., $\nrm{(A_{x}, \phi)}_{S^{1}[I]} \leq F$. Let $c$ be a frequency envelope for the
  initial data $(A,\phi)[0]$ in the energy space $\dot H^1 \times
  L^2$. Then the following properties hold:

\begin{enumerate}

\item \label{item:structure:1} (Linear well-posedness for $\Box_A$) The linear equation
\[
\Box_A \psi = f, \qquad \psi[0] = \psi_0
\]
is well-posed, with bounds
\begin{equation} \label{boxA-lin}
 \|\psi\|_{S^1_d[I]} \lesssim_F  \|\psi[0]\|_{E_d} + \|f\|_{(N \cap L^{2} \dot{H}^{-\frac{1}{2}} \cap L^{\frac{5}{9}} \dot{H}^{-\frac{4}{9}})_d[I]} 
\end{equation}
for any admissible frequency envelope $d$.

\item \label{item:structure:2} (Frequency envelope bound) The solution $(A,\phi)$ satisfies
\begin{equation}
  \|(A_x, \phi)\|_{S^1_c[I]}   \lesssim_F 1 .
\end{equation}

\item \label{item:structure:3} (Refined Maxwell field bounds) We have
\begin{equation} \label{boxA}
    \| \Box A_i\|_{(N \cap L^{2} \dot{H}^{-\frac{1}{2}} \cap L^{\frac{9}{5}} \dot{H}^{-\frac{4}{9}})_{c^2}[I]}  \lesssim_F 1, \qquad 
\| \nabla A_0\|_{Y^{1}_{c^2}[I]} 
\lesssim_F 1 .
\end{equation}

\item \label{item:structure:4} (Refined scalar field bounds) 
We have 
\begin{equation}\label{boxphi-free}
  \| \Box_{A^{free}}  \phi\|_{(N \cap L^{2} \dot{H}^{-\frac{1}{2}} \cap L^{\frac{9}{5}} \dot{H}^{-\frac{4}{9}})_c [I]}  \lesssim_F  1, 
\end{equation}
and for each $m > 0 $ the following paradifferential estimates hold:
\begin{equation}\label{boxphi}
  \| \Box_A^{p,m}  \phi\|_{(N \cap L^{2} \dot{H}^{-\frac{1}{2}} \cap L^{\frac{9}{5}} \dot{H}^{-\frac{4}{9}})_{c^{2}}[I]}
  + \| \Box_{A^{free}}^{p,m}  \phi\|_{(N \cap L^{2} \dot{H}^{-\frac{1}{2}} \cap L^{\frac{9}{5}} \dot{H}^{-\frac{4}{9}})_c[I]}  \lesssim_F  2^{Cm}  \ .
\end{equation}

\item \label{item:structure:5} (Weak divisibility of $S^1$ norm) There exists a partition  $I = \bigcup_{k = 1}^K  I_k$
with $K \lesssim_F 1$ so that
\begin{equation}
  \|(A, \phi)\|_{S^1[I_k]}  \lesssim_{E} 1
\end{equation}
where the implicit constant is $C (E + E^{2})$ times the constant in Theorem~\ref{t:para-free}.

\end{enumerate}
\end{theorem}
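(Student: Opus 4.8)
The plan is to prove part~\ref{item:structure:5} first, and then to obtain parts~\ref{item:structure:1}--\ref{item:structure:4} from it together with the frequency-enveloped versions of the multilinear estimates of Section~\ref{s:dec} and the parametrix Theorem~\ref{t:para-free}. Some preliminaries first. By conservation of the gauge-invariant energy $\calE[A,\phi]$ and Proposition~\ref{p:energy}, $\nrm{(A_x,\phi)[t]}_{\dot H^1\times L^2}\aleq_E 1$ for every $t\in I$; in particular the Cauchy data of $(A,\phi)$ at any time has energy norm $\aleq_E 1$, and $E\aleq_F 1$. Next, since $\nrm{\phi}_{S^1[I]}\le F$, Lemma~\ref{l:ell}/\eqref{a0-start} give $\nrm{\nabla A_0}_{L^2\dot H^{1/2}[I]}\aleq_{E,F}1$, so that $\nrm{A_x}_{\DS[I]}$, $\nrm{\phi}_{\DS[I]}$, $\nrm{\nabla A_0}_{L^2\dot H^{1/2}[I]}$ are all finite. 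As these norms are divisible (and, relative to a fixed admissible $\ell^2$ envelope dominating the data envelope, so are their frequency-weighted versions), we fix, once and for all, a parameter $m=m(E,F)\gg_F 1$ (large enough for Theorem~\ref{t:para-free} and for the smallness requirements below), then a small $\delta=\delta(E,F,m)$, and then a partition $I=\bigcup_{j=1}^{K}I_j$ with $K\aleq_F 1$ on which each of the three divisible norms above (and their frequency-weighted counterparts) is $\le\delta$.

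\emph{Proof of \ref{item:structure:5}.} Fix $j$. From $\Box_A\phi=0$, \eqref{phieq-nonlin} and $\Box_A^{p,m}=\Box_{A^{free}}^{p,m}+\Diff_{A^{nl}}^m$ we get $\Box_{A^{free}}^{p,m}\phi=-\mathcal{M}_A^m\phi-\Diff_{\A}^m(\phi,\phi,A)\phi$. We estimate the right side in $N\cap L^2\dot H^{-1/2}\cap L^{9/5}\dot H^{-4/9}$ on $I_j$ via Propositions~\ref{p:ma}, \ref{p:diff} and \ref{p:phi-high-mod}: the $\mathcal{M}_{A,small}^{m,2}$ and $\Diff_{\A,small}^{m,2}$ contributions are $\aleq 2^{-cm}\nrm{A_x}_{S^1[I_j]}\nrm{\phi}_{S^1[I_j]}\le 2^{-cm}F\nrm{\phi}_{S^1[I_j]}$ and $\aleq 2^{-cm}\nrm{\phi}_{S^1[I_j]}^3\le 2^{-cm}F^2\nrm{\phi}_{S^1[I_j]}$ (recall $\nrm{A_x}_{S^1[I_j]},\nrm{\phi}_{S^1[I_j]}\le F$), both $\le\tfrac{1}{10}\nrm{\phi}_{S^1[I_j]}$ since $m\gg_F 1$; the $\mathcal{M}_{A,large}^{m,2}$, $\mathcal{M}_{A,A}^{m,3}$, $\Diff_{\A,large}^{m,2}$ and $\Diff_{\A}^{m,3}$ contributions carry one factor $2^{Cm}$ but only divisible norms of $(A_x,\nabla A_0,\phi)$, hence are $\aleq 2^{Cm}\delta\,F\,\nrm{\phi}_{S^1[I_j]}\le\tfrac{1}{10}\nrm{\phi}_{S^1[I_j]}$ since $\delta\ll_{E,F,m}1$. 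Theorem~\ref{t:para-free} then gives $\nrm{\phi}_{S^1[I_j]}\aleq_E\nrm{\phi[\mathrm{left}(I_j)]}_{\dot H^1\times L^2}+\tfrac{1}{2}\nrm{\phi}_{S^1[I_j]}$, and since the left side is finite ($\le F$) we may absorb, obtaining $\nrm{\phi}_{S^1[I_j]}\aleq_E 1$. For the connection, $A_x=A_x^{free}+A_x^{nl}$ with $\nrm{A_x^{free}}_{S^1[I_j]}\aleq_E 1$ by \eqref{lin} and the energy bound, while $\nrm{A_x^{nl}}_{S^1[I_j]}=\nrm{\nabla\A_x(\phi,\phi,A)}_{S[I_j]}$ plus high-modulation terms, which by Proposition~\ref{p:ax}\ref{item:ax:2} and Proposition~\ref{p:axnl-high-mod} are $\aleq 2^{-cm}F^2+2^{Cm}\delta^2+\delta^3\aleq_E 1$. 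Hence $\nrm{(A_x,\phi)}_{S^1[I_j]}\aleq_E 1$ for every $j$, which is part~\ref{item:structure:5}; the constant has the claimed form, with the factor $E+E^2$ coming from the energy bounds of Proposition~\ref{p:energy}.

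\emph{Proof of \ref{item:structure:1}--\ref{item:structure:4}.} On each $I_j$ we now also have $\nrm{(A_x,\phi)}_{S^1[I_j]}\aleq_E 1$. For part~\ref{item:structure:1}, given $\Box_A\psi=f$, write $\Box_{A^{free}}^{p,m}\psi=f-\mathcal{M}_A^m\psi-\Diff_{\A}^m(\phi,\phi,A)\psi$, estimate the two error operators exactly as above but in the $d$-weighted norms, and absorb (legitimate for classical $\psi$, as then $\nrm{\psi}_{S^1_d[I_j]}<\infty$ for the relevant $d$); the only new point is the output envelope $f(k)=d(k)\nrm{e_{\le k-m}}_{\ell^2}^2\aleq_F d(k)$ in \eqref{diffaa}. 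Theorem~\ref{t:para-free} then yields $\nrm{\psi}_{S^1_d[I_j]}\aleq_E\nrm{\psi[\mathrm{left}(I_j)]}_{E_d}+\nrm{f}_{(N\cap L^2\dot H^{-1/2}\cap L^{9/5}\dot H^{-4/9})_d[I_j]}$, and chaining over $j=1,\dots,K$ (using that $S^1_d$ controls the $d$-weighted energy at the right endpoint of $I_j$, the interval square-summability of Proposition~\ref{p:intervals}, and $K\aleq_F 1$) gives \eqref{boxA-lin}. Part~\ref{item:structure:2} follows by applying part~\ref{item:structure:1} with $\psi=\phi$, $f=0$, $d=c$ (giving $\nrm{\phi}_{S^1_c[I]}\aleq_F 1$) and then bounding $A_x^{nl}$ by Proposition~\ref{p:ax}\ref{item:ax:1} and Proposition~\ref{p:axnl-high-mod}, using $c^2\aleq_E c$ (valid since $\nrm{c}_{\ell^\infty}\aleq_E 1$). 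Part~\ref{item:structure:3} is then immediate: $\Box A_i=\Box\A_x^2(\phi,\phi)+\Box\A_x^3(\phi,\phi,A)$ is controlled by Propositions~\ref{p:ax}\ref{item:ax:1}, \ref{p:axnl-high-mod} and part~\ref{item:structure:2}, while $\nabla A_0$ is handled via \eqref{a0-start}, \eqref{MKGa0+}, Proposition~\ref{p:a0} and part~\ref{item:structure:2}. For part~\ref{item:structure:4}, using $\Box_A\phi=0$ and \eqref{phieq-nonlin} one expresses each of $\Box_{A^{free}}\phi$, $\Box_A^{p,m}\phi$, $\Box_{A^{free}}^{p,m}\phi$ as a sum of $\mathcal{M}$- and $\Diff$-type multilinear expressions in $(\phi,A_x,\nabla A_0)$ (with $A$ possibly replaced by $A^{free}$ or $A^{nl}$), controlled by Propositions~\ref{p:ma}, \ref{p:diff}, \ref{p:ax}, \ref{p:a0}, \ref{p:phi-high-mod} and parts~\ref{item:structure:2}--\ref{item:structure:3}; the $2^{Cm}$ factors produce precisely the $2^{Cm}$ on the right of \eqref{boxphi}, and none appears in \eqref{boxphi-free}.

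\emph{Main difficulty.} The essential points are: (i) choosing the constants in the correct order, $m=m(E,F)\gg_F 1$, then $\delta=\delta(E,F,m)$, then $K\aleq_F 1$, so that all ``small'' multilinear contributions can be absorbed, using crucially the a-priori bound $\nrm{(A_x,\phi)}_{S^1[I]}\le F<\infty$; (ii) verifying that the absorptions remain legitimate at the level of frequency-enveloped norms, which needs the off-diagonal decay built into the estimates of Section~\ref{s:dec} and a-priori finiteness of $\nrm{\psi}_{S^1_d[I]}$ for classical $\psi$; and (iii) tracking admissible frequency envelopes through the trilinear estimate \eqref{diffaa} and through the $A_0$ and bilinear estimates (the inequalities $f(k)\aleq_F d(k)$ and $c^2\aleq_E c$). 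Everything rests on Theorem~\ref{t:para-free}, whose constant must be uniform in $m\ge m_0(E)$ in order for the bound in part~\ref{item:structure:5} to depend only on $E$.
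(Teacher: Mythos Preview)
Your proof is correct and uses the same ingredients as the paper (Propositions~\ref{p:ma}, \ref{p:diff}, \ref{p:ax}, \ref{p:a0}, \ref{p:phi-high-mod} together with Theorem~\ref{t:para-free} and divisibility), but the logical order is reversed. The paper first proves part~\ref{item:structure:1} by the perturbative argument you describe (splitting $I$ into $O_F(2^{Cm})$ subintervals where the divisible norms of $(A_x,\phi,\nabla A_0)$ are small, then solving $\Box_{A^{free}}^{p,m}\psi$ perturbatively via Theorem~\ref{t:para-free}), then deduces parts~\ref{item:structure:2}--\ref{item:structure:4}, and only at the end obtains~\ref{item:structure:5} as a corollary: once \eqref{boxphi} is known on all of $I$, divisibility of the $N$ norm \eqref{n-div} directly yields a partition into $O_F(1)$ pieces on which $\|\Box_{A^{free}}^{p,m}\phi\|_{N}$ is small, and Theorem~\ref{t:para-free} gives the $\lesssim_E$ bound. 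Your route---proving~\ref{item:structure:5} first by running the same absorption argument with $\psi=\phi$, then reusing the resulting partition for~\ref{item:structure:1}---is equally valid and arguably makes the dependence of constants more transparent. One omitted detail worth noting: on each subinterval $I_j$, the free part $A^{free}$ and the nonlinear part $\A(\phi,\phi,A)$ must be reinitialized at the left endpoint of $I_j$ (so that $\A$ has zero Cauchy data there and Theorem~\ref{t:para-free} applies with the correct $A^{free}$); the paper makes this explicit.
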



\begin{proof}
As a preliminary step, we observe that  
from Lemma~\ref{l:ell}, \eqref{a0-start}, \eqref{a0} and \eqref{d0a0} we obtain
the bound
\begin{equation}
\label{dA0-noc}
\| \nabla A_0\|_{L^{2} \dot{H}^{\frac{1}{2}}_{c^2}[I]} 
\lesssim_F 1 .
\end{equation}
We remark that this bound will later be refined when we prove \ref{item:structure:3}.

\medskip 

\ref{item:structure:1}). We first prove well-posedness for 
the equation 
\[
\Box_A^{p,m} \psi = f, \qquad \psi[0] = (\psi_0,\psi_1)
\]
together with the bound 
\begin{equation}\label{para-nl}
\| \psi\|_{S^1_e} \lesssim \|\psi[0]\|_{E_e} + \|f\|_{(N \cap L^{2} \dot{H}^{-\frac{1}{2}} \cap L^{\frac{5}{9}} \dot{H}^{-\frac{4}{9}})_e}
\end{equation}
provided that $m \gg_F 1$. This is done perturbatively, based on the similar result
for $\Box_{A^{free}}^{p,m}$ in Theorem~\ref{t:para-free}.  
Using also \eqref{dA0-noc}, we can split time into $O_{F}(2^{100 Cm})$ intervals $I_n$ so that  
\[
\| \phi \|_{\DS[I_n]}+ \|A_x\|_{\DS[I_n]} + \|\nabla A_0\|_{L^2 \dot H^{\frac12} [I_n]} 
 \lesssim_F 2^{-2Cm}  \ .
\]
Then within each interval $I_n$ we write the equation above 
in the form 
\[
\Box_{A^{free}}^{p,m} \psi = 
 - \Diff_{\A}^{m} \psi  + f
= - (\Diff_{\A,large}^{m,2}  + \Diff_{\A}^{m,3} +  \Diff_{\A,small}^{m,2}) \psi  + f \ . 
\]
By Propositions~\ref{p:diff} and \ref{p:phi-high-mod}, all the terms on the right are
perturbative in $N \cap L^{2} \dot{H}^{-\frac{1}{2}} \cap L^{\frac{9}{5}} \dot{H}^{-\frac{4}{9}}[I_{n}]$, so if $m \gg_F 1$ then within each such interval we can
solve the above equation perturbatively. Reiterating, the global
solvability along with \eqref{para-nl} follows.  We note that in this
argument the free part of $A$ is reinitialized in each interval $I_k$.
The nonlinear part $A_x^{nl} = \A(\phi,\phi,A_x)$ is also defined
separately for each interval.

To get the well-posedness for the $\Box_A$ equation, we repeat the
above argument for the expression $\mathcal M_A^m$. For the $N$ norm, we apply Proposition~\ref{p:ma}. Then the small part is
treated perturbatively by taking $m \gg_{F} 1$, while for the large part we use again a time
interval division in order to gain smallness. For the $L^{2} \dot{H}^{-\frac{1}{2}} \cap L^{\frac{9}{5}} \dot{H}^{-\frac{4}{9}}$ norm, we use Proposition~\ref{p:phi-high-mod} and rely on divisibility for smallness.

\medskip

\ref{item:structure:2}).  The $\phi$ bound is a direct consequence of the bound
\eqref{boxA-lin} applied to $\phi$.  
Then we get the $A_x$ bound from \eqref{lin}, 
\eqref{axnl}-\eqref{axnl3} (for the $N$ norm) and \eqref{axnl-hm}-\eqref{axnl-hm3} (for the $L^{2} \dot{H}^{-\frac{1}{2}} \cap L^{\frac{9}{5}} \dot{H}^{-\frac{4}{9}}$ norm).
 
\medskip 

\ref{item:structure:3}). The $A_x$ bound has been proved in \ref{item:structure:2}, while the desired $A_0$ estimate follows from \eqref{a0-start} and Proposition~\ref{a0}.

\medskip

\ref{item:structure:4}). For the $N$ norm, the bound for $\Box_{A}^{p,m} \phi$ is a consequence of the
estimates \eqref{man}, \eqref{man-small} and \eqref{man-large} for the
components of $\Box_{A}^{p,m} \phi = - \mathcal M_A^m $.
For transition to $\Box_{A^{free}}^{p,m} \phi$ we use in addition 
the bounds \eqref{diffa} and \eqref{diffaa}. We can switch back from 
$\Box_{A^{free}}^{p,m} \phi$ to $\Box_{A^{free}} \phi$ using again 
the estimates \eqref{man}, \eqref{man-small} and \eqref{man-large} but for 
$A= A^{free}$. Finally, for the $L^{2} \dot{H}^{-\frac{1}{2}} \cap L^{\frac{9}{5}} \dot{H}^{-\frac{4}{9}}$ bound, we use Proposition~\ref{p:phi-high-mod} for all parts.

\medskip

\ref{item:structure:5}). 
By Proposition~\ref{p:energy} and conservation of energy, the linear energy $E_{lin}(A_{x}[t], \phi[t])$ is bounded by $E + E^{2}$ uniformly in time. Moreover, the $N$ norm is divisible by \eqref{n-div}, hence
the $A$ part is a direct consequence of \eqref{boxA}. The similar assertion 
for $\phi$ follows similarly from the divisibility of the $N$ norm and the second bound 
\eqref{boxphi}, since for a fixed $m \gg E$, the $\Box_{A^{free}}^{p,m}$ equation is well-posed in $S^1$
with implicit constants depending only on $E$. \qedhere



\end{proof}
 
With Theorem~\ref{t:structure} in hand, we may easily prove the continuation and scattering theorem (Theorem~\ref{thm:cont-scat}).
\begin{proof} [Proof of Theorem~\ref{thm:cont-scat}]
We start with the continuation result. The idea is to use the frequency envelope bound in Theorem~\ref{t:structure} to show a uniform lower bound on the energy concentration scale $r_{c}$ for all $t \in I$, which allows us to apply Theorem~\ref{t:local}.

By Theorem~\ref{t:structure}, we see that $(A, \phi)$ obeys the frequency envelope bound
\begin{equation*}
	\nrm{A_{0}}_{Y^{1}_{c^{2}}[I]} + \nrm{(A_{x}, \phi)}_{S^{1}_{c}[I]} \leq \tilde{F}
\end{equation*}
where $\nrm{c}_{\ell^{2}} \aleq \nrm{(A_{x}, \phi)[0]}_{\dot{H}^{1} \times L^{2}}$. In particular, $\lim_{\ell \to \infty} \nrm{c_{k}}_{\ell^{2}(k > \ell)} = 0$. 
Recall also that both $Y^{1}_{c}$ and $S^{1}_{c}$ control $\nrm{\nb (\cdot)}_{L^{\infty} L^{2}_{c}}$. Hence given any small number $\dlt > 0$, there exists $\ell \in \bbZ$ such that the splittings $A = A_{low} + A_{high} := A_{<\ell} + A_{\geq \ell}$ and $\phi = \phi_{low} + \phi_{high} := \phi_{< \ell} + \phi_{\geq \ell}$ obey
\begin{gather*}
	\nrm{\nb A_{\mu, high}(t)}_{L^{2}} + \nrm{\nb \phi_{high}(t)}_{L^{2}} < \frac{\dlt}{10},
\end{gather*}
and by Bernstein's inequality,
\begin{gather*}
	\nrm{A_{\mu, low}(t)}_{L^{\infty}}
	+ \nrm{\nb A_{\mu, low}(t)}_{L^{\infty}}
	+ \nrm{\phi_{low}(t)}_{L^{\infty}}
	+ \nrm{\nb \phi_{low}(t)}_{L^{\infty}} \aleq_{\tilde{F}, c} 1. 
\end{gather*}
Both bounds are uniform in $t \in I$. Using H\"older's inequality for the low frequency part, we can find $\tilde{r} = \tilde{r}(\tilde{F}, c, \dlt) > 0$ such that
\begin{equation*}
	\nrm{(A_{\mu}, \rd_{t} A_{\mu})(t)}_{(\dot{H}^{1} \cap L^{4}) \times L^{2}(B_{\tilde{r}}(x))} 
	+ \nrm{(\phi, \rd_{t} \phi(t)}_{(\dot{H}^{1} \cap L^{4}) \times L^{2}(B_{\tilde{r}}(x))} < \dlt
\end{equation*}
for every $t \in I$ and ball $B_{\tilde{r}}(x)$ of radius $\tilde{r}$ and arbitrary center $x \in \bbR^{4}$. Recalling the definition \eqref{eq:ecs-def}, we see that the energy concentrations scale of the data for $(A, \phi)$ at time $t$ is uniformly bounded below by $\tilde{r} > 0$, if $\dlt > 0$ is chosen sufficiently small depending only on $E$. Hence by Theorem~\ref{t:local}, $(A, \phi)$ can be continued past the endpoints of $I$ as an $C_{t} \calH^{1}$ admissible solution with appropriate $S^{1}$ and $Y^{1}$ bounds.

The scattering statement is an easy consequence of \eqref{boxA}, \eqref{boxphi-free}, and divisibility of the $N \cap L^{2} \dot{H}^{-\frac{1}{2}} \cap L^{\frac{9}{5}} \dot{H}^{-\frac{4}{9}}$ norm. \qedhere
\end{proof}

\subsection{ MKG waves with small energy dispersion.}

Here we continue the analysis above, but add to it the  small energy dispersion condition.
\begin{theorem} \label{t:structure-ed}
  Let $(A,\phi)$ be an admissible $C_{t} \calH^{1}$ solution to the MKG system
  \eqref{MKG} in the Coulomb gauge \eqref{Coulomb} on the time interval $I = [0, T]$, which has energy
  $E$ and $S^1$ norm $F$. Suppose furthermore that $\phi$ is $\veps$-energy dispersed.
  Then the following properties hold:

\begin{enumerate}

\item \label{item:structure-ed:1} (Elliptic bounds) We have
\begin{equation}\label{A0-eps}
 \| \nabla A_0\|_{Y^{1}[I]}  \lesssim_F  \veps^{\dlt_{1}} \nrm{\phi}_{S^{1}}^{2}
\end{equation}

\item \label{item:structure-ed:2} (High modulation bound) 
\begin{equation}\label{high-mod}
\begin{aligned}
\| \Box \phi \|_{L^2 \dot H^{-\frac12}[I]} + \|\Box \phi \|_{L^\frac95 \dot H^{-\frac49}[I]}
 & \lesssim_F   \veps^{\dlt_{1}} \nrm{\phi}_{S^{1}} 
\end{aligned}
\end{equation}

\item \label{item:structure-ed:3} (Maxwell field bounds) We have
\begin{equation}\label{BoxA-eps}
\| A^{nl}_x\|_{S^1[I]}  + \| \Box A_x\|_{(N \cap L^{2} \dot{H}^{-\frac{1}{2}} \cap L^{\frac{9}{5}} \dot{H}^{-\frac{4}{9}})[I]}  \lesssim_F \veps^{\dlt_{1}} \nrm{\phi}_{S^{1}}^{2}
\end{equation}

\item \label{item:structure-ed:4} (Scalar field bounds) For $m > 0 $ we have
\begin{equation}\label{Boxphi-eps}
  \| \Box_A^{p,m}  \phi\|_{(N \cap L^{2} \dot{H}^{-\frac{1}{2}} \cap L^{\frac{9}{5}} \dot{H}^{-\frac{4}{9}})[I]}  \lesssim_F  2^{Cm} \veps^{\dlt_{1}} \nrm{\phi}_{S^{1}}
\end{equation}

\end{enumerate}
\end{theorem}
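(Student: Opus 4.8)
The plan is to deduce all four assertions from the $\veps$-energy dispersed bilinear and trilinear estimates collected in Section~\ref{s:dec}, fed with the a-priori bounds $\nrm{(A_{x}, \phi)}_{S^{1}[I]} \lesssim_{F} 1$ and $\nrm{(A_{x}, \nb A_{0})}_{(S^{1} \times L^{2}\dot{H}^{\frac{1}{2}})[I]} \lesssim_{F} 1$ supplied by Theorem~\ref{t:structure}. The one point requiring care is the order of the proof: part~\ref{item:structure-ed:2} must come first, since the high modulation bound \eqref{box-phi} it produces is a standing hypothesis in several of the $\veps$-dispersed estimates; and part~\ref{item:structure-ed:1} must precede part~\ref{item:structure-ed:4}, for the reason explained below. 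When invoking \eqref{box-phi} in what follows it will only hold with $\veps^{\dlt_{1}}$ replaced by $C_{F}\veps^{\dlt_{1}}$, which is harmless since every constant here may depend on $F$.

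For part~\ref{item:structure-ed:2}, since $\Box_{A}\phi = 0$, for any fixed $m$ one has $\Box \phi = -\Diff^{m}_{A}\phi - \calM^{m,2}_{A}\phi - \calM^{m,3}_{A,A}\phi$, and I would bound these three terms in $L^{2}\dot{H}^{-\frac{1}{2}} \cap L^{\frac{9}{5}}\dot{H}^{-\frac{4}{9}}$ using \eqref{phi-para-hm-eps}, \eqref{phi-main-hm-eps} and \eqref{phi-cubic-hm-eps} of Proposition~\ref{p:phi-high-mod}, all of which require only that $\phi$ be $\veps$-energy dispersed and are $m$-independent. Feeding in $\nrm{(A_{x}, \nb A_{0})}_{(S^{1} \times L^{2}\dot{H}^{\frac{1}{2}})[I]} \lesssim_{F} 1$ then gives \eqref{high-mod}; in particular $\phi$ obeys \eqref{box-phi} up to the $F$-dependent constant above, which is what legitimizes the hypotheses used in parts \ref{item:structure-ed:3} and \ref{item:structure-ed:4}.

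For part~\ref{item:structure-ed:1}, the elliptic relations \eqref{eq-a0}, \eqref{MKGa0+} express $A_{0} = \A_{0}(\phi,\phi,A_{0})$ and $\rd_{t}A_{0} = \rd_{t}\A_{0}(\phi,\phi,A_{x})$, so applying the $\veps$-dispersed bound \eqref{a0-disp} of Proposition~\ref{p:a0} with $p = \infty$ and with $p = 2$, and absorbing the factors $(1 + \nrm{A_{0}}_{L^{p}\dot{H}^{1 + \frac{1}{p}}[I]})$ and $(1 + \nrm{A_{x}}_{S^{1}[I]})$ by the a-priori bounds, controls each component of $\nrm{\nb A_{0}}_{Y^{1}[I]}$ by $\veps^{\dlt_{1}}\nrm{\phi}_{S^{1}}^{2}$, giving \eqref{A0-eps}. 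For part~\ref{item:structure-ed:3}, I would write $\Box A_{x} = \Box A_{x}^{nl}$ with $A_{x}^{nl} = \A_{x}^{2}(\phi,\phi) + \A_{x}^{3}(\phi,\phi,A_{x})$ and $A_{x}^{nl}[0] = 0$, bound $\Box A_{x}^{nl}$ in $N$ by \eqref{axnl-eps}, \eqref{axnl-eps3} of Proposition~\ref{p:ax}\ref{item:ax:3} (this is where \eqref{box-phi}, hence part~\ref{item:structure-ed:2}, is used) and in $L^{2}\dot{H}^{-\frac{1}{2}} \cap L^{\frac{9}{5}}\dot{H}^{-\frac{4}{9}}$ by \eqref{axnl-hm-eps}, \eqref{axnl-hm-eps3} of Proposition~\ref{p:axnl-high-mod}, using $\nrm{A_{x}}_{S^{1}[I]} \lesssim_{F} 1$ for the cubic terms; then the linear estimate \eqref{lin(i)} with vanishing data converts the resulting bound on $\Box A_{x}^{nl}$ into $\nrm{A_{x}^{nl}}_{S^{1}[I]} \lesssim_{F} \veps^{\dlt_{1}}\nrm{\phi}_{S^{1}}^{2}$, which is \eqref{BoxA-eps}.

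For part~\ref{item:structure-ed:4}, from $\Box_{A}\phi = 0$ we get $\Box_{A}^{p,m}\phi = -\calM_{A}^{m,2}\phi - \calM_{A,A}^{m,3}\phi$, whose $N$-norm I would estimate by \eqref{man-disp}, \eqref{man-dispa} of Proposition~\ref{p:ma}\ref{item:ma:3} and whose $L^{2}\dot{H}^{-\frac{1}{2}} \cap L^{\frac{9}{5}}\dot{H}^{-\frac{4}{9}}$-norm by \eqref{phi-main-hm-eps}, \eqref{phi-cubic-hm-eps}. Here lies the one structural subtlety, and the step I expect to be the main obstacle: the right-hand side of \eqref{man-disp} contains a term $\nrm{\nb A_{0}}_{L^{2}\dot{H}^{\frac{1}{2}}_{c}[I]}\nrm{\psi}_{S^{1}[I]}$ \emph{without} a factor $\veps^{\dlt_{1}}$, so the needed gain can only be recovered by inserting the refined elliptic bound \eqref{A0-eps} of part~\ref{item:structure-ed:1}, which upgrades $\nrm{\nb A_{0}}_{L^{2}\dot{H}^{\frac{1}{2}}[I]}$ from $O_{F}(1)$ to $\lesssim_{F}\veps^{\dlt_{1}}\nrm{\phi}_{S^{1}}^{2} \lesssim_{F}\veps^{\dlt_{1}}\nrm{\phi}_{S^{1}}$ (using $\nrm{\phi}_{S^{1}[I]} \leq F$). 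With the a-priori bounds handling the remaining factors, this yields \eqref{Boxphi-eps}. All the hard analysis sits in the propositions of Section~\ref{s:dec}, proved in Sections~\ref{s:bi}--\ref{s:multi}; at this stage the only genuinely new ingredient is the bootstrap ordering just described, in particular the use of the refined $A_{0}$ estimate to close the scalar field bound.
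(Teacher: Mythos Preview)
Your proposal is correct and follows essentially the same route as the paper: part~\ref{item:structure-ed:1} from \eqref{a0-disp}, part~\ref{item:structure-ed:2} from \eqref{phi-main-hm-eps}--\eqref{phi-cubic-hm-eps}, part~\ref{item:structure-ed:3} from \eqref{axnl-eps}--\eqref{axnl-eps3}, \eqref{axnl-hm-eps}--\eqref{axnl-hm-eps3} and \eqref{lin(i)}, and part~\ref{item:structure-ed:4} from \eqref{man-disp}--\eqref{man-dispa}, \eqref{phi-main-hm-eps}, \eqref{phi-cubic-hm-eps}. Your explicit attention to the logical ordering---in particular the observation that the undecorated $\nrm{\nb A_{0}}_{L^{2}\dot{H}^{1/2}}$ term in \eqref{man-disp} forces one to feed \eqref{A0-eps} back into the proof of \eqref{Boxphi-eps}---makes precise a dependency the paper leaves implicit in its terse proof (and which indeed resurfaces explicitly as \eqref{a0-key-boot} in Section~\ref{s:induction}). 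One small overstatement: part~\ref{item:structure-ed:1} does not actually require part~\ref{item:structure-ed:2}, since \eqref{a0-disp} needs only $\veps$-energy dispersion and not \eqref{box-phi}; the paper's ordering $(1),(2),(3),(4)$ is therefore also valid.
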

\begin{proof}
\ref{item:structure-ed:1}). The bound \eqref{A0-eps} follows directly from the estimate \eqref{a0-disp}.

\medskip

\ref{item:structure-ed:2}).
These bounds follow from \eqref{phi-main-hm-eps}-\eqref{phi-cubic-hm-eps}.
%
%
%

\medskip

\ref{item:structure-ed:3}).
The estimate \eqref{BoxA-eps} follows from 
\eqref{axnl-eps}-\eqref{axnl-eps3}, \eqref{axnl-hm-eps}-\eqref{axnl-hm-eps3} for $\Box A_x^{nl}$ and \eqref{lin(i)}.


\medskip

\ref{item:structure-ed:4}).  The bound \eqref{Boxphi-eps} is a consequence of \eqref{man-disp}-\eqref{man-dispa}, \eqref{phi-main-hm-eps} and \eqref{phi-cubic-hm-eps}. \qedhere
 \end{proof}

\section{Induction on energy}
\label{s:induction}
Here we provide the induction on energy argument which gives the proof
of our main result in Theorem~\ref{t:ed}. Our induction hypothesis
is that the conclusion of the theorem holds up to energy $E$. Thus we
have $F(E)$ and $\epsilon(E)$.  
Our goal is to show that there exists $c_{0} = c_{0}(E) > 0$ so that the conclusion holds up to energy $E+c_{0}$. Moreover, we do not allow $c_{0}(E)$ to depend on $F(E)$, but only on $E$. The independence of $c_{0}(E)$ on $F(E)$ allows us to additionally ensure that $c_{0}(\cdot)$ is a positive non-increasing function on the whole $[0, \infty)$; this property is what makes our induction argument work for all energies\footnote{We refer to the beginning of Step~2.3 in the proof of Proposition~\ref{p:key-boot} for the precise dependence of $c_{0}$ on $E$. The conclusion is that $c_{0}$ needs to be chosen small enough compared to the constant in Theorem~\ref{t:para-free}.}.

To begin with, we observe that it suffices to establish Theorem~\ref{t:ed} for smooth solutions. Indeed, Theorem~\ref{t:local} implies that any admissible $C_{t} \calH^{1}$ solution $(A, \phi)$ can be approximated by smooth solutions in the $S^{1}[J]$ norm (and hence also in the $ED[J]$ norm) for any compact interval $J$.
Thus, we consider smooth data $(A_{x}[0],\phi[0])$ with energy $E+c_{0}$, generating 
a smooth solution $(A,\phi)$ in $[0,T)$ with $\nrm{\phi}_{ED(0, T)} \leq \epsilon \ll_E 1$.
Then the $S^1$ norm $\| (A_{x} ,\phi)\|_{S^1(0,t)}$ is a continuous function of time $t \in (0,T)$,
satisfying 
\[
\lim_{t \to 0} \| (A,\phi)\|_{S^1(0,t)} \lesssim E^{1/2}.
\]
Hence, in order to prove a uniform bound 
\begin{equation}\label{s-prove}
  \| (A_{x},\phi)\|_{S^1(0,t)} \leq F
\end{equation}
we can make the bootstrap assumption 
\begin{equation}\label{s-boot}
\| (A_{x},\phi)\|_{S^1(0,t)} \leq 2F,
\end{equation}
where $F$ is a positive to be determined in the proof. By scaling we harmlessly take $t = T$.

Indeed, once we show that \eqref{s-prove} holds assuming \eqref{s-boot}, a simple continuous induction argument in time implies that $S^{1}(0, T)$ norm of $(A_{x}, \phi)$ is bounded by $F$. This bound is precisely \eqref{S_est-ed} with $F(E + c_{0}) = F$. Note that the parameter $\eps$ becomes $\eps(E + c_{0})$ in Theorem~\ref{t:ed}.

Next, we dispense the easy case when the $S^{1}$ norm of $\phi$ is disproportionally small compared to the overall energy $E$ of $(A, \phi)$. This procedure allows us to link the small energy dispersion assumption $\nrm{\phi}_{ED(0, T)} \leq \eps$ to the notion of $\veps$-energy dispersion (Definition~\ref{def:eps-ed}) for some $\veps = \veps(\eps)$. More precisely, given $\veps > 0$ to be determined, we consider two cases: (i) $\nrm{\phi}_{S^{1}(0, T)} \leq \veps E^{1/2}$ or (ii) $\nrm{\phi}_{S^{1}(0, T)} > \veps E^{1/2}$. In case (i), a direct application of \eqref{axnl}-\eqref{axnl3}, \eqref{axnl-hm}-\eqref{axnl-hm3} and the bootstrap assumption \eqref{s-boot} gives
\begin{equation*}
	\nrm{\Box A_{x}}_{N \cap L^{2} \dot{H}^{-\frac{1}{2}} \cap L^{\frac{9}{5}} \dot{H}^{-\frac{4}{9}}(0, T)} 
	\aleq \veps^{2} E (1 + F).
\end{equation*}
Applying the linear estimate \eqref{lin} and taking $\veps$ sufficiently small compared to to $F$, \eqref{s-prove} follows directly. Thus we are left with case (ii), in which we may assume that $\phi$ is $\veps$-energy dispersed (according to Definition~\ref{def:eps-ed}) by taking $\eps= \veps^{2} E^{1/2}$. Henceforth we eliminate $\eps$ (which has the dimension of $(energy)^{1/2}$) in favor of the dimensionless parameter $\veps$. 

To establish the $S^{1}$ bound \eqref{s-prove} under the assumption that $\phi$ is $\veps$-energy dispersed on $[0, T]$, we will compare the solution $(A,\phi)$ with the MKG wave $(\tA,\tphi)$ 
generated by frequency truncated data
\[
(\tA_x[0],\tphi[0]) =  P_{\leq k*} (A_x[0],\phi[0])
\]
where the \emph{cut frequency} $k^* \in \bbR$ (hence $P_{\leq k*}$ is a continuous version of Littlewood-Paley projection) is selected so that $(\tA,\tphi)$ has
energy $E$.   Note here that we only truncate $\phi[0]$ and
$A_j[0]$. The functions $A_0$ and $\partial_t A_0$, which are also
part of the energy, are defined directly from the compatibility
conditions \eqref{MKGa0}.  The fact that such a $k^*$ exists is a consequence 
of the continuity with respect to $k$ of the $A_0$
component generated by $  P_{\leq k} (A_x[0],\phi[0])$, see Proposition~\ref{p:energy}. 
We further remark that by part \ref{item:energy:3} of Proposition~\ref{p:energy}, the energy 
of both $(A,\phi)$ and  $(\tA,\tphi)$ is $\epsilon^\frac14$ close to the 
corresponding linear energy of $(A_x[0],\phi[0])$, respectively $(\tA_x[0],\tphi[0])$.

We wish to apply the induction hypothesis to obtain an $S^{1}$ bound for $(\tA, \tphi)$ on $[0, T]$, namely
\begin{equation}
	\nrm{(\tA_{x}, \tphi)}_{S^{1}(0, T)} \leq F(E).
\end{equation} 
For this purpose, we need to know that energy dispersion of $\tphi$ is sufficiently small on $[0, T]$. We achieve this smallness by 
transferring the information for $\nrm{\phi}_{ED(0, T)}$ to $\nrm{\tphi}_{ED(0, T)}$ by another continuous induction in time.

Indeed, at time $t =0$, the solution $(\tA,\tphi)$ has smooth data and $\nrm{\tphi[0]}_{ED} \leq \veps^{2} E^{1/2} \ll \epsilon(E)$. 
Thus for some short time it will still have energy dispersion $\leq \epsilon(E)$. We claim
that $(\tA,\tphi)$ extends smoothly up to time $T$, so that the
stronger bound
\begin{equation}\label{ed-prove}
\| \tphi\|_{ED(0,t_0)} \leq  \frac12 \epsilon(E)
\end{equation}
holds for all $t_0 \in (0,T]$. We will establish \eqref{ed-prove} under the additional bootstrap assumption
\begin{equation}\label{ed-boot}
\| \tphi\|_{ED(0,t_0)} \leq   \epsilon(E)
\end{equation}
As before, note that we may take $t_{0} = T$ by scaling.

To see how the claim follows from this bootstrap procedure, let $T^*$ be the maximal time $T^* \leq T$ up to which
\eqref{ed-prove} holds.  Then by our induction hypothesis and Theorem~\ref{thm:cont-scat}, the
solution $(\tA,\tphi)$ extends smoothly past time $T^*$. Hence
\eqref{ed-boot} holds past the time $T^* $ by continuity, therefore \eqref{ed-prove} also
holds past time $T^*$ by our claim. This contradicts the maximality of $T^*$ unless
$T^* = T$.

To summarize, we have to prove that we can find $c_{0} = c_{0}(E)$, $F \gg_E 1$ and 
$\veps \ll_F 1$ so that the following statement holds:

\begin{proposition} \label{p:key-boot}
  Assume that the MKG waves $(A,\phi)$, respectively $(\tA,\tphi)$,
  with initial data $(A_x[0],\phi[0])$, respectively $
  (\tA_{x}[0],\tphi[0]) = P_{\leq k*} (\tA_x[0],\tphi[0])$, and energies
  $E+c_{0}$, respectively $E$, are smooth in $[0,T]$ and obey the following hypotheses:
  \begin{enumerate}[leftmargin=*]
\item[(i)] The $S^{1}$ norm of $(A_{x}, \phi)$ satisfies \eqref{s-boot}.
\item[(ii)] The solution $\phi$ is $\veps$-energy dispersed (as in Definition~\ref{def:eps-ed}).
\item[(iii)] The ED norms of $\phi$ and $\tphi$ obey
\begin{equation}
\| \tphi\|_{ED(0,T)} \leq   \epsilon(E), \qquad \| \phi\|_{ED(0,T)} \leq   \veps^{2} E^{1/2}.
\label{ed-hyp}
\end{equation}
\end{enumerate}
Then the following statements hold:
\begin{enumerate}
\item \label{item:key-boot:1} The $S^1$ norm of $ (A_{x},\phi)$ satisfies \eqref{s-prove}.
\item \label{item:key-boot:2} The ED norm of $\tphi$  satisfies \eqref{ed-prove}.
\end{enumerate}
\end{proposition}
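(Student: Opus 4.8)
The plan is to run the two bootstraps \eqref{s-boot}$\Rightarrow$\eqref{s-prove} and \eqref{ed-boot}$\Rightarrow$\eqref{ed-prove} together, organized around the comparison of $(A,\phi)$ with the truncated wave $(\tA,\tphi)$. First I would feed in the induction hypothesis: since $(\tA,\tphi)$ has energy exactly $E$, is smooth on $[0,T]$, and — by hypothesis (iii) — satisfies $\nrm{\tphi}_{ED(0,T)} \leq \epsilon(E)$, the conclusion of Theorem~\ref{t:ed} at energy $\leq E$ gives $\nrm{(\tA_{x},\tphi)}_{S^{1}(0,T)} \leq F(E)$, which Theorem~\ref{t:structure} upgrades to the frequency-envelope bound $\nrm{(\tA_{x},\tphi)}_{S^{1}_{\tilde c}(0,T)} \lesssim_{F(E)} 1$ for the data envelope $\tilde c$ of $(\tA_{x}[0],\tphi[0]) = P_{\leq k^{*}}(A_{x}[0],\phi[0])$. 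This is the only place the induction hypothesis is used.

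Next I would introduce the difference variables $\psi := \phi-\tphi$, $B_{x} := A_{x}-\tA_{x}$, $B_{0} := A_{0}-\tA_{0}$. Their data is the high-frequency tail $P_{>k^{*}}(A_{x}[0],\phi[0])$ (with $A_{0}$, $\rd_{t}A_{0}$ components slaved via \eqref{MKGa0}); by Proposition~\ref{p:energy}\ref{item:energy:3} and the defining property of $k^{*}$ this data has energy-space norm $O(\mu^{1/2})$, where $\mu := c_{0} + \epsilon^{1/4}$ is small. Subtracting $\Box_{\tA}\tphi = 0$ from $\Box_{A}\phi = 0$, using the decomposition $\Box_{A} = \Box_{A^{free}}^{p,m} + \Diff^{m}_{A^{nl}} + \mathcal M^{m}_{A}$ (with $A^{nl}_{x} = \A_{x}(\phi,\phi,A)$, and likewise for $\tA$, where $A^{free}-\tA^{free} =: B^{free}$ is the free wave carrying the small data $P_{>k^{*}}A_{x}[0]$), one obtains
\[
\Box_{A^{free}}^{p,m}\psi = -\mathcal M^{m}_{A}\phi - \Diff^{m}_{A^{nl}}\phi + \mathcal M^{m}_{\tA}\tphi + \Diff^{m}_{\tA^{nl}}\tphi - \Diff^{m}_{B^{free}}\tphi .
\]
After a further telescoping, every term on the right is a multilinear expression that is either (a) controlled by a divisible norm ($\DS$, $L^{2}\dot H^{\frac12}$) of the backgrounds $A,\tA,\phi,\tphi$, or (b) carries an explicit $\veps^{\dlt_{1}}$ gain from the $\veps$-energy dispersion of $\phi$ (via Theorem~\ref{t:structure-ed}, \eqref{box-phi} and Propositions~\ref{p:ax}, \ref{p:diff}, \ref{p:phi-high-mod}), or (c) contains a `difference' factor $(B_{x},\nb B_{0},\psi)$, or (d) in the case of $\Diff^{m}_{B^{free}}\tphi$ is the interaction of a small free wave with a background governed by the envelope $\tilde c$, hence is $O_{F(E)}(\mu^{1/2})$ by \eqref{null(Afree)}. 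A parallel equation $\Box B_{x} = \calP_{x}\bigl(J_{x}(A,\phi)-J_{x}(\tA,\tphi)\bigr)$ holds for $B_{x}$, with a telescoping quadratic/cubic right-hand side carrying a difference factor, handled by \eqref{lin(i)} and Propositions~\ref{p:ax}, \ref{p:axnl-high-mod}, \ref{p:a0}.

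To solve this coupled system I would: fix $m$ large enough that both the parametrix \eqref{para-free} of Theorem~\ref{t:para-free} applies and the $2^{-cm}$ gains in $\mathcal M^{m}_{\cdot,small}$, $\Diff^{m}_{\cdot,small}$ dominate the relevant powers of $F$; partition $[0,T]$ into finitely many subintervals on which the $\DS$ and $L^{2}\dot H^{\frac12}$ norms of $(A,\phi)$, $(\tA,\tphi)$ — finite by (i) and Step 1, divisible by Theorem~\ref{t:structure}\ref{item:structure:5} and \eqref{n-div} — are $\leq 2^{-100Cm}$; and on each subinterval apply \eqref{para-free} to $\psi$ and \eqref{lin(i)} to $B_{x}$, absorbing the type (a)--(b) terms into the left side and controlling the type (c) terms by a continuity bootstrap in $t$ that uses the almost-conservation of the linear energy of $(B_{x},\psi)$ (Proposition~\ref{p:energy}), so that the $F$-dependent \emph{number} of subintervals does not generate multiplicative losses. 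The outcome is $\nrm{(B_{x},\psi)}_{S^{1}(0,T)} \lesssim_{E}\mu^{1/2}$, with the constant depending only on $E$ through the Theorem~\ref{t:para-free} constant. Granting this: for \ref{item:key-boot:1}, $\nrm{(A_{x},\phi)}_{S^{1}(0,T)} \leq \nrm{(\tA_{x},\tphi)}_{S^{1}(0,T)} + \nrm{(B_{x},\psi)}_{S^{1}(0,T)} \leq F(E) + C(E)\mu^{1/2}$, and defining $F := F(E+c_{0})$ suitably and taking $c_{0}=c_{0}(E)$ small (per the footnote, depending only on $E$ via the parametrix constant) and $\veps \ll_{F,m}1$ forces the right side below $F$; the ordering of choices is $c_{0}(E) \to F = F(E+c_{0}) \to m \to \veps$, and $c_{0}$ is non-increasing in $E$. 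For \ref{item:key-boot:2}, Bernstein gives $\nrm{\psi}_{ED(0,T)} \lesssim \nrm{\psi}_{S^{1}(0,T)} \lesssim_{E}\mu^{1/2}$, whence $\nrm{\tphi}_{ED(0,T)} \leq \nrm{\phi}_{ED(0,T)} + \nrm{\psi}_{ED(0,T)} \leq \veps^{2}E^{1/2} + C(E)\mu^{1/2} \leq \tfrac12\epsilon(E)$ after further shrinking $\veps$ and $c_{0}$.

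The main obstacle is the difference estimate of the third step, and specifically the demand — imposed by the induction scheme — that its constant be a function of $E$ alone, so that $c_{0}$ may be chosen without reference to the running bound $F$. The tension is real: the frequency-gap parameter $m$ must be taken large (depending on $F$) in order to turn powers of $F$ into smallness, which forces an $F$-dependent number of time subdivisions, and the delicate point is to prevent those subdivisions from producing an $F$-dependent blow-up of $\nrm{(B_{x},\psi)}_{S^{1}}$ — this is where the almost-conservation of energy and the weak divisibility of $S^{1}$ (Theorem~\ref{t:structure}\ref{item:structure:5}) have to be used with care. The single heaviest ingredient, on which the whole argument rests, is the paradifferential parametrix of Theorem~\ref{t:para-free}: it is the only available control of the unbalanced gradient interaction $\sum_{k}P_{<k-m}(A^{free})^{\alpha}\rd_{\alpha}P_{k}\psi$, for which neither the energy dispersion nor a divisible norm is effective, and where the smallness comes entirely from the size of the frequency gap $m$ rather than from any energy.
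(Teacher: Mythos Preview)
Your overall architecture is close to the paper's, but there is a genuine gap at the point you yourself flag as the main obstacle. You assert ``almost-conservation of the linear energy of $(B_{x},\psi)$ (Proposition~\ref{p:energy})'' to prevent $F$-dependent blow-up across the many subintervals. Proposition~\ref{p:energy} does not give this: it relates $E_{lin}$ to the conserved $\calE$ for a \emph{single} MKG solution, but $E_{lin}[B,\psi](t) = E_{lin}[A,\phi](t) - E_{lin}[\tA,\tphi](t) - 2\mathrm{Re}\,\langle (B,\psi)(t),(\tA,\tphi)(t)\rangle$, and the cross term has no sign and no smallness for $t>0$. At $t=0$ the cross term is controlled because $(B,\psi)[0]=P_{>k^{*}}(\cdot)$ and $(\tA,\tphi)[0]=P_{\leq k^{*}}(\cdot)$ are almost orthogonal in frequency; the nonlinear flow does not preserve this. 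Without a uniform bound $\nrm{(B,\psi)[t]}_{E}^{2}\lesssim c_{0}+O_{F}(\veps^{\cdots})$, your iteration $\nrm{\psi}_{S^{1}[J_{n}]}\lesssim_{E}\nrm{\psi[t_{n-1}]}_{E}+(\text{small})$ over $O_{F}(2^{Cm})$ subintervals produces a $(C_{E})^{O_{F}(1)}$ factor, forcing $c_{0}=c_{0}(F)$ and destroying the induction.

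The paper fixes this with an additional step you have omitted: a \emph{low frequency bound} comparing $(\tA,\tphi)$ not with $(A,\phi)$ but with the frequency-truncated pair $(A_{<k^{*}},\phi_{<k^{*}})$. These two share the \emph{same} initial data, so the difference starts from zero and can be driven purely by the $\veps$-energy dispersion of $\phi$, yielding $\nrm{(\tA-A_{<k^{*}},\tphi-\phi_{<k^{*}})}_{S^{1}_{c^{*}}}\lesssim_{F}\veps^{\dlt_{*}}$ with the envelope $c^{*}(k)=2^{-\dlt_{0}|k-k^{*}|}$. This single estimate then (i) restores the almost-orthogonality at every time, giving the uniform energy bound \eqref{en-high} for $(B^{high},\psi^{high})=(A-\tA,\phi-\tphi)$; (ii) transfers $\veps$-energy dispersion from $\phi$ to $\tphi$ and $\psi^{high}$ (your scheme needs this too, e.g.\ for $(\calM^{m}_{\tA}-\calM^{m}_{A})\tphi$); and (iii) supplies the $c^{*}$-envelope decay used in the paradifferential and null-form pieces. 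Only after this does the paper run the high-frequency argument you sketch, and there the subintervals come from \emph{weak divisibility} of $(\tA,\tphi)$ (so $\nrm{(\tA_{x},\tphi)}_{S^{1}[J]}\lesssim_{E}1$), with the uniform energy bound supplying the data term on each $J$ independently of $F$.
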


\begin{proof} 
Unless otherwise stated, all norms below are taken over the time interval $(0, T)$. We will prove the proposition in two steps.

\pfstep{Step 1: The low frequency bound} Here we estimate the 
difference $(B,\psi)$ given by
\[
B = \tA- A_{<k^*}, \qquad \psi = \tphi - \phi_{<k^*}
\]
and prove that it satisfies the bound
\begin{equation}\label{bd-low}
\| ( B_x,\psi) \|_{S^1_{c^*}}+ \|\nabla B_0\|_{L^{2} \dot{H}^{\frac{1}{2}}_{c^*}}  \lesssim_F  \  \veps^{\dlt_{\ast}}, \qquad 
c^{*}(k) = 2^{- \dlt_{0} |k-k^*|}
\end{equation}
One consequence of the $\psi$ bound above, combined with 
\eqref{ed-hyp}, is that \eqref{ed-prove} holds. 

Before we begin, note that Theorem~\ref{t:structure-ed} implies the following a-priori bounds:
\begin{align} 
	\nrm{\Box \phi}_{L^{2} \dot{H}^{-\frac{1}{2}}} & \aleq_{F} \veps^{\dlt_{1}} \nrm{\phi}_{S^{1}}, \label{box-phi-key-boot} \\
	\nrm{\nb A_{0}}_{L^{2} \dot{H}^{\frac{1}{2}}} & \aleq_{F} \veps^{\dlt_{1}} \nrm{\phi}_{S^{1}}^{2}, \label{a0-key-boot}
\end{align}
In particular, the bound for $\phi$ ensures that \eqref{box-phi} holds, allowing us to apply Propositions~\ref{p:ax} and \ref{p:ma}, whereas the bound for $\nb A_{0}$ provides smallness in applications of \eqref{axnl-eps}.

\pfstep{Step 1.1: Bound for $B_{x}$}
To prove the estimates above, we begin with the bounds for $B_x$. By definition 
$B_x$ has zero Cauchy data at time $0$, therefore we have 
\[
\begin{split}
B_x = & \ \A_x(\tphi,\tphi,\tA) -  P_{<k^*} \A_x(\phi,\phi,A)
\\  = & \  
\A_x(\tphi,\tphi,\tA) -  \A_x(\phi_{<k^*},\phi_{<k^*},A_{<k^*}) 
+\A_x(\phi_{<k^*},\phi_{<k^*},A_{<k^*})   -  P_{<k^*} \A_x(\phi,\phi,A)  
\end{split}
\]
In the first difference above we substitute $\tA= A_{<k^*}+B$ and $\tphi = \phi_{<k^*}+\psi$, then use \eqref{lin}, \eqref{axnl}-\eqref{axnl3}, \eqref{axnl-eps}-\eqref{axnl-eps3}, \eqref{axnl-hm}-\eqref{axnl-hm3} and \eqref{axnl-hm-eps}-\eqref{axnl-hm-eps3} (see also Remark~\ref{rem:disp-freq-loc}) to obtain
\[
\| \A_x(\tphi,\tphi,\tA) -  \A_x(\phi_{<k^*},\phi_{<k^*},A_{<k^*}) \|_{S^1_{c^*}}
\lesssim_F \veps^{\dlt_{1}} \|(B_x,\psi)\|_{S^1_{c^*}} +   \|(B_x,\psi)\|_{S^1_{c^*}}^2+
 \|(B_x,\psi)\|_{S^1_{c^*}}^3
\]
The second difference is localized at frequency $< 2^{k^*+5}$, and all the $\phi$
factors are $\veps$-energy dispersed. Further, we may rewrite this difference as
\begin{align*}
& P_{<k^* + 5} \bb( \A_x(\phi_{<k^*},\phi_{<k^*},A_{<k^*}) - \A_x(\phi_{<k^*-5},\phi_{<k^*-5},A_{<k^*-5}) \bb) \\
&  - P_{<k^*} \bb( \A_x(\phi,\phi,A)  - \A_{x}(\phi_{<k^*-5} , \phi_{<k^*-5}, A_{<k^*-5}) \bb)
\end{align*}
which shows that of the two or three inputs, at least one has frequency $\geq 2^{k^* - 10}$.
This input can be measured with the frequency envelope $c^*$ at frequencies below $2^{k^* - 10}$.  Thus, applying
\eqref{lin}, \eqref{axnl-eps}-\eqref{axnl-eps3} and \eqref{axnl-hm-eps}-\eqref{axnl-hm-eps3} yields
\[
\| \A_x(\phi_{<k^*},\phi_{<k^*},A_{<k^*})   -  P_{<k^*} \A_x(\phi,\phi,A)  \|_{S^1_{c^*}}
\lesssim_F \veps^{\dlt_{1}} \ .
\]
Summing up the last two bounds, we get
\begin{equation}\label{bx}
\|B_x\|_{S^1_{c^*}}
\lesssim_F (\veps^{\dlt_{1}}+   \|(B_x,\psi)\|_{S^1_{c^*}}^2) (1+ \|(B_x,\psi)\|_{S^1_{c^*}}) 
\end{equation}

\pfstep{Step 1.2: Bound for $B_{0}$}
The analysis for $B_0$ is very similar. Precisely,
$B_0$ solves the equation
\[
\begin{split}
B_0 = & \ \A_0(\tphi,\tphi,\tA) -  P_{<k^*} \A_0(\phi,\phi,A)
\\  = & \  
\A_0(\tphi,\tphi,\tA) -  \A_0(\phi_{<k^*},\phi_{<k^*},A_{<k^*}) 
+\A_0(\phi_{<k^*},\phi_{<k^*},A_{<k^*})   -  P_{<k^*} \A_0(\phi,\phi,A)  
\end{split}
\]
and the terms on the right can be estimated using \eqref{a0}-\eqref{a0-disp}.
The same applies for $\partial_t B_0$. We obtain 
\begin{equation}\label{b0}
\|\nabla B_0\|_{L^2 \dot H^\frac12_{c^*}}
\lesssim_F( \veps^{\dlt_{1}} + \|(B_x,\psi)\|_{S^1_{c^*}}^2)(1+ \|(B_x,\psi)\|_{S^1_{c^*}}
+ \|\nabla B_0\|_{L^2   \dot H^\frac12_{c^*}} ) 
\end{equation}

\pfstep{Step 1.3: Bound for $\psi$}
We now consider $\psi$, which solves
\begin{equation} \label{psi-eq}
\begin{split}
\Box_{\tA} \psi =  &  - (\Box_{\tA}- \Box_{A_{<k^*}}) \phi_{<k^*}
- (\Box_{A_{<k^*}}  \phi_{<k^*} - P_{<k^*} \Box_{A} \phi) 
\end{split}
\end{equation}
We start by estimating the right hand side in $(L^{2} \dot{H}^{-\frac{1}{2}} \cap L^{\frac{5}{9}} \dot{H}^{-\frac{4}{9}})_{c^*}$. For the first difference, we write $\tA = A_{<k^{*}} + B$ and observe that at least one input is $B$ (which can be measured using $c^{*}$) and $\phi_{<k^{*}}$ is $\veps$-energy dispersed. Hence by \eqref{phi-main-hm-eps}-\eqref{phi-cubic-hm-eps}, we have
\begin{equation} \label{psi-hm-1}
\begin{aligned}
	& \nrm{(\Box_{\tA}- \Box_{A_{<k^*}}) \phi_{<k^*}}_{(L^{2} \dot{H}^{-\frac{1}{2}} \cap L^{\frac{5}{9}} \dot{H}^{-\frac{4}{9}})_{c^{*}}} \\
	& \quad \aleq_{F} \veps^{\dlt_{1}} (\nrm{B_{x}}_{S^{1}_{c^*}} + \nrm{\nb B_{0}}_{L^{2} \dot{H}^{\frac{1}{2}}_{c^*}}) 
					(1 + \nrm{B_{x}}_{S^{1}_{c^*}} + \nrm{\nb B_{0}}_{L^{2} \dot{H}^{\frac{1}{2}}_{c^*}}).
\end{aligned}
\end{equation}
For the second difference in \eqref{psi-eq}, we claim that the following bound holds:
\begin{equation} \label{psi-hm-2}
\begin{aligned}
	& \nrm{(\Box_{A_{<k^*}}  \phi_{<k^*} - P_{<k^*} \Box_{A} \phi)}_{(L^{2} \dot{H}^{-\frac{1}{2}} \cap L^{\frac{5}{9}} \dot{H}^{-\frac{4}{9}})_{c^*}}  \\
	& \quad \aleq_{F} \veps^{\dlt_{1}} (\nrm{B_{x}}_{S^{1}_{c^*}} + \nrm{\nb B_{0}}_{L^{2} \dot{H}^{\frac{1}{2}}_{c^*}}) 
					(1 + \nrm{B_{x}}_{S^{1}_{c^*}} + \nrm{\nb B_{0}}_{L^{2} \dot{H}^{\frac{1}{2}}_{c^*}}).
\end{aligned}
\end{equation}
To prove this bound, we divide further into the following cases:
\begin{itemize}
\item [(i)] At least one of the $A$ frequencies is $> 2^{k^{*} - 10}$. Note that the output frequency is localized to $< 2^{k^* + 5}$. Hence by measuring the high frequency input with $c^{*}$, using the $\veps$-energy dispersion of $\phi$ and applying \eqref{phi-main-hm-eps}-\eqref{phi-cubic-hm-eps}, we can bound this contribution in $(L^{2} \dot{H}^{-\frac{1}{2}} \cap L^{\frac{5}{9}} \dot{H}^{-\frac{4}{9}})_{c^{*}}$ by $\aleq_{F} \veps^{\dlt_{1}}$.
\item [(ii)] The term $\Box_{A_{<k^{*}-10}} \phi_{<k^{*}} - P_{<k^{*}} (\Box_{A_{<k^{*} -10}} \phi)$. The contribution of $\phi_{<k^{*} - 5}$ and $\phi_{>k^{*}+5}$ is zero, so we may assume that $\phi$ and the output are frequency localized near $2^{k^{*}}$. Then by $\veps$-energy dispersion of $\phi$ and \eqref{phi-main-hm-eps}-\eqref{phi-cubic-hm-eps}, the desired estimate follows.
\end{itemize}

To estimate the $N_{c^*}$ norm of the right hand side in \eqref{psi-eq}, we use a frequency gap
parameter $m$ to be chosen later.  The first difference in
\eqref{psi-eq} is expressed in the form
\[
(\Box_{\tA}- \Box_{A_{<k^*}}) \phi_{<k^*} = (\mathcal M_\tA^m -  \mathcal M_{A_{<k^*}}^m)
\phi_{<k^*} 
+  (\Diff_{\A}^m (\tphi,\tphi,\tA) -  \Diff_{\A}^m (\phi,\phi,A)) \phi_{<k^*}
\]
In the first term, note that one of the inputs must be $B$. Then we use the frequency envelope $c^*$ for $B$, the $\veps$-energy dispersion of $\phi_{<k^{\ast}}$, \eqref{box-phi-key-boot} and \eqref{a0-key-boot} via \eqref{man-disp}-\eqref{man-dispa} to obtain
\begin{equation} \label{psilow-1}
\| (\mathcal M_\tA^m -  \mathcal M_{A_{<k^*}}^m)
\phi_{<k^*} \|_{N_{c^*}} \lesssim_F   (\veps^{\dlt_{1}} 2^{Cm} \nrm{B_{x}}_{S^{1}_{c^*}} + \nrm{\nb B_{0}}_{L^{2} \dot{H}^{\frac{1}{2}}_{c^*}}) 
					(1 + \nrm{B_{x}}_{S^{1}_{c^*}} + \nrm{\nb B_{0}}_{L^{2} \dot{H}^{\frac{1}{2}}_{c^*}})
\end{equation}
In the second term, we first replace the argument $(\phi,\phi,A)$ by $(\phi_{<k^*}, \phi_{<k^*},
A_{<k^*})$, and estimate the corresponding difference via \eqref{diffa}-\eqref{diffaa} as
\begin{equation} \label{psilow-2}
\|(\Diff_{\A}^m (\phi,\phi,A) - \Diff_{\A}^m(\phi_{<k^*}, \phi_{<k^*},
A_{<k^*})) \phi_{<k^*} \|_{N_{c^*}}  \lesssim_F  2^{-\dlt_{0} m}
\end{equation}
where both the frequency envelope control of $N_{c^*}$ and the gain $2^{-\dlt_{0} m}$ come from the frequency gap between the difference of
the magnetic coefficients $\A(\phi,\phi,A) - \A(\phi_{<k^*},
\phi_{<k^*}, A_{<k*})$ (which is only used at frequencies below
$2^{k^*-m}$) and its arguments $(\phi,\phi,A)$ (of which at least one must
have frequency no smaller than $2^{k^*}$; we use $c^*$ to measure this input). Then we are left to establish
\begin{equation} \label{psilow-3}
\begin{aligned}
& \|  (\Diff_{\A}^m (\tphi,\tphi,\tA) - \Diff_{\A}^m(\phi_{<k^*}, \phi_{<k^*}, A_{<k*})) \phi_{<k^*} \| _{N_{c^*}}   \\
& \quad \lesssim_F 2^{-\dlt_{0} m} (\|(B_{x},\psi)\|_{S^1_{c^*}} + \nrm{\nb B_{0}}_{L^{2} \dot{H}^{\frac{1}{2}}_{c^*}})(1 + \|\psi\|_{S^1_{c^*}}^2)
\end{aligned}
\end{equation}
Note that one of the inputs must be $(B, \psi)$. Then \eqref{psilow-3} follows again from \eqref{diffa}-\eqref{diffaa}, and using the frequency envelope $c^*$ to measure $(B, \psi)$.

Finally, we still have the second difference in \eqref{psi-eq} in $N_{c^{\ast}}$, for which we claim that
\begin{equation} \label{psilow-4} 
\| \Box_{A_{<k^*}} \phi_{<k^*} -
  P_{<k^*} \Box_{A} \phi\|_{N_{c^*}} \lesssim_F \veps^{\dlt_{1}} 2^{Cm}  
  + 2^{-c m}
\end{equation}
 To see this we write it as 
\[
(\Box_{A_{<k^*}}  \phi_{<k^*} - P_{<k^*} \Box_{A} \phi)  = 
(\mathcal M_{A_{<k^*}}^m  \phi_{<k^*} - P_{<k^*} \mathcal M^m_{A} \phi) +   [P_{<k^*}, \Diff^m_A] \phi 
\]
For both differences, note that the output frequency is localized to $< 2^{k^{*} + 5}$. 
Canceling the like terms in the first difference, we are left with three types
of frequency scenarios:

\begin{itemize}
\item[(i)] The frequency of one of the $A$'s is at least $k^*-m$.
Then we can apply \eqref{man-disp}-\eqref{man-dispa} and \eqref{a0-key-boot} to obtain
an $ \veps^{\dlt_{1}} 2^{Cm}$ bound, where we use $c^{*}$ to measure the high frequency input. 

\item[(ii)] We have a quadratic term of the form $\partial_t A_{0,<k^*-m} \phi_{[k^*-5, k^*+5]}$,
which can be directly estimated by $2^{-c m}$ using Strichartz bounds and Bernstein's
inequality.

\item[(iii)] We have a cubic term 
of the form $A_{<k^*-m}^2 \phi_{[k^*-5, k^*+5]}$, which in turn can be estimated directly in $L^1 L^2$
using non-sharp Strichartz estimates, to get an $\veps^{\dlt_{1}}$ bound.
\end{itemize} 

It remains to consider the commutator term  $[ P_{<k^*} , \Diff^m_A] \phi $; 
we claim that the contribution of this term can be estimated by $2^{-c m}$.
It is clear that only the frequencies close to $k^*$ in $\phi$ are relevant here,
therefore the commutator can be  expressed as
\[
[ P_{<k^*}, A^\alpha_{<k^*-m}] \partial_\alpha \phi_{k^*} =       
2^{-k^*} L(\nabla  A^\alpha_{<k^*-m},  \partial_\alpha \phi_{k^*}  )
\]
for a bilinear form $L$ with translation invariant integrable kernel.
The $A_0$ term is easy to deal with using the $L^2 \dot H^\frac32$
bound for $A_0$, Strichartz for $\phi$ and Bernstein's inequality.
Thus we are left with the expression $2^{-k^*} L(\nabla
A^j_{<k^*-m}, \partial_j \phi_{k^*} )$, which has both a null
structure and a favorable frequency balance.
This we can treat using the bound \eqref{null} in the beginning of the next section
(see also \eqref{Aphi-null}). Hence \eqref{psilow-4} follows.

Thus, summing up all cases in \eqref{psi-hm-1}-\eqref{psilow-4}, we obtain
\begin{align*}
& \| \Box_{\tA} \psi\|_{(N \cap L^{2} \dot{H}^{-\frac{1}{2}} \cap L^{\frac{9}{5}} \dot{H}^{-\frac{4}{9}})_{c^*}} \lesssim_F \\
&  \bb(2^{-cm} + 2^{-\dlt_{0} m }+ \veps^{\dlt_{1}} 2^{Cm}(1 +  \|(B_x,\psi)\|_{S^1_{c^*}}) + \nrm{\nb B_{0}}_{L^{2} \dot{H}^{\frac{1}{2}}_{c^*}}  \bb)
(1+\|(B_x,\psi)\|_{S^1_{c^*}}^2 + \nrm{\nb B_{0}}_{L^{2} \dot{H}^{\frac{1}{2}}_{c^*}}^{2}) 
\end{align*}
Optimizing the choice of $m$, this gives (with $\delta_{\ast} \ll \delta_{0} \ll c $) 
\begin{align*}
& \| \Box_{\tA} \psi\|_{(N \cap L^{2} \dot{H}^{-\frac{1}{2}} \cap L^{\frac{9}{5}} \dot{H}^{-\frac{4}{9}})_{c^*}}  \\
& \quad \lesssim_F \bb( \veps^{\dlt_{\ast}}(1 +  \|(B_x,\psi)\|_{S^1_{c^*}}) + \nrm{\nb B_{0}}_{L^{2} \dot{H}^{\frac{1}{2}}_{c^*}} \bb)
(1+\|(B_x,\psi)\|_{S^1_{c^*}}^2 + \nrm{\nb B_{0}}_{L^{2} \dot{H}^{\frac{1}{2}}_{c^*}}^{2}) 
\end{align*}
Recalling that $\psi$ has zero initial data, by Theorem~\ref{t:structure}\ref{item:structure:1} this 
implies the estimate
\begin{equation}\label{psi}
\| \psi\|_{S^1_{c^*}} \lesssim_{F}
 \bb( \veps^{\dlt_{\ast}}(1 +  \|(B_x,\psi)\|_{S^1_{c^*}}) + \nrm{\nb B_{0}}_{L^{2} \dot{H}^{\frac{1}{2}}_{c^*}} \bb)
(1+\|(B_x,\psi)\|_{S^1_{c^*}}^2 + \nrm{\nb B_{0}}_{L^{2} \dot{H}^{\frac{1}{2}}_{c^*}}^{2}) .
\end{equation}
Now we can combine this with \eqref{bx} and \eqref{b0}, and 
close to prove \eqref{bd-low}, provided that $\veps$ is small enough. 
We carefully observe here that the smallness of $\veps$ depends on $F$.
In turn, we will want later that the choice of $F$ is independent of $\veps$. 
\bigskip 

\pfstep{Step 2: The high frequency bound} Here we consider the high frequency differences
$(B^{high}, \psi^{high})$ given by
\[
B^{high} = A - \tA, \qquad \psi^{high} = \phi - \tphi
\]
and prove that they satisfy the $S^1$ bound
\begin{equation} \label{high(I)}
\begin{aligned}
\| (B_x^{high},   \psi^{high}) \|_{S^1}+ \| \nabla B_0\|_{L^2 \dot H^\frac12} \lesssim_{F(E)} 1,
\end{aligned}
\end{equation}
provided that $c_{0} = c_{0}(E)$ is chosen small enough compared to $E$, but independent of $F(E)$. 

\pfstep{Step 2.1: Energy estimate for $(B^{high}_{x}, \psi^{high})$ and weak divisibility}
Here we take the necessary steps to ensure the independence of $c_{0}$ on $F(E)$. 
We first use the energy conservation for $(A,\phi)$ and $(\tA,\tphi)$, together with the low frequency  estimates of the previous step,
to conclude that the energy norm for $(B^{high}, \psi^{high})$ stays bounded, i.e.,
\begin{equation}\label{en-high}
\| (\nabla B_x^{high}, \nabla \psi^{high})\|_{L^\infty L^2}^2 \lesssim c_{0} + O_F(\veps^{\frac{1}{4} \dlt_{\ast}})
\end{equation}
Indeed, for each $t \in (0, T)$ we have
\begin{align*}
	E_{lin}(A, \phi)
	= & E_{lin}(A_{\geq k^*}, \phi_{\geq k^*}) + E_{lin}(A_{< k^*}, \phi_{< k^*}) 
	+ \brk{A_{\geq k^*}, A_{< k^*}}_{\dot{H}^{1} \times L^{2}} + \brk{\phi_{\geq k^*}, \phi_{< k^*}}_{\dot{H}^{1} \times L^{2}} \\
	\geq & E_{lin}(A_{\geq k^*}, \phi_{\geq k^*}) + E_{lin}(A_{< k^*}, \phi_{< k^*}).
\end{align*}
where we omitted writing $[t]$ and the subscript $x$ from $A = A_{x}$. We have used the fact that the operator $P_{< k^*} P_{\geq k^*}$ is non-negative, as it has a non-negative symbol. By Step 1, we know that $(A_{<k^*}, \phi_{<k^*})$ is equal to $(\tA, \tphi)$ up to an error of size $O_{F}(\veps^{\dlt_{\ast}})$ in $S^{1}_{c^*}$. Therefore, we have
\begin{align*}
	E_{lin}(B^{high}, \psi^{high})
	= & E_{lin}( A_{\geq k^*}, \phi_{\geq k^*}) + O_{F}(\veps^{\dlt_{\ast}}) \\
	\leq & E_{lin}(A, \phi) - E_{lin}(A_{<k^*}, \phi_{<k^*}) + O_{F}(\veps^{\dlt_{\ast}}) \\
	= & E_{lin}(A, \phi) - E_{lin}(\tA, \tphi) + O_{F}(\veps^{\dlt_{\ast}}).
\end{align*}
By Proposition~\ref{p:energy}, recall that $E_{lin}(A, \phi)$ and $E_{lin}(\tA, \tphi)$ are $O_{E}(\veps^{\frac{1}{4} \dlt_{\ast}})$ close to the corresponding conserved energies $\calE[A, \phi]$ and $\calE[\tA, \tphi]$, respectively. Hence by the definition of $(\tA, \tphi)$, the desired estimate \eqref{en-high} follows.

Next, we use the weak divisibility of the $S^1$ norm in
Theorem~\ref{t:structure}\ref{item:structure:5} to split the time interval $I$ into
$O_{F(E)} (1)$ subintervals, on each of which
 \begin{equation}\label{I-split}
\| (\tA_{x}, \tphi)\|_{S^1[J]} \lesssim_E 1.
\end{equation}
We remark that this bound also relies on the conservation of energy for $(\tA, \tphi)$.
Due to the uniform bound in \eqref{en-high}, on each such subinterval
$J$ we can reinitialize the data for $(B^{high}_{x}, \psi^{high})$ and we
no longer have any trace of $F(E)$ or $\eps(E)$.  Instead, $(\tA_{x},
\tphi)$ has $S^1$ norm $O_E(1)$ and energy dispersion
$\tilde{C}_{F} \veps^{\dlt_{\ast}}$.  Thus, it remains to show that on each $J$ we
have the improved bound
\begin{equation}\label{high(J)}
\begin{aligned}
\| (B^{high},\psi^{high}) \|_{S^1[J]} \lesssim_{E} 1
\end{aligned}
\end{equation}
Then \eqref{high(I)} would follow by adding the above over $O_{F(E)}(1)$
intervals, using \eqref{s-sum}.  

\begin{remark} \label{rem:disp-psih-tphi}
Compared to the low frequency estimate in Step 1,
here we have a key advantage that we can exploit small energy dispersion for both $\tphi$ and $\psi^{high}$, albeit at the expense of using the $S^{1}$ norm of $\phi$ on the larger interval $(0, T)$. More precisely, Step 1 implies
\begin{equation} \label{psih-phi}
	\nrm{\tphi - \phi_{< k^*}}_{S^{1}(0, T)} + \nrm{\psi^{high} - \phi_{\geq k^*}}_{S^{1}(0, T)} \leq \tilde{C}_{F} \veps^{\dlt_{\ast}}.
\end{equation}
for some constant $\tilde{C}_{F} > 1$.
Moreover, $\phi$ is $\veps$-energy dispersed by hypothesis and obeys \eqref{box-phi-key-boot} on the large interval $(0, T)$. By Remark~\ref{rem:disp-freq-loc} and a simple extension procedure\footnote{Technically, one extends all non energy-dispersed inputs of the form $\phi, A_{x}$ by homogeneous waves outside $J$ to $(0, T)$ (see Proposition~\ref{p:intervals}) and $A_{0}$ by a standard Sobolev extension so that $\nrm{\nb A_{0}}_{L^{2} \dot{H}^{\frac{1}{2}}(0, T)} \aleq \nrm{\nb A_{0}}_{L^{2} \dot{H}^{\frac{1}{2}}[J]}$.}, we may gain $\tilde{C}_{F} \veps^{\dlt_{\ast}} + \veps^{\dlt_{1}} \nrm{\phi}_{S^{1}(0, T)}$ from $\tphi$ or $\psi^{high}$ whenever any of the $\veps$-energy dispersion bounds \eqref{axnl-eps}-\eqref{axnl-eps3}, \eqref{axnl-hm-eps}-\eqref{axnl-hm-eps3}, \eqref{a0-disp}, \eqref{man-disp}-\eqref{man-dispa}, \eqref{phi-main-hm-eps}-\eqref{phi-cubic-hm-eps} are applicable on the smaller interval $J$.
\end{remark}


\pfstep{Step 2.2: Bound for $B^{high}_{x}$}
The bound for $B^{high}_{x}$ is an easy consequence of small energy dispersion. Indeed, using \eqref{psih-phi}, $\veps$-energy dispersion of $\phi$, \eqref{box-phi-key-boot} as well as the estimates \eqref{axnl}-\eqref{axnl3}, \eqref{axnl-eps}-\eqref{axnl-eps3}, \eqref{axnl-hm}-\eqref{axnl-hm3} and \eqref{axnl-hm-eps}-\eqref{axnl-hm-eps3} (see also Remark~\ref{rem:disp-psih-tphi}), we have
\begin{equation} \label{boxB-high}
\| \Box B^{high}_{x}\|_{N \cap L^{2} \dot{H}^{-\frac{1}{2}} \cap L^{\frac{9}{5}} \dot{H}^{-\frac{4}{9}} [J]} \lesssim_F \veps^{\dlt_{\ast}},
\end{equation}
where we used the fact that $\dlt_{\ast} \ll \dlt$. Then by the linear estimate \eqref{lin} and \eqref{en-high}, it follows that
\begin{equation} \label{B-high}
\| B^{high}\|_{S^{1}[J]} \lesssim_E c_{0} + O_{F}(\veps^{\frac{1}{4}\dlt_{\ast}}) \ .
\end{equation}
This bound is stronger than what we need for \eqref{high(J)}, but it will be useful in the next step. 

\pfstep{Step 2.3: Bound for $\psi^{high}$}
For $\psi^{high}$, we claim that a similar bound to \eqref{boxB-high} but with respect to the $\Box_A$ flow holds:
\begin{equation}\label{boxA-psih}
\|\Box_A \psi^{high}\|_{N \cap L^{2} \dot{H}^{-\frac{1}{2}} \cap L^{\frac{9}{5}} \dot{H}^{-\frac{4}{9}}[J]} \lesssim_F \veps^{\dlt_{\ast \ast}}.
\end{equation}
where $0 < \dlt_{\ast \ast} \ll \dlt_{\ast}$.
Assuming that \eqref{boxA-psih} holds, we can conclude \eqref{high(J)} using the following simple additional bootstrap argument in time. Denoting the initial time in $J$ by $t_{0}$, it follows from \eqref{en-high} and continuity of the $S^{1}[J']$ norm that we have
\begin{equation} \label{phih-prove}
	\nrm{\psi^{high}}_{S^{1}[J']} \leq C_{E} (c_{0} + O_{F}(\veps^{\frac{1}{4} \dlt_{\ast}}))
\end{equation}
for a suitably large constant $C_{E} > 1$ and a sufficiently short interval $J'$ containing $t_{0}$. Then to prove \eqref{phih-prove} for $J' = J$, it suffices to establish \eqref{phih-prove} under the bootstrap assumption
\begin{equation} \label{phih-boot}
	\nrm{\psi^{high}}_{S^{1}[J']} \leq 2 C_{E} (c_{0} + O_{F}(\veps^{\frac{1}{4} \dlt_{\ast}})).
\end{equation}
Choosing $c_{0}$ sufficiently small depending on $E$ and $\veps \ll_{F} 1$, it follows from \eqref{en-high}, \eqref{I-split}, \eqref{B-high} and \eqref{phih-boot} that
\begin{equation*}
	\nrm{(A_{x}, \phi)}_{S^{1}[J']} \aleq_{E} 1,
\end{equation*}
where the implicit constant is twice that of \eqref{I-split}. Now applying the linear $\Box_A$ bound \eqref{boxA-lin} in Theorem~\ref{t:structure}\ref{item:structure:1} and enlarging $C_{E}$ to be larger than the implicit constant in \eqref{boxA-lin}, the desired estimate \eqref{phih-prove} follows.
We remark that the size of $c_{0}$ essentially depends on the implicit constant in \eqref{boxA-lin}, which in turn depends on the constant in Theorem~\ref{t:para-free}.

We now turn to the proof of \eqref{boxA-psih}. We first estimate the $L^{2} \dot{H}^{-\frac{1}{2}} \cap L^{\frac{9}{5}} \dot{H}^{-\frac{4}{9}}$ norm, which is easier. Note that
\begin{equation*}
	\Box_{A} \psi^{high} = (\Box_{\tA} - \Box_{A}) \tphi.
\end{equation*}
Exploiting the small energy dispersion of $\tphi$ as in Remark~\ref{rem:disp-psih-tphi} and applying \eqref{phi-main-hm}-\eqref{phi-para-hm}, \eqref{phi-main-hm-eps}-\eqref{phi-cubic-hm-eps}, as well as \eqref{bd-low}, \eqref{a0-key-boot} for $A_{0}$, $\tA_{0}$, we obtain
\begin{equation} \label{boxA-psih-hm}
	\nrm{\Box_{A} \psi^{high}}_{L^{2} \dot{H}^{-\frac{1}{2}} \cap L^{\frac{9}{5}} \dot{H}^{-\frac{4}{9}}[J]}
	\aleq_{F} \veps^{\dlt_{\ast}}.
\end{equation}

To bound the $N$ norm in \eqref{boxA-psih}, we introduce a frequency gap $m$ to be chosen later. Then we
 write the $\Box_A$ equation for $\psi^{high}$
as follows:
\begin{align}
\Box_A \psi^{high} =& \   (\Box_{\tA} - \Box_A) \tphi 			\label{boxA-psih-decomp}
\\ = & (\mathcal M_\tA^m - \mathcal M_A^m) \tphi+   
(\Box^{p, m}_{\tA^{free}} - \Box^{p, m}_{A^{free}}) \tphi  + (\Diff_\A^m(\tphi,\tphi,\tA) - \Diff_\A^m(\phi,\phi,A))\tphi, \notag
\end{align}
where the decomposition $A = A^{free} + A^{nl}$ is performed with respect to some fixed initial time $t_{0} \in J$.

In the first term on the right hand side of \eqref{boxA-psih-decomp}, we take advantage of the small energy dispersion of $\tphi$
as in Remark~\ref{rem:disp-psih-tphi} and apply \eqref{man}-\eqref{man-small}, \eqref{man-disp}-\eqref{man-dispa}, as well as \eqref{bd-low}, \eqref{a0-key-boot} for $A_{0}$, $\tA_{0}$ (here it is crucial to use the smallness factor $\veps^{\dlt_{1}}$ in \eqref{a0-key-boot}), to obtain
\[
\|  (\mathcal M_\tA^m - \mathcal M_A^m) \tphi\|_{N[J]} \lesssim_F 2^{Cm} \veps^{\dlt_{\ast}}.
\]
In the term $(\Box^{p, m}_{\tA^{free}} - \Box^{p, m}_{A^{free}}) \tphi$, we make a further decomposition as follows:
\begin{align*}
(\Box^{p, m}_{\tA^{free}} - \Box^{p, m}_{A^{free}}) \tphi
= (\calM^{m, 2}_{\tA^{free}} - \calM^{m, 2}_{A^{free}}) \tphi + 2 i \sum_{k > k^{\ast} + m} (\tA^{free} - A^{free})_{< k-m}^{j} \rd_{j} \tphi_{k}
\end{align*}
For the first difference, we use the bounds \eqref{man-small}-\eqref{man-large} and \eqref{man-disp}, where we exploit the small energy dispersion of $\tphi$ as in Remark~\ref{rem:disp-psih-tphi}. For the second difference, we use the null form estimate \eqref{null(Afree)} together with the high frequency decay of $\tphi$ and low frequency decay of $\tA - A[t_{0}]$ due to the $c^*$ envelope bound \eqref{bd-low}. We conclude that
\begin{equation*}
\nrm{(\Box^{p, m}_{\tA^{free}} - \Box^{p, m}_{A^{free}}) \tphi}_{N[J]} \aleq_{F} 2^{Cm} \veps^{\dlt_{\ast}} + 2^{-cm}.
\end{equation*}
Finally, for the third term in \eqref{boxA-psih-decomp}, we use \eqref{diffa} and
\eqref{diffaa}. The gain comes from  the low frequency bound \eqref{bd-low} from Step 1.
This guarantees that, on one hand, $\tphi$ decays 
at high frequencies $> 2^{k^*}$, and on the other hand the differences $(A-\tA, \phi-\tphi)$ 
decay at low frequency $< 2^{k^*}$. As the frequency gap enforces a separation of 
at least $m$, from \eqref{diffa} and
\eqref{diffaa} we obtain
\[
\| (\Diff^{m}_{\bfA}(\tphi,\tphi,\tA) - \Diff^{m}_{\bfA}(\phi,\phi,A))\tphi \|_{N[J]} \lesssim_{F}  2^{-\dlt_{0} m}.
\]
 Summing up, the bounds for the three terms in $\Box_A \psi^{high}$, we conclude that 
\begin{equation}
\| \Box_A \psi^{high}\|_{N \cap L^{2} \dot{H}^{-\frac{1}{2}} \cap L^{\frac{9}{5}} \dot{H}^{-\frac{4}{9}}[J]} \lesssim_{F} 2^{Cm} \veps^{\dlt_{\ast}} + 2^{-cm} + 2^{-\dlt_{0} m} .
\end{equation}
Optimizing the choice of $m$, the desired estimate \eqref{boxA-psih} follows. \qedhere

\end{proof}

\section{Bilinear null form estimates}
\label{s:bi}

We begin our discussion with the bilinear null form estimates, which
play a key role in our analysis. These occur in both equations in the
MKG-CG system \eqref{MKG}. In the $\phi$ equation we have the
expression $A^j \partial_j \phi$, under the Coulomb gauge condition
$\rd^{j} A_{j} = 0$. We can rewrite this as
\begin{equation}\label{Aphi-null}
A^j \partial_j \phi = \partial_{k} \partial^{k} \Delta^{-1} A^j \partial_j \phi
= Q_{kj}(  \partial^{k} \Delta^{-1} A^{j}, \phi)
\end{equation}
where $Q_{kj}$ is the standard null form 
\[
Q_{kj}(u,v) = \partial_k u \partial_j v - \partial_j u \partial_k v
\]
In the $A$ equation, on the other hand, we encounter the expression
\begin{equation}\label{phiphi-null}
\mathcal P_j ( \phi \partial_x \bar \phi) 
=  \phi \partial_j \bar \phi - \partial^{k} \partial_j 
\Delta^{-1} (\phi \partial_k \bar \phi) = \partial^{k} \Delta^{-1} Q_{kj}(\phi,\bar \phi)
\end{equation}
Thus, it suffices to produce good estimates for the null form $Q_{ij}$.
 For that we have

\begin{proposition}
  Let $\mathcal N$ be one of the $Q_{ij}$ null form. Then the
  following bilinear estimates hold:
\begin{equation}\label{null}
\| P_{j} \mathcal N(\phi_k,\psi_l)\|_{N} \lesssim  2^{j} 2^{-\delta (|j-k|+|j-l|)}  \|\phi_k\|_{S^1} \|\psi_l\|_{S^1}  
\end{equation}
\begin{equation}\label{null-small}
\| Q_{<j-m} P_{j} \mathcal N(Q_{< k-m} \phi_k ,Q_{<l-m} \psi_l )\|_{N} 
\lesssim  2^{j} 2^{-\dlt m} 2^{C (|j-k|+|k-l|)}  \|\phi_k\|_{S^1} \|\psi_l\|_{S^1}  
\end{equation}
\end{proposition}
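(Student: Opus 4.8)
The plan is to reduce both estimates to dyadic bilinear bounds for the null form $Q_{ij}$ proved (in the small-data setting) in \cite{Krieger:2012vj}, and to extract the extra off-diagonal decay factors from the null structure. First I would separate the output modulation and the two characteristic cones: writing $\mathcal N(\phi_k,\psi_l)$ with $\phi_k = \sum_{\pm}Q^{\pm}\phi_k$, $\psi_l = \sum_{\pm}Q^{\pm}\psi_l$, and decomposing the output into $Q^{\pm}_j$, we are reduced to finitely many sign interactions. The key algebraic input is the standard symbol bound for $Q_{ij}$: on frequencies $\xi,\eta$ with $\abs{\xi}\sim 2^k$, $\abs{\eta}\sim 2^l$ and output $\abs{\xi+\eta}\sim 2^j$, one has $\abs{q_{ij}(\xi,\eta)}\lesssim \abs{\xi}\abs{\eta}\,\angle(\pm_1\xi,\pm_2\eta)$ where the angle is, up to constants, controlled by the modulations via $\angle(\pm_1\xi,\pm_2\eta)^2 \lesssim 2^{-\min(k,l)}\max(\abs{\,\abs{\tau_1}-\abs{\xi}\,},\abs{\,\abs{\tau_2}-\abs{\eta}\,},\abs{\,\abs{\tau_1+\tau_2}-\abs{\xi+\eta}\,})$ in the $(+,-)$ (transversal) case, with an extra $2^{\min(k,l)-\max(k,l)}$ type improvement in the high-high-to-low case. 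This is exactly the null-form gain that the $S^1_k$-angular norms $S^{ang}_{k,j}$ and the null-frame/$PW$ components in \eqref{Sl_def} are designed to absorb.

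For \eqref{null}, I would run the usual Littlewood-Paley trichotomy in $(j,k,l)$. In each of the regimes $k\sim l\gtrsim j$ (high-high), $j\sim k\gtrsim l$ or $j\sim l\gtrsim k$ (high-low), one uses the null structure together with the $S^1$ norm's control of Strichartz, $X^{s,b}$, the $NE$ and $PW^{\pm}$ null-frame pieces, and the angular square-sums, precisely as in \cite{Krieger:2012vj}; this gives the bound $2^j$ times a gain of the form $2^{-\delta_0(\abs{j-k}+\abs{j-l})}$ for some $\delta_0>0$ coming from either the angular separation or the frequency imbalance (one may then pass from $\delta_0$ to the stated $\delta$ by the convention $\delta$ is small). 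Here one must be slightly careful about the high-modulation regimes $Q_{\geq}$ of the inputs: when an input has modulation $\gtrsim$ its frequency, one loses the angular gain but recovers it (and more) from the $X^{0,1/2}_\infty$ or the $\Box^{-1}L^2$ components in \eqref{s1}, estimating directly in $X^{0,-1/2}_1\subset N$. The output being placed in $L^1L^2+X^{0,-1/2}_1$, these are exactly the computations of \cite[Sections~on bilinear estimates]{Krieger:2012vj}, and I would cite them after reducing to the dyadic pieces.

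For \eqref{null-small}, the point is that all three inputs/output are restricted to modulation $< 2^{(\cdot)-m}$, so the modulation-based lower bound on the angle forces the interaction to be transversal only through very small angles: $\angle(\pm_1\xi,\pm_2\eta)^2 \lesssim 2^{-\min(k,l)}\cdot 2^{\max(j,k,l)-m}$, hence one gains a factor $2^{-\delta m}$ from the $L^2$-angular orthogonality (each fixed output frequency block $\sim 2^j$ is hit by only $O(2^{?})$ pairs of angular caps, and summing the squares over the cap decomposition produces the $2^{-\delta m}$). The price is an exponential loss $2^{C(\abs{j-k}+\abs{k-l})}$ in the frequency gaps, which is harmless here because this estimate is applied (see \eqref{para_wave}, \eqref{diffa-small}, \eqref{man-small}) only in nearly-balanced configurations where $\abs{j-k}+\abs{k-l}$ is bounded or summed against a frequency envelope. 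Concretely I would: (i) fix the signs and note only the transversal case survives for $m$ large; (ii) decompose both inputs into angular caps of size $2^{l_0}$ with $2^{2l_0}\sim 2^{\max(j,k,l)-m-\min(k,l)}$; (iii) for each admissible pair of caps use a single bilinear Strichartz / $PW\times NE$ estimate as in \eqref{Sl_def}; (iv) sum over caps using Cauchy-Schwarz and the $S^{ang}$ square-sum, which yields the $2^{-\delta m}$; (v) absorb the crude frequency-localization losses into $2^{C(\abs{j-k}+\abs{k-l})}$. The main obstacle is step (iv)-(v): bookkeeping the cap decompositions so that the angular square-summability built into $S^1_k$ is used at the correct scale $l_0 = l_0(m,j,k,l)$ and matches the geometric packing of caps on $\mathbb S^3$, while keeping the frequency-gap losses polynomial in $2^m$-free—exactly the kind of argument carried out in \cite{Krieger:2012vj, MR2657817}, which I would follow rather than reproduce in full.
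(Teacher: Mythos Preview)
Your proposal is correct and aligns with the paper's approach: both estimates are taken from \cite{Krieger:2012vj}, and your outline describes precisely the null-form/angular-decomposition machinery underlying the bounds (131) and (143) there. The paper's own argument is terser than yours: it simply cites (131) for \eqref{null}, and for \eqref{null-small} it first observes that the factor $2^{C(|j-k|+|k-l|)}$ lets one harmlessly assume $|j-k|+|k-l|\ll C$ (i.e., reduce to the balanced case before invoking (143)), whereas you carry the frequency-gap losses through the argument and absorb them at the end---same content, slightly different packaging.
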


The first estimate above is the bound (131) in \cite{Krieger:2012vj}.
For the second bound we can harmlessly assume that $|j-k|+ |k-l| \ll C$. 
Then \eqref{null-small} is a consequence of the bound 
(143) in \cite{Krieger:2012vj}. 

We remark that the first bound \eqref{null} easily transfers to an interval $I$.
However, the second one involves modulation localizations, which are inconsistent
with interval localizations.

\bigskip

We now use the above null form estimates to conclude the proof 
of all the remaining results in Section~\ref{s:dec}, except for Proposition~\ref{p:diff}.

\bigskip

\begin{proof}[Proof of Proposition~\ref{p:ax}]
As in Remark~\ref{rem:ax-S}, it suffices to estimate the $\nrm{\Box (\cdot)}_{N}$ norms.
We begin with the quadratic part  $\A^{2}_x$. For simplicity, we concentrate on the case when the first two inputs are identical; the general case is a minor extension. We have 
\[
\Box \A_i^{2} (\phi,\phi)  =  \partial_j \Delta^{-1} \mathcal N(\phi,\bar \phi)
\]
therefore \eqref{axnl} follows by dyadic summation from \eqref{null}.

To prove the more refined bounds for $\A^{2}_x$ we use a large frequency gap $m$
 to first split 
\[
\begin{split}
\Box \A_i^{2} (\phi,\phi) = & \!\!\! \sum_{\max \set{|k-k_i|} \geq m} \!\!
P_k   \partial_j \Delta^{-1} \mathcal N(\phi_{k_1},\bar \phi_{k_2})
+ \sum_{|k-k_i| < m} \!\!
P_k   \partial_j \Delta^{-1} \mathcal N(\phi_{k_1},\bar \phi_{k_2})
\end{split}
\]
The first sum is estimated using \eqref{null} with a $2^{-cm}$
constant.

The second sum is essentially diagonal, so it suffices to estimate it
for fixed $k$. For this we consider two cases depending on the
relative size of the interval $I$.  The case of short intervals $|I|
\leq 2^{-k+m}$ is easy to dispense with, as we have
\[
\begin{split}
\| P_k   \partial_j \Delta^{-1} \mathcal N(\phi_{k_1},\bar \phi_{k_2})\|_{N[I]}
\lesssim & \ 2^{-k} \| \calN(\phi_{k_1},\bar \phi_{k_2})\|_{L^1 L^2[I]} 
\\  \lesssim & \  
 2^{Cm} (2^{k} |I|)^\frac12 \| |D|^\frac14 \phi_{k_1}\|_{L^4[I]} \| |D|^\frac14 \phi_{k_2}\|_{L^4[I]}. 
\end{split}
\]
Here we have a large $2^{Cm}$ constant, but the Strichartz norms on the right is divisible so this suffices for \eqref{axnl-large}.
Moreover, since $(4, 4)$ is a non-sharp pair of Strichartz exponents, it will be sufficient for \eqref{axnl-eps} too, as we explain below.


We are left with the most interesting case. To summarize, we have 
$|k-k_j| < m$ and $|I| > 2^{-k+Cm}$. To continue the proof we need to use
modulation localizations.   In order to be able to do that we extend $\phi$ outside our 
interval $I$ by homogeneous waves. Then we decompose
\[
P_k    \partial_j \Delta^{-1} \mathcal N(\phi_{k_1},\bar \phi_{k_2})
= Q_{<k-Cm}P_k\partial_j \Delta^{-1} 
\mathcal N(Q_{<k-Cm}\phi_{k_1},Q_{<k-Cm}\bar \phi_{k_2})+
  err(k,k_1,k_2)
\]
where the error corresponds to  at least one
modulation larger than $k-Cm$. The first  term is estimated using
 \eqref{null-small} with a $2^{-cm}$ constant. 
For the error we produce instead a direct bound, with two
cases:

\begin{itemize}
\item[(i)] High modulation output:
\[
\begin{split}
\|  Q_{>k-Cm} P_k   \partial_j \Delta^{-1} \mathcal N(\phi_{k_1},\bar \phi_{k_2})\|_{N[I]}
\lesssim & \ 2^{Cm} 2^{-\frac{3k}2} \| \chi_I^k \mathcal N(\phi_{k_1},\bar \phi_{k_2})\|_{L^2}
\\ \lesssim & \ 2^{Cm} \|  \chi_I^k |D|^\frac14 \phi_{k_1}\|_{L^4} \| \chi_I^k
 |D|^\frac14 \phi_{k_2}\|_{L^4} 
\\ \lesssim & \ 2^{Cm} \| |D|^\frac14 \phi_{k_1}\|_{L^4[I]} \|  |D|^\frac14 \phi_{k_2}\|_{L^4[I]} 
\end{split}
\]
where we used Proposition~\ref{p:ext} on the last line.
Here the relaxed cutoff $\chi_I^k$ was inserted in order to account
for the fact that the operator $ Q_{>k-Cm}$ is nonlocal in time.  Its
kernel decays rapidly on the $2^{Cm} 2^{-k}$ time scale, and this is exactly
the scale captured by $\chi_I^k$. Again the Strichartz norms on the
right are both divisible and non-sharp, so this bound suffices for
both \eqref{axnl-large} and \eqref{axnl-eps} (see below).

\item[(ii)] One high modulation input:
\[
\begin{split}
\|  Q_{<k-Cm} P_k \mathcal N(Q_{> k-Cm}\phi_{k_1},\bar \phi_{k_2})\|_{N[I]}
\lesssim & \  2^{Cm} 
 \|   \chi_I^k \mathcal N(Q_{> k-Cm} \phi_{k_1},\bar \phi_{k_2})\|_{L^1 L^2}
\\ \lesssim & \ 2^{Cm}   \| \Box  \phi_{k_1}\|_{L^2}  \| \chi_I^k \phi_{k_2}\|_{L^2 L^\infty} 
\end{split}
\]
This suffices for \eqref{axnl-large}. To complete the proof of \eqref{axnl-eps}
we also need to account for the case when $\phi_{k_{2}}$ has high modulation.
Then we have the following small variation of the previous computation:
\[
\begin{split}
\|  Q_{<k-Cm} P_k \mathcal N(\phi_{k_1},\overline{ Q_{> k-Cm} \phi_{k_2}})\|_{N[I]}
\lesssim & \  2^{Cm} 
 \|   \chi_I^k \mathcal N(\phi_{k_1},\overline{ Q_{> k-Cm} \phi_{k_2}})\|_{L^1 L^2}
\\
 \lesssim & \ 2^{Cm}  \| \chi_I^k \phi_{k_1}\|_{L^\frac94 L^\infty}   \| \Box  \phi_{k_2}\|_{L^{\frac95} L^2} \end{split}
\]
where the point is that $(\frac{9}{4}, \infty)$ is a non-sharp Strichartz exponent.
\end{itemize}

To conclude the proof of \eqref{axnl-eps} we observe that the above 
estimates allow us to use the $\veps$-energy dispersion and \eqref{box-phi} for all the large parts of $\A_x^2$.
Hence  we obtain a bound of the form
\[
\| \Box \A_x^{2}(\phi_1,\phi_2) \|_{N_{c}[I]} \lesssim 
(2^{-cm} + 2^{Cm} \veps^{\dlt_{1}})  \|\phi_1\|_{S^1[I]} \|\phi_2\|_{S^1_c[I]}
\]
Now \eqref{axnl-eps} easily follows by optimizing the choice of $m$.

Finally we consider the cubic terms $\A_i^{3}$,  which satisfy 
\[
\Box \A_i^{3} (\phi,\phi ,A_x) = \mathcal P( \phi \bar \phi A_x)
\]
At the dyadic level, using Bernstein's inequality in a favorable way we obtain 
\[
 \| P_k  (\phi_{k_1} \bar \phi_{k_2} A_{k_3}) \|_{L^1 L^2[I]} \lesssim 2^{-\delta \max |k-k_i|}
\|  |D|^{\frac14} \phi_{k_1}\|_{L^4[I]}  \| |D|^\frac14  \phi_{k_2}\|_{L^4[I]}    \|  A_{k_3}\|_{L^2 L^8[I]}
\]
All norms on the right are Strichartz norms and are bounded by the $S^1$ norms,
so \eqref{axnl} follows. Further, if say $\phi_1$ is $\veps$-energy dispersed, then we can bound 
its non-sharp Strichartz norm $L^4$ using the energy dispersion at the expense of 
losing the frequency envelope information, in order to obtain \eqref{axnl-eps3}. \qedhere 
\end{proof}

\bigskip

%
%

\begin{proof}[Proof of Proposition~\ref{p:ma}]
For the leading part  
\[
\mathcal M_A^{m,main} = 2i \sum_k P_{>k-m} A^j \partial_j P_k
\]
of $\mathcal M_A^m$ we have
\[
A^j \partial_j \phi = \mathcal N(  \partial_k \Delta^{-1} A_j, \phi)
\]
To decompose it into a small and a large part we first consider the frequency balance
of the two inputs and the output, depending on the frequency gap parameter $m \gg 1$. 
\[
\begin{split}
\mathcal M_{A}^{m,main} \phi =     2i \sum_k & \ 
  P_{ \geq k+m} A^j \partial_j P_k \phi  
+  P_{<k-m} (P_{< k-m} A^j \partial_j P_k \phi) + P_{\geq k-m}(  P_{[k-m,k+m)} A^j \partial_j   P_k \phi)
\end{split}
\]
The first two terms are estimated with a favorable $2^{-cm}$ constant
using \eqref{null}, and thus placed in $\mathcal M_{A,small}^{m, 2}$.  It
remains to consider the last term. This is essentially diagonal in
$k$, so we can freeze the three frequencies in the allowed range.

Now we consider the size of $I$.  As in the proof of
Proposition~\ref{p:ax} there is one easy case, namely when $|I|
\leq 2^{-k+m}$. Dispensing with that, from here on we assume 
that $|I| > 2^{-k+m}$.  The remaining argument uses modulation localizations.
To allow for that we extend both $A$ and $\phi$ outside $I$ 
as free waves. Then we decompose the last term above as
\[
\begin{split}
P_{\geq k-m}(  P_{[k-m,k+m)} A^j \partial_j   P_k \phi)   = & \ 
Q_{< k -Cm}   P_{\geq k-m}( Q_{<k-Cm}  P_{[k-m,k+m)} A^j \partial_j Q_{<k-Cm}  P_k \phi) 
\\ & \ + \mathcal M_{A,large}^{m,main} P_k \phi 
\end{split}
\]
In the first term, we gain $2^{-c m}$ by \eqref{null-small}. Hence this part can be put into $\mathcal M_{A, small}^{m, 2}$.
The remaining part $\mathcal M_{A,large}^{m,main} $ contains only terms where all three
frequencies are balanced, and at least one modulation is large. But this is estimated 
exactly as in the proof of Proposition~\ref{p:ax}:

\begin{itemize}
\item[(i)]  If the output has high modulation, then we bound it in $L^2$ using 
 using divisible non-sharp Strichartz norms to gain either the divisible bound
\eqref{man-large}, or  smallness via $\veps$-energy dispersion as in \eqref{man-disp}.

\item[(ii)] If the second input (i.e., $\phi$) has high modulation, then we combine  the $L^2 L^\infty$
bound for $A$ with  the  $L^2$ bound for $\Box \phi$.

\item[(iii)] If the first input (i.e., $A$) has high modulation, then we gain both
divisibility and smallness via energy dispersion by combining an
$L^\frac95 L^2$ bound for $\Box A$ and an $L^\frac94 L^\infty$ bound
for $\phi$.
\end{itemize}

\bigskip We now consider the remaining terms in $\mathcal M_A^m$.  For
the terms $P_{ \geq k-m} A_0 \partial_t \phi_k$ and $ \partial_t
A_0  \phi$ we estimate
\[
\| P_{k} \sum_{k_{2}} P_{ \geq k-m} A_0 \partial_t \phi_{k_{2}}\|_{L^1 L^2[I]} \lesssim 2^{-\dlt \max \set{\abs{k - k_{i}}}} \| P_{k_{1}} A_0 \|_{L^2 \dot H^\frac32[I]}
\| |D|^{-1} \partial_t \phi_{k_{2}} \|_{L^2 L^8[I]}
\]
respectively
\[
\| P_{k}( P_{k_{1}} \partial_t A_0  \phi_{k_{2}} )\|_{L^1 L^2[I]}  \lesssim 2^{-\dlt \max \set{\abs{k - k_{i}}}} \|  P_{k_{1}} \partial_t A_0 \|_{L^2 \dot H^\frac12 [I]}
\| \phi_{k_{2}} \|_{L^2 L^8 [I]} .
\]

Finally, the term $A^\alpha A_\alpha \phi$ is estimated in $L^1 L^2$ with off-diagonal gain
using only divisible non-endpoint Strichartz estimates, which
suffices.
\end{proof}

\section{Multilinear null form estimates}
\label{s:multi}

In this section we discuss directly the bounds for the operator 
$\Diff_\A^m$, and prove Proposition~\ref{p:diff}.  The bounds
\eqref{diffa} and \eqref{diffaa} were already proved in \cite{Krieger:2012vj}.
The delicate matter is to be able to estimate the bulk of 
$\Diff_\A^{m,2}(\phi,\phi) \psi$ in terms of the divisible norm 
$\DS$ of $\phi$. 
We split our argument into two steps:

\begin{itemize}
\item[(i)] First we review the decompositions and the estimates in \cite{Krieger:2012vj}
on the full real line, leading to the proof of \eqref{diffa} and
\eqref{diffaa}. But we do this in a careful fashion so that we can
isolate a bulk part where we get smallness from the frequency gap, and
a remaining part where this does not work. For this remaining part we
can easily produce a divisible bound. Unfortunately, this last
argument uses modulation localizations.

\item[(ii)] Secondly, we consider the changes in the previous arguments when the analysis 
is done on a compact interval $I$. The challenge here is to be able to accurately estimate 
the large but divisible part using only information localized to our interval.
\end{itemize}

\subsection{A review of \cite{Krieger:2012vj}}

We decompose $ \Diff_\A^m$ into 
\[
\Diff_\A^m = \H^* \Diff_\A^m + (I-\H^*) \Diff_\A^m
\]
where the operator $\H^*$, introduced in \cite{Krieger:2012vj}, selects the case
where $\A$ has high modulation while both the input and the output
have small modulation,
\[
 \H^* \Diff_\A^m \psi = \sum_{k_0 < k-m}  \sum_{j < k_{0}} Q_{<j- 2} ( Q_{ j}  P_{k_{0}} \A^\alpha 
Q_{<j- 2} \partial_\alpha \psi_k   )
\]
The better part $(I-\H^*) \Diff_\A^m$ can be still dealt with in a bilinear fashion
using the following result:
\begin{proposition}
We have the bilinear estimate
\begin{equation} \label{imh}
\| (I-\H^*) \Diff_{A}^m \psi\|_{N} \lesssim (\| A_x\|_{\ell^1 S^1} + 
\| \nabla A_0\|_{\ell^1 L^2 \dot H^\frac12}) \|\psi\|_{S^{1}} \ .
\end{equation}
Further, we have the low modulation improvement
\begin{equation}\label{imh-small}
\| (I-\H^{*}) \Diff_{A^{low}}^m  \psi\|_{N} \lesssim 2^{-cm}(\| A_x\|_{\ell^1 S^1} + \| \nabla A_0\|_{\ell^{1} L^2 \dot H^\frac12}),
\|\psi\|_{S^1}
\end{equation}
where
\begin{equation*}
	A^{low} = \sum_{k_{0}} Q_{<k_{0}-m} P_{k_{0}} A.
\end{equation*}

\end{proposition}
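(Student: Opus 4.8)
The plan is to follow the bilinear analysis of the paradifferential term from \cite{Krieger:2012vj}, keeping careful track of which interactions survive the subtraction of $\H^*$. First I would split $\Diff^m_A = 2i \sum_k P_{<k-m} A^j \partial_j P_k + 2i \sum_k P_{<k-m} A^0 \partial_0 P_k$ into its spatial and temporal pieces. The temporal piece has no null structure, but $A_0$ is more regular: $\nabla A_0 \in L^2 \dot H^{\frac12}$ is equivalent to $A_0 \in L^2 \dot H^{\frac32}$ by the Sobolev trace bound used in Proposition~\ref{p:phi-high-mod}. A direct $L^1 L^2$ product estimate — combining the $L^2 \dot H^{\frac32}$ bound for the low-frequency factor $P_{k_0} A_0$, an $L^2 L^8$ Strichartz bound for $|D|^{-1} \partial_t P_k \psi$, and Bernstein at the low frequency $k_0$ with off-diagonal gain in $|k - k_0|$ — disposes of this contribution, exactly as in the treatment of the $A_0 \partial_t \phi$ terms in the proof of Proposition~\ref{p:ma}. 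This applies whether or not $\H^*$ is inserted, and it contributes to both \eqref{imh} and, via the usual $2^{-cm}$ gain from the frequency gap, \eqref{imh-small}.

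For the spatial piece I would invoke the Coulomb gauge $\partial^j A_j = 0$ to rewrite $A^j \partial_j \psi$ as the null form $Q_{kj}(\partial^k \Delta^{-1} A^j, \psi)$ as in \eqref{Aphi-null}, and then carry out the standard dyadic decomposition in frequency (three scales: $A$ at $2^{k_0} < 2^{k-m}$, input $\psi$ at $2^k$, output near $2^k$) and in modulation. By construction $\H^*$ extracts precisely the configurations in which $A$ carries a modulation $2^j$ with $j < k_0$ that dominates the (smaller) modulations of both the input and the output. Hence on the complement $(I - \H^*)$ one of the following holds: $A$ is elliptic, i.e.\ has modulation $\gtrsim 2^{k_0}$; or the input $P_k \psi$ has modulation $\gtrsim 2^j$; or the output has modulation $\gtrsim 2^j$. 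In each subcase I would close the estimate by combining \eqref{null}, Bernstein at frequency $k_0$, and the $X^{0,\pm\frac12}$ components built into $S^1$ and $N$, and then sum the dyadic pieces; the $\ell^1$ summation over the frequency of $A_x$ in \eqref{imh} is the price paid in the cases where \eqref{null} affords no net gain in $|k-k_0|$ (notably the elliptic-$A$ case). This yields \eqref{imh}.

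For \eqref{imh-small} I would rerun this scheme with $A^{low}$ in place of $A$. The key extra input is that $A^{low}$ has modulation $< 2^{k_0 - m}$, i.e.\ smaller than its own frequency by a factor $2^m$, so it behaves like a homogeneous wave at frequency $2^{k_0}$. In the surviving interactions in $(I - \H^*) \Diff^m_{A^{low}}$ the null form $Q_{kj}$ is then forced into an angularly concentrated regime, and combining this concentration with the modulation gap $m$ — via the refined null form bound \eqref{null-small} in the balanced subcases and via the $X^{0,\pm\frac12}$ bounds otherwise — produces the factor $2^{-cm}$. Choosing $c$ small relative to the exponent $\delta$ appearing in \eqref{null}, \eqref{null-small} and summing closes \eqref{imh-small}.

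The main obstacle I expect is organizational rather than analytic: the operator $\H^*$ and the resulting case split for $(I - \H^*)$ involve simultaneous, non-commuting frequency and modulation localizations, so the delicate point is to arrange the case analysis so that in each surviving configuration \eqref{null} (or \eqref{null-small}) applies with the modulation cutoffs placed correctly, and in particular to confirm that no residual ``$A$ high modulation, everything else low modulation'' piece escapes the subtraction of $\H^*$. A secondary point is verifying that only the weaker $L^2 \dot H^{\frac12}$ control of $\nabla A_0$ — and never a full $S^1$-type norm — is used for the temporal coefficient, which is exactly where the elliptic regularity gain for $A_0$ from Lemma~\ref{l:ell} is needed.
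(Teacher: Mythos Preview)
Your proposal is correct and matches the paper's approach: the paper does not give an independent proof but cites \cite{Krieger:2012vj}, noting that \eqref{imh} is the sum of estimates (54) and (58) there (the spatial null-form piece and the $A_0$ piece, respectively), and that \eqref{imh-small} follows because in the proof of (132) in \cite{Krieger:2012vj} the key dyadic null-form estimate (143) is only invoked when the modulation is already below the frequency by at least $m$. Your sketch --- the $L^1 L^2$ treatment of the $A_0$ term via $L^2 \dot H^{3/2}$ and Strichartz, the modulation trichotomy on $(I-\H^*)$ for the spatial null form, and the extraction of the $2^{-cm}$ gain from the forced modulation/frequency gap in the $A^{low}$ case --- is exactly this argument.
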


The bound \eqref{imh} is the sum of the bounds (54) and (58) in
\cite{Krieger:2012vj}.  The bound \eqref{imh-small} is a corollary of the proof of
(54)\cite{Krieger:2012vj}; it follows from a similar improvement in the bound
(132)\cite{Krieger:2012vj},  which in turn is tied to the fact that the estimate (143)\cite{Krieger:2012vj}
is only used in the case when $j_1 < k_2-m$.

We now turn our attention to the term $\H^*\Diff_{\A}^m$, where it is
no longer enough to obtain bounds depending on the above norms of $\A$.
Our first tool here is the intermediate norm $Z$, which has the
following properties:

\begin{proposition}
We have the bilinear estimates:
\begin{equation}\label{axz}
\| \H^*\Diff_{A_x}^m \psi \|_{N} \lesssim \|A_x\|_{Z} \|\psi\|_{S^1}
\end{equation}
respectively
\begin{equation}\label{a0z}
  \| \H^*\Diff_{A_0}^m \psi \|_{N} \lesssim \|A_0\|_{\Delta^{-\frac12} \Box^\frac12 Z+L^1 L^\infty}
 \|\psi\|_{S^1}
\end{equation}
as well as the low modulation improvement
\begin{equation}\label{axz-low}
\| \H^*\Diff_{A_x^{low}}^m \psi \|_{N} \lesssim 2^{-cm} \|A_x\|_{Z} \|\psi\|_{S^1}
\end{equation}
respectively
\begin{equation}\label{a0z-low}
\| \H^*\Diff_{A_0^{low}}^m \psi \|_{N} \lesssim 2^{-cm} \|A_0\|_{\Delta^{-\frac12} \Box^\frac12 Z} \|\psi\|_{S^1} \ .
\end{equation}
\end{proposition}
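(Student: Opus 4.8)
The plan is to prove the four bounds dyadically, exploiting the rigid geometry that the operator $\H^{*}$ imposes on its inputs. First I would fix the frequency $2^{k_{0}}$ of the coefficient $A$, the frequency $2^{k}$ of $\psi$ with $k_{0} < k - m$ (so that the output is also at frequency $2^{k}$), and the modulation $2^{j}$ of $A$ with $j < k_{0}$, recalling that both $\partial_{\alpha}\psi_{k}$ and the output carry modulation $< 2^{j-2}$. The key observation is that all three of $A$, $\psi_{k}$ and the output then lie within modulation $2^{j}$ of the light cone while the smallest frequency present is $2^{k_{0}}$, so a standard wave-interaction computation forces the angle between the two input spatial frequencies to be $\lesssim 2^{(j-k_{0})/2}$. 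I would therefore decompose $A$ into angular caps $\omega$ at precisely this scale $l = (j-k_{0})/2$, which is the scale built into the $Z_{k_{0}}$ norm, so that $\big(\sum_{\omega}\|P^{\omega}_{l}Q_{j}P_{k_{0}}A\|_{L^{1}L^{\infty}}^{2}\big)^{1/2}\lesssim 2^{-l/2}\|P_{k_{0}}A\|_{Z_{k_{0}}}$.

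For the spatial coefficient, i.e.\ \eqref{axz} and \eqref{axz-low}, I would use the Coulomb null structure \eqref{Aphi-null}: in the $\H^{*}$ region one writes $A_{x}^{j}\partial_{j}\psi = Q_{ij}(\partial^{i}\Delta^{-1}A_{x}^{j},\psi)$, and using $\partial^{j}A_{j}=0$ the resulting multiplier relating the output to $\widehat{A_{x}}(\xi_{1})\widehat{\psi}(\xi_{2})$ is bounded by $|\xi_{2}|\sin\theta \lesssim 2^{k}2^{(j-k_{0})/2}$, a gain of $2^{(j-k_{0})/2}<1$ over the naive size $2^{k}$. The output, at frequency $2^{k}$ and low modulation, goes directly into $L^{1}L^{2}\subset N$; one estimates the cap-localized $A_{x}$ in $L^{1}_{t}L^{\infty}_{x}$, places $\psi_{k}$ in $L^{\infty}_{t}L^{2}_{x}$ (controlled by $2^{-k}\|\psi_{k}\|_{S^{1}}$), and sums the caps by Cauchy--Schwarz, pairing the $Z_{k_{0}}$ angular square sum of $A_{x}$ against the angular square sum of the caps of $\psi_{k}$ governed by the $S^{ang}_{k}$ component of $\|\psi_{k}\|_{S^{1}}$. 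Collecting the powers leaves a net dyadic gain $2^{(j-k_{0})/4}$; summing over $j < k_{0}$ converges, the $\ell^{1}$ frequency sum over $k_{0}$ rebuilds $\|A_{x}\|_{Z}$, and $\ell^{2}$-orthogonality of the outputs in $k$ together with $S^{1}$ being $\ell^{2}$ in frequency completes \eqref{axz}. For \eqref{axz-low} the additional modulation restriction $j < k_{0}-m$ present in $A^{low}$ truncates the $j$-sum to $\sum_{j<k_{0}-m}2^{(j-k_{0})/4}\lesssim 2^{-m/4}$, which is the claimed $2^{-cm}$.

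The temporal coefficient \eqref{a0z}, \eqref{a0z-low} has no null structure in $A^{0}\partial_{0}=-A_{0}\partial_{t}$, so the missing power must come from elsewhere, and here I would use two facts. First, the constraint that $\psi_{k}$ and the output both have modulation $\ll 2^{k_{0}}$ at comparable frequencies $2^{k}$ forces the spatial frequency of the $A_{0}$ input, at modulation $2^{j}$, to be nearly aligned with that of $\psi_{k}$ within the same angle $\lesssim 2^{(j-k_{0})/2}$, so $A_{0}$ is effectively pinned to a single cap at its natural angular scale. Second, on that frequency--modulation region the multiplier $\Delta^{-\frac12}\Box^{\frac12}$ has symbol of size $2^{(j-k_{0})/2}$ and acts boundedly, so $\|P^{\omega}_{l}Q_{j}P_{k_{0}}A_{0}\|_{L^{1}L^{\infty}}\lesssim 2^{(j-k_{0})/2}\|P^{\omega}_{l}Q_{j}P_{k_{0}}(\Box^{-\frac12}\Delta^{\frac12}A_{0})\|_{L^{1}L^{\infty}}$, whose right side is controlled by $\|A_{0}\|_{\Delta^{-\frac12}\Box^{\frac12}Z}$; the $L^{1}L^{\infty}$ summand of the norm serves as the fallback for the complementary very-low-modulation range, where a bare $L^{1}L^{\infty}\times L^{\infty}L^{2}$ estimate with output in $L^{1}L^{2}$ already suffices. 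With this substitution the argument proceeds as for $A_{x}$, with the same net gain $2^{(j-k_{0})/4}$ per dyadic piece and the $2^{-cm}$ improvement for $A_{0}^{low}$.

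All four estimates are close relatives of statements in \cite{Krieger:2012vj}: \eqref{axz} and \eqref{a0z} are essentially contained there, while \eqref{axz-low} and \eqref{a0z-low} follow by inserting the cutoff $Q_{<k_{0}-m}P_{k_{0}}$ into that argument and bookkeeping the geometric series in $j$, the same mechanism that produced \eqref{imh-small} and \eqref{null-small}. I expect the main obstacle to be not conceptual but a matter of careful bookkeeping in the $A_{0}$ case: verifying that the cone geometry really does localize the $A_{0}$ input to one cap at the natural angular scale (so the $Z$-type angular square sum costs nothing), keeping the mixed $L^{1}_{t}L^{\infty}_{x}$ and $L^{\infty}_{t}L^{2}_{x}$ summations consistent, checking that the borderline regime $|\tau_{1}|\sim 2^{k_{0}}$ is still absorbed by $\Delta^{-\frac12}\Box^{\frac12}Z$ rather than falling prematurely into the $L^{1}L^{\infty}$ piece, and confirming that the modulation cutoffs $Q_{<j-2}$ in the definition of $\H^{*}\Diff$ are harmless on the relevant $L^{1}L^{2}$ pieces.
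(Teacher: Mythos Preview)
Your proposal is correct and aligned with the paper's approach: the paper does not give an independent proof but simply cites bounds (133) and (140) in \cite{Krieger:2012vj} for \eqref{axz} and \eqref{a0z}, and remarks that the low modulation improvements \eqref{axz-low}, \eqref{a0z-low} are apparent from those proofs --- exactly the mechanism you describe of truncating the $j$-sum to $j < k_{0} - m$ and harvesting $2^{-cm}$ from the geometric series, as in \eqref{imh-small} and \eqref{null-small}. Your sketch of the underlying dyadic argument (null form gain for $A_{x}$, the $\Delta^{-1/2}\Box^{1/2}$ multiplier gain for $A_{0}$, angular cap decomposition at scale $2^{(j-k_{0})/2}$, and $L^{1}L^{\infty}\times L^{\infty}L^{2}\to L^{1}L^{2}$) is a faithful outline of how those bounds are established in \cite{Krieger:2012vj}, so nothing further is needed here.
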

These are the bounds (133) and (140) in \cite{Krieger:2012vj}, where the low modulation improvement is again apparent from the proofs.

Combining the  estimates \eqref{imh}, \eqref{axz}, \eqref{a0z}  and \eqref{B_embed} we 
can eliminate the modulation localizations and obtain 
\begin{corollary}
The following estimate holds:
\begin{equation}\label{a3-all}
\| \Diff_A^{m} \psi\|_{N} \lesssim (\| A_x\|_{\ell^1 S^1} +   \| \Box A_x\|_{\ell^1 L^1 L^2} +
\| \nabla A_0\|_{\ell^1 L^2 \dot H^\frac12} + \| A_0\|_{L^1 L^\infty}) \|\psi\|_{S^1}
\end{equation}
\end{corollary}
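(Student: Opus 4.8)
The plan is to split the operator $\Diff_A^m$ into the pieces isolated by the operator $\H^*$ of \cite{Krieger:2012vj}. Using that $\Diff_A^m$ is linear in $A$ (and $\H^*$ is linear), I would write
\[
\Diff_A^m = (I-\H^*)\Diff_A^m + \H^*\Diff_{A_x}^m + \H^*\Diff_{A_0}^m,
\]
and then estimate each of the three pieces by a different one of the cited bounds, so that the four terms on the right of \eqref{a3-all} are produced one at a time. For the first piece I would simply apply \eqref{imh}, which already yields $(\|A_x\|_{\ell^1 S^1} + \|\nabla A_0\|_{\ell^1 L^2 \dot{H}^{\frac12}})\,\|\psi\|_{S^1}$ with no further work; thus two of the four terms in \eqref{a3-all} come for free.

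Next I would treat $\H^*\Diff_{A_x}^m$. Here I would split $A_x = A_x^{free} + A_x^{nl}$ into the free evolution of the data $A_x[0]$ and the remainder with vanishing data, so that $\Box A_x^{nl} = \Box A_x$ and hence $A_x^{nl} = \Box^{-1}\Box A_x$. The observation that makes this work is that the space-time Fourier transform of the free wave $A_x^{free}$ is supported on the cone $\{|\tau| = |\xi|\}$, so that $Q_j A_x^{free} = 0$ for every $j$, and therefore $\H^*\Diff_{A_x^{free}}^m = 0$. Consequently $\H^*\Diff_{A_x}^m = \H^*\Diff_{A_x^{nl}}^m$, and applying \eqref{axz} followed by the embedding \eqref{B_embed} gives
\[
\|\H^*\Diff_{A_x}^m\psi\|_N \lesssim \|A_x^{nl}\|_Z\,\|\psi\|_{S^1} \lesssim \|\Box A_x\|_{\ell^1 L^1 L^2}\,\|\psi\|_{S^1},
\]
which supplies the third term in \eqref{a3-all}.

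For the last piece $\H^*\Diff_{A_0}^m$ I would invoke \eqref{a0z} and then discard the $Z$-based summand of the sum space by taking the trivial decomposition $A_0 = 0 + A_0$ in the infimum defining $\Delta^{-\frac12}\Box^{\frac12}Z + L^1 L^\infty$, obtaining $\|\H^*\Diff_{A_0}^m\psi\|_N \lesssim \|A_0\|_{L^1 L^\infty}\,\|\psi\|_{S^1}$; this is the fourth term. Adding the three estimates yields \eqref{a3-all}. I do not expect a genuine obstacle here, since the analytic content lies entirely in the four estimates \eqref{imh}, \eqref{axz}, \eqref{a0z} and \eqref{B_embed}; the only point needing a moment's care is the remark that the free part of $A_x$ is annihilated by $\H^*$, which is precisely what allows one to trade the $Z$ norm of $A_x$ — not controlled by the right-hand side of \eqref{a3-all} in general — for the $\Box^{-1}$-type quantity $\|\Box A_x\|_{\ell^1 L^1 L^2}$, together with the analogous crude treatment of the $A_0$ contribution, thereby removing all modulation localizations from the final bound.
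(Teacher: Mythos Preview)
Your proposal is correct and matches the paper's approach exactly: the paper's one-line proof simply says to combine \eqref{imh}, \eqref{axz}, \eqref{a0z} and \eqref{B_embed}, and you have spelled out precisely how these four ingredients assemble. The only detail you made explicit that the paper leaves implicit is why \eqref{B_embed} suffices for the $A_x$ piece --- namely that the free part of $A_x$ is annihilated by the modulation projections $Q_j$ in $\H^*$ (equivalently, has vanishing $Z$ norm), so that only $A_x^{nl} = \Box^{-1}\Box A_x$ contributes --- and this is exactly the intended mechanism.
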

Using this estimate, we can dispense with the cubic contributions due to $\A^{3} = (\A_{0}^{3}, \A_{x}^{3})$. Indeed, combined with the bounds \eqref{axnl3}, \eqref{axnl-hm3} and \eqref{a0}, as well as \eqref{lin} to control $\nrm{\A_{x}^{3}}_{\ell^{1} S^{1}}$, we can use \eqref{a3-all} to establish \eqref{diffaa}. We remark that the frequency envelope bound in \eqref{diffaa} is clear from the frequency gap $m$ between the two inputs $\bfA^{3}$ and $\phi$.


The output of the quadratic part of $\A$ cannot be all dealt with
using the $Z$ norm, but a good portion of it is amenable to this
strategy.  This is described using the operators $\H_{k_0}$ defined by
\[
\H_{k_0}  \A^{2}(\phi_{k_1},\phi_{k_2})= \sum_{j< k_0} Q_j P_{k_0}  \A^{2}(Q_{<j}\phi_{k_1},
Q_{<j}\phi_{k_2})
\]

Precisely, the portion of  $\A^{2}$ which does not have good $Z$ bounds is
\[
\H^m \A^{2} = \sum_{k_0 < k_1-m} \H_{k_0}  \A^{2}(\phi_{k_1},\phi_{k_2})
\]
A key result in \cite{Krieger:2012vj} is to treat the output of this part in a
genuine trilinear fashion, taking advantage of a cancellation between
the $A_0$ and $A_x$ parts, which have otherwise been treated
separately. Precisely, we have

\begin{proposition}
For any admissible frequency envelopes $c, d, e$, we have
\begin{equation}\label{cubic}
\| \H^* \Diff^m_{\H^m \A^{2} (\phi_{1},\phi_{2})} \psi \|_{N_{f}} \lesssim 2^{-cm} \|\phi_{1}\|_{S^1_{c}} \|\phi_{2}\|_{S^1_{d}} \|\psi\|_{S^1_{e}},
\end{equation}
where $f(k)$ is as in \eqref{diffa-freqenv}.
\end{proposition}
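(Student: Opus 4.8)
The plan is to reduce the estimate to the trilinear null form bounds of \cite{Krieger:2012vj}, while tracking the frequency envelopes, and to extract the $2^{-cm}$ gain from the frequency gaps built into $\H^{*}$ and $\H^{m}$. First I would unpack the definitions: writing out $\H^{*}\Diff^{m}_{\H^{m}\A^{2}(\phi_{1},\phi_{2})}\psi$ using the frequency and modulation cutoffs in $\H^{*}$, $\Diff^{m}$, $\H^{m}$ and $\H_{k_{0}}$, the expression is a sum over the frequency $k$ of $\psi$ (which is also the output frequency since $k_{0}<k-m$); the frequency $k_{0}$ and modulation $2^{j}$ of the $\A^{2}$--factor, with $k_{0}<k-m$ and $j<k_{0}$; and the frequencies $k_{1}\sim k_{2}$ of the two scalar inputs of $\A^{2}$ (with $|k_{1}-k_{2}|=O(1)$, forced by the fact that a frequency--$2^{k_{0}}$ output of $\A^{2}$ with $k_{0}\ll k_{1}$ requires the two inputs to be essentially antiparallel), both at modulation $<2^{j}$ and subject to $k_{0}<k_{1}-m$ from $\H^{m}$. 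Note that the $2^{-cm}$ factor may be taken either from $k-k_{0}>m$ or from $k_{1}-k_{0}>m$; I would use the latter.

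The heart of the matter is the trilinear cancellation between the temporal and spatial contributions. Estimating $\H^{*}\Diff^{m}_{\A^{2}_{x}}\psi$ and $\H^{*}\Diff^{m}_{\A^{2}_{0}}\psi$ separately only gives control by the $Z$--type norms of $\A^{2}_{x}$, $\A^{2}_{0}$ via \eqref{axz} and \eqref{a0z}, and the $Z$--norm of $\H^{m}\A^{2}$ is not small. Instead one treats $\A^{2}_{\alpha}\partial^{\alpha}\psi = \A^{2}_{0}\partial_{t}\psi + \A^{2}_{j}\partial^{j}\psi$ as a whole, using the structure of $\A^{2}$ from \eqref{MKGa} and \eqref{MKGa0+}: $\Box\A^{2}_{x}$ and $\Delta\A^{2}_{0}$ involve the \emph{same} quadratic currents $\Im(\phi_{1}\partial_{\alpha}\overline{\phi_{2}})$, which obey the approximate spacetime divergence--free relation $\partial^{\alpha}\Im(\phi_{1}\partial_{\alpha}\overline{\phi_{2}})\approx 0$ --- the error (essentially $\Im(\phi_{1}\Box\overline{\phi_{2}})$) being lower order because $Q_{<j}\phi_{1}$, $Q_{<j}\phi_{2}$ are localized at low modulation. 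Moreover, on the region $j<k_{0}$ one has $\Box^{-1} = -\Delta^{-1}+O(2^{2(j-k_{0})})$ acting on the frequency--$2^{k_{0}}$, modulation--$2^{j}$ output, so $\A^{2}_{x}$ agrees with $-\Delta^{-1}$ of the spatial current up to an acceptable error, matching the form of $\A^{2}_{0}$. The two contributions then assemble, to leading order, into a single genuine trilinear null form $\mathcal{N}(\phi_{1},\phi_{2},\psi)$ of the type estimated in \cite{Krieger:2012vj}.

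To close, I would invoke the trilinear null form estimates of \cite{Krieger:2012vj} for the leading term: they provide the off--diagonal decay for the dyadic sums and, combining $k_{1}-k_{0}>m$ with $j<k_{0}$, the gain $2^{-cm}$. The frequency envelope $f(k)=e(k)\|c_{\le k-m}\|_{\ell^{2}}\|d_{\le k-m}\|_{\ell^{2}}$ appears because the null structure inside $\A^{2}$ (inputs antiparallel up to angle $\sim 2^{k_{0}-k_{1}}$) gives off--diagonal decay in $k_{1}-k_{0}$, reducing the sum over $k_{1}\sim k_{2}$ to the value near $k_{0}$, after which Cauchy--Schwarz over $k_{0}<k-m$ produces the two $\ell^{2}$ factors, while $e(k)$ simply tracks $\psi$ at frequency $2^{k}$. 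The error terms --- the $O(2^{2(j-k_{0})})$ discrepancy between $\Box^{-1}$ and $-\Delta^{-1}$, and the high--modulation parts of $\phi_{1},\phi_{2}$ dropped from the divergence--free relation --- each carry an extra $2^{-c(k_{0}-j)}$ or $2^{-cm}$ and are handled directly via \eqref{axz}, \eqref{a0z}, \eqref{axz-low}, \eqref{a0z-low} and $X^{s,b}$--type bounds, exactly as in \cite{Krieger:2012vj}.

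I expect the main obstacle to be the bookkeeping of this trilinear cancellation: one must reorganize three pieces --- each only $\ell^{1}$--summable and none individually small --- into a single expression with manifest null structure, while peeling off the (small, summable) error terms, and then feed the result into the trilinear null form machinery; ensuring that the $2^{-cm}$ gain and the frequency envelope bound survive this reorganization simultaneously is the delicate point.
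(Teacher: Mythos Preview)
Your proposal is correct and follows the same approach as the paper: the paper's own proof is simply a reference to estimate (60) in \cite{Krieger:2012vj} and its dyadic versions (136)--(138), noting that the frequency envelope bound and the $2^{-cm}$ gain are apparent from those dyadic estimates. You have accurately reconstructed the mechanism behind that cited argument --- the trilinear cancellation between the $\A_{0}^{2}$ and $\A_{x}^{2}$ contributions via $\Box^{-1}\approx -\Delta^{-1}$ at low modulation and the approximate divergence relation for the currents --- and correctly identified the source of the gain (the frequency gap $k_{1}-k_{0}>m$ from $\H^{m}$) and the origin of the envelope $f(k)$.
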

For this we refer the reader to the estimate (60) in \cite{Krieger:2012vj} and
its dyadic versions (136)-(138), where the frequency envelope bound and the gain with respect to $m$ are
apparent.

Hence it remains to bound
\[
\| (I - \H^m) \A_x^{2}(\phi,\phi)\|_{Z + L^1 L^\infty}, \qquad \|
(I - \H^m) \A_0^{2}(\phi,\phi)\|_{\Delta^{-\frac12} \Box^\frac12
  Z+L^1L^\infty} \ .
\]
Considering the dyadic portions
\[
P_{k_0} \A^{2}(\phi_{k_1},\phi_{k_2}),
\]
the case of high-high interactions  was also discussed in \cite{Krieger:2012vj}.
Precisely, from the bounds (134) and (141) in \cite{Krieger:2012vj} we have

\begin{proposition}
For $k_{0} < k_{1} - C$, we have the dyadic bound
\begin{equation}\label{hh-z}
\begin{aligned}
& \| (I - \H^m) P_{k_0} \A_x^{2}(\phi_{k_1},\phi_{k_2})\|_{Z}+ \|
(I - \H^m) P_{k_0} \A_0^{2}(\phi_{k_1},\phi_{k_2})\|_{\Delta^{-\frac12} \Box^\frac12
  Z} \\
  & \quad \lesssim 2^{- \dlt |k_0-k_1|} \| \phi_{k_{1}} \|_{S^1} \| \phi_{k_{2}} \|_{S^1} \ .
\end{aligned}\end{equation}
\end{proposition}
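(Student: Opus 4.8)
The statement is exactly the content of the dyadic estimates (134) and (141) in \cite{Krieger:2012vj}, so in the paper we simply cite them; what follows is the organization of that argument. The first step is to expose the null structure via \eqref{Aphi-null}--\eqref{phiphi-null}: one has $\Box \A_x^{2}(\phi_{k_1},\phi_{k_2}) = \rd_j \Delta^{-1} \calN(\phi_{k_1}, \overline{\phi_{k_2}})$ with $\calN$ one of the standard null forms $Q_{ij}$, while $\Delta \A_0^{2}(\phi_{k_1},\phi_{k_2}) = -\Im(\phi_{k_1}\rd_t \overline{\phi_{k_2}})$, which in the regime $k_0 < k_1 - C$ (so $k_1 \sim k_2$) also carries a null/current structure visible after applying a half-power of $\Box$. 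The essential gain in this high-high-to-low geometry is that the colliding high frequencies make an angle $\sim \pi$, so that $|\xi_1 \wedge \xi_2| = |\xi_1 \wedge (\xi_1 + \xi_2)| \lesssim 2^{k_0 + k_1}$; this furnishes a factor $2^{k_0 - k_1}$ over the trivial symbol size $2^{2k_1}$, and together with Bernstein gains on the low-frequency output it is the source of the claimed $2^{-\dlt|k_0 - k_1|}$.

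Next I would split $(I - \H_{k_0}) P_{k_0}\A^{2}(\phi_{k_1},\phi_{k_2})$ according to the output modulation relative to $2^{k_0}$. On the high-output-modulation piece $\sum_{j \geq k_0} Q_j P_{k_0}\A^{2}$ the elliptic inversion away from the cone gains $2^{-(j-k_0)}$ for $\A_x^{2}$ (and the weight $\Delta^{-\frac12}\Box^{\frac12}$ in the $\A_0^{2}$ norm is tuned so that one likewise gains a positive power of $2^{-(j-k_0)}$), which is summable in $j$; this piece enters $Z$ through the embedding \eqref{B_embed}, $\Box^{-1} L^1 L^2 \subseteq Z$, and the remaining $L^1 L^2$ bound for the null form against two $S^1$ factors comes from the Strichartz norms inside the $S^{str}_k$ component of $S^1$, with the off-diagonal $2^{-\dlt|k_0-k_1|}$ produced by Bernstein on the $2^{k_0}$ output. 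On the low-output-modulation piece, $\sum_{j < k_0} Q_j P_{k_0}\bigl[\A^{2}(\phi_{k_1},\phi_{k_2}) - \A^{2}(Q_{<j}\phi_{k_1}, Q_{<j}\phi_{k_2})\bigr]$, at least one input has modulation $\gtrsim 2^j$ by construction; here one estimates the $Z_k$ (resp.\ $\Delta^{-\frac12}\Box^{\frac12} Z_k$) norm directly, taking angular caps of size $2^l$ with $k_0 + 2l = j$ so that the output modulation matches the definition of $Z$, converting the modulation of the off-cone input into decay via its $X_\infty^{0,\frac{1}{2}}$ control inside $S^1$, and using angular Bernstein to pass from $L^2 L^2$ to the $L^1 L^\infty$ structure of $Z$.

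The main obstacle is the bookkeeping in this last low-modulation piece. One must check that the null gain $2^{k_0-k_1}$ is not lost when the two colliding high frequencies are cut into caps, keep track of the interplay of the output modulation $2^j$, the cap size $2^{l} = 2^{(j-k_0)/2}$ and the output frequency $2^{k_0}$ through the various Bernstein inequalities, and perform the sums over $j$ and $l$ so that the net $2^{-\dlt|k_0-k_1|}$ decay and the correct $Z$-norm structure survive. This delicate counting is precisely what is carried out in (134) and (141) of \cite{Krieger:2012vj}; there is no real shortcut, and the decomposition described above is how those dyadic estimates are set up.
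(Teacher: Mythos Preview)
Your proposal is correct and matches the paper's treatment: the paper does not give an independent proof but simply cites the dyadic estimates (134) and (141) in \cite{Krieger:2012vj}, and your outline of the argument behind those estimates---expose the null structure, split by output modulation, use the embedding \eqref{B_embed} on the high-modulation piece, and on the low-modulation piece exploit that $(I-\H_{k_0})$ forces a high-modulation input together with angular decompositions adapted to the $Z$ norm---is an accurate description of how that argument is organized.
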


This suggests that we should decompose $\A^{2}$ into a $high \times high \to low$
portion and a better reminder. We will be more accurate and set
\[
\begin{split}
\A^{2} = & \ \sum_{k_1 \geq k_0+m}  P_{k_0} \A^{2}(\phi_{k_1},\phi_{k_2})
+ \sum_{k_0-m < k_{1,2} < k_0+m}  P_{k_0} \A^{2}(\phi_{k_1},\phi_{k_2})
+ \sum_{k_{min} < k_0- m}  P_{k_0} \A^{2}(\phi_{k_1},\phi_{k_2}) \\ := & \ 
 \A^{2,hh(m)} +\A^{2,med(m)} + 
 \A^{2,hl(m)} ,
\end{split}
\]
where $k_{min} = \min \set{k_{1}, k_{2}}$.
Note that no modulation localizations are present here. 

We first handle the part $\bfA^{2, hh(m)}$. Recall from the proofs of \eqref{axnl}, \eqref{axnl-hm}, \eqref{a0} and \eqref{d0a0} that there is a bound with an off-diagonal decay of the form
\begin{equation} \label{hh-rest}
\begin{aligned}
 	\nrm{P_{k_{0}} \A_{x}^{2}(\phi_{k_{1}}, \phi_{k_{2}})}_{S^{1}}
	+ \nrm{P_{k_{0}} \A_{0}^{2}(\phi_{k_{1}}, \phi_{k_{2}})}_{L^{2} \dot{H}^{\frac{3}{2}}}  
	\aleq 2^{- \dlt (\abs{k_{0} - k_{1}} + \abs{k_{0} - k_{1}})} \nrm{\phi_{k_{1}}}_{S^{1}} \nrm{\phi_{k_{2}}}_{S^{1}} 
\end{aligned}\end{equation}
when $k_{0} < k_{1} - C$.

Combining 
the bounds \eqref{cubic}, \eqref{hh-z} with \eqref{axz}, \eqref{a0z} for the $\H^*$ portion
and \eqref{hh-rest} with \eqref{imh} for 
the $I-\H^*$ portion, we obtain the following:	
\begin{corollary}
For any admissible frequency envelopes $c, d, e$, we have
\begin{equation}\label{hh-a2}
\| \Diff_{\A^{2,hh(m)}}^m(\phi_{1}, \phi_{2}) \psi\|_{N_{f}} \lesssim 2^{-cm} \| \phi_{1} \|_{S^1_{c}} \| \phi_{2} \|_{S^1_{d}} \|\psi\|_{S^1_{e}} \, 
\end{equation} 
where $f(k)$ is as in \eqref{diffa-freqenv}. 
\end{corollary}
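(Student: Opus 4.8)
The plan is to unfold $\Diff^{m,2}_{\A^{2,hh(m)}(\phi_1,\phi_2)}\psi$ along exactly the two–step decomposition recalled above, and to observe that restricting $\A^{2}$ to its $hh(m)$ component forces a frequency separation of size at least $m$ at every stage, which upgrades each of the already cited bounds by a factor $2^{-cm}$. Throughout I would use that $\dlt_{0}\ll c \ll \dlt$, so that any dyadic gain $2^{-\dlt m}$ (or $2^{-\dlt_{0}m}$ with the envelope parameter) dominates $2^{-cm}$.

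First I would write $\Diff^m_{\A^{2,hh(m)}}\psi = \H^{*}\Diff^m_{\A^{2,hh(m)}}\psi + (I-\H^{*})\Diff^m_{\A^{2,hh(m)}}\psi$ and dispatch the second term with \eqref{imh}. Its right–hand side only sees $\|\A^{2,hh(m)}_x\|_{\ell^1 S^1}$ and $\|\nabla\A^{2,hh(m)}_0\|_{\ell^1 L^2\dot H^{\frac12}}$, which I would bound by dyadic summation of \eqref{hh-rest}: since every dyadic piece $P_{k_0}\A^2(\phi_{1,k_1},\phi_{2,k_2})$ occurring in $\A^{2,hh(m)}$ has output frequency $k_0$ at least $m$ below the input frequencies $k_1$ (and hence, by symmetry of the quadratic form together with the frequency balance, also $k_2$), the off-diagonal factor in \eqref{hh-rest} in $|k_0-k_1|$ and $|k_0-k_2|$ supplies a $2^{-\dlt m}$ gain, and the $\ell^1$ sum over $k_0$ together with the $\ell^2$ sums over $(k_1,k_2)$ reproduces the frequency envelope $e(k)\|c_{\leq k-m}\|_{\ell^2}\|d_{\leq k-m}\|_{\ell^2}$ precisely as in the proof of \eqref{diffa}.

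For the $\H^{*}$ term I would split $\A^{2,hh(m)} = \H^m\A^{2,hh(m)} + (I-\H^m)\A^{2,hh(m)}$. The contribution of $\H^m\A^{2,hh(m)}$ is exactly of the type handled by the trilinear estimate \eqref{cubic} (the $hh(m)$ restriction being already built into $\H^m$), so \eqref{cubic} yields the desired bound verbatim. For $(I-\H^m)\A^{2,hh(m)}$ I would separate the temporal and spatial components, run the spatial part through \eqref{axz} and the temporal part through \eqref{a0z}, reducing matters to the $Z$, resp.\ $\Delta^{-\frac12}\Box^{\frac12}Z$, norms controlled by \eqref{hh-z}; the hypothesis $k_0 < k_1 - C$ there holds because $k_1\geq k_0+m$ with $m\gg C$, and the $2^{-\dlt|k_0-k_1|}$ decay again produces the $2^{-cm}$ gain and the envelope $f$ upon summation.

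The step I expect to be the only genuinely delicate one is the frequency–envelope bookkeeping when passing through the $Z$–norm detour and through \eqref{imh}, the latter being stated with plain $\ell^1 S^1$ norms rather than envelope-weighted ones. I would handle this exactly as in the derivation of \eqref{diffa}--\eqref{diffaa} in \cite{Krieger:2012vj}, the single new ingredient being the uniform $m$–separation between the two $\phi$–inputs and the $\A$–output, which is precisely what converts those estimates into ones carrying the extra factor $2^{-cm}$.
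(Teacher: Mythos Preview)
Your proposal is correct and follows exactly the paper's own route: split into $\H^*$ and $I-\H^*$ portions, handle the latter via \eqref{imh} together with the $S^1$/$L^2\dot H^{1/2}$ bounds \eqref{hh-rest}, and for the former split further into $\H^m$ (handled by \eqref{cubic}) and $I-\H^m$ (handled by \eqref{axz}, \eqref{a0z} together with the $Z$-type bounds \eqref{hh-z}), with the $2^{-cm}$ gain coming in each case from the enforced separation $k_1\geq k_0+m$. The frequency-envelope bookkeeping you flag is indeed routine and identical to that in the proof of \eqref{diffa}.
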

Again, no modulation localizations are present here.

The remaining parts of $\A^2$ have no contributions from $\H^m \A^2$,
so we will estimate them entirely using the $Z$ norm or the simpler
$L^1 L^\infty$ bound. The latter suffices in the case of $\A_0$,
whose dyadic pieces are  readily bounded by 

\begin{equation} \label{hl-med-a0}
\begin{split}
  \| P_{k_0} \A_0^{2}(\phi_{k_1},\phi_{k_2})\|_{L^1 L^\infty}
  \lesssim & \ 2^{-2k_0} \|P_{k_0} \Delta
  \A_0^{2}(\phi_{k_1},\phi_{k_2})\|_{L^1 L^\infty} \\ \lesssim & \
  2^{-2(k_0 - k_{max}) -\frac12 |k_1-k_2|} \prod_{j=1,2} \| (|D|^{-\frac12}
  \phi_{k_j}, \abs{D}^{-\frac{3}{2}} \rd_{t} \phi_{k_{j}}) \|_{L^2 L^\infty} 
 \end{split} 
\end{equation}
where $k_{max} = \max\set{k_{1}, k_{2}}$. In both $\A_{0}^{2, hl(m)}$ and $\A_{0}^{2, med(m)}$, note that
we have $k_{\max} \leq k_{0} + m$ for (say) $m \geq 3$. Moreover, the above dyadic bound 
sums up easily due to the off-diagonal decay. Using the embedding $L^{1} L^{2} \subset N$, we easily obtain
\begin{corollary} 
For any admissible frequency envelopes $c, d, e$, we have
\begin{equation}\label{hl-a02}
\| \Diff_{\A_0^{2,hl(m)}}^m(\phi_{1},\phi_{2}) \psi\|_{N_{f}} 
+\| \Diff_{\A_0^{2,med(m)}}^m(\phi_{1},\phi_{2}) \psi\|_{N_{f}}  
\lesssim 2^{Cm} \| \phi_{1} \|_{\DS_{c}} \nrm{\phi_{2}}_{\DS_{d}} \|\psi\|_{S^1_{e}}
\end{equation} 
where $f(k)$ is as in \eqref{diffa-freqenv}. 
\end{corollary}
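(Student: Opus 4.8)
The plan is to deduce \eqref{hl-a02} directly from the dyadic bound \eqref{hl-med-a0} together with the embedding $L^{1}L^{2}\subseteq N_{k}$ (recall \eqref{n}); only routine frequency-envelope bookkeeping is involved, and the two terms $\Diff^{m}_{\A_{0}^{2,hl(m)}}$, $\Diff^{m}_{\A_{0}^{2,med(m)}}$ are handled the same way. Since the $A_{0}$-component of $\Diff^{m}_{\A}$ differentiates $\psi$ in time, it suffices, after a fixed frequency projection, to bound for each output frequency $2^{k}$ the expression
\[
2i\, P_{k}\!\Bigl(\ \sum_{k_{0}<k-m}\ \sum_{(k_{1},k_{2})}P_{k_{0}}\A_{0}^{2}(\phi_{k_{1}},\phi_{k_{2}})\cdot\partial_{t}\psi_{k}\Bigr),
\]
where the inner $(k_{1},k_{2})$-sum runs over the pairs defining $\A_{0}^{2,hl(m)}$ (i.e.\ $k_{\min}<k_{0}-m$), respectively $\A_{0}^{2,med(m)}$ (i.e.\ $k_{0}-m<k_{1},k_{2}<k_{0}+m$). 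I would first record the elementary frequency-support remark already used in the text: for a nonzero contribution one needs $k_{\max}:=\max\{k_{1},k_{2}\}\le k_{0}+O(1)$ in the $hl$ case, while $k_{\max}<k_{0}+m$ is built into the $med$ case, so in both cases $k_{\max}\le k_{0}+m$; in particular $k_{1},k_{2}<k-m$ up to an additive $O(m)$.

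For each fixed $(k_{0},k_{1},k_{2})$ I would apply H\"older, $\|P_{k_{0}}\A_{0}^{2}(\phi_{k_{1}},\phi_{k_{2}})\,\partial_{t}\psi_{k}\|_{L^{1}L^{2}}\le\|P_{k_{0}}\A_{0}^{2}(\phi_{k_{1}},\phi_{k_{2}})\|_{L^{1}L^{\infty}}\|\partial_{t}\psi_{k}\|_{L^{\infty}L^{2}}$, bound $\|\partial_{t}\psi_{k}\|_{L^{\infty}L^{2}}\lesssim\|\psi_{k}\|_{S^{1}_{k}}$ using $S_{k}\subseteq L^{\infty}L^{2}$, and insert \eqref{hl-med-a0}. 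The two factors $\||D|^{-\frac12}\phi_{k_{j}}\|_{L^{2}L^{\infty}}$ and $\||D|^{-\frac32}\partial_{t}\phi_{k_{j}}\|_{L^{2}L^{\infty}}$ appearing there reduce, by Bernstein's inequality in $\bbR^{4}$ comparing $L^{\infty}$ with $L^{6}$, exactly to $\||D|^{\frac16}\phi_{k_{j}}\|_{L^{2}L^{6}}$ and $\||D|^{-\frac56}\partial_{t}\phi_{k_{j}}\|_{L^{2}L^{6}}$ — both controlled by $\|P_{k_{j}}\phi_{j}\|_{\DS}$ and hence by $c(k_{1})\|\phi_{1}\|_{\DS_{c}}$, resp.\ $d(k_{2})\|\phi_{2}\|_{\DS_{d}}$. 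Using $k_{\max}\le k_{0}+m$ to absorb the possibly growing factor $2^{-2(k_{0}-k_{\max})}\le 2^{2m}$ (retaining the genuine decay when $k_{\max}\le k_{0}$), this yields
\[
\bigl\|P_{k_{0}}\A_{0}^{2}(\phi_{k_{1}},\phi_{k_{2}})\,\partial_{t}\psi_{k}\bigr\|_{N_{k}}\lesssim 2^{Cm}\,2^{-\frac12|k_{1}-k_{2}|}\,c(k_{1})\,d(k_{2})\,\|\phi_{1}\|_{\DS_{c}}\|\phi_{2}\|_{\DS_{d}}\|\psi_{k}\|_{S^{1}_{k}}.
\]

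Finally I would sum over $k_{0}<k-m$ and the admissible $(k_{1},k_{2})$. In the $hl$ case $k_{\max}=k_{0}+O(1)$ with $k_{\min}<k_{0}-m$, so the geometric factor $2^{-\frac12|k_{1}-k_{2}|}$ and a Schur/Cauchy--Schwarz estimate give $\sum 2^{-\frac12|k_{1}-k_{2}|}c(k_{1})d(k_{2})\lesssim\|c_{\le k-m}\|_{\ell^{2}}\|d_{\le k-m}\|_{\ell^{2}}$; in the $med$ case the three frequencies lie within $m$ of one another and below $k$, so a crude bound on the $O(m^{2})$ terms using the slow variation of $c,d$ gives the same with an extra power $2^{Cm}$. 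In either case
\[
\|P_{k}(\text{output})\|_{N_{k}}\lesssim 2^{Cm}\,\|c_{\le k-m}\|_{\ell^{2}}\|d_{\le k-m}\|_{\ell^{2}}\,\|\phi_{1}\|_{\DS_{c}}\|\phi_{2}\|_{\DS_{d}}\,\|\psi_{k}\|_{S^{1}_{k}}.
\]
Squaring, dividing by $f(k)^{2}=e(k)^{2}\|c_{\le k-m}\|_{\ell^{2}}^{2}\|d_{\le k-m}\|_{\ell^{2}}^{2}$, and summing in $k$, the envelope factors cancel and the residual sum $\sum_{k}e(k)^{-2}\|\psi_{k}\|_{S^{1}_{k}}^{2}=\|\psi\|_{S^{1}_{e}}^{2}$ supplies the $\ell^{2}$-summability in the output frequency, giving \eqref{hl-a02}. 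The only mildly delicate point is this last accounting — in particular ensuring that the \emph{truncated} norms $\|c_{\le k-m}\|_{\ell^{2}}$, $\|d_{\le k-m}\|_{\ell^{2}}$ appear rather than the full $\ell^{2}$ norms, which is precisely what the frequency-support remark of the first paragraph guarantees; everything else is a direct application of \eqref{hl-med-a0} and Bernstein.
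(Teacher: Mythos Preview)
Your approach is correct and is exactly the one the paper intends: bound $P_{k_0}\A_0^{2}$ in $L^1L^\infty$ via \eqref{hl-med-a0}, pair with $\|\partial_t\psi_k\|_{L^\infty L^2}\lesssim\|\psi_k\|_{S^1_k}$, and use $L^1L^2\subseteq N_k$. The paper's own justification is the one-line remark preceding the corollary, and you have supplied the details faithfully.

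One small bookkeeping slip in your last paragraph: the frequency-envelope norms $\|\cdot\|_{N_f}$ and $\|\cdot\|_{S^1_e}$ in this paper are $\sup$-type norms, $\|\phi\|_{X_c}=\sup_k c_k^{-1}\|P_k\phi\|_X$, not $\ell^2$-type; so the identity ``$\sum_k e(k)^{-2}\|\psi_k\|_{S^1_k}^2=\|\psi\|_{S^1_e}^2$'' is not correct. The fix is simpler than what you wrote: from your dyadic bound and $\|\psi_k\|_{S^1_k}\le e(k)\|\psi\|_{S^1_e}$ you get directly $f(k)^{-1}\|P_k(\text{output})\|_{N_k}\lesssim 2^{Cm}\|\phi_1\|_{\DS_c}\|\phi_2\|_{\DS_d}\|\psi\|_{S^1_e}$, and taking $\sup_k$ finishes. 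This does not affect the substance of your argument.
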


We now  consider the contributions of $\A_x^{2,hl(m)}$ and $\A_x^{2,med(m)}$. Our first tool
is due to the estimates (134) and (135) in \cite{Krieger:2012vj}, which give
\begin{proposition}
The following estimate holds:
\begin{equation}\label{hl-z}
\| P_{k_{0}} \A_x^{2}(\phi_{k_1},\phi_{k_2})\|_{Z} \lesssim 2^{C|k_{0}-k_{max}|} 2^{-\delta |k_1-k_2|} \| \phi_{k_1}\|_{S^1} \| \phi_{k_2}\|_{S^1} 
\end{equation}
\end{proposition}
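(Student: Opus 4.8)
The plan is to reduce \eqref{hl-z} to the dyadic $Z$-norm bilinear bounds recorded as (134)--(135) in \cite{Krieger:2012vj}, and to recall the mechanism behind them. Unfolding the definition of the $Z_{k_0}$ norm, it suffices to show that for every $l < C$ --- equivalently for every output modulation $2^{j_0}$ with $j_0 = k_0 + 2l \leq k_0 + O(1)$ --- the quantity
\[
2^{l/2}\Big(\sum_{\omega} \nrm{P^{\omega}_{l} Q_{j_0} P_{k_0} \A_x^{2}(\phi_{k_1},\phi_{k_2})}_{L^1 L^\infty}^2\Big)^{1/2}
\]
is bounded by the right-hand side of \eqref{hl-z}, where the $\omega$-sum is over a finitely overlapping cover of $\mathbb{S}^3$ by caps of radius $2^l$. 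By symmetry we may assume $k_1 \geq k_2$. Using the null structure \eqref{phiphi-null}, $P_{k_0}\A_x^{2}(\phi_{k_1},\phi_{k_2})$ is $\Box^{-1}\partial^k\Delta^{-1}$ applied to the (symmetrized) null form $Q_{kj}(\phi_{k_1},\bar\phi_{k_2})$; on the modulation-$2^{j_0}$, frequency-$2^{k_0}$ piece this contributes the multiplier gain $2^{-k_0}$ (from $\partial^k\Delta^{-1}$) together with $2^{-k_0-j_0}$ (from $\Box^{-1}$, since $|\tau^2-|\xi|^2| \sim 2^{k_0+j_0}$ there).

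Next I would decompose the two inputs into modulation scales $2^{j_1}, 2^{j_2}$ and angular sectors, and exploit the null-form symbol estimate $|Q_{kj}(\xi_1,\xi_2)| \lesssim 2^{k_1+k_2}\,\theta$, where $\theta$ is the angle between the two (signed) spatial frequencies, together with the usual cone geometry, which forces $\theta^2 \lesssim 2^{j_{\max}-k_{\min}}$ with $j_{\max} = \max(j_0,j_1,j_2)$ and $k_{\min} = k_2$. The two inputs are then distributed over the components of the $S^1$ norm appropriate to the configuration --- the plain Strichartz norm $S^{str}_k$, the null-frame $PW^{\pm}$ / $NE$ pieces inside $S^{ang}_k$, or the $X^{0,\frac12}_\infty$ piece for a high-modulation input --- and the $L^1 L^\infty$ output is estimated by pairing them, e.g. $L^1 L^\infty \subset L^2 L^\infty \cdot L^2 L^\infty$ in the bulk, or through a null-frame times energy ($PW^{\mp}_\omega(l)$ against $NE$) pairing, which is also what supplies the cap weight $2^{l/2}$. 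The off-diagonal gain $2^{-\delta|k_1-k_2|}$ is harvested from the Bernstein / Strichartz losses that appear precisely when the input frequencies are unbalanced, while the possibly large factor $2^{C|k_0-k_{\max}|}$ is the price paid in the $\mathrm{high}\times\mathrm{high}\to\mathrm{low}$ regime, where $\theta \sim 1$ and one has to use Bernstein in the transverse directions to descend from the scale $2^{k_{\max}}$ to the output scale $2^{k_0}$.

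The main obstacle is reconciling the modulation localizations --- indispensable both for extracting the null-form angular gain and for the time decay needed to land in $L^1 L^\infty$ --- with the angular and modulation structure built into the $Z$ norm, while keeping careful track of which $S^1$-component each input is measured in across the several sub-cases (output highest modulation; one input highest modulation; balanced modulations) and not destroying the decay in $|k_1-k_2|$. The case $j_0 < j_1, j_2$ is the most delicate, since then the $\Box^{-1}$ gain is large and the null structure must be extracted from the inputs rather than the output. In practice all of this is exactly the bookkeeping carried out for the dyadic bounds (134)--(135) of \cite{Krieger:2012vj}, so the proof of \eqref{hl-z} amounts to quoting those estimates and observing that $\A_x^{2}$ enters here only through its fixed dyadic pieces $P_{k_0}\A_x^{2}(\phi_{k_1},\phi_{k_2})$, for which no additional modulation manipulation is needed.
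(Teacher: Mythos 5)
Your proposal is correct and takes essentially the same route as the paper: the paper proves nothing here beyond the single sentence that the bound ``is due to the estimates (134) and (135) in \cite{Krieger:2012vj},'' which is also where your argument lands. The heuristic walk-through you supply (unfolding the $Z_{k_0}$ norm, the $\Box^{-1}\partial^k\Delta^{-1}Q_{kj}$ structure, the angular/modulation bookkeeping, and where the factors $2^{C|k_0-k_{\max}|}$ and $2^{-\delta|k_1-k_2|}$ come from) is a sound account of the mechanism behind those cited estimates, but it is not additional content the paper uses.
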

This gives a gain for the high-low portion of $A_x$. Hence in combination with \eqref{axz}, \eqref{a0z},  \eqref{hh-z} for the $\H^*$ portion and
\eqref{imh}, \eqref{hh-rest} for 
the $I-\H^*$ portion, we obtain a result with no modulation localizations:

\begin{corollary}
For any admissible frequency envelopes $c, d, e$, we have
\begin{equation}\label{hl-a2}
\| \Diff_{\A^{2,hl(m)}}^m(\phi_{1},\phi_{2}) \psi \|_{N_{f}} \lesssim 2^{-cm} \| \phi_{1} \|_{S^1_{c}} \| \phi_{2} \|_{S^1_{d}} \|\psi\|_{S^1_{e}}
\end{equation} 
where $f(k)$ is as in \eqref{diffa-freqenv}. 
\end{corollary}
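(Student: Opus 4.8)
The plan is to establish \eqref{hl-a2} by assembling the estimates reviewed above, in the same fashion as the derivation of \eqref{hh-a2}. Since the $\A_0$-contribution of the high-low frequency regime was already disposed of in \eqref{hl-a02} with a divisible bound, it suffices to bound $\Diff^{m}_{\A_x^{2,hl(m)}}(\phi_1,\phi_2)\psi$. The argument is driven by two consequences of the restriction $k_{min}<k_0-m$ built into $\A_x^{2,hl(m)}$: a Fourier support consideration shows that each dyadic piece $P_{k_0}\A_x^{2}(\phi_{k_1},\phi_{k_2})$ entering it satisfies $\abs{k_0-k_{max}}\le C$, hence $\abs{k_1-k_2}\ge m-C$; and since $\Diff^m$ pairs $P_{<k-m}\A^\alpha$ with $\partial_\alpha P_{k}\psi$, for an output at frequency $\sim 2^{k}$ both $\phi$-frequencies lie below $2^{k-m+C}$. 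In particular $\A_x^{2,hl(m)}$ receives no contribution from $\H^m$ (that operator only sees the high-high regime), so its $\H^*$ part may be treated entirely through the $Z$ norm.

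I would then split $\Diff^m_{\A_x^{2,hl(m)}}=\H^*\Diff^m_{\A_x^{2,hl(m)}}+(I-\H^*)\Diff^m_{\A_x^{2,hl(m)}}$, using throughout the frequency-localized (dyadic) forms of the cited estimates, which are the ones recorded in \cite{Krieger:2012vj}. For the first piece, \eqref{axz} reduces matters to the $Z_{k_0}$-norms of $P_{k_0}\A_x^{2}(\phi_{k_1},\phi_{k_2})$, which by \eqref{hl-z} are $\lesssim 2^{C\abs{k_0-k_{max}}}2^{-\dlt\abs{k_1-k_2}}\nrm{\phi_{k_1}}_{S^1}\nrm{\phi_{k_2}}_{S^1}$; in the high-low regime the prefactor is $O(1)$ and $2^{-\dlt\abs{k_1-k_2}}\le 2^{-\dlt m/2}\,2^{-\dlt\abs{k_1-k_2}/2}$, so the residual off-diagonal factor runs the Cauchy--Schwarz summation in $(k_1,k_2)$. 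Because the $\phi$-frequencies are confined below $2^{k-m+C}$ and the envelopes $c,d$ are slowly varying, this summation reproduces exactly $\nrm{c_{\le k-m}}_{\ell^2}\nrm{d_{\le k-m}}_{\ell^2}$, while $\psi_{k}$ supplies $e(k)$; together with the factor $2^{-\dlt m/2}$ this yields the envelope $f$ of \eqref{diffa-freqenv} with a gain $2^{-\dlt m/2}$. For the $(I-\H^*)$ piece, \eqref{imh} reduces matters to the $S^1$-norms of $P_{k_0}\A_x^{2}$, which carry the off-diagonal decay $2^{-\dlt(\abs{k_0-k_1}+\abs{k_0-k_2})}$ obtained from the null structure in \eqref{null} via \eqref{lin-S}, supplemented by \eqref{axnl-hm} for the high-modulation components of the $S^1$ norm; the same bookkeeping then produces a gain $2^{-\dlt m/2}$ and the envelope $f$. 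Adding the two contributions and using $c\ll\dlt$ gives \eqref{hl-a2}. Finally I would stress, as in the statement, that no modulation localizations survive this bound, since both $\nrm{\cdot}_Z$ and the $(I-\H^*)$ bilinear estimate are free of them --- a point that will matter when this estimate is re-run on a compact interval in the next subsection.

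The argument is bookkeeping rather than new analysis, and the only real point requiring care is the passage from dyadic off-diagonal decays to the frequency-envelope statement with the correct exponents: one must check that the high-low separation genuinely propagates the gap $m$ to \emph{both} $\phi$-frequencies relative to the output, so that the $\ell^2$ summations are restricted to frequencies below $2^{k-m}$ and reproduce $\nrm{c_{\le k-m}}_{\ell^2}\nrm{d_{\le k-m}}_{\ell^2}$, and that the decay rate $\dlt$ in \eqref{null} and \eqref{hl-z} dominates the target rate $c$, so that a single application of these estimates --- with no optimization over $m$ --- already delivers the $2^{-cm}$ gain.
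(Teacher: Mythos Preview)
Your approach is correct and matches the paper's: split $\Diff^m_{\A_x^{2,hl(m)}}$ into $\H^*$ and $(I-\H^*)$ parts, feed the former through \eqref{axz} with the $Z$-bound \eqref{hl-z}, and the latter through \eqref{imh} with the $S^1$ off-diagonal decay coming from \eqref{null}; the high-low constraint $k_{min}<k_0-m$ forces $\abs{k_0-k_{max}}\le C$ and $\abs{k_1-k_2}\ge m-C$, so both routes deliver a $2^{-\dlt m}$ gain which dominates the target $2^{-cm}$. Your frequency-envelope bookkeeping is also correct: since $k_0<k-m$ and $k_1,k_2\le k_0+C$, both $\phi$-frequencies lie below $2^{k-m+C}$ and the truncated $\ell^2$ sums $\nrm{c_{\le k-m}}_{\ell^2}\nrm{d_{\le k-m}}_{\ell^2}$ emerge exactly as in \eqref{diffa-freqenv}.

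Two small remarks. First, the paper's one-line justification cites \eqref{hh-z} and \eqref{hh-rest}, which are stated for the high--high configuration; your choice of \eqref{hl-z} and the general dyadic bound from \eqref{null} is the directly applicable pair here, and is what is actually needed. Second, your closing sentence about why ``no modulation localizations survive'' is slightly misstated: the $Z$ norm and the $\H^*/(I-\H^*)$ splitting \emph{do} involve modulation projections. The correct reason the corollary is modulation-free is simply that the left-hand side $\Diff^m_{\A_x^{2,hl(m)}}$ is itself defined purely through spatial-frequency cutoffs (the $hl(m)$ decomposition and the $P_{<k-m}$ in $\Diff^m$), so once the two pieces are recombined the statement is about an operator with no $Q_j$'s in it. This is what makes the bound transfer cleanly to a compact interval in the next subsection.
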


Finally, it remains to consider the contribution of $\A_x^{2,med(m)}$. There the estimate 
\eqref{hl-z} suffices  for the bound \eqref{diffaa}, but provides no divisible norm estimate.
To summarize, we are left with the case
\begin{equation*}
	k_{0} - m < k_{1}, k_{2} < k_{0} + m.
\end{equation*}
Here we can take advantage of the low modulation decay in
\eqref{imh-small} and \eqref{axz-low} to obtain 
\begin{corollary}
The following bound holds for large enough $C$ and $k_0$, $k_1$, $k_2$ as above:
\begin{equation}\label{med-a2}
  \| \Diff^{m}_{\A_x^{2,med(m), low}(\phi_{k_1},\phi_{k_2})} \psi_{k}\|_{N} \lesssim 2^{-cm} \| \phi_{k_{1}} \|_{S^1} \nrm{\phi_{k_{2}}}_{S^{1}} \|\psi_{k}\|_{S^1}
\end{equation} 
where 
\begin{equation*}
\A_x^{2,med(m), low} = \sum_{k_{0}} Q_{<k_{0} - m} P_{k_{0}} \A_{x}^{2, med(m)} \ .
\end{equation*}
\end{corollary}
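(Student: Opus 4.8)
The plan is to read off \eqref{med-a2} directly from the low modulation improvements \eqref{imh-small} and \eqref{axz-low}, applied with the coefficient $A = \A_x^{2,med(m),low}(\phi_{k_1},\phi_{k_2})$ and with $A_0 = 0$. The starting observation is that $\A_x^{2,med(m),low}$ is precisely of the form $A^{low} = \sum_{k_0} Q_{<k_0 - m} P_{k_0} A$ appearing in those two estimates, taken with $A = \A_x^{2,med(m)}(\phi_{k_1},\phi_{k_2})$. Thus, decomposing $\Diff^m = \H^* \Diff^m + (I-\H^*)\Diff^m$ and applying \eqref{axz-low} to the $\H^*$ summand and \eqref{imh-small} to the $(I-\H^*)$ summand, I would reduce \eqref{med-a2} to the single auxiliary estimate
\begin{equation*}
	\nrm{\A_x^{2,med(m)}(\phi_{k_1},\phi_{k_2})}_{Z} + \nrm{\A_x^{2,med(m)}(\phi_{k_1},\phi_{k_2})}_{\ell^1 S^1} \aleq \nrm{\phi_{k_1}}_{S^1} \nrm{\phi_{k_2}}_{S^1},
\end{equation*}
after which the $2^{-cm}$ prefactor furnished by \eqref{imh-small}--\eqref{axz-low} yields exactly \eqref{med-a2}. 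Here $\nrm{\cdot}_{\ell^1 S^1}$ is applied to the full coefficient $\A_x^{2,med(m)}$ rather than its low modulation truncation; this is harmless, since for each $k_0$ the projection $Q_{<k_0-m}$ is a contraction on both $S_{k_0}$ and $Z_{k_0}$ (it merely deletes the modulation scales $\geq 2^{k_0 - m}$ from the respective norms).

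For the $\ell^1 S^1$ bound I would use the dyadic form of \eqref{axnl} --- equivalently, the null form estimate \eqref{null} applied to $\Box \A_i^{2}(\phi,\phi) = \partial_j \Delta^{-1} \mathcal N(\phi, \bar \phi)$ followed by the linear estimate \eqref{lin} --- which provides, for each dyadic output $P_{k_0}$, an off-diagonal decay $2^{-\dlt(\abs{k_0-k_1}+\abs{k_0-k_2})}$; the sum over the allowed $k_0$ in the medium range then converges with an $m$-independent bound. For the $Z$ bound I would split according to the location of $k_0$: if $k_0 \geq k_{max} - 10$ then \eqref{hl-z} applies with $\abs{k_0 - k_{max}} = O(1)$, hence with no loss, whereas if $k_0 < k_{max} - 10$ --- which forces $\abs{k_1 - k_2} = O(1)$ and $k_{max} - m < k_0 < k_{max} - 10$ --- I would instead invoke \eqref{hh-z}; this is legitimate because $\H^m$, which selects $k_0 < k_1 - m$, annihilates the medium-frequency piece, so that $(I - \H^m) P_{k_0} \A_x^{2,med(m)} = P_{k_0}\A_x^{2,med(m)}$ and \eqref{hh-z} supplies the decay $2^{-\dlt\abs{k_0-k_1}}$. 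Since the $Z$ norm is an $\ell^1$ frequency sum, these dyadic bounds add up.

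The main obstacle, and essentially the only delicate point, is this $Z$-norm estimate: a blind use of \eqref{hl-z} throughout the medium regime carries the factor $2^{C\abs{k_0 - k_{max}}}$, which grows to $2^{Cm}$ when $k_0$ lies a distance $\sim m$ below $k_{max}$, and would overwhelm the gain $2^{-cm}$ coming from \eqref{axz-low}. Recognising that this bad sub-regime is a high $\times$ high $\to$ low interaction devoid of any $\H^m$ component, and rerouting it through \eqref{hh-z}, removes the difficulty. With \eqref{med-a2} in hand, together with the Corollaries establishing \eqref{hh-a2}, \eqref{hl-a02}, and \eqref{hl-a2}, one has accounted for all of $\Diff^{m,2}_{\A,small}$ and obtained \eqref{diffa-small} on the full line; the remaining high-modulation medium piece is the one placed in the divisible-norm estimate \eqref{diffa-large}, and the passage to a compact interval $I$ is carried out in step (ii) of the section's strategy.
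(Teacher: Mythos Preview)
Your proof is correct and follows the paper's indicated approach: the paper simply says ``take advantage of the low modulation decay in \eqref{imh-small} and \eqref{axz-low}'', which is exactly your decomposition $\Diff^m = \H^*\Diff^m + (I-\H^*)\Diff^m$ together with the auxiliary $Z$ and $\ell^1 S^1$ bounds on $\A_x^{2,med(m)}$.

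Your explicit treatment of the $Z$ bound --- rerouting the sub-regime $k_0 < k_{\max} - 10$ through \eqref{hh-z} rather than \eqref{hl-z}, after observing that $\H^m$ vanishes on the medium piece --- is a detail the paper glosses over but which is indeed needed to obtain an $m$-independent $Z$ bound (so that the $2^{-cm}$ gain from \eqref{axz-low} survives). The paper's terse pointer to \eqref{hl-z} just before the corollary is really only adequate for the diagonal part $k_0 \approx k_{\max}$; your observation closes that gap cleanly.
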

Thus we can restrict ourselves to
high modulations in $\A_x$, i.e., 
\begin{equation*}
\A_{x}^{2, med(m), high}(\phi_{1}, \phi_{2}) = \sum_{k_{0}} P_{k_{0}} Q_{>k_{0} -Cm} \A_{x}^{2, med(m)}(\phi_{1}, \phi_{2}).
\end{equation*}
For this part, we can use the $L^1L^\infty$ norm. 
Precisely, each dyadic piece obeys the estimate
\begin{align*}
\nrm{P_{k_{0}} Q_{>k_{0} -Cm} \A_{x}^{2}(\phi_{k_{1}}, \phi_{k_{2}})}_{L^{1} L^{\infty}}
\aleq & 2^{-2k_{0}+Cm} \nrm{\Box \A_{x}^{2}(\phi_{k_{1}}, \phi_{k_{2}})}_{L^{1} L^{\infty}}  \\
\aleq & 2^{Cm} \nrm{\abs{D}^{-\frac{1}{2}} \phi_{k_{1}}}_{L^{2} L^{\infty}} \nrm{\abs{D}^{-\frac{1}{2}} \phi_{k_{2}}}_{L^{2} L^{\infty}} \ .
\end{align*}
Recall that we are in the scenario $k_{0} - m < k_{1}, k_{2} < k_{0} + m$. Combined with the embeddings $L^{1} L^{2} \subset N$ and $\nb S^{1} \subset L^{\infty} L^{2}$, we obtain
\begin{equation*}
  \| \Diff^{m}_{\A_{x}^{2, med(m), high}(\phi_{1}, \phi_{2})} \psi \|_{N_{f}} \lesssim 2^{Cm} \| \phi \|_{\DS_{c}} \nrm{\phi}_{\DS_{d}} \|\psi_{k}\|_{S^1_{e}}
\end{equation*}
where $c, d, e$ are any admissible frequency envelopes and $f$ is as in \eqref{diffa-freqenv}.
Thus the proof of Proposition~\ref{p:diff} is concluded on the entire real line.

\subsection{Interval localized bounds}

Here we seek to prove the result of Proposition~\ref{p:diff} in a time interval $I$.
Due to the paradifferential nature of the operator $\Diff_\A^{m}$, we can fix the 
frequency $2^k$ of the input $\psi$ and simply estimate the expression
$\Diff_\A^m \psi_k$. For $\A$ we consider its components successively:

a) The cubic terms $\A^3$. Here we simply extend $\A^3$ outside $I$ as a homogeneous 
wave, and then use the bound \eqref{a3-all}. 
By Propositions~\ref{p:ax}, \ref{p:axnl-high-mod} and \ref{p:a0},
we know that $\A^{3}$ is entirely estimated by divisible norms.
%

b) The contributions of $\A^{2,hh(m)}$ and $\A_x^{2,hl(m)}$. Here we extend $\phi$ outside 
$I$ as a homogeneous wave, and then apply  \eqref{hh-a2}, respectively \eqref{hl-a2}.

c) The contributions of $\A_0^{2,med(m)}$ and $\A_0^{2,hl(m)}$. These are estimated 
directly via \eqref{hl-a02}; no extensions are necessary.

d) The contribution of $\A_x^{2,med(m)}$. This is the  part where  the divisible bound 
is more difficult to gain. In what follows, we simply write $\A_{x} = \A^{2}_{x}$. To review, we have to estimate the expression 
\[
\| P_{k_0} \A^i(\phi_{k_1}, \phi_{k_2})    \partial_i \psi_k\|_{N}
\]
where the frequency balance is 
\[
k_0 < k-m, \qquad k_0-m < k_1, k_2 < k_0+m \ .
\]
This is where  the length of the time interval $I$ plays a role. 
Comparing it to $k_0$, we distinguish 
two scenarios:

(i) Short time intervals, $|I| \leq 2^{-k_0+m}$. Then we have a direct estimate,
\[
\begin{split}
  \| P_{k_0} \A^i(\phi_{k_1}, \phi_{k_2}) \partial_i \psi_k\|_{N[I]}
  \lesssim & \ \| P_{k_0} \A^i(\phi_{k_1}, \phi_{k_2}) \partial_i
  \psi_k\|_{L^1 L^2[I]} 
  \\ \lesssim & |I|^{\frac12} \| P_{k_0} 
  \A_x(\phi_{k_1}, \phi_{k_2}) \|_{ L^2 L^\infty[I]} 
  \| \nabla_{x} \psi_k\|_{L^\infty L^2[I]}
  \\
  \lesssim & \ 
  2^{Cm} \| |D|^{-\frac12} P_{k_{0}} \A_{x}(\phi_{k_{1}}, \phi_{k_{2}}) \|_{L^2 L^\infty[I]} 
  \| \nb_{x} \psi_k\|_{L^\infty L^2[I]} \ . 
\end{split}
\]
Summing over $k_{0}, k_{1}, k_{2}$ and recalling the definition of the $S^{str}_{k_{0}} \subset S_{k_{0}}$, we obtain
\begin{equation*}
\nrm{\Diff^{m}_{\A^{2, med(m)}_{x}(\phi_{1}, \phi_{2})} \psi_{k}}_{N[I]} \aleq 2^{Cm} \nrm{\nb P_{<k-m} \bfA^{2, med(m)}_{x}(\phi_{1}, \phi_{2})}_{\ell^{1} S[I]} \nrm{\psi_{k}}_{S^{1}[I]},
\end{equation*}
The right hand side can be controlled by Proposition~\ref{p:ax}. The splitting into small and large parts is then achieved\footnote{Technically, \eqref{axnl-small}-\eqref{axnl-large} apply to the full operator $\A_{x}^{2}$. Nevertheless, $\A_{x}^{2} - \A_{x}^{2, med(m)}$ gains $2^{-cm}$ by \eqref{hh-rest}, and thus this difference can be put into the `small' part.} by using the corresponding statements \eqref{axnl-small}-\eqref{axnl-large} for $\A_{x}^{2}$.

(ii) Long time intervals, $|I| >  2^{-k_0+m}$. This is the difficult
case.
 Our proof here involves
modulation localizations, so we need to consider appropriate extensions
of $\A_x$ and $\psi_k$.  Since $\psi_k$ is an independent variable, for it we can 
simply use the canonical extension as homogeneous waves. For $\A_{x}$, instead,
we extend its arguments $\phi_1$ and $\phi_2$  as homogeneous waves.

The bound \eqref{med-a2} suffices for low modulations of $\A^{2,med(h)}$,
therefore it suffices to estimate
\[\begin{split}
& 
\sum_{k_{0} < k - m}  \| Q_{> k_0-Cm} P_{k_0} \A_x( \phi_{k_1},\phi_{k_2}) \nabla_x \psi_{k}\|_{N[I]}
\lesssim  \ 
\\ & \qquad \qquad \sum_{k_{0} < k-m} \| Q_{> k_0-Cm} P_{k_0} \A_x( \phi_{k_1},\phi_{k_2})\|_{L^1 L^\infty[I]}
 \|\nabla_x \psi_{k} \|_{L^\infty L^2}
\end{split}\]
To estimate the localized $L^1 L^\infty[I]$ norm we write
\[
\begin{split}
Q_{> k_0-Cm} P_{k_0} \A_x( \phi_{k_1},\phi_{k_2}) = & \ 
Q_{> k_0-Cm} P_{k_0} \Box^{-1} \bb( \frac{1}{2} \calP_{x} \Im(\phi_{k_1} \nabla_{x} \overline{\phi}_{k_2}+ 
\nabla_{x} \overline{\phi}_{k_1} \phi_{k_2}) \bb)
\\ = & \ 2^{-k_0+2Cm}  L(\phi_{k_1},\phi_{k_2})
\end{split}
\]
where $L$ is a bilinear translation invariant form whose kernel is localized
near $0$ on the $2^{-k_0}$ scale in space-time. This allows us to estimate the tails outside $I$
as follows:
\[
\| Q_{> k_0-Cm} P_{k_0} \A_x( \phi_{k_1},\phi_{k_2})\|_{L^1 L^\infty[I]}
\lesssim  2^{-k_{0}} 2^{Cm} \| \chi_{I}^{k_0} \phi_{k_1}\|_{L^2 L^\infty}   
\| \chi_{I}^{k_0} \phi_{k_2}\|_{L^2 L^\infty}  .
\]
Since $k_1$ and $k_2$ are close to $k$, we conclude using Proposition~\ref{p:ext}
that
\[
\| Q_{> k_0-Cm} P_{k_0} \A_x( \phi_{k_1},\phi_{k_2})\|_{L^1 L^\infty[I]}
\lesssim 2^{Cm} \| \phi_{k_1}\|_{\DS[I]}  \| \phi_{k_2}\|_{\DS[I]} 
\]
which is the sought after divisible bound.  The proof of
Proposition~\ref{p:diff} is concluded.

\section{The paradifferential parametrix}
\label{s:para}

The goal of this section is to prove
Theorem~\ref{t:para-free}. Instead of producing an exact solution
operator, it is easier to produce parametrix with small errors.  Then
the exact solution is obtained in a straightforward iterative
fashion. The result we produce here is as follows:

\begin{theorem} \label{t:app}
Let $A_x$ be a Coulomb magnetic potential solving the free wave equation
with energy $E$, and let $m > 5$. 
Consider any finite energy initial data $(\phi_{0}, \phi_{1})$ localized in frequency $\approx 1$, and a source $f \in N$ which is localized in frequency $\approx 1$ and modulation $\aleq 1$. 
Then there exists an approximate solution $\phi$ so that 
\begin{equation}
\begin{gathered}
\|\phi\|_{S_{0}} \lesssim_{E} \nrm{(\phi_{0}, \phi_{1})}_{L^{2} \times L^{2}} + \nrm{f}_{N_{0}}, \\
\|\phi[0]-(\phi_0,\phi_1)\|_{L^{2} \times L^{2}}+
 \| \Box_{A}^{p,m} \phi -f\|_{N_{0}} \lesssim_E 2^{-c m} (\|(\phi_0,\phi_1)\|_{L^{2} \times L^{2}} + \|f\|_{N_{0}}).
\end{gathered}
\end{equation}
\end{theorem}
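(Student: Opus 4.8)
The plan is to build $\phi$ by a \emph{pseudodifferential renormalization} (a phase-space gauge transform), following the scheme of \cite{MR2100060, Krieger:2012vj}. One constructs a real pseudodifferential phase $\Psi = \Psi_{<\cdot-c}(t,x,D)$ built entirely out of the free wave $A^{free}_x$, so that conjugation by $e^{i\Psi}$ approximately intertwines $\Box$ with $\Box^{p,m}_A$ from \eqref{para_wave}. Concretely, using the identity
\[
e^{-i\Psi}\Box(e^{i\Psi}v) = \Box v + 2i\,\rd^\alpha\Psi\,\rd_\alpha v + \big(i\Box\Psi - \rd^\alpha\Psi\,\rd_\alpha\Psi\big)v + (\text{commutator errors}),
\]
I would choose $\Psi$ so that $\rd^\alpha\Psi$ reproduces, paradifferentially on each output frequency $2^k$, the coefficient $A^\alpha_{<k-m}$, the remaining terms being errors. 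Then set $\phi := e^{i\Psi}v$, where $v$ solves the free inhomogeneous wave equation $\Box v = e^{-i\Psi}f$ with Cauchy data $v[0]$ chosen so that $(e^{i\Psi}v)[0] = (\phi_0,\phi_1)$ up to an $O(2^{-cm})$ error. The bound $\|\phi\|_{S_0}\lesssim_E \|(\phi_0,\phi_1)\|_{L^2\times L^2}+\|f\|_{N_0}$ then follows from the linear estimate \eqref{lin-S} for $\Box$ together with the boundedness of $e^{\pm i\Psi}$, while $\|\Box^{p,m}_A\phi - f\|_{N_0}\lesssim_E 2^{-cm}(\cdots)$ and $\|\phi[0]-(\phi_0,\phi_1)\|_{L^2\times L^2}\lesssim_E 2^{-cm}(\cdots)$ reduce to estimating the renormalization errors.

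For the phase itself: since $\Box A^{free}_x = 0$, I would split $A^{free}=A^{free}_+ + A^{free}_-$ into half-waves and, for each dyadic frequency $2^k$ and each cap $\omega$ with $|\omega|\approx 2^l$ (over a suitable range of angular scales $l<0$), define a phase $\psi_{k,\pm}^\omega$ by solving the transport equation along the null direction $(\pm1,\omega)$ with source $P_{<k-m}P^\omega_l A^{free}_\pm$. This is where the structure of MKG-CG is used: the Coulomb condition $\rd^j A_j = 0$ ensures that, on the cone and to leading order, $\rd^\alpha\psi\,\xi_\alpha$ and hence the quadratic term $\rd^\alpha\Psi\,\rd_\alpha\Psi$ essentially vanish, and that $\Box\Psi$ is of lower order; both then carry a gain $2^{-cm}$ because only frequencies $<2^{k-m}$ of $A$ enter. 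Summing over $\pm$ and $\omega$ and quantizing in the output variable produces $\Psi$.

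Two families of estimates are then required. The \emph{disposable bounds}: $e^{\pm i\Psi}$ (and the associated microlocal pieces) map $L^2\to L^2$, $N_0\to N_0$ and $S_0\to S_0$ with constants depending only on $E$; these rest on fixed-time $L^\infty$-type control of the low-frequency truncations of $\Psi$, on $L^2$-based control of $\nb\Psi$ by the energy $E$, and crucially on the null-frame ($NE$, $PW^\pm_\omega$) components of the $S_k^\omega(l)$ norm in \eqref{Sl_def}, with the cap decomposition organized so that for each $k$ only $O_E(1)$ caps interact. The \emph{error bounds}: one shows $\Box^{p,m}_A\phi - f$, which collects (a) the mismatch between $2i\,\rd^\alpha\Psi\,\rd_\alpha$ and the paradifferential operator $2i\sum_k P_{<k-m}A^\alpha\rd_\alpha P_k$ (frequency- and angle-localization discrepancies), (b) the quadratic phase term $\rd^\alpha\Psi\,\rd_\alpha\Psi$ and $\Box\Psi$, and (c) commutators of $e^{i\Psi}$ with Littlewood--Paley projections, is bounded in $N_0$ by $2^{-cm}$ times the data norms; the Cauchy-data error comes from the fixed-time error in $e^{i\Psi(0)}e^{-i\Psi(0)}=I$ after truncation.

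The hard part will be the error analysis of step (b) together with the $N_0\to N_0$ and $S_0\to S_0$ mapping properties of $e^{i\Psi}$: these demand the full strength of the null-frame function spaces and the bilinear/angular estimates of \cite{Krieger:2012vj}, and it is exactly here that the frequency gap $m$ — rather than smallness of $A$ — is exploited to extract the gain $2^{-cm}$, since $A$ itself carries only the fixed energy $E$. By comparison, terms (a), (c) and the data error are routine once the phase and its disposable bounds are in hand. Finally, Theorem~\ref{t:para-free} follows from this parametrix by a standard iteration, absorbing the $2^{-cm}$ errors once $m\gg_E 1$.
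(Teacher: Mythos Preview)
Your overall approach is the same as the paper's: a pseudodifferential gauge transform built from the free wave $A$, conjugating $\Box$ to $\Box^{p,m}_A$ up to errors that gain $2^{-cm}$ from the frequency gap. A few points where your sketch is off or incomplete, however, deserve correction.

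First, the phase is a genuine symbol $\psi_\pm(t,x,\xi)$ depending on $\omega=\xi/|\xi|$, and the parametrix uses \emph{two} quantizations, $e^{-i\psi_\pm}_{<0}(t,x,D)$ on the left and $e^{i\psi_\pm}_{<0}(D,y,s)$ on the right, sandwiching the free propagator; writing $\phi=e^{i\Psi}v$ as if $e^{i\Psi}$ were a multiplication obscures this. Second, a crucial technical ingredient you only hint at is the \emph{angular cutoff}: one defines $\psi_{\ell,\pm}=\pm L^\omega_\pm \Delta_{\omega^\perp}^{-1}\Pi^\omega_{>2^{\sigma\ell}}(\omega\cdot P_\ell A)$, excising the small-angle part of $A$ from the phase. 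Without this the symbol has unacceptable $\xi$-regularity and the pseudodifferential calculus breaks down. The price is a leftover error $\mathrm{Diff}_1=-2\big(\sum_{k<-m}\Pi^\omega_{\le 2^{\sigma k}}(\xi\cdot A_k)\,e^{-i\psi_\pm}\big)_{<0}$, which is in fact the \emph{most dangerous} term (being highest order), not the quadratic phase term you flag as hardest; it is handled by a bilinear null-form estimate exploiting the angular localization, and this is where the Coulomb gauge really enters. Third, you misplace the novelty: the $N_0\to N_0$ and $S^\sharp_0\to S_0$ boundedness of the renormalization operators do \emph{not} require the gap $m$ and are borrowed directly from \cite{Krieger:2012vj} with constants depending on $E$; the new work here is precisely the approximate-unitarity estimates on $L^2$ and $N$ and the parametrix error bound, all with the $2^{-cm}$ gain. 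Finally, $\Box\psi_\pm=0$ exactly (since $\Box A=0$), so there is no $\Box\Psi$ error at all.
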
 
We remark that the frequency support of the approximate solution $\phi$ is only slightly larger compared to $\phi_{0}, \phi_{1}$ and $f$; it is essentially also localized at frequency $\approx 1$ and modulation $\aleq 1$. After choosing $m$ sufficiently large, Theorem~\ref{t:app} directly implies Theorem~\ref{t:para-free}; see \cite[Proof of Theorem 6.3]{Krieger:2012vj}.  

The definition of
our parametrix is identical to the one used for the small data problem
in \cite{Krieger:2012vj}, which was based on \cite{MR2100060}. The main difference is in the source of smallness for
the errors.  In \cite{Krieger:2012vj} this comes from the smallness of the energy
of $A$. Here, we rely instead on the frequency gap $m$, which must be
large in terms of the energy $E$.

 The parametrix is  constructed using pseudodifferential
operators with rough symbols. Given a symbol $a(t, x, \tau, \xi)$, its left- and
right-quantizations are denoted $a(t, x, D)$ and $a(D, y, s)$,
respectively. We also use the standard convention $D_{\mu} =
\frac{1}{i} \rd_{\mu}$. 

To prove the theorem it suffices 
to consider initial data $\phi[0]$ and source $f$ with frequency localization in $\approx 1$,
and construct the approximate solution $\phi$ with a similar localization.
Thus we work with  the unit-frequency localized paradifferential magnetic wave operator
\begin{equation} \label{eq:paraMagWave}
	\Box_{A}^{p,m} = \Box  + 2 i A_{< - m}^{j} \rd_{j} .
\end{equation}
where $A$ solves the free wave equation $\Box A = 0$ with initial data
$A[0] \in \dot{H}^{1}_{x} \times L^{2}_{x}$.

Given an additional small angular localization parameter $0 < \sgm <
1/2$, we construct a parametrix for \eqref{eq:paraMagWave} as follows.
For $\xi \in \bbR^{4}$  we define
\begin{equation*}
	\omg := \frac{\xi}{\abs{\xi}}, \qquad
	\oL_{\pm} := \rd_{t} \pm \omg \cdot \nb_{x}, \qquad
	\opLap := \lap - (\omg \cdot \nb_{x})^{2}.
\end{equation*}

Note that
\begin{equation*}
	\Box = - L^{\omg}_{+} L^{\omg}_{-} + \opLap.
\end{equation*}

Define the angular sector projection $\oPi_{> \tht}$ by the formula
\begin{equation*}
	\calF[\oPi_{> \tht} f](\xi) := \big(1 - \eta( \frac{\angle(\xi, \omg)}{\tht}) \big)  \big(1 - \eta( \frac{\angle(- \xi, \omg)}{\tht}) \big) \widehat{f}(\xi).
\end{equation*}

It is important to note that if $f$ is real, then so is $\oPi_{> \tht} f$. We also define
\begin{equation*}
	\oPi_{\tht} := \oPi_{>\frac{\tht}{2}} - \oPi_{>\tht}, \qquad
	\oPi_{\leq \tht} := 1 - \oPi_{> \tht}.
\end{equation*}

For each $\ell \leq 0$, we define $\psi_{\ell, \pm}$ to be
\begin{equation} \label{}
	\psi_{\ell, \pm}(t,x, \xi) := \pm \oL_{\pm} \opLap^{-1} \oPi_{> 2^{\sgm \ell}} (\omg \cdot P_{\ell} A) 
\end{equation}

The full phase $\psi_{\pm}$ is then defined to be
\begin{equation} \label{eq:defn4psi}
	\psi_{\pm}(t,x, \xi) = \psi_{<-m, \pm}(t,x,\xi) := \sum_{\ell < -m} \psi_{\ell, \pm}(t,x, \xi).
\end{equation}

Note that we have
\begin{equation} \label{eq:oLPsi}
	\oL_{\mp} \psi_{\pm} = \pm \sum_{\ell < - m} \oPi_{> 2^{\sgm \ell}} (\omg \cdot P_{\ell} A)
\end{equation}
In other words, $\psi_{\pm}$ represent roughly the output of the integration of the 
(bulk of the) magnetic potential $A$ along light rays. Here we exclude the output
of small angle interactions, which is on one hand perturbative, and on the other hand
would yield a bad dependence of $\psi_{\pm}$ on $\xi$. This is akin to symbol smoothing 
for rough pdo's.

We use the pseudodifferential gauge transform
\begin{equation*}
	e^{- i \psi_{\pm}}_{<0}(t, x, D) := (S_{<0} e^{- i \psi_{\pm}})(t, x, D)
\end{equation*}
where $S_{<0}$ is taken with respect to the $(t,x)$ variables of the symbol. Its dual is 
\begin{equation*}
	e^{i \psi_{\pm}}_{<0}(D, y, s) 
\end{equation*}
As the symbol is independent of the time Fourier variable $\xi_{0} =
\tau$, we see that the left and right quantizations with respect to
$t$ are the same, i.e.,
\begin{equation*}
	e^{i \psi_{\pm}}_{<0}(D, y, s)  = e^{i \psi_{\pm}}_{<0}(t, D, y).
\end{equation*}

The operators $e^{- i \psi_{\pm}}_{<0}(t, x, D)$, respectively $e^{i
  \psi_{\pm}}_{<0}(D, y, s) $ are used on the left and on the right in
order to conjugate the paradifferential operator $\Box_{A}^{p,m}$ to
the d'Alembertian $\Box$. Precisely, our parametrix is given by
\begin{equation}
\begin{split}
\phi(t,x) = & \ e^{- i \psi_{\pm}}_{<0}(t, x, D)  |D|^{-1} e^{\pm it|D|} 	e^{i \psi_{\pm}}_{<0}(D, y, 0) 
( |D| \phi_0 \pm \phi_1)  \\ & \ 
 + \frac12 \int_{0}^t  e^{- i \psi_{\pm}}_{<0}(t, x, D)  |D|^{-1} e^{\pm i(t-s)|D|} e^{i \psi_{\pm}}_{<0}(D, y, s ) Q_{\pm} f(s) ds
\end{split} 
\end{equation}

To show that the above parametrix satisfies the bounds in
Theorem~\ref{t:app} we need the following mapping properties for the
operators $e^{- i \psi_{\pm}}_{<0}(t, x, D)$, respectively $e^{i\psi_{\pm}}_{<0}(D, y, s) $:
 
\begin{theorem} \label{thm:mEst} For $m > 0$, let $\psi_{\pm}$ be
  defined as in \eqref{eq:defn4psi}. Then the following mapping
  properties hold with $Z \in \{ N_0, L^2,N_0^*\}$, with implicit
  constants which depend on the energy $E$ of $A$:
\begin{enumerate}
\item (Boundedness)
\begin{equation}\label{para-bd}
e^{\pm i \psi_{\pm'}}_{<0}(t,x,D) : \quad  Z \to Z
\end{equation}

\item (Dispersive estimates)
\begin{equation}\label{para-disp}
	e^{\pm i \psi_{\pm'}}_{<0}(t,x,D) : \quad  S^{\sharp}_{0} \to S_{0}
\end{equation}

\item (Derivative bounds)
\begin{equation} \label{eq:mEst:DtPtx}
	(\nb e^{\pm i \psi_{\pm}}_{<0})(t,x) : Z \to 2^{-m} Z
\end{equation}

\item (Approximate unitarity of $e^{i \psi_{\pm}}$ on $L^{2}_{x}$)
For each $t \in \bbR$, we have
\begin{equation} \label{eq:mEst:L2}
e^{- i \psi_{\pm}}_{< 0}(t,x, D) e^{i \psi_{\pm}}_{<0} (t, D, y) - I : 
L^{2}_{x} \to 2^{-(1-\dlt_{0}) m} L^{2}_{x}
\end{equation}

\item (Approximate unitarity of $e^{i \psi_{\pm}}$ on $N$) We have
\begin{equation} \label{eq:mEst:N}
e^{- i \psi_{\pm}}_{< 0}(t,x, D) e^{i \psi_{\pm}}_{<0} (D, y, s) - I : N_{0} \to 2^{-\dlt_{1} m} N_{0}
\end{equation}
 
\item (Parametrix error estimate) We have
\begin{equation} \label{eq:mEst:ptxError}
	e^{- i \psi_{\pm}}_{<0}(t,x,D) \Box - \Box^{p}_{A} e^{- i \psi_{\pm}}_{<0} (t,x,D) : S^{\sharp}_{0, \pm} \to 2^{- \dlt_{2} m} N_{0, \pm}
\end{equation}
\end{enumerate}
\end{theorem}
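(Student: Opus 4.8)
The plan is to follow the parametrix construction of Krieger--Sterbenz--Tataru \cite{Krieger:2012vj} (itself modeled on Rodnianski--Tao \cite{MR2100060}) essentially line by line, with one structural change: wherever the earlier arguments extracted smallness from the hypothesis $\nrm{A[0]}_{\dot H^{1} \times L^{2}} \ll 1$, here one extracts smallness from the frequency gap $m$, while keeping track of how the surviving constants depend on the energy $E$ so they can be absorbed once $m \gg_{E} 1$. The foundational step is a package of symbol estimates for the phases $\psi_{\ell, \pm}$ and $\psi_{\pm} = \sum_{\ell < -m} \psi_{\ell, \pm}$: fixed-time $L^{\infty}_{t,x}$ bounds of size $O_{E}(1)$; the improved bounds $\nrm{\nb_{x} \psi_{\pm}} \aleq_{E} 2^{-m}$ and $\nrm{\rd_{t} \psi_{\pm}} \aleq_{E} 2^{-m}$, the latter read off from the transport identity \eqref{eq:oLPsi} together with $\sum_{\ell < -m} 2^{\ell} \aleq 2^{-m}$; square-summed bounds over the dyadic pieces $\psi_{\ell, \pm}$; and angular regularity reflecting the $\oPi_{> 2^{\sgm \ell}}$ cutoff. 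These are exactly the inputs required by the pseudodifferential and null-frame machinery of \cite{Krieger:2012vj}, and none of them needs $A$ to be small.

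Granting the symbol bounds, the operator bounds \eqref{para-bd}, the dispersive estimate \eqref{para-disp}, and the derivative bound \eqref{eq:mEst:DtPtx} follow as in \cite{Krieger:2012vj, MR2100060}, with implicit constants now depending on $E$. For \eqref{para-bd}, $L^{2}$-boundedness is a Calder\'on--Vaillancourt estimate, while the $N_{0}$ and $N_{0}^{*}$ bounds are verified on each constituent norm ($L^{1} L^{2}$ and $X_{1}^{0, -\frac12}$ for $N_{0}$; $L^{\infty} L^{2}$ and $X_{\infty}^{0, \frac12}$ for $N_{0}^{*}$), with the delicate $PW$ and $NE$ pieces handled exactly as there. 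Estimate \eqref{para-disp} expresses that conjugation preserves the full $S_{0}$ structure. For \eqref{eq:mEst:DtPtx} one writes $\nb e^{\pm i \psi_{\pm}}_{<0} = \pm i (\nb \psi_{\pm}) e^{\pm i \psi_{\pm}}_{<0}$, up to the harmless $S_{<0}$ truncation, and combines the $2^{-m}$ gain on $\nb \psi_{\pm}$ with the boundedness of $e^{\pm i \psi_{\pm}}_{<0}$ on $Z$.

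The approximate unitarity statements \eqref{eq:mEst:L2} and \eqref{eq:mEst:N} are symbol-calculus identities: composing the left quantization $e^{-i\psi_{\pm}}_{<0}(t,x,D)$ with the right quantization $e^{i\psi_{\pm}}_{<0}(t, D, y)$ (respectively $e^{i\psi_{\pm}}_{<0}(D, y, s)$) produces $I$ plus a remainder whose symbol involves $\nb_{x} \psi_{\pm}$ and the high-frequency tail discarded by $S_{<0}$. The factor $\nb_{x} \psi_{\pm}$ supplies the gain $2^{-m}$; summing the remainder over the dyadic and angular pieces of $\psi_{\pm}$ costs $2^{\dlt_{0} m}$ in the $L^{2}_{x}$ case, yielding $2^{-(1 - \dlt_{0}) m}$, and more in the $N_{0}$ case, where the null-frame components of the norm force the weaker gain $2^{-\dlt_{1} m}$. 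The $S_{<0}$-tail contribution is $O(2^{-Nm})$ for every $N$, since $\psi_{\pm}$ is frequency-localized to $\{ |\zeta| \aleq 2^{-m}\}$.

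The core of the theorem is the parametrix error bound \eqref{eq:mEst:ptxError}. One computes the conjugation identity, which in schematic quantized form reads
\[
e^{-i\psi_{\pm}}_{<0}(t,x,D)\, \Box - \Box_{A}^{p,m}\, e^{-i\psi_{\pm}}_{<0}(t,x,D) = 2i \bb( \rd^{\mu} \psi_{\pm} - A^{\mu} \bb) \rd_{\mu}\, e^{-i\psi_{\pm}}_{<0}(t,x,D) + (\text{lower order}),
\]
with the convention $A^{0} = 0$, and then uses $\Box = - \oL_{+} \oL_{-} + \opLap$. On a $Q_{\pm}$-wave one has $\rd_{t} \approx \mp |D|$, hence $\rd^{\mu} \psi_{\pm} \rd_{\mu} \approx - (\oL_{\mp} \psi_{\pm})(\omg \cdot \nb_{x}) + \snabla_{\omg} \psi_{\pm} \cdot \snabla_{\omg}$, and by \eqref{eq:oLPsi} the first term cancels the radial part of $A^{j} \rd_{j}$ up to the excluded small-angle piece $\sum_{\ell < -m} \oPi_{\leq 2^{\sgm \ell}}(\omg \cdot P_{\ell} A)$. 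It remains to estimate: (i) the transverse terms $\snabla_{\omg} \psi_{\pm} \cdot \snabla_{\omg}$ and $A^{\perp} \cdot \snabla_{\omg}$, gained via the angular localization $\opLap^{-1} \oPi_{> 2^{\sgm \ell}}$ once $\sgm$ is fixed; (ii) the small-angle remainder, which is perturbative by Cauchy--Schwarz over the $O(2^{-4 \sgm \ell})$ excluded caps against the energy of $A$; (iii) the modulation error from replacing $\rd_{t}$ by $\mp |D|$, controlled on $S_{0}^{\sharp}$ through its $\nrm{\Box u}_{N_{0}}$ component; and (iv) the lower-order terms $(\Box \psi_{\pm}) u$, $(\rd^{\mu} \psi_{\pm} \rd_{\mu} \psi_{\pm}) u$, $(\rd_{j} \psi_{\pm}) u$ and the $S_{<0}$ symbol-smoothing errors, each carrying a factor of $\nb_{x} \psi_{\pm}$ or an $S_{<0}$-tail and hence a power of $2^{-m}$. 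Collecting everything gives $\aleq_{E} 2^{-\dlt_{2} m}$. I expect the main obstacle to be precisely this last estimate, and within it steps (i)--(ii) together with the bookkeeping in (iii): one must revisit the corresponding bounds in \cite{Krieger:2012vj}, which there gain smallness from $\nrm{A}_{E} \ll 1$, and check that the same manipulations instead produce a genuine negative power of $2^{m}$ after fixing $\sgm$ appropriately, with an exponent independent of $E$ and all $E$-dependent constants finite.
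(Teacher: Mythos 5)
Your high-level strategy is the correct one, and it is exactly the paper's: run the Krieger--Sterbenz--Tataru parametrix construction and replace every small-energy gain by a frequency-gap gain, with $E$-dependent constants absorbed once $m \gg_E 1$. Parts \eqref{para-bd}--\eqref{eq:mEst:DtPtx} are indeed handled as you say. But the rest of the sketch has several steps that would fail as stated.

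For \eqref{eq:mEst:L2}, the difference $e^{-i\psi_{\pm}}_{<0}(t,x,D)e^{i\psi_{\pm}}_{<0}(t,D,y) - I$ is not an expression whose symbol contains a clean factor of $\nb_{x}\psi_{\pm}$; its kernel is $\int \bigl(e^{i(\psi_{\pm}(t,x,\xi)-\psi_{\pm}(t,y,\xi))}-1\bigr)a(\xi)e^{i\xi\cdot(x-y)}\,d\xi$, and the phase difference is of size $|x-y|\,\nrm{\nb_{x}\psi_{\pm}}_{L^{\infty}}$, which is not small when $|x-y|$ is large. The paper's Lemma~\ref{lem:prelimL2} therefore performs a genuine non-stationary-phase kernel estimate, splitting into $|x-y|\aleq 2^{-\dlt\ell}$ (where the phase-difference size bound applies) and $|x-y|\ageq 2^{-\dlt\ell}$ (where one integrates by parts in $\xi$ using the limited angular regularity \eqref{eq:symbBnd:xiReg}); your sketch cannot close without this dichotomy. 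Your reason for the $S_{<0}$-tail decay is also incorrect: $\psi_{\pm}$ is frequency-localized but $e^{\pm i\psi_{\pm}}$ is not (exponentiation destroys frequency localization). The actual mechanism (Lemma~\ref{lem:k>l}) is that $\Box\psi_{\pm}=0$, so $e^{-i\psi}_{k} = 2^{-2k}\tld{S}_{k}(\rd_{t}^{2}+\lap)e^{-i\psi} = 2^{-2k}\tld{S}_{k}(-2i\lap\psi-|\nb\psi|^{2})S'_{k}e^{-i\psi}$, iterated. Likewise, for \eqref{eq:mEst:N} the sketch does not engage with the paper's actual decomposition (Proposition~\ref{prop:Xsb}), which expands $e^{-i\Psi}-e^{-i\Psi_{<j-\dlt m}}$ into linear, quadratic, cubic contributions of the pieces $\Psi_{\ell}$, peels off the low-frequency phase separately, and estimates each with a specific combination of angular separation, decomposability, and $L^{2}$-unitarity; this is where most of the work in the proof lives and where the exponent degrades to $\dlt_{1}$, and "null-frame components force a weaker gain" is not a substitute for it.

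For \eqref{eq:mEst:ptxError}, your conjugation computation introduces residual transverse terms $\snabla_{\omg}\psi_{\pm}\cdot\snabla_{\omg}$ and $A^{\perp}\cdot\snabla_{\omg}$. These vanish at the symbol level: $\rd_{j}$ acting on a frequency-$\aeq 1$ input is the multiplier $i\xi_{j}=i|\xi|\omg_{j}$, which is purely radial, so $A^{\ell}\rd_{\ell}P_{0}$ has symbol $(\omg\cdot A)|\xi|$ with no transverse part. The paper instead adds and subtracts $2(\pm\rd_{t}\psi_{\pm}-\omg\cdot A_{<-m})e^{-i\psi_{\pm}})_{<0}|D|$, and the leading residual $\Diff_{1}$ becomes exactly $\sum_{\ell<-m}\oPi_{\leq 2^{\sgm\ell}}(\omg\cdot A_{\ell})$. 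Its estimate is not "Cauchy--Schwarz over $O(2^{-4\sgm\ell})$ excluded caps" (the excluded region contains $O(1)$ caps of size $2^{\sgm\ell}$, not $O(2^{-4\sgm\ell})$ of them); the paper performs a Whitney decomposition in angle, separates close-angle from far-angle interactions, exploits the null structure to gain a factor $2^{\ell}$ in the close-angle case, splits by input modulation, and sums $2^{\frac{1}{2}\sgm\ell}$ over $\ell<-m$ to extract $2^{-\frac{1}{2}\sgm m}$. Finally, before you can combine the leading terms you must pass from $e^{-i\psi_{\pm}}(t,x,D)$ to the bilinear estimate via Lemma~\ref{lem:decomp-prod}, and that commutator error is controlled by \eqref{eq:decomp:ASummed} and \eqref{eq:symbBnd:decomp:dpsiSummed}; none of this is visible in your outline.
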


\begin{remark}
The small constants $\sgm, \dlt_{0}, \dlt_{1}, \dlt_{2}$ and $\dlt$ are now \emph{different} from those used in the earlier part of the paper. They are chosen in the following logical order: $\sgm$, $\dlt_{0}$, $\dlt_{1}$, $\dlt_{2}$. On the other hand, we reserve the symbol $\dlt > 0$ for a free small number, whose value may vary depending on the usage.
\end{remark}
This result  mirrors Theorem~3 in \cite{Krieger:2012vj}, with the key difference that the 
smallness is now due to the frequency gap parameter $m$.
 Assuming these bounds, the conclusion of  Theorem~\ref{t:app} follows in the 
same way as in \cite{Krieger:2012vj}. 

To prove the above theorem, we may directly borrow the estimates from
\cite{Krieger:2012vj} which do not involve smallness, namely
\eqref{para-bd} and \eqref{para-disp}. The implicit constant in these estimates 
will now depend on the energy $E$ of $A$. The remainder of the section 
is devoted to the proof of the new bounds  \eqref{eq:mEst:DtPtx}, \eqref{eq:mEst:L2},
\eqref{eq:mEst:N} and \eqref{eq:mEst:ptxError}. 

\subsection{Review of decomposability calculus}
Here we give a brief review of the notion of \emph{decomposable symbols} developed in \cite{MR2100060, Krieger:2005wh, Krieger:2012vj}, which provides 
a convenient way to keep track of mixed $L^{q}_{t} L^{r}_{x}$-type bounds. The particular version we use is from \cite{Krieger:2005wh, Krieger:2012vj}.

Given $\tht \in 2^{\bbZ_{-}}$, where $\bbZ_{-}$ denotes the set of nonpositive integers, consider a covering of the unit sphere $\bbS^{3} = \set{\xi: \abs{\xi} = 1} \subseteq \bbR^{4}$ by solid angular caps of the form $\set{\xi \in \bbS^{3} : \abs{\phi - \frac{\xi}{\abs{\xi}}} < \tht}$ with uniformly finite overlaps. We enumerate these caps by the centers $\phi \in \bbS^{3}$, and denote by $\set{b^{\phi}_{\tht}(\xi)}_{\phi}$ the associated smooth partition of unity on $\bbS^{3}$. 

Consider a smooth symbol $c(t, x; \xi)$ which is homogeneous of degree zero in $\xi$, i.e., depends only on the angular variable $\omg := \frac{\xi}{\abs{\xi}}$. We say that $c(t, x; \xi)$ is \emph{decomposable in $L^{q}_{t} L^{r}_{x}$} (where $1 \leq q, r \leq \infty$) if there exists an expansion $c = \sum_{\tht \in 2^{\bbZ_{-}}} c^{(\tht)}$ such that
\begin{equation} \label{eq:DLqLr}
	\sum_{\tht \in 2^{\bbZ_{-}}} \nrm{c^{(\tht)}}_{D_{\tht} L^{q}_{t} L^{r}_{x}} < \infty,
\end{equation} 
where 
\begin{equation} \label{eq:DthtLqLr}
	\nrm{c^{(\tht)}}_{D_{\tht} L^{q}_{t} L^{r}_{x}} 
	:= \nrm{\bb( \sum_{k=0}^{40} \sum_{\phi} \sup_{\omg} \nrm{b^{\phi}_{\tht}(\omg) \tht^{k} \rd_{\xi}^{(k)} c^{(\tht)}}_{L^{r}_{x}}^{2} \bb)^{\frac{1}{2}}}_{L^{q}_{t}}.
\end{equation}
We denote the class of such symbols by $D L^{q}_{t} L^{r}$. For $c \in D L^{q}_{t} L^{r}_{x}$, we define the norm $\nrm{c}_{D L^{q}_{t} L^{r}_{x}}$ by taking the infimum of \eqref{eq:DLqLr} over all possible decompositions $c = \sum_{\tht \in 2^{\bbZ_{-}}} c^{(\tht)}$.

The class $D L^{q}_{t} L^{r}_{x}$ provides a convenient framework for establishing $L^{q}_{t} L^{r}_{x}$-type estimates for pseudo-differential operators arising from products of symbols. The following lemma collects the key properties that we need.
\begin{lemma} \label{lem:decomp-key}
The following statements concerning the class $D L^{q}_{t} L^{r}_{x}$ hold.
\begin{enumerate}
\item For any symbols $c \in D L^{q_{1}} L^{r_{1}}$ and $c \in D L^{q_{2}} L^{r_{2}}$, its product obeys the H\"older-type bound
\begin{equation*}
	\nrm{cd}_{D L^{q}_{t} L^{r}_{x}} \aleq \nrm{c}_{D L^{q_{1}}_{t} L^{r_{1}}_{x}} \nrm{d}_{D L^{q_{2}}_{t} L^{r_{2}}_{x}}
\end{equation*}
where $1 \leq q_{1}, q_{2}, q, r_{1}, r_{2}, r \leq \infty$, $\frac{1}{q_{1}} + \frac{1}{q_{2}} = \frac{1}{q}$ and $\frac{1}{r_{1}} + \frac{1}{r_{2}} = \frac{1}{r}$.
\item Let $a(t, x; \xi)$ be a smooth symbol whose left quantization $a(t, x; D)$ satisfies the fixed time bound
\begin{equation*}
	\sup_{t} \nrm{a(t, x; D)}_{L^{2}_{x} \to L^{2}_{x}} \leq A.
\end{equation*}
Then for any symbol $c \in D L^{q}_{t} L^{r}_{x}$, we have the space-time bounds
\begin{equation*}
	\nrm{(ac)(t, x; D)}_{L^{q_{1}}_{t} L^{2}_{x} \to L^{q_{2}}_{t} L^{r_{2}}_{x}} \aleq A \nrm{c}_{D L^{q}_{t} L^{r}_{x}}
\end{equation*}
where $1 \leq q_{1}, q_{2}, q, r_{2}, r \leq \infty$, $\frac{1}{q_{1}} + \frac{1}{q} = \frac{1}{q_{2}}$ and $\frac{1}{2} + \frac{1}{r} = \frac{1}{r_{2}}$. 
An analogous statement holds in the case of right-quantization.
\end{enumerate}
\end{lemma}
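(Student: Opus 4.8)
The plan is to treat both parts as standard manipulations in the decomposability calculus of \cite{MR2100060, Krieger:2005wh, Krieger:2012vj}. For part (1), I would fix decompositions $c = \sum_{\tht} c^{(\tht)}$ and $d = \sum_{\tht} d^{(\tht)}$ that realize the respective norms up to an $\eps$, and build a decomposition of $cd$ by recording the product of a piece at angular scale $\tht_{1}$ against a piece at the coarser scale $\tht_{2} \geq \tht_{1}$ as belonging to the \emph{finer} scale $\tht_{1}$; that is, $(cd)^{(\tht)} := \sum_{\min\set{\tht_{1}, \tht_{2}} = \tht} c^{(\tht_{1})} d^{(\tht_{2})}$. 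The point of this bookkeeping is that on a cap of the finer scale $\tht_{1}$ the coarse factor $d^{(\tht_{2})}$ meets only $O(1)$ caps of its own scale, and $\tht_{1}^{k} \rd_{\xi}^{(k)} d^{(\tht_{2})} = (\tht_{1}/\tht_{2})^{k}\, \tht_{2}^{k} \rd_{\xi}^{(k)} d^{(\tht_{2})}$ is controlled by the $D_{\tht_{2}}$ quantity with no loss since $\tht_{1}/\tht_{2} \leq 1$. The estimate for a single pair $(\tht_{1}, \tht_{2})$ then follows from the Leibniz rule (only $\leq 40$ derivatives are ever needed), H\"older $\nrm{fg}_{L^{r}_{x}} \leq \nrm{f}_{L^{r_{1}}_{x}} \nrm{g}_{L^{r_{2}}_{x}}$ in $x$, and re-summing the $\ell^{2}$ over caps by grouping the finer caps under the $O(1)$ coarser caps that contain them: for each fixed coarse cap the inner sum of the $c$-contributions is bounded by the full fixed-time $D_{\tht_{1}}$ seminorm (which is independent of the coarse cap), and the residual sum over coarse caps returns the fixed-time $D_{\tht_{2}}$ seminorm of $d^{(\tht_{2})}$; the uniformly finite overlap of caps at each dyadic scale is what forbids a volume loss. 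One further H\"older in $t$ with $\frac{1}{q_{1}} + \frac{1}{q_{2}} = \frac{1}{q}$ produces $\nrm{c^{(\tht_{1})} d^{(\tht_{2})}}_{D_{\tht_{1}} L^{q} L^{r}} \aleq \nrm{c^{(\tht_{1})}}_{D_{\tht_{1}} L^{q_{1}} L^{r_{1}}} \nrm{d^{(\tht_{2})}}_{D_{\tht_{2}} L^{q_{2}} L^{r_{2}}}$, and summing over $\tht_{2} \geq \tht_{1}$ then over $\tht_{1}$ (plus the symmetric term) gives the claim after taking infima over the decompositions.

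For part (2), I would decompose $c = \sum_{\tht} c^{(\tht)}$ and split each $c^{(\tht)}$ into angular caps $c^{(\tht)} b^{\phi}_{\tht}$. On each cap I would expand the fixed-time symbol $c^{(\tht)}(t,x;\xi) b^{\phi}_{\tht}(\xi) = \sum_{j} c_{j,\phi}(t,x)\, e_{j,\phi}(\xi)$ into standard angular-sector Fourier multipliers $e_{j,\phi}$, with coefficients obeying $\nrm{c_{j,\phi}(t,\cdot)}_{L^{r}_{x}} \aleq 2^{-Nj}\, [c^{(\tht)}]_{\phi}(t)$, where $[c^{(\tht)}]_{\phi}(t) := \sup_{\omg} \sum_{k=0}^{40} \nrm{b^{\phi}_{\tht}(\omg)\, \tht^{k} \rd_{\xi}^{(k)} c^{(\tht)}(t,\cdot,\omg)}_{L^{r}_{x}}$ is exactly the $\phi$-summand of the fixed-time $D_{\tht}$ seminorm --- this rapid convergence is precisely what the $40$ symbol derivatives buy. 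Since $e_{j,\phi}$ depends only on $\xi$, the $x$-coefficient separates off: $(a\, c_{j,\phi} e_{j,\phi})(t,x;D) f = c_{j,\phi}(t,\cdot)\cdot a(t,x;D)\, e_{j,\phi}(D) f$, so the only hypothesis on $a$, namely $\nrm{a(t,x;D)}_{L^{2}_{x} \to L^{2}_{x}} \leq A$, together with the uniform $L^{2}_{x}$ bound for $e_{j,\phi}(D)$ and the localization $e_{j,\phi}(D) = e_{j,\phi}(D) P^{\phi}_{\tht}$ (a fattened cap projection), yields $\nrm{a(t,x;D)\, e_{j,\phi}(D) f(t)}_{L^{2}_{x}} \aleq A\, \nrm{P^{\phi}_{\tht} f(t)}_{L^{2}_{x}}$.

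Reassembling the caps, I would use the pointwise Cauchy--Schwarz inequality $\abs{\sum_{\phi} c_{j,\phi} G_{j,\phi}} \leq (\sum_{\phi} \abs{c_{j,\phi}}^{2})^{1/2}(\sum_{\phi} \abs{G_{j,\phi}}^{2})^{1/2}$, then H\"older in $x$ with $\frac{1}{2} + \frac{1}{r} = \frac{1}{r_{2}}$, Minkowski $\nrm{(\sum_{\phi} \abs{c_{j,\phi}}^{2})^{1/2}}_{L^{r}_{x}} \leq (\sum_{\phi} \nrm{c_{j,\phi}}_{L^{r}_{x}}^{2})^{1/2}$ (legitimate since $r \geq 2$, which is forced by $r_{2} \geq 1$), and the $L^{2}_{x}$ almost-orthogonality $\sum_{\phi} \nrm{P^{\phi}_{\tht} f(t)}_{L^{2}_{x}}^{2} \aleq \nrm{f(t)}_{L^{2}_{x}}^{2}$, to arrive at the fixed-time bound $\nrm{(a\, c^{(\tht)})(t,x;D) f(t)}_{L^{r_{2}}_{x}} \aleq A\, \nrm{c^{(\tht)}(t)}_{D_{\tht}}\, \nrm{f(t)}_{L^{2}_{x}}$ (the $j$-sum converging thanks to the $2^{-Nj}$ factor). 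Taking the $L^{q_{2}}_{t}$ norm, applying H\"older in $t$ with $\frac{1}{q_{1}} + \frac{1}{q} = \frac{1}{q_{2}}$, and summing over $\tht$ then yields the stated mapping property; the right-quantization version is verbatim the same argument.

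The hard part will be part (2): arranging the cap-by-cap symbol expansion so that the $x$-dependent coefficients detach cleanly from the honest Fourier multipliers $e_{j,\phi}(D)$ --- which is exactly what makes the bare $L^{2}_{x} \to L^{2}_{x}$ hypothesis on $a$ usable --- and then recombining the caps through Cauchy--Schwarz, Minkowski and the $L^{2}$ almost-orthogonality of the angular projections while preserving the $2^{-Nj}$ gain needed to sum the expansion and without losing the dyadic $\tht$-summation. Part (1) is comparatively mechanical once one commits to recording each product at the finer of the two angular scales, the only real point being the finite-overlap counting. Both arguments are essentially present in \cite{Krieger:2005wh, Krieger:2012vj}, so in the write-up it would suffice to indicate these two points and refer to those sources.
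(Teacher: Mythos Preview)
Your proposal is correct and matches the paper's treatment: the paper does not prove this lemma at all but simply cites \cite[Chapter~10]{Krieger:2005wh} and \cite[Lemma~7.1]{Krieger:2012vj}, and your sketch is a faithful account of the argument found there (the finer-scale bookkeeping for products in part~(1), and the cap-wise Fourier expansion separating the $x$-coefficient from a genuine multiplier in part~(2)). Your closing remark that it suffices to point to those references is exactly what the paper does.
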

For a proof, see \cite[Chapter 10]{Krieger:2005wh} and \cite[Lemma~7.1]{Krieger:2012vj}.

We borrow another lemma from \cite{Krieger:2012vj}, which relates the product of quantized operators with the product of the corresponding symbols within the framework of decomposable symbols.
\begin{lemma} \label{lem:decomp-prod}
Let $a(t, x; \xi)$, $b(t, x; \xi)$ be smooth symbols, where we assume furthermore that $a$ is homogeneous of degree zero in $\xi$. Then we have
\begin{equation*}
\begin{aligned}
& \hskip-2em
	\nrm{a(t, x; D) b(t, x; D) - (ab)(t, x; D)}_{L^{q_{0}}_{t} L^{2}_{x} \to L^{q}_{t} L^{2}_{x}} \\
	\aleq & \nrm{(\rd_{\xi} a)(t, x; D)}_{D L^{q_{2}}_{t} L^{\infty}_{x}} \nrm{(\rd_{x} b)(t, x; D)}_{L^{q_{0}}_{t} L^{2}_{x} \to L^{q_{1}}_{t} L^{2}_{x}}
\end{aligned}\end{equation*}
where $\frac{1}{q} = \frac{1}{q_{1}} + \frac{1}{q_{2}}$.
An analogous statement holds in the case of right-quantization.
\end{lemma}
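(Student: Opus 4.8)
The plan is to reduce the statement to Lemma~\ref{lem:decomp-key} through the exact composition formula for pseudodifferential operators. Writing $\wht{b}(t, \zeta; \eta)$ for the Fourier transform of $b(t, x; \eta)$ in the spatial variable $x$, a direct computation of the two left quantizations gives, up to a fixed multiplicative constant,
\begin{align*}
& \big( a(t,x;D) b(t,x;D) - (ab)(t,x;D) \big) u(x) \\
& \quad = \int \!\! \int e^{i x \cdot (\eta + \zeta)} \big( a(t,x;\eta + \zeta) - a(t,x;\eta) \big) \, \wht{b}(t,\zeta;\eta) \, \wht{u}(\eta) \, d\zeta \, d\eta .
\end{align*}
Inserting the fundamental theorem of calculus $a(t,x;\eta + \zeta) - a(t,x;\eta) = \int_{0}^{1} \zeta \cdot (\rd_{\xi} a)(t,x;\eta + s\zeta) \, ds$ and using that (again up to a constant) $\zeta_{j} \wht{b}(t,\zeta;\eta)$ is the partial Fourier transform of $\rd_{x_{j}} b(t, x; \eta)$, the error operator is realized as $\int_{0}^{1} R_{s} \, ds$, where $R_{s}$ is, up to harmless factors and the reindexing $b \rightsquigarrow \rd_{x} b$, the composition of $\rd_{\xi} a$ quantized at the shifted frequency $\eta \mapsto \eta + s\zeta$ with the operator $(\rd_{x} b)(t,x;D)$.

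The next step is to observe that, uniformly in $s \in [0, 1]$, the $s$-shifted symbol $(\rd_{\xi} a)(t, x; \cdot + s\zeta)$ has decomposable norm comparable to that of $\rd_{\xi} a$; this is precisely where the homogeneity of $a$ enters. Since $\rd_{\xi} a$ is homogeneous of degree $-1$ in $\xi$, its angular decomposition is scale invariant and, the spatial dimension being $4$, it is locally integrable near the origin, and the finitely many derivative bounds $\tht^{k} \rd_{\xi}^{(k)}$, $0 \le k \le 40$, built into the $D_{\tht} L^{q}_{t} L^{r}_{x}$ norm allow one to absorb the frequency translation $\eta \mapsto \eta + s\zeta$ by a finite Taylor expansion when $\abs{\zeta}$ is small compared with the input frequency, while in the complementary range one spends the surplus $\zeta$-weights on $\rd_{x} b$ rather than on undifferentiated $b$. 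Granting this, Lemma~\ref{lem:decomp-key}(2) with $a \equiv 1$, $r = \infty$, $r_{2} = 2$ shows that a symbol decomposable in $D L^{q_{2}}_{t} L^{\infty}_{x}$ quantizes to a bounded operator $L^{q_{1}}_{t} L^{2}_{x} \to L^{q}_{t} L^{2}_{x}$ with $\frac{1}{q} = \frac{1}{q_{1}} + \frac{1}{q_{2}}$, and precomposing with $(\rd_{x} b)(t,x;D) : L^{q_{0}}_{t} L^{2}_{x} \to L^{q_{1}}_{t} L^{2}_{x}$ gives the estimate, uniform in $s$,
\begin{equation*}
\nrm{R_{s}}_{L^{q_{0}}_{t} L^{2}_{x} \to L^{q}_{t} L^{2}_{x}} \aleq \nrm{(\rd_{\xi} a)(t,x;D)}_{D L^{q_{2}}_{t} L^{\infty}_{x}} \, \nrm{(\rd_{x} b)(t,x;D)}_{L^{q_{0}}_{t} L^{2}_{x} \to L^{q_{1}}_{t} L^{2}_{x}} .
\end{equation*}
Integrating in $s \in [0, 1]$ gives the claimed bound; the right-quantization statement follows by passing to adjoints.

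The main obstacle is the second step: tracking the frequency shift $\eta \mapsto \eta + s\zeta$ inside $\rd_{\xi} a$ without destroying the decomposable structure. Concretely this forces a Littlewood-Paley decomposition of $b$ (equivalently, of the $\zeta$ variable) against the frequency of $u$, with a mildly delicate regime in which the input frequency is lower than the $b$-frequency; there the point is that all extra $\zeta$-factors must be attached to $\rd_{x} b$, whose operator norm is the only quantity we control, with none left over as second or higher order derivatives of $b$. Everything else is routine symbol calculus bookkeeping. This lemma, with essentially this proof, is contained in \cite{Krieger:2012vj}.
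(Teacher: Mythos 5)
The paper offers no proof of this lemma—it simply cites \cite[Lemma~7.2]{Krieger:2012vj}—so there is nothing internal to compare your sketch against, and I evaluate it on its own. Your exact product formula and the fundamental theorem of calculus step are both correct, and they do place the error in the form $\int_0^1 R_s \, ds$ with integrand built from $(\partial_\xi a)(t,x;\eta+s\zeta)$ and $\widehat{\rd_x b}(t,\zeta;\eta)$. The subsequent invocation of Lemma~\ref{lem:decomp-key}(2) with the stated exponents is also arithmetically consistent with the target mixed-norm bound.

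The gap is in the claim that $R_s$ is ``the composition of $\rd_\xi a$ quantized at the shifted frequency with the operator $(\rd_x b)(t,x;D)$.'' For $0 < s < 1$ this is simply not a composition of two pseudodifferential operators: the shift $s\zeta$ in the argument of $\rd_\xi a$ depends on the intermediate frequency $\zeta$ of the $b$-factor, so there is no single symbol $c_s$ with $c_s(t,x;\eta+\zeta) = (\rd_\xi a)(t,x;\eta+s\zeta)$ for all $\eta,\zeta$ unless $s=1$ (and at $s=0$ one instead gets the \emph{product} symbol $(\rd_\xi a)(\rd_x b)$). Consequently you cannot pre-compose a single bounded operator with $(\rd_x b)(t,x;D)$ and invoke the hypothesis on the operator norm of the latter; $R_s$ is a genuinely bilinear object that needs to be decomposed further. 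Two further issues compound this. First, the statement that ``$(\rd_\xi a)(t,x;\cdot + s\zeta)$ has decomposable norm comparable to that of $\rd_\xi a$, uniformly in $s$'' is not well-posed: the class $D L^q_t L^r_x$ is defined for symbols depending only on the angular variable, and the shifted symbol $\xi \mapsto (\rd_\xi a)(t,x;\xi+s\zeta)$ is not homogeneous at all once $\zeta\ne 0$, so its decomposable norm is not defined without additional interpretation; moreover the bound would have to be uniform in $\zeta$ \emph{and} integrable over $\zeta$ against $\widehat{\rd_x b}$, which is considerably more than a fixed-shift statement. Second, your proposed resolution of the $|\zeta| \gtrsim |\eta|$ regime---``spend the surplus $\zeta$-weights on $\rd_x b$''---cannot work under the stated hypotheses, since they give control only of $(\rd_x b)(t,x;D)$ as an operator and nothing about $\rd_x^2 b$ or higher; there is no operator norm on which to spend those weights. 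The small-$\zeta$ Taylor expansion against the built-in $\tht^k\rd_\xi^{(k)}$ bounds is a plausible ingredient, but the hard work is precisely to assemble the $\zeta$-localized pieces back into the claimed operator bound, and as written that mechanism is missing.
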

For a proof, see \cite[Lemma~7.2]{Krieger:2012vj}.

\subsection{Symbol bounds for $\psi$}

We first consider the size and regularity of the dyadic pieces of $\psi_{k, \pm}$, namely 
\[
  \psi^{(\tht)}_{k, \pm} (t,x, \xi):= (\oPi_{\tht} \psi_{k, \pm})(t,x, \xi). 
\]
Given the symbol dependence on the angle, it is useful to keep 
in mind that the size of  $ \psi^{(\tht)}_{k, \pm} (t,x, \xi)$ is roughly given by
\[
\psi^{(\tht)}_{k, \pm} \approx 2^{-k} \tht^{-2} \oPi_{\tht} (\omg \cdot A_{k}).
\]

We borrow the following decomposability estimates for the symbol $\psi^{(\tht)}_{k, \pm}$ from \cite{Krieger:2012vj}:
\begin{lemma}[Decomposability estimates {\cite[Section~7.3]{Krieger:2012vj}}] 
For $\frac{2}{q} + \frac{3}{r} \leq \frac{3}{2}$, we have
\begin{align} 
	\nrm{(\psi^{(\tht)}_{k, \pm}, 2^{-k} \nb \psi^{(\tht)}_{k, \pm})}_{D L^{q}_{t} L^{r}_{x}}
	\aleq 2^{-(\frac{1}{q} + \frac{4}{r}) k} \tht^{\frac{1}{2} - \frac{2}{q} - \frac{3}{r}} E, \label{eq:symbBnd:decomp} 
\end{align}

Moreover, for any $\bt \geq 0$, we have
\begin{equation} 
	\nrm{\rd_{\xi}^{\bt} \oPi_{\tht} (\omg \cdot A_{k})}_{D L^{q}_{t} L^{r}_{x}}
	\aleq 2^{(1-\frac{1}{q} - \frac{4}{r}) k} \tht^{\frac{5}{2} - \frac{2}{q} - \frac{3}{r} - \bt} E. \label{eq:symbBnd:decomp:A} 
\end{equation}
\end{lemma}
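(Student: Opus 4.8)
The plan is to follow \cite[Section~7.3]{Krieger:2012vj} essentially verbatim; the only difference from the small data setting treated there is that the energy of $A$, which was a small parameter in \cite{Krieger:2012vj}, is here the fixed constant $E$, and since every estimate involved is linear in $A$ this substitution is harmless. First I would reduce \eqref{eq:symbBnd:decomp} to \eqref{eq:symbBnd:decomp:A}. Writing the dyadic piece as $\psi^{(\tht)}_{k, \pm} = \pm \oL_{\pm} \opLap^{-1} \oPi_{\tht} \oPi_{> 2^{\sgm k}} (\omg \cdot P_{k} A)$, one notes that on the range $\tht > 2^{\sgm k}$ (the only range occurring in the definition of $\psi_{k, \pm}$) the cutoff $\oPi_{> 2^{\sgm k}}$ acts as the identity, that on the angular sector selected by $\oPi_{\tht}$ the elliptic operator $\opLap$ has symbol of size $\approx 2^{2k} \tht^{2}$ so that $\opLap^{-1}$ contributes the scalar factor $2^{-2k} \tht^{-2}$ together with lower-order commutator terms absorbed by the decomposability calculus of Lemma~\ref{lem:decomp-key}, and that $\oL_{\pm}$ as well as the full space-time gradient $\nb$ each contribute at most $2^{k}$. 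Multiplying the right-hand side of \eqref{eq:symbBnd:decomp:A} with $\bt = 0$ by $2^{-k} \tht^{-2}$ then reproduces precisely the right-hand side of \eqref{eq:symbBnd:decomp}.

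For \eqref{eq:symbBnd:decomp:A} itself, I would unwind the definition of the decomposable norm in \eqref{eq:DLqLr}--\eqref{eq:DthtLqLr}, using the trivial single-scale decomposition $c = c^{(\tht)}$ since $\oPi_{\tht}(\omg \cdot A_{k})$ is already localized at angular scale $\tht$. Since the symbol depends on $\xi$ only through the angular variable and is supported in a $\tht$-cap, each $\xi$-derivative costs $\tht^{-1}$: the explicit $\rd_{\xi}^{\bt}$ produces the factor $\tht^{-\bt}$, and the auxiliary derivatives of bounded order that enter the norm \eqref{eq:DthtLqLr} are balanced by the accompanying powers of $\tht$ there. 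It then remains to bound, for each cap $\phi$ and each time $t$, the quantity $\nrm{b^{\phi}_{\tht}(\omg) \oPi_{\tht}(\omg \cdot A_{k})(t)}_{L^{r}_{x}}$, to take the $\ell^{2}$ sum over the $\approx \tht^{-3}$ caps, and finally the $L^{q}_{t}$ norm. Bernstein's inequality passes from $L^{r}_{x}$ to $L^{2}_{x}$ at the cost of $(2^{4k} \tht^{3})^{\frac12 - \frac1r}$, the measure of the relevant angular sector of the annulus $\set{\abs{\xi} \approx 2^{k}}$; the $\ell^{2}$ sum over caps is controlled by the $L^{2}$ almost-orthogonality of the pieces $b^{\phi}_{\tht}(D) A_{k}(t)$; and for $q = \infty$ one closes using energy conservation for the free wave, $\nrm{A_{k}(t)}_{L^{2}_{x}} \aleq 2^{-k} E$, uniformly in $t$. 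For finite $q$ one replaces this last step by the angular-sector Strichartz estimates for $A_{k}$, viewed as a free wave at frequency $2^{k}$ with $\nrm{A_{k}[0]}_{L^{2} \times \dot{H}^{-1}} \aleq 2^{-k} E$, square-summed over caps; the accumulated powers of $2^{k}$ and $\tht$ then match $2^{(1 - \frac1q - \frac4r) k} \tht^{\frac52 - \frac2q - \frac3r - \bt} E$.

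I expect the main obstacle to be the finite-$q$ case, where the precise power of $\tht$ in the square-summed (over $\tht$-caps) Strichartz estimate for $A_{k}$ must be matched exactly; because $\tht \gg 2^{\sgm k}$ and $A_{k}$ is a genuine free wave, the required angular-localized Strichartz bounds follow from the standard ones (encoded in the $S^{str}_{k}$ norm) together with $\ell^{2}$-orthogonality, but the bookkeeping of exponents needs care. The remaining details — the treatment of the lower-order terms generated by $\opLap^{-1}$, and the verification that the decomposable norms of the dyadic pieces obey the stated weights — are exactly those carried out in \cite[Section~7.3]{Krieger:2012vj}, which I would transcribe with $E$ in place of the small energy parameter.
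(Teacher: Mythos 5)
Your overall strategy — reduce \eqref{eq:symbBnd:decomp} to \eqref{eq:symbBnd:decomp:A} by counting the sizes of $\oL_\pm$ and $\opLap^{-1}$ on the $\tht$-sector, then prove \eqref{eq:symbBnd:decomp:A} by unwinding the decomposable norm via angular-sector Bernstein, $\ell^2$ cap-orthogonality, energy conservation and Strichartz — is the right one, and deferring to \cite[Section~7.3]{Krieger:2012vj} for the bookkeeping is exactly what the paper itself does, since it states the lemma with a citation and no proof.

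However, the central exponent check in your middle paragraph is off by a full factor of $\tht$, and this is not a transcription detail but a structural feature you have omitted. Test $q=\infty$, $r=2$, $\bt=0$: your chain of estimates gives $\sup_t \bigl(\sum_\phi \|b^\phi_\tht(\omg)\oPi_\tht(\omg\cdot A_k)(t)\|_{L^2_x}^2\bigr)^{1/2} \aleq \|A_k(t)\|_{L^2_x} \aleq 2^{-k}E$, whereas \eqref{eq:symbBnd:decomp:A} demands $2^{-k}\tht E$. The missing $\tht$ comes from the \emph{Coulomb gauge} condition $\rd^j A_j=0$, i.e.\ $\eta\cdot\widehat{A_k}(\eta)=0$: on the angular sector $\angle(\eta,\pm\omg)\approx\tht$ selected by $\oPi_\tht$, one has $\omg\cdot\widehat{A_k}(\eta) = \omg_\perp\cdot\widehat{A_k}(\eta)$ with $|\omg_\perp|=|\sin\angle(\eta,\omg)|\approx\tht$, so $\|\oPi_\tht(\omg\cdot A_k)\|_{L^2}\aleq\tht\|\oPi_\tht A_k\|_{L^2}$. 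Without this null-structure gain, the stated power $\tht^{5/2-2/q-3/r-\bt}$ cannot be reached; in particular the sum over $\tht\aleq 2^{\sgm k}$ used downstream (e.g.\ in \eqref{eq:decomp:ASummed}, where the exponent of $\tht$ after taking $\bt=1$, $q=2$, $r=\infty$ is $+\frac12$) would instead carry a negative power of $\tht$ and diverge, destroying the $2^{-cm}$ gains from the frequency gap. So the proposal is incomplete: you must explicitly invoke the divergence-free structure of $A$ to extract the extra factor of $\tht$ before the cap-wise Strichartz/Bernstein/$\ell^2$ bookkeeping you describe will close.
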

In particular, for $q > 4$,
\begin{equation} \label{eq:symbBnd:decomp:summed}
	\nrm{\rd_{x}^{\alp-1} \nb \psi_{k, \pm}}_{D L^{q}_{t} L^{\infty}_{x}} \aleq 2^{(\alp - \frac{1}{q}) k} E.
\end{equation}

\begin{remark} 
From the decomposability bound \eqref{eq:symbBnd:decomp:summed} with $q=\infty$, \eqref{eq:mEst:DtPtx} follows easily.
\end{remark}

We also collect here additional symbol bounds which are cruder but
useful for estimating oscillatory kernels:
 
\begin{lemma} [Symbol bound for $\psi^{(\tht)}_{\pm}$ {\cite[Section~7.3]{Krieger:2012vj}}]  
The following symbol bounds hold.
\begin{enumerate}
\item For any $\alp, \bt \geq 0$ and $2 \leq q \leq \infty$ we have
\begin{align}
	\nrm{\rd^{\alp-1}_{x} \nb \rd^{\bt}_{\xi} \psi^{(\tht)}_{k, \pm}}_{L^{q}_{t} L^{\infty}_{x, \xi}} 
	& \aleq 2^{(-\frac{1}{q} + \alp) k} \tht^{\frac{1}{2} - \frac{2}{q} - \bt} E. \label{eq:symbBnd:simple} 
\end{align}
When $\alp = 0$, we interpret the expression on the left hand side as $\rd^{\bt}_{\xi} \psi^{(\tht)}_{k, \pm}$.

\item For $q > 4$ and $1 \leq \bt \leq \sgm^{-1}(1-\frac{1}{q})$, we have
\begin{equation} \label{eq:symbBnd:xiReg:Lq}
	\nrm{\rd^{\alp}_{x}\rd^{\bt}_{\xi} \psi_{k, \pm}}_{L^{q}_{t} L^{\infty}_{x, \xi}} 
	\aleq 2^{(-\frac{1}{q} + \alp) k} 2^{\sgm(\frac{1}{2} - \frac{2}{q} - \bt) k} E. 
\end{equation}

\item For $1 \leq \bt \leq \sgm^{-1}$, we have
\begin{equation}
	\abs{\rd^{\bt}_{\xi} (\psi_{\pm}(t,x, \xi) - \psi_{\pm}(t, y, \xi))}
	 \aleq \brk{x-y}^{\sgm(\bt-\frac{1}{2})} E . 	 \label{eq:symbBnd:xiReg}
\end{equation}

\end{enumerate}
\end{lemma}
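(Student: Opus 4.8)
The plan is to reproduce the argument of \cite[Section~7.3]{Krieger:2012vj}, the only change being that the energy $E$ of $A$ need no longer be small: since $\psi_{\pm}$ depends \emph{linearly} on $A$ through \eqref{eq:defn4psi}, every quantity below is linear in $\nrm{\nb A}_{L^{2}}$, so $E$ simply appears on the right-hand sides in place of the smallness factor used there. The two structural facts driving everything are: on the frequency set $\set{\abs{\eta} \aeq 2^{k}, \ \angle(\eta, \pm\omg) \aeq \tht}$ the symbol of $\opLap$ equals $-\abs{\eta}^{2}\sin^{2}\angle(\eta,\omg) \aeq 2^{2k}\tht^{2}$, so $\opLap^{-1}$ gains $2^{-2k}\tht^{-2}$; and on that same set the symbol of $\oL_{\pm}$ is $\aleq 2^{k}$, while the Coulomb condition $\rd^{j}A_{j}=0$ supplies an extra factor $\tht$ inside $\oPi_{\tht}(\omg\cdot A_{k})$, since $\omg\cdot\wht{A}(\eta) = (\omg - \abs{\eta}^{-1}\eta)\cdot\wht{A}(\eta)$ and $\abs{\omg - \abs{\eta}^{-1}\eta} = \sin\angle(\eta,\omg)\aeq\tht$ there. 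Together these reproduce the heuristic $\psi^{(\tht)}_{k,\pm} \aeq 2^{-k}\tht^{-2}\oPi_{\tht}(\omg\cdot A_{k})$ already recorded above, with the accounting that every factor $\rd_{x}$ or $\rd_{t}$ landing on $\psi^{(\tht)}_{k,\pm}$ costs $2^{k}$ and every $\rd_{\xi}$ costs $\tht^{-1}$ (the symbol is homogeneous of degree zero in $\xi$ and angularly regular on scale $\tht$).

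First I would prove the fixed-scale bound \eqref{eq:symbBnd:simple}. For $q=\infty$ this reduces, via the heuristic, to an $L^{\infty}_{t}L^{\infty}_{x}$ bound for $2^{-k}\tht^{-2}\oPi_{\tht}(\omg\cdot A_{k})$: one estimates $L^{\infty}_{x}$ from the energy bound $\nrm{\oPi_{\tht}A_{k}}_{L^{\infty}_{t}L^{2}_{x}}\aleq 2^{-k}E$ by Bernstein's inequality on a frequency block of measure $\aeq 2^{4k}\tht^{3}$, at cost $(2^{4k}\tht^{3})^{1/2}=2^{2k}\tht^{3/2}$, which together with the Coulomb factor $\tht$ gives exactly $\tht^{1/2}E$ --- the $\alp=\bt=0$ case. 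The intermediate exponents $2\le q<\infty$ then follow by interpolation with the angular-sector Strichartz bounds for the free wave $A$; this is precisely the content of the decomposability estimates \eqref{eq:symbBnd:decomp}--\eqref{eq:symbBnd:decomp:A}, \eqref{eq:symbBnd:decomp:summed}, which may be quoted directly, and the $\rd_{x}^{\alp-1}\nb$ and $\rd_{\xi}^{\bt}$ derivatives contribute the advertised factors $2^{\alp k}$ and $\tht^{-\bt}$.

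The summed bounds \eqref{eq:symbBnd:xiReg:Lq} and \eqref{eq:symbBnd:xiReg} would then follow by summing \eqref{eq:symbBnd:simple} over the dyadic angular scales $2^{\sgm k}\aleq\tht\aleq 1$ present in $\psi_{k,\pm}$ (the lower cutoff being enforced by $\oPi_{>2^{\sgm k}}$ in \eqref{eq:defn4psi}) and, for \eqref{eq:symbBnd:xiReg}, also over $k<-m$. After $\bt$ $\xi$-derivatives the angular summand carries the weight $\tht^{\frac12-\frac2q-\bt}$, whose exponent is negative as soon as $\bt>\frac12-\frac2q$ --- automatic for $\bt\ge1$, $q>4$ --- so the sum is dominated by $\tht=2^{\sgm k}$ and produces $2^{\sgm(\frac12-\frac2q-\bt)k}$, which is \eqref{eq:symbBnd:xiReg:Lq}. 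For \eqref{eq:symbBnd:xiReg} one first bounds $\psi_{\pm}(t,x,\cdot)-\psi_{\pm}(t,y,\cdot)$ by $\min\set{2,\, 2^{k}\abs{x-y}}$ times the fixed-scale symbol bound; the ensuing geometric sum in $k$ converges exactly because $\bt\le\sgm^{-1}$ (so $1+\sgm(\frac12-\bt)>0$), and its dominant term, occurring near $2^{k}\aeq\abs{x-y}^{-1}$, yields $\brk{x-y}^{\sgm(\bt-\frac12)}E$.

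The one genuinely delicate point --- and the step I expect to be the main obstacle --- is bookkeeping the $\rd_{\xi}$ derivatives against the \emph{two} nested angular cutoffs $\oPi_{\tht}$ and $\oPi_{>2^{\sgm k}}$ and the $\opLap^{-1}$ symbol: an $\rd_{\xi}$ that lands on the threshold cutoff $\oPi_{>2^{\sgm k}}$ costs the transition sharpness $2^{-\sgm k}$ rather than $\tht^{-1}$, and one must verify that even these worst-case contributions are controlled by the stated right-hand sides. This is exactly where the upper restrictions $\bt\le\sgm^{-1}(1-\frac1q)$ in \eqref{eq:symbBnd:xiReg:Lq} and $\bt\le\sgm^{-1}$ in \eqref{eq:symbBnd:xiReg} enter, and the verification proceeds by a case analysis on which cutoff or symbol each derivative hits, exactly as in \cite[Section~7.3]{Krieger:2012vj}.
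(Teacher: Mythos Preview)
Your proposal is correct and follows precisely the approach the paper intends: the lemma is stated with a citation to \cite[Section~7.3]{Krieger:2012vj} and no proof, and you have accurately reconstructed the argument from there---the heuristic $\psi^{(\tht)}_{k,\pm}\aeq 2^{-k}\tht^{-2}\oPi_{\tht}(\omg\cdot A_{k})$, Bernstein on angular sectors plus the Coulomb null-form gain for \eqref{eq:symbBnd:simple}, and the dyadic summations in $\tht$ and $k$ for \eqref{eq:symbBnd:xiReg:Lq} and \eqref{eq:symbBnd:xiReg}. Your identification of the one subtle bookkeeping issue (where $\rd_{\xi}$ lands among the nested cutoffs) and its resolution via the upper bounds on $\bt$ is also on target.
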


%

\subsection{Fixed-time $L^{2}$ bounds}
Here we prove \eqref{eq:mEst:L2}. For later use, we prove the
following stronger result:
\begin{proposition} \label{prop:mEst:L2} For sufficiently small $\sgm
  > 0$, there exists $\dlt_{0} > 0$ such that the following is true:
  For every $\ell, k \leq 0$ with $\ell + C \leq k$, we have
  \begin{equation} \label{eq:mEst:strongL2} \nrm{(e^{- i \psi_{<\ell,
          \pm}}_{< k} (t,x, D) e^{i \psi_{< \ell, \pm}}_{<k} (D, y, t)
      - 1)P_{0}}_{L^{2}_{x} \to L^{2}_{x}}
    \aleq_{E} 2^{(1-\dlt_{0})
      \ell} + 2^{10(\ell - k)}.
  \end{equation}
  where the constant is independent of $k, \ell$.
\end{proposition}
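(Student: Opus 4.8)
The plan is to prove the stronger estimate \eqref{eq:mEst:strongL2}, from which \eqref{eq:mEst:L2} follows by specializing to $\ell = -m$, $k = 0$. The two terms on the right-hand side correspond to two distinct errors: $2^{10(\ell-k)}$ is the error incurred by the symbol-smoothing operator $S_{<k}$, while $2^{(1-\dlt_{0})\ell}$ is the genuine composition error, in which one gains one pure $x$-derivative of the phase $\psi_{<\ell,\pm}$ (worth $2^{\ell}$ by \eqref{eq:symbBnd:decomp}, \eqref{eq:symbBnd:decomp:summed}) while losing a fixed power $2^{-C\sgm\ell}$ from the bounded number of angular $\xi$-derivatives that the pseudodifferential calculus produces.

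\textbf{Step 1: removing $S_{<k}$.} Write $e^{\mp i \psi_{<\ell,\pm}}_{<k} = e^{\mp i \psi_{<\ell,\pm}} - S_{\geq k} e^{\mp i \psi_{<\ell,\pm}}$ and expand the composition. Since $\psi_{<\ell,\pm}$ is real, so that $\abs{e^{\mp i \psi_{<\ell,\pm}}} \equiv 1$, and since every pure $x$-derivative of it gains a factor $2^{\ell}$ by summing \eqref{eq:symbBnd:decomp:summed} over $k < \ell$, the higher-order chain rule gives $\nrm{\partial_x^{N} e^{\mp i\psi_{<\ell,\pm}}}_{L^{\infty}_{t,x}} \lesssim_{E} 2^{N\ell}$ for all $N$, with no angular loss since no $\xi$-derivatives are involved; hence by Bernstein $\nrm{S_{\geq k} e^{\mp i \psi_{<\ell,\pm}}}_{L^{\infty}_{t,x}} \lesssim_{E} 2^{N(\ell - k)}$, and likewise for the decomposable norms in $D L^{\infty}_{t} L^{\infty}_{x}$. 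Quantizing and using the $L^{2}_{x}$-boundedness \eqref{para-bd} of the remaining factor, the difference between the smoothed and unsmoothed compositions is $\lesssim_{E} 2^{N(\ell-k)}$; choosing $N$ large and using $\ell + C \leq k$ absorbs it into $2^{10(\ell-k)}$. It then remains to estimate $\nrm{(e^{-i\psi_{<\ell,\pm}}(t,x,D)\, e^{i\psi_{<\ell,\pm}}(D,y,t) - 1)P_{0}}_{L^{2}_{x} \to L^{2}_{x}}$.

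\textbf{Step 2: the composition error.} Write $\psi = \psi_{<\ell,\pm}$; as $\psi$ is real, $e^{i\psi}(D,y,t)$ is the $L^{2}_{x}$-adjoint of $e^{-i\psi}(t,x,D)$. Using the pseudodifferential calculus for a left quantization followed by a right quantization, the full symbol of the composition expands with leading term $e^{-i\psi(t,x,\xi)} e^{i\psi(t,x,\xi)} = 1$, and \emph{every} lower-order term carries a spatial derivative falling on the phase of the right factor, i.e.\ a factor $\partial_{x}\psi$, multiplied by a bounded number of $\xi$-derivatives of $e^{\mp i\psi}$. Following the decomposability calculus of \cite{Krieger:2012vj} (Lemmas~\ref{lem:decomp-key} and \ref{lem:decomp-prod}), the operator norm of the difference is therefore controlled by products of decomposable symbol norms in which one factor carries $\nrm{\partial_{x} \psi}_{D L^{\infty}_{t} L^{\infty}_{x}} \lesssim_{E} 2^{\ell}$ (from \eqref{eq:symbBnd:decomp} summed over $k < \ell$) and the remaining factors are angular derivatives $\partial_{\xi}^{\beta} e^{\mp i\psi}$ with $\beta \lesssim 1$; summing the symbol bounds \eqref{eq:symbBnd:decomp} and \eqref{eq:symbBnd:xiReg:Lq} over the dyadic angles $2^{\sgm\ell} \leq \tht \leq 1$ admitted by the truncation $\oPi_{> 2^{\sgm\ell}}$, each such factor costs at most $2^{-C\sgm\ell}$. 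Altogether the difference is $\lesssim_{E} 2^{\ell} 2^{-C\sgm\ell} = 2^{(1-\dlt_{0})\ell}$ with $\dlt_{0} := C\sgm$, which lies in $(0,1)$ once $\sgm$ is small — this is the logical reason $\sgm$ is fixed before $\dlt_{0}$.

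\textbf{Main obstacle.} The delicate point is that $\psi_{<\ell,\pm}$ need \emph{not} be bounded in $L^{\infty}$ (the zeroth-order bound in \eqref{eq:symbBnd:decomp} summed over $k < \ell$ is not geometric), so one cannot Taylor-expand $e^{\mp i\psi_{<\ell,\pm}}$ in $\psi_{<\ell,\pm}$; instead every estimate must be carried out by expressing $\partial^{\alpha}_{t,x}\partial^{\beta}_{\xi} e^{\mp i \psi_{<\ell,\pm}}$ via the higher-order chain rule as a sum of products of mixed derivatives of $\psi_{<\ell,\pm}$ (controlled by \eqref{eq:symbBnd:decomp}--\eqref{eq:symbBnd:xiReg}) times the unit-modulus factor $e^{\mp i \psi_{<\ell,\pm}}$, summing dyadically in $k < \ell$ and $\tht \geq 2^{\sgm\ell}$, and verifying throughout that each error term beyond the leading one genuinely carries a pure $x$-derivative of $\psi_{<\ell,\pm}$. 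This accounting — which is precisely the adaptation of the small-energy argument of \cite{Krieger:2012vj} to the frequency-gap setting, with $2^{\ell}$ in place of the energy smallness — is the only substantive work; once it is in place, Steps 1 and 2 combine to give \eqref{eq:mEst:strongL2}.
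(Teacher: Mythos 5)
Your overall two-source decomposition (the $S_{<k}$ truncation error and the unsmoothed composition error) matches the paper's structure (Lemma~\ref{lem:k>l} plus Lemma~\ref{lem:prelimL2}), and the ``Main obstacle'' paragraph correctly identifies that $\psi_{<\ell,\pm}$ is unbounded in $L^\infty$, which is the real subtlety. However, Step~1 has a genuine gap: the projection $S_{<k}$ is a \emph{space-time} frequency truncation, so Bernstein's inequality requires control of $\rd_{t,x}^{N}e^{\mp i\psi_{<\ell,\pm}}$, not merely the pure spatial derivatives $\rd_x^N$ that you estimate. The symbol bounds \eqref{eq:symbBnd:decomp}, \eqref{eq:symbBnd:simple} only supply one application of the full gradient $\nb$; higher time derivatives do \emph{not} come for free. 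The paper closes this gap by exploiting that $\psi_{<\ell,\pm}(t,x,\xi)$ is a free wave for each fixed $\xi$, writing $\rd_t^2\psi = \lap\psi$ and iterating in Lemma~\ref{lem:k>l}. This is the missing idea in your Step~1, and without it the claimed $2^{N(\ell-k)}$ gain does not follow.

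Step~2 is also vaguer than it may look. You invoke Lemmas~\ref{lem:decomp-key} and \ref{lem:decomp-prod}, but those are formulated for the composition of two \emph{left} quantizations and produce mixed $L^q_t L^r_x$ space-time bounds, whereas here the composition is a left quantization with the right quantization of its adjoint and the target is a \emph{fixed-time} $L^2_x\to L^2_x$ bound. In fact the compound symbol in this setting is exactly $e^{-i(\psi(t,x,\xi)-\psi(t,y,\xi))}$, so no asymptotic pdo expansion (which in any case would not converge with only $O(\sgm^{-1})$ controlled $\xi$-derivatives) is needed. The paper's Lemma~\ref{lem:prelimL2} instead estimates the kernel $K_1(t,x,y)=C\int (e^{-i\Psi_{\pm}}-1)a(\xi)e^{i\xi\cdot(x-y)}d\xi$ directly, splitting into $|x-y|\aleq 2^{-\dlt\ell}$ (where $|\Psi_{\pm}|\aleq 2^{(1-\dlt)\ell}$ from the $x$-derivative bound) and $|x-y|\ageq 2^{-\dlt\ell}$ (where one integrates by parts $N\aleq\sgm^{-1}$ times in $\xi$ using \eqref{eq:symbBnd:xiReg} to gain polynomial kernel decay), and then applies Schur's test. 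Your heuristic ``one $\rd_x\psi$ gain times a bounded number of $\rd_\xi$ losses'' captures the arithmetic of $\dlt_0 = C\sgm$, but the precise mechanism producing a fixed-time $L^2$ operator bound is this kernel/Schur argument, which your write-up does not supply.
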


We remind the reader that $e^{i \psi_{\pm}}_{<0} (D, y, s)= e^{i
  \psi_{\pm}}_{<0} (t, D, y)$, since the symbol is independent of
$\tau = \xi_{0}$. In particular, this pseudodifferential operator
makes sense on every fixed time slice. Note that \eqref{eq:mEst:L2}
follows by taking $k = 0$ and noting that $\psi_{\pm} = \psi_{<-m}$.

To begin the proof of Proposition \ref{prop:mEst:L2}, we prove a
closely related estimate which does not involve space-time
Littlewood-Paley projections for $e^{i \psi_{<\ell, \pm}}$.
\begin{lemma} \label{lem:prelimL2} Let $\ell \leq 0$ and $a(D)$ be a
  multiplier such that $a(\xi)$ is a smooth bump function adapted to
  $\set{ \abs{\xi} \aleq 1}$. Then we have
  \begin{align}
    \nrm{e^{- i \psi_{<\ell, \pm}} (t,x,D) a(D) e^{i \psi_{<\ell,
          \pm}}(D, y, t) - a(D)}_{L^{2}_{x} \to L^{2}_{x}} &
    \aleq_{E} 2^{(1-\dlt_{0})
      \ell}.  \label{eq:prelimL2}
  \end{align}

  Furthermore, for any $k \in \bbR$ we have
  \begin{align}
    \nrm{e^{- i \psi_{<\ell, \pm}} (t,x,D) P_{0}}_{L^{2}_{x} \to
      L^{2}_{x}}
    & \aleq_{E} \, 1,					\label{eq:prelimL2:bdd} \\
    \nrm{e^{- i \psi_{<\ell, \pm}}_{<k} (t,x,D) P_{0}}_{L^{2}_{x} \to
      L^{2}_{x}} & \aleq_{E} \,
    1.  \label{eq:prelimL2:bdd:<k}
  \end{align}
\end{lemma}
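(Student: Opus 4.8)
The three bounds in Lemma~\ref{lem:prelimL2} are closely related, so I would begin with the simplest ones \eqref{eq:prelimL2:bdd} and \eqref{eq:prelimL2:bdd:<k}. Since $\psi_{<\ell,\pm}$ is a real-valued symbol, the symbol $e^{-i\psi_{<\ell,\pm}}$ has absolute value $1$; combined with the symbol regularity estimates \eqref{eq:symbBnd:simple}--\eqref{eq:symbBnd:xiReg:Lq} (with $q = \infty$, which for $\ell<-m$ give $\rd_\xi$-derivatives that are small in $\ell$ and in particular bounded), standard pseudodifferential calculus (Calder\'on--Vaillancourt, or a direct $TT^*$ argument using the $\xi$-regularity of the symbol) yields the fixed-time $L^2_x\to L^2_x$ boundedness after composing with the frequency cutoff $P_0$. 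The same argument applies after applying the spatial Littlewood--Paley truncation $S_{<k}$ to the symbol, since $S_{<k}$ is bounded on the relevant symbol classes; this gives \eqref{eq:prelimL2:bdd:<k}. One should be mildly careful that the bound is uniform in $\ell$ and $k$, which it is since we only use finitely many $\xi$-derivatives and the $\ell$-dependence in \eqref{eq:symbBnd:simple} is a gain.

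\textbf{The main estimate \eqref{eq:prelimL2}.} This is the heart of the lemma. The idea is that $e^{-i\psi_{<\ell,\pm}}(t,x,D)$ and $e^{i\psi_{<\ell,\pm}}(D,y,t)$ should be approximate inverses of each other on frequency-$\approx 1$ functions, up to an error governed by the $x$-regularity of the phase $\psi$. Writing $T = e^{-i\psi_{<\ell,\pm}}(t,x,D)$ and $T^* = e^{i\psi_{<\ell,\pm}}(D,y,t)$ (the dual, as the symbol is independent of $\tau$), I would use Lemma~\ref{lem:decomp-prod} to compare $T a(D) T^*$ with the operator quantized from the symbol product $e^{-i\psi_{<\ell,\pm}}(t,x;\xi)\, a(\xi)\, e^{i\psi_{<\ell,\pm}}(t,y;\xi)$. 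The leading symbol, if one freezes $x=y$, is exactly $|e^{-i\psi}|^2 a = a$; the discrepancy comes from the difference $\psi_{<\ell,\pm}(t,x,\xi) - \psi_{<\ell,\pm}(t,y,\xi)$, which by \eqref{eq:symbBnd:xiReg} (and its analogue without $\xi$-derivatives) is controlled by $\langle x-y\rangle^{\sigma(\beta-\frac12)} E$ type bounds, i.e. small regularity loss in $x-y$ together with the summed bound \eqref{eq:symbBnd:decomp:summed} giving the $2^{(1-\delta_0)\ell}$ gain from the frequency gap. Concretely: expand $e^{-i\psi(t,x,\xi)} - e^{-i\psi(t,y,\xi)}$, pair it against the kernel of $a(D)$ which is Schwartz and concentrated at scale $1$, and use that on that scale the phase difference $\psi(t,x,\xi)-\psi(t,y,\xi)$ is $O(2^{(1-\delta_0)\ell})$ in the decomposable norm by summing \eqref{eq:symbBnd:decomp} over $\tht$ and over $k<\ell$. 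Then Lemma~\ref{lem:decomp-key}(2) converts the decomposable symbol bound into the operator bound $L^2_x\to L^2_x$.

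\textbf{Expected obstacle and bookkeeping.} The main technical difficulty is organizing the symbol calculus so that the error genuinely comes out as $2^{(1-\delta_0)\ell}$ rather than merely $O(\ell\cdot 2^\ell)$ or something that degenerates as $\ell\to-\infty$: one has to be slightly generous and absorb the logarithmic factor from dyadic summation over $k<\ell$ into the small exponent loss $\delta_0$, and correspondingly choose $\sigma$ (hence $\delta_0$) small enough that the $\sigma$-H\"older loss in $x$ in \eqref{eq:symbBnd:xiReg} is dominated by the gain. This is exactly where the hypothesis "for sufficiently small $\sigma>0$, there exists $\delta_0>0$" in the ambient proposition enters. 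The second subtlety is keeping all constants independent of $\ell$ (and later of $k$): since only finitely many $\xi$-derivatives of the symbols are ever used, and each such derivative costs at most a bounded power of $\tht^{-1}$ which is then summed against the gain, this is fine, but it should be checked. A final routine point: the composition $T a(D) T^*$ versus $(e^{-i\psi}ae^{i\psi})(t,x,D)$ uses Lemma~\ref{lem:decomp-prod} twice (once for each factor), each time producing a commutator error controlled by $(\rd_\xi\psi)$ in $DL^\infty_t L^\infty_x$ times $(\rd_x(\cdot))$; since $\rd_x\psi_{<\ell,\pm}$ also carries the $\ell$-gain from \eqref{eq:symbBnd:decomp:summed}, these errors are of the same favorable size $2^{(1-\delta_0)\ell}$ and cause no trouble. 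Once \eqref{eq:prelimL2} is in hand, Proposition~\ref{prop:mEst:L2} follows by additionally inserting the space-time cutoffs $S_{<k}$ and estimating their commutator error by the $2^{10(\ell-k)}$ term, using that the symbol is supported (in the $(t,x)$ Fourier variables) at frequencies $\lesssim 2^\ell$ so that $S_{<k}$ with $k\geq \ell+C$ is essentially the identity on it, with rapidly decaying tails.
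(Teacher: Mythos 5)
Your proposal reverses the logical order of the paper's proof and, in doing so, runs into real trouble. The paper proves \eqref{eq:prelimL2} first, directly via a kernel estimate and Schur's test, and then deduces \eqref{eq:prelimL2:bdd} from it by $TT^{\ast}$ (and \eqref{eq:prelimL2:bdd:<k} from \eqref{eq:prelimL2:bdd} by translation invariance, mollifying by the $S_{<k}$ kernel). This ordering is essential, and your plan of first establishing \eqref{eq:prelimL2:bdd} by Calder\'on--Vaillancourt does not go through: the symbol $e^{-i\psi_{<\ell,\pm}}(t,x,\xi)$ is \emph{not} in a fixed symbol class uniformly in $\ell$. From \eqref{eq:symbBnd:xiReg:Lq} with $q=\infty$, $\alpha=0$, $\beta=1$, one has $\|\partial_{\xi}\psi_{k,\pm}\|_{L^{\infty}}\lesssim 2^{-\sigma k/2}$, and the sum over $k<\ell$ diverges; the $\xi$-derivatives of the phase (hence of $e^{-i\psi}$) are genuinely unbounded as $\ell\to-\infty$. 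Your assertion that the $\xi$-regularity bounds are ``small in $\ell$ and in particular bounded'' is incorrect. What saves the day in the paper is that the \emph{difference} $\psi_{<\ell,\pm}(t,x,\xi)-\psi_{<\ell,\pm}(t,y,\xi)$ enjoys better $\xi$-regularity — precisely \eqref{eq:symbBnd:xiReg} — and this is a quantity that appears only after the conjugation $e^{-i\psi}a(D)e^{i\psi}$ is assembled into a single kernel; it is not available if you try to bound $e^{-i\psi}(t,x,D)P_{0}$ in isolation.

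For \eqref{eq:prelimL2} itself, your sketch is also missing the key step. Comparing $T a(D) T^{\ast}$ with the operator ``quantized from $e^{-i\psi(t,x,\xi)}a(\xi)e^{i\psi(t,y,\xi)}$'' is a tautology — that compound symbol \emph{is} the kernel of $T a(D) T^{\ast}$. And Lemma~\ref{lem:decomp-prod} is stated for compositions of two left-quantized (or two right-quantized) operators; it does not directly apply to the left-right composition here, and the decomposability framework of Lemma~\ref{lem:decomp-key} is for symbols $c(t,x;\xi)$, not compound symbols depending on both $x$ and $y$. More substantively, the claim that the phase difference is $O(2^{(1-\delta_{0})\ell})$ holds only for $|x-y|\lesssim 2^{-\delta\ell}$ (via the mean value theorem and $|\partial_{x}\psi_{<\ell}|\lesssim 2^{\ell}$ from \eqref{eq:symbBnd:simple}); for larger $|x-y|$ the phase difference is not small and can be $O(1)$ or worse. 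The paper's proof splits into exactly these two cases: for $|x-y|\lesssim 2^{-\delta\ell}$ it uses the mean-value estimate, and for $|x-y|\gtrsim 2^{-\delta\ell}$ it integrates by parts in $\xi$ $N$ times, using \eqref{eq:symbBnd:xiReg} to control the loss $\langle x-y\rangle^{\sigma(\beta-\frac12)}$ per derivative, netting a decaying kernel $\lesssim |x-y|^{-(1-\sigma)N-\sigma/2}$. This second case is where the smallness of $\sigma$ is used and is not a consequence of the Schwartz decay of the kernel of $a(D)$ alone, since the oscillatory factor $e^{i(\psi(x,\xi)-\psi(y,\xi))}$ can interact with $e^{i\xi\cdot(x-y)}$. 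Your proposal would need to supply this integration-by-parts argument and the correct two-scale decomposition, and would then need to rearrange the logical dependencies so that \eqref{eq:prelimL2:bdd} follows rather than precedes.
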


\begin{proof} 
  We first reduce \eqref{eq:prelimL2:bdd} and
  \eqref{eq:prelimL2:bdd:<k} to proving \eqref{eq:prelimL2}.  By a
  $TT^{\ast}$ argument, \eqref{eq:prelimL2:bdd} is equivalent to
  $L^{2}_{x}$ boundedness of $e^{- i \psi_{<\ell, \pm}} (t,x,D)
  P_{0}^{2} e^{i \psi_{<\ell, \pm}}(D, y, s)$, which follows from
  \eqref{eq:prelimL2} and the $L^{2}_{x}$ boundedness of $a(D) =
  P_{0}^{2}$. Next, note that
  \begin{equation*}
    e^{-i \psi_{<\ell, \pm}}_{<k}(t,x,\xi) = \int e^{-i \psi_{<\ell, \pm}}((t,x)-z,\xi) 2^{5k} m(2^{k} z ) \, \ud^{1+4} z
  \end{equation*}
  where $m(z)$ is the kernel for $S_{<0}$. As the hypotheses for
  \eqref{eq:prelimL2:bdd} is obviously invariant under translations,
  the left- (and also right-) quantization of each $e^{-i \psi_{<\ell,
      \pm}}((t,x)-z,\xi)$ obeys the same bound as
  \eqref{eq:prelimL2:bdd}. Therefore, by the rapid decay of
  $m(\cdot)$, \eqref{eq:prelimL2:bdd:<k} follows.

  The proof of \eqref{eq:prelimL2} is an easy consequence of
  non-stationary phase, thanks to the fact that $\sgm > 0$ can be
  taken arbitrarily small. The kernel of the operator in
  \eqref{eq:prelimL2} is given by
  \begin{align*}
    K_{1}(t,x, y)
    :=& \ C \int (e^{i (\psi_{<\ell, \pm}(t,x, \xi) - \psi_{<\ell, \pm}(t,y, \xi))} - 1 ) a(\xi) e^{i \xi \cdot (x-y)}\,  \ud^{4} \xi \\
    = & C \int \int_{0}^{1} \Psi_{\pm} e^{i \rho \Psi_{\pm}} (t,x,t,y,
    \xi) a(\xi) e^{i \xi \cdot (x-y)} \, \ud \rho \, \ud^{4} \xi
  \end{align*}
  where
  \begin{equation*}
    \Psi_{\pm} (t,x, s,y, \xi) := \psi_{<\ell, \pm}(t,x, \xi) - \psi_{< \ell, \pm}(s,y, \xi).
  \end{equation*}

  We divide into two cases, namely when $\abs{x-y} \aleq 2^{- \dlt
    \ell}$ and $\abs{x-y} \ageq 2^{- \dlt \ell}$.

  \pfstep{Case 1: $\abs{x-y} \aleq 2^{-\dlt \ell}$} In this case,
  using \eqref{eq:symbBnd:simple} with $q=\infty$ and $\abs{\alp} = 1$
  and $\bt =0$ for each frequency $\ell'$ and summing up in $\ell' <
  \ell$, we obtain
  \begin{equation*}
    \abs{\Psi_{\pm}(t,x,t,y, \xi)} \aleq 2^{(1-\dlt) \ell}  E.
  \end{equation*}
 
  Since $\Psi_{\pm}$ is real-valued and $\supp \, a \subseteq
  \set{\abs{\xi} \aleq 1}$, it easily follows that
  \begin{equation}
    \abs{K_{1}(t, x,y) } \aleq 2^{(1-\dlt) \ell} E
    \qquad \hbox{ for } \abs{x-y} \aleq 2^{-\dlt \ell}.
  \end{equation}

  \pfstep{Case 2: $\abs{x-y} \ageq 2^{- \dlt \ell}$} Here we integrate
  by parts in $\xi$ for $N$-times and use the bound
  \eqref{eq:symbBnd:xiReg}.  Then we obtain
  \begin{equation}
    \abs{K_{1}(t, x,y)} \aleq_{\sgm, N, E} \frac{1}{\abs{x-y}^{(1-\sgm) N + \frac{1}{2} \sgm}}
    \qquad \hbox{ for } \abs{x-y} \ageq 2^{- \dlt \ell}.
  \end{equation}

  Combining Cases 1 and 2, it follows that
  \begin{equation*}
    \sup_{x} \int \abs{K_{1}(t, x,y)} \, \ud^{4} y +  \sup_{y} \int \abs{K_{1}(t, x,y)} \, \ud^{4} x \aleq_{E} 2^{(1-\dlt_{0}) \ell}
  \end{equation*}
  if $\sgm$, $\dlt$ are small enough and $N$ is sufficiently
  large. Estimate \eqref{eq:prelimL2} now follows. \qedhere
\end{proof}

Next, we borrow a lemma from \cite{Krieger:2012vj}, which is useful
for handling $e^{i \psi_{< \ell, \pm}}_{k}$ when $k > \ell$.
\begin{lemma} \label{lem:k>l} For $\ell + C \leq k$ and every $t \in
  \bbR$, we have
  \begin{equation} \label{eq:k>l:L2} \nrm{e^{- i \psi_{< \ell,
          \pm}}_{k}(t,x, D) P_{0} }_{L^{2}_{x} \to L^{2}_{x}}
    \aleq_{E} 2^{10(\ell-k)}.
  \end{equation}

  Furthermore, for $1 \leq q \leq p \leq \infty$ and $\ell + C \leq
  k$, we have
  \begin{equation} \label{eq:k>l:spt} \nrm{e^{- i \psi_{< \ell,
          \pm}}_{k}(t,x, D) P_{0} }_{L^{p}_{t}L^{2}_{x} \to L^{q}_{t}
      L^{2}_{x}} \aleq_{E}
    2^{(\frac{1}{p} - \frac{1}{q}) \ell} 2^{10(\ell-k)}.
  \end{equation}

  These estimates also hold for $e^{-i \psi_{<\ell, \pm}}_{k}(D, y,
  s)$.
\end{lemma}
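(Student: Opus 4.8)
The plan is to prove Lemma~\ref{lem:k>l} by the oscillatory-integral method already used for Lemma~\ref{lem:prelimL2}, the new point being that the extra spacetime-frequency localization $S_{k}$ with $k \geq \ell + C$ produces the gain $2^{10(\ell-k)}$. The basic structural input is the following. By the definition \eqref{eq:defn4psi}, for each fixed $\xi$ the symbol $\psi_{<\ell, \pm}(\cdot, \cdot, \xi)$ is a \emph{real-valued} function of $(t,x)$ with spacetime Fourier support in the ball $\set{\abs{\zeta} \aleq 2^{\ell}}$: each dyadic piece $\psi_{\ell', \pm}$ is obtained from $P_{\ell'} A$ — which is spacetime-frequency localized to $\abs{\zeta} \approx 2^{\ell'}$ since $A$ is a free wave — by applying the spatial multiplier $\opLap^{-1} \oPi_{>2^{\sgm \ell'}}$ and the first-order operator $\oL_{\pm}$, neither of which enlarges the Fourier support. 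Hence $\abs{e^{-i\psi_{<\ell, \pm}}(\cdot, \cdot, \xi)} \equiv 1$, and, summing the symbol bounds \eqref{eq:symbBnd:simple}, \eqref{eq:symbBnd:decomp:summed} over $\ell' < \ell$ (the geometric series converging because each spacetime derivative of $\psi_{\ell', \pm}$ costs only $2^{\ell'}$) and using Bernstein's inequality, one gets $\nrm{\nb^{N}_{t,x} e^{-i\psi_{<\ell, \pm}}(\cdot, \cdot, \xi)}_{L^{\infty}_{t,x}} \aleq_{N, E} 2^{N \ell}$ for every $N \geq 1$, uniformly in $\xi$. Since the $(t,x)$-convolution kernel of $S_{k}$ has vanishing moments of all orders, a Taylor expansion in the kernel variable gives the factorization $(S_{k}^{(t,x)} e^{\mp i \psi_{<\ell, \pm}})(t,x,\xi) = e^{\mp i \psi_{<\ell, \pm}(t,x,\xi)} h^{\mp}_{k}(t,x,\xi)$, where $h^{\mp}_{k}$ is assembled from the \emph{differences} $\psi_{<\ell, \pm}((t,x)-z, \xi) - \psi_{<\ell, \pm}(t,x,\xi)$ averaged against the $S_{k}$ kernel; using the moment cancellation together with the uniform bound above, this yields $\abs{h^{\mp}_{k}} \aleq_{N,E} 2^{N(\ell - k)}$ (uniformly in $\xi$) and, crucially, the same type of bound for the $\xi$-derivatives of $h^{\mp}_{k}$ via the difference estimate \eqref{eq:symbBnd:xiReg} — even though $\psi_{<\ell, \pm}$ and its $\xi$-derivatives are themselves unbounded in $L^{\infty}_{t,x}$.

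For the fixed-time bound \eqref{eq:k>l:L2} I would follow the proof of \eqref{eq:prelimL2:bdd}: by a $TT^{\ast}$ argument it suffices to bound $e^{-i\psi_{<\ell, \pm}}_{k}(t,x,D) P_{0}^{2}\, e^{i\psi_{<\ell, \pm}}_{k}(t,D,y)$ on $L^{2}_{x}$. Inserting the factorization above, the kernel of this operator becomes an oscillatory integral in $\xi$ with phase $(x-y)\cdot\xi + \psi_{<\ell, \pm}(t,y,\xi) - \psi_{<\ell, \pm}(t,x,\xi)$ and amplitude $h^{-}_{k}(t,x,\xi)\, h^{+}_{k}(t,y,\xi)\, a(\xi)^{2}$, where $a$ is the symbol of $P_{0}$. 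Exactly as in the proof of \eqref{eq:prelimL2}, non-stationary phase — using $\nrm{\nb_{x}\bigl(\psi_{<\ell, \pm}(t,x,\cdot) - \psi_{<\ell, \pm}(t,y,\cdot)\bigr)}$ for $\abs{x-y} \aleq 1$, and integrating by parts in $\xi$ (with only $\sgm$-losses, thanks to \eqref{eq:symbBnd:xiReg:Lq}--\eqref{eq:symbBnd:xiReg}) for $\abs{x-y} \ageq 1$ — gives a kernel bound $\aleq_{E} 2^{2N(\ell - k)}$ near the diagonal with rapid off-diagonal decay, hence $\nrm{TT^{\ast}}_{L^{2}_{x} \to L^{2}_{x}} \aleq_{E} 2^{2N(\ell - k)}$ and $\nrm{T}_{L^{2}_{x} \to L^{2}_{x}} \aleq_{E} 2^{N(\ell-k)}$. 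Choosing $N$ large and recalling $\ell \leq k$ and $k \geq \ell + C(E)$ absorbs the constant and yields $\aleq 2^{10(\ell - k)}$.

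For the mixed-norm bound \eqref{eq:k>l:spt}, set $\tfrac{1}{r} := \tfrac{1}{q} - \tfrac{1}{p} \geq 0$. Re-running the previous argument while tracking the $t$-dependence — that is, replacing the $q = \infty$ decomposability bounds by the mixed $L^{q}_{t} L^{r}_{x}$ bounds \eqref{eq:symbBnd:decomp}, which encode the dispersive time-integrability of the free wave $A$ — shows that the function $t \mapsto \nrm{e^{-i\psi_{<\ell, \pm}}_{k}(t, \cdot, D) P_{0}}_{L^{2}_{x} \to L^{2}_{x}}$ lies in $L^{r}_{t}$ with norm $\aleq_{E} 2^{-\ell/r}\, 2^{10(\ell-k)} = 2^{(\frac{1}{p} - \frac{1}{q})\ell}\, 2^{10(\ell-k)}$, the factor $2^{-\ell/r}$ merely reflecting that the effective temporal scale is $2^{-\ell}$. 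Hölder's inequality in time, $\nrm{\calK f}_{L^{q}_{t} L^{2}_{x}} \leq \nrm{\, \nrm{\calK(t)}_{L^{2}_{x} \to L^{2}_{x}}\,}_{L^{r}_{t}}\, \nrm{f}_{L^{p}_{t} L^{2}_{x}}$, then gives \eqref{eq:k>l:spt}. Finally, the assertions for the right quantization $e^{-i\psi_{<\ell, \pm}}_{k}(D,y,s)$ follow by taking adjoints — equivalently, they are left quantizations in $y$ — using that the symbol is independent of $\tau = \xi_{0}$ (so left and right $t$-quantizations agree) and that all the estimates above are symmetric under this duality.

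The main obstacle, and the reason this is not entirely mechanical, is the $\xi$-regularity bookkeeping in the integration-by-parts step: $\psi_{<\ell, \pm}$ itself (and each of its $\xi$-derivatives) is genuinely unbounded in $L^{\infty}_{t,x}$ once one sums over the dyadic scales $\ell' < \ell$, so one cannot differentiate the symbol naively. The resolution — as in \cite[Section~7]{Krieger:2012vj} — is to keep the modulus-one exponential and the angular localization coupled throughout: one phrases everything in terms of differences of $\psi_{<\ell, \pm}$ (through the $TT^{\ast}$ structure and the factorization of $S_{k} e^{\mp i \psi_{<\ell, \pm}}$ above) and the decomposable-symbol calculus of Lemmas~\ref{lem:decomp-key} and~\ref{lem:decomp-prod}. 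By contrast the spacetime-frequency gain, which is where the smallness $2^{10(\ell-k)}$ ultimately comes from, is elementary.
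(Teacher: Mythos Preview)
Your approach is genuinely different from the paper's, and there is a real gap at exactly the point you flag.

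The paper does not go through $TT^{\ast}$ or kernel estimates at all. Instead it exploits that $\psi_{<\ell,\pm}(\cdot,\cdot,\xi)$ is a \emph{free wave} for each fixed $\xi$. Writing $S_{k} = 2^{-2k}\widetilde{S}_{k}(\partial_{t}^{2}+\Delta)$ (legitimate since $S_{k}$ is supported at spacetime frequency $\approx 2^{k}$) and using $\Box\psi_{<\ell,\pm}=0$, one gets
\[
(\partial_{t}^{2}+\Delta)\,e^{-i\psi_{<\ell,\pm}} = \bigl(-2i\,\Delta\psi_{<\ell,\pm} - \abs{\nabla\psi_{<\ell,\pm}}^{2}\bigr)\,e^{-i\psi_{<\ell,\pm}},
\]
where the prefactor has $(t,x)$-frequency $\aleq 2^{\ell}$ and size $\aleq_{E} 2^{2\ell}$ in $DL^{q}_{t}L^{\infty}_{x}$ for $q>4$ by \eqref{eq:symbBnd:decomp:summed}. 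Iterating five times expresses $e^{-i\psi_{<\ell,\pm}}_{k}$ as $2^{-10k}$ times a nested product of five such decomposable symbols and $e^{-i\psi_{<\ell,\pm}}$. Lemma~\ref{lem:decomp-key} together with the already-proven $L^{2}_{x}$ boundedness \eqref{eq:prelimL2:bdd} then gives \eqref{eq:k>l:L2} directly, and distributing the five factors among $DL^{q_{j}}_{t}L^{\infty}_{x}$ norms with $\sum 1/q_{j}=1/q-1/p$ gives \eqref{eq:k>l:spt}. No $\xi$-differentiation of any auxiliary symbol is needed, and there are no $\sigma$-losses because the $\xi$-regularity is packaged into the decomposable norm rather than taken pointwise.

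Your route, by contrast, needs $\abs{\partial_{\xi}^{\beta}h^{\mp}_{k}}\aleq 2^{N(\ell-k)}$ for the off-diagonal kernel decay in the $TT^{\ast}$ argument, and this does \emph{not} follow from \eqref{eq:symbBnd:xiReg} as you claim: that estimate only gives $\abs{\partial_{\xi}^{\beta}[\psi_{<\ell}((t,x)-z,\xi)-\psi_{<\ell}(t,x,\xi)]}\aleq \langle z\rangle^{\sigma(\beta-1/2)}$, with no $(\ell-k)$ gain. If you redo the moment-cancellation argument for $\partial_{\xi}^{\beta}h_{k}$, Fa\`a di Bruno brings in factors $\nabla^{a}_{t,x}\partial_{\xi}^{b}\psi_{<\ell}$ with $a\geq 1$, $b\geq 1$; the pointwise bound for these (from summing \eqref{eq:symbBnd:simple} over $\theta$) is only $2^{(a+\sigma(1/2-b))\ell}$, i.e.\ carries a loss $2^{\sigma(b-1/2)\abs{\ell}}$ relative to $2^{a\ell}$. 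Tracing this through, you obtain $\abs{\partial_{\xi}^{\beta}h_{k}}\aleq 2^{N(\ell-k)}2^{C\sigma\beta\abs{\ell}}$, and in the Schur estimate the factor $2^{C\sigma M\abs{\ell}}$ is \emph{not} absorbed by choosing $N$ large when $\abs{\ell-k}$ stays bounded but $\abs{\ell}\to\infty$ --- the resulting bound is not uniform in $(\ell,k)$. The fix you allude to, using the decomposable calculus, does avoid these $\sigma$-losses, but only by re-expressing $e^{-i\psi_{<\ell}}_{k}$ as decomposable symbols times $e^{-i\psi_{<\ell}}$, which is precisely the paper's argument and renders the $TT^{\ast}$ framework redundant.
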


\begin{remark} 
  The specific factor $10$ in the gain $2^{10(\ell - k)}$ is
  irrelevant, but it is important to note that this number is much
  bigger than $1$. This will be very useful in our proof of
  \eqref{eq:mEst:N}, where we will use this factor to dominate smaller
  factors. In fact, a variant of the proof below allows us to make
  this gain as large as we want, by making the implicit constant
  larger.
\end{remark}
\begin{proof}
  Consider frequency projections $S^{(1)}_{k}, \ldots, S^{(6)}_{k}$,
  $\widetilde{S}^{(1)}_{k}, \ldots, \widetilde{S}^{(5)}_{k}$, which
  obey the same bounds as $S_{k}$ and furthermore satisfy
  \begin{equation*}
    S^{(1)}_{k} := S_{k}, \quad
    S^{(i)}_{k} = 2^{-2k} \widetilde{S}^{(i)}_{k} (\rd_{t}^{2} + \lap), \quad
    \widetilde{S}^{(i)}_{k} = \widetilde{S}^{(i)}_{k} S^{(i+1)}_{k}
  \end{equation*}
  for $i=1,\ldots,5$.  Thanks to the assumption $\ell + C \leq k$, we
  may write at the level of symbols
  \begin{align*}
    e^{- i \psi_{<\ell, \pm}}_{k}
    =& 2^{-2k} \widetilde{S}^{(1)}_{k} (\rd_{t}^{2} + \lap) e^{- i \psi_{<\ell, \pm}} \\
    =& 2^{-2k} \widetilde{S}^{(1)}_{k} (-2 i \lap \psi_{<\ell, \pm}  - \abs{\nb \psi_{<\ell, \pm}}^{2}) S^{(2)}_{k} e^{- i \psi_{<\ell, \pm}} \\
    =& \cdots = 2^{-10k} \prod_{j=1}^{5} \bb[ \widetilde{S}^{(j)}_{k}
    (-2 i \lap \psi_{<\ell, \pm} - \abs{\nb \psi_{<\ell,
        \pm}}^{2}) \bb] e^{- i \psi_{<\ell, \pm}}
  \end{align*}

  Here we used the fact that $\psi_{<\ell, \pm}(t,x,\xi)$ solves the
  free wave equation $\rd_{t}^{2} \psi_{<\ell, \pm} = \lap
  \psi_{<\ell, \pm}$ for each $\xi$, since $A$ does. Disposing of the
  nested projections $\widetilde{S}^{(j)}_{k}$ by translation
  invariance, using the decomposability bound
  \eqref{eq:symbBnd:decomp:summed} and $L^{2}_{x}$ boundedness of
  $e^{-i \psi_{<\ell, \pm}} (t,x,D) P_{0}$, the desired estimate
  follows. \qedhere
\end{proof}

We are now ready to prove Proposition \ref{prop:mEst:L2}.
\begin{proof} [Proof of Proposition \ref{prop:mEst:L2}]
  Thanks to the frequency localization of the symbol $e^{i
    \psi_{<\ell, \pm}}_{<k}(s,y,\xi)$, note that we can harmlessly put
  in a multipler $a(D)$ whose symbol is a smooth bump function adapted
  to $\set{\abs{\xi} \aleq 1}$. The operator in
  \eqref{eq:mEst:strongL2} therefore equals
  \begin{equation*}
    ( e^{- i \psi_{<\ell, \pm}}_{<0} (t, x, D) a(D) e^{i \psi_{< \ell, \pm}}_{<0} (D, y, s) - a(D) ) P_{0}.
  \end{equation*}

  For the purpose of proving \eqref{eq:mEst:strongL2}, we can safely
  dispose $P_{0}$ on the right. Next, note that
  \begin{align*}
    & e^{-i \psi_{<\ell, \pm}}(t,x,D) a(D) e^{i \psi_{<\ell, \pm}}(D,
    y, s)
    - e^{-i \psi_{<\ell, \pm}}_{<k}(t,x,D) a(D) e^{i \psi_{<\ell, \pm}}_{<k}(D, y, s) \\
    & \quad = e^{-i \psi_{<\ell, \pm}}_{\geq k}(t,x,D) a(D) e^{i
      \psi_{<\ell, \pm}}(D, y, s) + e^{-i \psi_{<\ell, \pm}}_{<
      k}(t,x,D) a(D) e^{i \psi_{<\ell, \pm}}_{\geq k}(D, y, s) .
  \end{align*}

  By Lemma \ref{lem:k>l} and \eqref{eq:prelimL2:bdd}, the operators on
  the right hand side obey
  \begin{align*}
    \nrm{e^{-i \psi_{<\ell, \pm}}_{\geq k}(t,x,D) a(D) e^{i
        \psi_{<\ell, \pm}}(D, y, s)}_{L^{2}_{x} \to L^{2}_{x}}
    \aleq_{E} & \, 2^{10(\ell-k)}, \\
    \nrm{e^{-i \psi_{<\ell, \pm}}_{< k}(t,x,D) a(D) e^{i \psi_{<\ell,
          \pm}}_{\geq k}(D, y, s)}_{L^{2}_{x} \to L^{2}_{x}} \aleq_{E} & \,
    2^{10(\ell - k)}.
  \end{align*}

  Combining these bounds with \eqref{eq:prelimL2} established in Lemma
  \ref{lem:prelimL2}, \eqref{eq:mEst:strongL2} follows. \qedhere
\end{proof}

\subsection{Space-time bounds}
Here we establish \eqref{eq:mEst:N}.  More precisely, we will show
that:

\begin{proposition} \label{prop:Xsb} For $\sgm > 0$ sufficiently
  small, there exists $\dlt_{1} > 0$ such that the following holds:
  For $j \leq C$, we have
  \begin{equation} \label{eq:Xsb} \nrm{Q_{j} [e^{- i \psi_{\pm}}_{<0}
      (t,x,D) e^{i \psi_{\pm}}_{<0}(D, y, s) -1] P_{0}
      Q_{<0}}_{N^{\ast} \to X^{0, 1/2}_{\infty}}
    \aleq_{E} 2^{-\dlt_{1} m}.
  \end{equation}
\end{proposition}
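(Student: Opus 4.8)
The plan is to reduce \eqref{eq:Xsb} to the already-proven fixed-time bound \eqref{eq:mEst:strongL2} by a duality argument that exploits the modulation structure. Fixing a test function $u$ with $\nrm{u}_{N^{\ast}} \leq 1$ localized at frequency $1$ and modulation $\leq 1$, we must bound $2^{j/2} \nrm{Q_{j} P_{0} [e^{- i \psi_{\pm}}_{<0}(t,x,D) e^{i \psi_{\pm}}_{<0}(D,y,s) - 1] u}_{L^{2} L^{2}}$. Since the symbols are frequency-localized to $\set{\abs{\xi} \aleq 1}$, we may freely insert bump multipliers in $\xi$; the key point is that the output modulation $2^{j}$ is being prescribed. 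First I would record that, for the nontrivial regime, the modulation gap between input ($\leq 1$) and output ($2^j$, $j \leq C$) forces either $j$ to be moderate (in which case one pays only a harmless constant and uses Proposition~\ref{prop:mEst:L2} with $k = 0$ directly) or the two pseudodifferential operators to exchange modulation with the symbols $e^{\pm i \psi}$ themselves. The latter is where the $2^{-\dlt_1 m}$ gain must come from.

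The main step is to peel off the space-time frequency localizations $S_{<k}$ in the definition of $e^{\mp i \psi_{\pm}}_{<0}$ and of $\psi_{\pm} = \psi_{<-m}$, and to observe that every dyadic piece $\psi_{\ell,\pm}$ has $\ell < -m$, so its space-time Fourier support lies at frequency $\aleq 2^{\ell} < 2^{-m}$. Consequently the symbol $e^{- i \psi_{\pm}}_{<0}(t,x,D) e^{i \psi_{\pm}}_{<0}(D,y,s) - 1$, expanded in a Taylor/Neumann series in $\psi$, has each nonconstant term carrying at least one factor of $\psi$ and hence space-time Fourier support within $O(2^{-m})$ of the characteristic cone (since $\psi$ is built from the free wave $A$); combined with the input being at modulation $\leq 1$ and frequency $1$, this confines the output to modulation $\aleq 2^{-m} + 2^{0}$ unless we are in the close regime. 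The route I would take is: (i) split $e^{\mp i \psi_{\pm}}_{<0} = e^{\mp i \psi_{\pm}}_{<-m/2} + (\text{tail})$ using the $S_{<k}$ cutoff; the tail is controlled by \eqref{eq:k>l:L2} type bounds (Lemma~\ref{lem:k>l} applied with $k \sim -m/2$, $\ell < -m$) giving a gain $2^{10(\ell - k)} \leq 2^{-5m}$; (ii) for the main piece $e^{\mp i \psi_{\pm}}_{<-m/2}$, its symbol is supported in $(t,x)$-frequency $\aleq 2^{-m/2}$, so $Q_{j}$ applied to the composition with a modulation-$\leq 1$ frequency-$1$ input is nonzero only for $2^{j} \aleq 1$, i.e.\ $j \leq C$ is automatic but also $Q_{j}$ with $j$ very negative is suppressed; (iii) in the remaining range, invoke Proposition~\ref{prop:mEst:L2} (equivalently \eqref{eq:mEst:L2}) to get the $L^2 \to 2^{-(1-\dlt_0)m} L^2$ bound and absorb the $2^{j/2} \leq C$ loss from the $X^{0,1/2}_{\infty}$ norm. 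Summation over the finitely-effective range of $j$ and the dyadic pieces of the symbol costs only powers of $m$, which are beaten by $2^{-(1-\dlt_0)m}$, yielding $\dlt_1$ slightly below $1$.

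A cleaner way to organize (ii)--(iii), which I would actually adopt, is to write $e^{- i \psi_{\pm}}_{<0}(t,x,D) e^{i \psi_{\pm}}_{<0}(D,y,s) - 1$ as a sum of two terms: the fixed-time operator $e^{- i \psi_{\pm}}_{<0}(t,x,D) e^{i \psi_{\pm}}_{<0}(t,D,y) - 1$ (same $t$ in both symbols, legitimate since the symbol is $\tau$-independent), plus the correction $e^{- i \psi_{\pm}}_{<0}(t,x,D)[ e^{i \psi_{\pm}}_{<0}(D,y,s) - e^{i \psi_{\pm}}_{<0}(t,D,y)]$. For the first term, \eqref{eq:mEst:L2} gives a uniform-in-$t$ operator bound $\aleq 2^{-(1-\dlt_0)m}$ on $L^2_x$, hence an $L^2_t L^2_x \to L^2_t L^2_x$ bound of the same size; since the operator preserves frequency-$1$ localization and the input has $Q_{<0}$, the output has $Q_{\leq C}$ and the $X^{0,1/2}_\infty$ norm costs at most a constant, done. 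For the second term, the difference of symbols at times $s$ and $t$ is an integral of $\rd_t e^{i\psi_{\pm}}_{<0}$, and $\rd_t \psi_{\pm}$ is again built from the low-frequency free wave $A_{<-m}$, so $\rd_t e^{i\psi_{\pm}}_{<0}$ has a $2^{-m}$ gain in operator norm by \eqref{eq:mEst:DtPtx}; translating the time-difference into a modulation localization (the kernel in $t-s$ decays on the unit scale while carrying an $\rd_t$), the output again lands at modulation $\aleq 1$ and one gains $2^{-m}$. The main obstacle I anticipate is precisely this last bookkeeping: making rigorous the assertion that the $t$-frequency of $e^{\pm i \psi_{\pm}}_{<0}$ (equivalently, the $(t,s)$-kernel's regularity) is genuinely confined to scale $2^{-m}$ so that the prescribed output modulation $2^{j}$ with $j \leq C$ either costs nothing or gains from the gap — this requires carefully tracking the $S_{<k}$ cutoffs against the intrinsic $2^{<-m}$ frequency support of $\psi_{<-m}$, and is the place where the $\sgm$-dependent symbol bounds \eqref{eq:symbBnd:simple}, \eqref{eq:symbBnd:xiReg} from the previous subsection get used to control the off-cone tails.
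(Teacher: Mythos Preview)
Your proposal has a genuine gap in the regime $j \ll 0$. The claim in (ii) that ``$Q_{j}$ with $j$ very negative is suppressed'' is false: a symbol with $(t,x)$-frequency $\leq 2^{-m/2}$ acting on a modulation-$<1$ input bounds the output modulation \emph{from above} (by $\lesssim 1$), not from below. The output can sit at arbitrarily low modulation. Concretely, for $j < -m/2$ and input $u = Q_{<j-C}u$, your argument gives no control on $\|Q_{j}[e^{-i\Psi}_{\dless -m/2}-1]u\|_{L^{2}_{t,x}}$. The fixed-time bound \eqref{eq:mEst:L2} yields only $\|Tu\|_{L^{\infty}_{t}L^{2}_{x}} \lesssim 2^{-(1-\dlt_{0})m}\|u\|_{L^{\infty}_{t}L^{2}_{x}}$, and there is no passage from $L^{\infty}_{t}L^{2}_{x}$ to $L^{2}_{t,x}$ (the input lies only in $N^{\ast}\subset L^{\infty}_{t}L^{2}_{x}\cap X^{0,1/2}_{\infty}$, not $L^{2}_{t,x}$). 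The factor $2^{j/2}$ in $X^{0,1/2}_{\infty}$ does not help here because it must be paired with an $L^{2}_{t,x}$ bound you do not have.

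Separately, your ``cleaner'' decomposition collapses: the paper already records that $e^{i\psi_{\pm}}_{<0}(D,y,s) = e^{i\psi_{\pm}}_{<0}(t,D,y)$ since the symbol is $\tau$-independent, so your second term is identically zero and the first term is the full operator---you are back to the same difficulty. The paper's actual proof for $j < -\tfrac{1}{2}m$ (the hard case, Steps~3--7) requires substantially more: a Taylor-type expansion $e^{-i\Psi}-e^{-i\Psi_{<j-\dlt m}} = \calL + \calQ + \calC$, together with the crucial observation that producing output modulation $\sim 2^{j}$ from modulation $<2^{j-C}$ input via a factor $\psi_{\ell}$ forces an angular separation $\sim 2^{-\frac{1}{2}(\ell-j)_{+}}$ between the spatial frequency of $\psi_{\ell}$ and $\xi$. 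This angular gain, fed through the decomposability bounds \eqref{eq:symbBnd:decomp}, is what converts $L^{\infty}_{t}L^{2}_{x}$ input to $L^{2}_{t,x}$ output with the correct $2^{-j/2}$ factor. The difference structure of $\Psi$ (both in phase and symbol) is also exploited to close the estimate for the diagonal pieces. None of this machinery is present in your outline.
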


The estimate \eqref{eq:Xsb} proves the $X^{0,1/2}_{\infty}$ part of
\eqref{eq:mEst:N}. Note that the $L^{\infty}_{t} L^{2}_{x}$ portion of
\eqref{eq:mEst:N} follows immediately from \eqref{eq:mEst:L2}.

To ease the notation, we omit writing $\pm$ in $\psi_{\pm}$. Also, we
omit the dependence of the constants on $E$. It will be convenient to define the compound symbols
\begin{align*}
  \Psi(t,x,s,y, \xi) :=& \psi (t,x,\xi) - \psi(s,y,\xi), \\
  \Psi_{< \ell}(t,x,s,y, \xi) :=& \psi_{<\ell} (t,x,\xi) -
  \psi_{<\ell}(s,y,\xi).
\end{align*}
The symbol $\Psi_{\ell}$ is defined in the obvious way.

Given a compound symbol $a(t,x,s,y,\xi)$, we define the double
space-time frequency projection
\begin{align*}
  a_{\dless k} (t,x,s,y,\xi) := S^{t,x}_{<k} S^{s,y}_{<k} a
  (t,x,s,y,\xi),
\end{align*}
where $S^{t,x}_{<k}$ is the space-time frequency projection applied to
$(t,x)$, etc.  Therefore, according to our conventions,
\begin{equation*}
  e^{- i \Psi}_{\dless k}(t,x,D,y,s) = e^{- i \psi}_{<k} (t,x,D) e^{i \psi}_{<k}(D, y, s).
\end{equation*}

We begin with a lemma for frequency localizing the gauge transform
$e^{-i \Psi_{<\ell}}$, which will be used several times in our
argument.
\begin{lemma} \label{lem:k>lPsi} For $2 \leq q \leq \infty$ and $\ell
  + C\leq k \leq 0$, we have
  \begin{align}
    \nrm{(e^{- i \Psi_{<\ell}}_{\dless C} - e^{-i
        \Psi_{<\ell}}_{\dless k}) P_{0} }_{L^{p}_{t} L^{2}_{x} \to
      L^{q}_{t} L^{2}_{x}} \aleq 2^{(\frac{1}{p} - \frac{1}{q}) \ell}
    2^{ 10 (\ell-k)}.  \label{eq:k>lPsi:<0}
  \end{align}
\end{lemma}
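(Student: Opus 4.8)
The plan is to reduce \eqref{eq:k>lPsi:<0} to Lemma~\ref{lem:k>l} by telescoping the double space-time localization. Recall that $e^{-i\Psi_{<\ell}}_{\dless j}(t,x,D,y,s) = e^{-i\psi_{<\ell}}_{<j}(t,x,D)\, e^{i\psi_{<\ell}}_{<j}(D,y,s)$, and that $S_{<C} - S_{<k}$ is a finite sum of the dyadic projections $S_{k'}$ over scales $k \leq k' < C$. First I would write
\begin{align*}
  e^{-i\Psi_{<\ell}}_{\dless C} - e^{-i\Psi_{<\ell}}_{\dless k}
  &= \bb( \sum_{k \leq k' < C} e^{-i\psi_{<\ell}}_{k'}(t,x,D) \bb)\, e^{i\psi_{<\ell}}_{<C}(D,y,s) \\
  &\quad + e^{-i\psi_{<\ell}}_{<k}(t,x,D) \bb( \sum_{k \leq k' < C} e^{i\psi_{<\ell}}_{k'}(D,y,s) \bb),
\end{align*}
where $e^{\pm i\psi_{<\ell}}_{k'}$ denotes $S_{k'}$ applied (in the $(t,x)$, resp.\ $(s,y)$, variables) to the symbol $e^{\pm i \psi_{<\ell}}$ and then quantized. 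Since $A$ is real and $\oPi_{>\tht}$ preserves reality, the phase $\psi_{<\ell, \pm}$ is real-valued, so $e^{i \psi_{<\ell}}_{<j}(D,y,s)$ is the adjoint of $e^{-i\psi_{<\ell}}_{<j}(t,x,D)$; hence by \eqref{eq:prelimL2:bdd:<k} both $e^{i\psi_{<\ell}}_{<C}(D,y,s)P_{0}$ and $e^{-i\psi_{<\ell}}_{<k}(t,x,D)P_{0}$ are bounded on $L^{2}_{x}$ uniformly in time, and therefore bounded as maps $L^{r}_{t} L^{2}_{x} \to L^{r}_{t} L^{2}_{x}$ for any $r$ (the symbols carry no $\tau$, so these operators act fibrewise in $t$).

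For the dyadic pieces $e^{\pm i \psi_{<\ell}}_{k'}$ with $k \leq k' < C$, the hypothesis $\ell + C \leq k$ guarantees $\ell + C \leq k'$, so Lemma~\ref{lem:k>l}, estimate \eqref{eq:k>l:spt} (together with its stated right-quantization analogue, and noting that $e^{i\psi_{<\ell,\pm}} = e^{-i(-\psi_{<\ell,\pm})}$ with $-\psi_{<\ell,\pm}$ obeying the same size, regularity and free-wave bounds), applies and yields
\begin{equation*}
  \nrm{e^{\pm i\psi_{<\ell}}_{k'}(t,x,D) P_{0}}_{L^{p}_{t} L^{2}_{x} \to L^{q}_{t} L^{2}_{x}} \aleq 2^{(\frac{1}{p} - \frac{1}{q})\ell}\, 2^{10(\ell - k')} .
\end{equation*}
To chain this with the companion factor inside the composition one reinstates a harmless multiplier supported in $\set{\abs{\xi} \approx 1}$ between the two quantized symbols: this is legitimate because $\psi_{<\ell}$ has spatial frequency $< 2^{\ell} \ll 1$, so $e^{i\psi_{<\ell}}_{<C}(D,y,s)P_{0}$ outputs functions frequency-localized near $\abs{\xi}\approx 1$ up to a rapidly decaying tail. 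Composing the uniform $L^{r}_{t}L^{2}_{x}$ bounds for the companion factors with the above and summing the geometric series $\sum_{k' \geq k} 2^{10(\ell - k')} \aleq 2^{10(\ell - k)}$ gives, for each of the two terms in the telescoping identity, the bound $\aleq 2^{(\frac{1}{p} - \frac{1}{q})\ell}\, 2^{10(\ell-k)}$; adding them proves \eqref{eq:k>lPsi:<0}.

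The substantive input is entirely contained in the already-established Lemma~\ref{lem:k>l} (whose proof exploits that $\psi$ solves the free wave equation, via the identity $S_{k'} = c\, 2^{-2k'} \widetilde{S}_{k'}(\rd_{t}^{2} + \lap)$ applied repeatedly to $e^{-i\psi_{<\ell}}$, producing the gain $2^{10(\ell-k')}$ and the decomposability factor $2^{(1/p-1/q)\ell}$) together with the fixed-time boundedness Lemma~\ref{lem:prelimL2}. Thus the only genuine work here is the bookkeeping of the frequency projections through the composition — ensuring the $P_{0}$ of the cited bounds can be reinstated between the two quantized symbols — which is the one mildly fiddly step; everything else is a geometric series and the duality remark that turns the left-quantized bounds into right-quantized ones.
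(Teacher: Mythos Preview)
Your proof is correct and follows essentially the same approach as the paper: both arguments telescope the double frequency localization to split the difference into two terms, insert a harmless multiplier $a(D)$ supported near $\abs{\xi}\aleq 1$ between the two quantized factors, and then combine Lemma~\ref{lem:k>l} for the dyadic pieces with the fixed-time $L^2_x$ boundedness of the companion factors from Lemma~\ref{lem:prelimL2}. The paper states this more tersely (writing $e^{-i\psi_{<\ell}}_{k\leq\cdot<C}$ directly rather than summing over $k'$), but your explicit geometric-series summation and duality remark are exactly the details the paper leaves implicit.
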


\begin{proof} 
  Proceeding as in the last part of the proof of Proposition
  \ref{prop:mEst:L2}, we may write
  \begin{align*}
    & (e^{- i \Psi_{<\ell}}_{\dless C} - e^{-i \Psi_{<\ell}}_{\dless k}) P_{0} \\
    & \quad = [e^{- i \psi_{<\ell}}_{< C} (t,x, D) a(D) e^{i
      \psi_{<\ell}}_{< C} (D, y, s)
    - e^{- i \psi_{<\ell}}_{< k} (t,x, D) a(D) e^{i \psi_{<\ell}}_{< k} (D, y, s) ] P_{0} \\
    & \quad = [e^{- i \psi_{<\ell}}_{k \leq \cdot < C} (t,x, D) a(D)
    e^{i \psi_{<\ell}}_{< C} (D, y, s) + e^{- i \psi_{<\ell}}_{< k}
    (t,x, D) a(D) e^{i \psi_{<\ell}}_{k \leq \cdot < C} (D, y, s)]
    P_{0}
  \end{align*}
  where $a(\xi)$ is a smooth bump function adapted to $\set{\abs{\xi}
    \aleq 1}$. Then \eqref{eq:k>lPsi:<0} follows from Lemma
  \ref{lem:k>l}. \qedhere
\end{proof}

We are now ready to prove Proposition \ref{prop:Xsb}.
\begin{proof} [Proof of Proposition \ref{prop:Xsb}]
  We proceed in several steps. Let $\dlt > 0$ be a small number to be
  determined later.

  \pfstep{Step 1: High modulation input} For $j' \geq j - C$, we claim
  that
  \begin{equation} \label{eq:Xsb:highMod} \nrm{Q_{j} [e^{- i
        \Psi}_{\dless 0}(t,x, D, y, s) -1] P_{0} Q_{j'}}_{N^{\ast}
      \to X^{0, 1/2}_{\infty}} \aleq 2^{-\dlt_{1} m} 2^{\frac{1}{2} (j
      - j')}.
  \end{equation}

  Using the $X^{0,1/2}_{\infty}$ portion of $N^{\ast}_{0}$,
  \eqref{eq:Xsb:highMod} follows from
  \begin{equation*}
    \nrm{Q_{j} [e^{- i \Psi}_{\dless 0} -1] P_{0} Q_{j'}}_{L^{2}_{t,x} \to L^{2}_{t,x}} 
    \aleq 2^{-(1-\dlt_{0}) m} .
  \end{equation*}
  Since $Q_{j}, Q_{j'}$ are easily disposable, this estimate follows
  easily from \eqref{eq:mEst:L2}.

  \pfstep{Step 2: Low modulation input, $-\frac{1}{2} m \leq j \leq
    C$} In this step, we take care of the easy case $-\frac{1}{2} m
  \leq j \leq C$. Under this assumption, we claim that
  \begin{equation}
    \nrm{Q_{j} [e^{- i \Psi}_{\dless 0} (t,x,D, y, s) - 1]P_{0} Q_{< j - C}}_{N^{\ast} \to X^{0, 1/2}_{\infty}} 
    \aleq 2^{-\dlt_{1} m} .
  \end{equation}

  Note that
  \begin{equation*}
    Q_{j} [ e^{- i \Psi}_{\dless j-C} - 1] P_{0} Q_{< j-C} = 0
  \end{equation*}
  by modulation localization. Using the $L^{\infty}_{t} L^{2}_{x}$
  portion of $N^{\ast}_{0}$, it suffices to prove
  \begin{equation*}
    \nrm{Q_{j} [e^{- i \Psi}_{\dless 0}  - e^{ - i \Psi}_{\dless j-C}]P_{0} Q_{< j - C}}_{L^{\infty}_{t} L^{2}_{x} \to L^{2}_{t,x}} 
    \aleq 2^{- 4 m} 2^{-\frac{1}{2} j}.
  \end{equation*}
  Since $Q_{j}$, $Q_{<j-C}$ are disposable on $L^{2}_{t,x}$ and
  $L^{\infty}_{t} L^{2}_{x}$, this estimate follows from
  \eqref{eq:k>lPsi:<0} and the fact that $\Psi = \Psi_{<-m}$.

  \pfstep{Step 3: Low modulation input, $ j < -\frac{1}{2} m$, main
    decomposition} Henceforth, we consider the case $j < - \frac{1}{2}
  m$. The goal of Steps 3--6 is to establish
  \begin{equation} \label{eq:Xsb:lowMod} \nrm{Q_{j} [e^{- i
        \Psi}_{\dless 0} (t,x,D, y, s) - e^{- i \Psi_{< j - \dlt
          m}}_{\dless 0} (t,x,D,y,s)] P_{0} Q_{< j - C}}_{N^{\ast}
      \to X^{0, 1/2}_{\infty}} \aleq 2^{-\dlt_{1} m} .
  \end{equation}

  At the level of symbols, we begin by writing
  \begin{align*}
    e^{- i \Psi} - e^{- i \Psi_{<j - \dlt m}}
    = & - i \int_{\ell \geq j - \dlt m} \Psi_{\ell} \, e^{- i \Psi_{<j
        - \dlt m}}\, \ud \ell
    -\iint_{\ell \geq \ell' \geq j - \dlt m} \Psi_{\ell} \Psi_{\ell'} \, e^{- i \Psi_{<j - \dlt m}}\, \ud \ell' \ud \ell \\
    & 	+ i \iiint_{\ell \geq \ell' \geq \ell'' \geq j - \dlt m} \Psi_{\ell} \Psi_{\ell'} \Psi_{\ell''} \, e^{- i \Psi_{<\ell''}}\, \ud \ell'' \ud \ell' \ud \ell \\
    = & \!\!: \calL + \calQ + \calC.
  \end{align*}

  We treat $\calL$, $\calQ$ and $\calC$ in Steps 4, 5, and 6,
  respectively.

  \pfstep{Step 4: Low modulation input, $ j < -\frac{1}{2} m$,
    contribution of $\calL$}
  In this step, we prove
  \begin{equation} \label{eq:Xsb:Lest} \nrm{Q_{j} \calL_{\ll
        0}(t,x,D,y,s) P_{0} Q_{<j-C}}_{N^{\ast} \to X^{0,
        1/2}_{\infty}} \aleq 2^{- \dlt_{1} m}.
  \end{equation}

  We further decompose $\calL$ as follows. We first separate out the
  low frequency part of the gauge transform, then decompose according
  to the frequency of $\Psi_{\ell}$ (depending on whether $\ell$ is
  higher or comparable to $j$), and finally replace the gauge
  transform by $1$:
  \begin{align}
    \calL
    = &- i \int_{\ell \geq j - \dlt m} \Psi_{\ell} \, (e^{- i \Psi_{<j - \dlt m}} - e^{- i \Psi_{<j - \dlt m}}_{\dless j - C} )\, \ud \ell \label{eq:Xsb:L1} \\
    &- i \int_{\ell \geq j + 10 \dlt m} \Psi_{\ell} \, e^{- i \Psi_{<j - \dlt m}}_{\dless j - C}\, \ud \ell \label{eq:Xsb:L2}\\
    & - i \int_{j - \dlt m \leq \ell \leq j + 10 \dlt m} \Psi_{\ell} \, (e^{- i \Psi_{<j - \dlt m}}_{\dless j - C} - 1)\, \ud \ell  \label{eq:Xsb:L3}\\
    & - i \int_{j - \dlt m \leq \ell \leq j + 10 \dlt m} \Psi_{\ell} \, \ud \ell  \label{eq:Xsb:L4} \\
    = & \!\!: \calL_{1} + \calL_{2} + \calL_{3} + \calL_{4}. \notag
  \end{align}

  We treat the contribution of $\calL_{1}, \ldots, \calL_{4}$
  separately.

  \pfstep{Step 4.1: Contribution of $\calL_{1}$} For $\calL_{1}$, the
  double frequency localization $(\calL_{1})_{\dless 0}$ and the fact
  that $\ell < -m < 0$ allow us to write (at the level of symbols)
  \begin{equation*}
    (\calL_{1})_{\dless 0} 
    = \bb( - i \int_{\ell \geq j - \dlt m} \Psi_{\ell} \, (e^{- i \Psi_{<j - \dlt m}}_{\dless C} - e^{- i \Psi_{<j - \dlt m}}_{\dless j - C} )\, \ud \ell \bb)_{\dless 0}.
  \end{equation*}

  As the rest of the argument for $\calL_{1}$ will be translation
  invariant, we can easily dispose the double frequency localization
  $(\cdot)_{\dless \ell}$. We are now reduced to proving
  \begin{equation*}
    \nrm{\Psi_{\ell} \, (e^{- i \Psi_{<j - \dlt m}}_{\dless C} - e^{- i \Psi_{<j - \dlt m}}_{\dless j - C} ) P_{0} }_{L^{\infty}_{t} L^{2}_{x} \to L^{2}_{t,x}} \aleq 2^{-\frac{1}{2}j} 2^{-(10 - \frac{1}{2}) \dlt m} 2^{\frac{1}{6} (j - \dlt m -\ell)}
  \end{equation*}
  for $\ell \geq j - \dlt m$. This estimate follows from the
  decomposability bound \eqref{eq:symbBnd:decomp:summed} with $q=6$
  and \eqref{eq:k>lPsi:<0} with $(p,q)=(\infty,\frac{1}{3})$.

  \pfstep{Step 4.2: Contribution of $\calL_{2}$} Before we begin, note
  that the double frequency localization $(\cdot)_{\dless 0}$ does
  nothing to $\calL_{2}$, $\calL_{3}$ and $\calL_{4}$, thanks to their
  frequency localization properties. Therefore, we drop
  $(\cdot)_{\dless 0}$ from now on.

  In this step, the desired gain in $m$ will be obtained from $\ell
  \geq j + 10 \dlt m$, and we do not exploit the difference structure
  in $\Psi_{\ell}$. In fact, in order to apply decomposability bounds,
  we divide $\Psi_{\ell} (t,x,s,y, \xi) = \psi_{\ell}(t,x,\xi) -
  \psi_{\ell}(s,y,\xi)$ and treat each term separately. Here we only
  consider the case $\psi_{\ell}(t,x,\xi)$; the argument for the other
  case is analogous.

  Thanks to the frequency localization $e^{i \psi_{<j-\dlt
      m}}_{<j-C}$, the contribution of $\psi_{\ell}(t,x,\xi)$ in the
  integrand in \eqref{eq:Xsb:L2} equals
  \begin{equation*}
    Q_{j} (\psi_{\ell} e^{-i \psi_{<j - \dlt m}}_{<j-C})(t,x,D) \widetilde{P}_{0} \widetilde{Q}_{<j-C}
    e^{i \psi_{j-\dlt m}}_{<j-C}(D, y, s) P_{0} Q_{<j-C}
  \end{equation*}
  where $\widetilde{P}_{0} \widetilde{Q}_{<j-C}$ is a slightly
  enlarged version of $P_{0} Q_{<j-C}$. Then by the frequency
  localization of $e^{i \psi_{<j-\dlt m}}_{<j-C}$ (now applied to the
  one on the left), the modulation of the output forces an angular
  separation between the spatial frequency of $\psi_{\ell}(t,x,\xi)$
  and $\xi$ of the size $\aeq
  2^{-\frac{1}{2}(\ell-j)_{+}}$. Therefore, the preceding operator
  equals
  \begin{equation*}
    Q_{j} (\oPi_{>2^{-\frac{1}{2}(\ell-j)_{+}}}\psi_{\ell} e^{-i \psi_{<j - \dlt m}}_{<j-C})(t,x,D) 
    e^{i \psi_{<j-\dlt m}}_{<j-C}(D, y, s) P_{0} Q_{<j-C},
  \end{equation*}
  where we dropped $\widetilde{P}_{0} \widetilde{Q}_{<j-C}$ as it is
  of no more use.  Using fixed-time $L^{2}_{x}$ boundedness of $e^{i
    \psi_{<j-\dlt m}}_{<j-C} P_{0}$ and the decomposability bound
  \eqref{eq:symbBnd:decomp} summed over $\tht \ageq
  2^{-\frac{1}{2}(\ell-j)_{+}}$, it follows that
  \begin{equation} \label{eq:Xsb:L2:pf}
    \begin{aligned} 
      & \nrm{Q_{j} (\oPi_{>2^{-\frac{1}{2}(\ell-j)_{+}}}\psi_{\ell}
        e^{-i \psi_{<j - \dlt m}}_{<j-C})(t,x,D)
	e^{i \psi_{<j-\dlt m}}_{<j-C}(D, y, s) P_{0} Q_{<j-C}}_{L^{\infty}_{t} L^{2}_{x} \to L^{2}_{t,x}} \\
      & \quad \aleq 2^{-\frac{1}{2} j} 2^{\frac{1}{4}(j - \ell)}
    \end{aligned}
  \end{equation}
  for $\ell \geq j$. Now integrating this bound over $\ell \geq j+ 10
  \dlt m$, we obtain a gain of $2^{-\frac{1}{4} \dlt m}$ from the
  factor $2^{\frac{1}{4}(j - \ell)}$, which is acceptable.

  \pfstep{Step 4.3: Contribution of $\calL_{3}$} For $\calL_{3}$, we
  make use of the difference structure in the phase $\Psi_{<j - \dlt
    m}$, but not for the symbol $\Psi_{\ell}$. Thus we again only
  consider $\psi_{\ell}(t,x,\xi)$. We remind the reader that in this
  case, the $\ell$-integral in \eqref{eq:Xsb:L3} is taken over $j -
  \dlt m \leq \ell \leq j + 10 \dlt$.

  Proceeding as in Step 4.2, the contribution of this term in the
  integrand for \eqref{eq:Xsb:L3} equals
  \begin{equation*}
    \widetilde{\calL}_{3}(\ell) := Q_{j} (\oPi_{>2^{-\frac{1}{2}(\ell-j)_{+}}} \psi_{\ell} (e^{-i \Psi_{<j - \dlt m}}_{\dless j-C} - 1) )(t,x,D,y,s) P_{0} Q_{<j-C} \, .
  \end{equation*}

  Then proceeding as in the proof of \eqref{eq:Xsb:L2:pf}, but using
  \eqref{eq:mEst:strongL2} instead of mere boundedness of $e^{-i
    \Psi_{<j - \dlt m}}_{\dless j-C}$, and integrating over $j - \dlt
  m \leq \ell \leq j + 10 \dlt$, we obtain
  \begin{equation*}
    \nrm{\int_{j - \dlt m \leq \ell \leq j + 10 \dlt m} \widetilde{\calL}_{3}(\ell) \, \ud \ell}_{L^{\infty}_{t} L^{2}_{x} \to L^{2}_{t,x}} 
    \aleq 2^{-\frac{1}{2} j} 2^{\frac{1}{2} \dlt m} (2^{(1-\dlt_{0}) (j-\dlt m)} + 2^{- 10 \dlt m}).
  \end{equation*}
  This bound is good if $\dlt > 0$ is sufficiently small, since $j <
  -\frac{1}{2} m$.

  \pfstep{Step 4.4: Contribution of $\calL_{4}$} As in the previous
  step, the $\ell$-integral in \eqref{eq:Xsb:L4} is taken over $j -
  \dlt m \leq \ell \leq j + 10 \dlt$. Here we make use of the
  difference structure of the symbol $\Psi_{\ell}$. The modulation
  localization properties again allow us to write
  \begin{equation*}
    Q_{j} \Psi_{\ell}(t,x,D,y,s) P_{0} Q_{<j-C} 
    = \sum_{\tht \ageq 2^{-\frac{1}{2}(\ell-j)_{+}} } Q_{j} (\psi^{(\tht)}_{\ell}(t,x,D) - \psi^{(\tht)}_{\ell}(D,y,s)) P_{0} Q_{<j-C} \, .
  \end{equation*}

  As usual, we can harmlessly put in an operator $a(D)$ which is a
  slightly enlarged version of $P_{0}$. We now claim that for $2 \leq
  q \leq \infty$, the following bound holds:
  \begin{equation} \label{eq:Xsb:L4:pf}
    \nrm{\psi^{(\tht)}_{\ell}(t,x,D)a(D) - a(D)
      \psi^{(\tht)}_{\ell}(D, y, s)}_{L^{\infty}_{t} L^{2}_{x} \to
      L^{q}_{t} L^{2}_{x}} \aleq 2^{-\frac{1}{q} \ell} 2^{\ell}
    \tht^{-C}
  \end{equation}

  To prove this bound, we compute the kernel of
  $\psi^{(\tht)}_{\ell}(t,x,D) a(D) - a(D)
  \psi^{(\tht)}_{\ell}(D,y,s)$.
  \begin{align*}
    K_{2}(t,x,y)
    = & C \int (\psi^{(\tht)}_{\ell}(t,x,\xi) - \psi^{(\tht)}_{\ell}(t, y, \xi)) a(\xi) e^{i \xi \cdot (x-y)}\, \ud^{4} \xi \\
    = & C \int \int_{0}^{1} (x-y) \cdot (\rd_{x} \psi^{(\tht)}_{\ell})(t, \rho x + (1-\rho) y, \xi)  a(\xi) e^{i \xi \cdot (x-y)} \, \ud \rho \, \ud^{4} \xi \\
    = & C \int \int_{0}^{1} \rd_{\xi} \cdot [(\rd_{x}
    \psi^{(\tht)}_{\ell})(t, \rho x + (1-\rho) y, \xi) a(\xi)] e^{i
      \xi \cdot (x-y)} \, \ud \rho \, \ud^{4} \xi.
  \end{align*}

  Integrating by parts in $\xi$ several times and using the symbol
  bound \eqref{eq:symbBnd:simple}, we obtain a kernel bound which
  implies \eqref{eq:Xsb:L4:pf}.

  Applying \eqref{eq:Xsb:L4:pf} with $q=2$, it follows that
  \begin{equation*}
    \nrm{Q_{j} (\psi^{(\tht)}_{\ell}(t,x,D) - \psi^{(\tht)}_{\ell}(D,y,s)) P_{0} Q_{<j-C}}_{L^{\infty}_{t} L^{2}_{x} \to L^{2}_{t,x}}
    \aleq 2^{-\frac{1}{2} \ell} 2^{\ell} \tht^{-C}.
  \end{equation*}
  Summing over $\tht \ageq 2^{-\frac{1}{2}(\ell-j)_{+}}$ and
  integrating over $j - \dlt m \leq \ell \leq j + \dlt m$, we arrive
  at
  \begin{equation*}
    \nrm{\int_{j - \dlt m \leq \ell \leq j + 10 \dlt m} Q_{j} \Psi_{\ell}(t,x,D,y,s) P_{0} Q_{<j-C} \, \ud \ell}_{L^{\infty}_{t} L^{2}_{x} \to L^{2}_{t,x}}
    \aleq 2^{-\frac{1}{2} j} 2^{j} 2^{C \dlt m}.
  \end{equation*}
  This is good for sufficiently small $\dlt > 0$, as $j <
  -\frac{1}{2}m$. This completes the proof of \eqref{eq:Xsb:Lest}.

  \pfstep{Step 5: Low modulation input, $ j < -\frac{1}{2} m$,
    contribution of $\calQ$} Here we prove
  \begin{equation} \label{eq:Xsb:Qest} \nrm{Q_{j} \calQ_{\ll
        0}(t,x,D,y,s) P_{0} Q_{<j-C}}_{N^{\ast} \to X^{0,
        1/2}_{\infty}} \aleq 2^{- \dlt_{1} m}.
  \end{equation}

  As in Step 4, we begin by further decomposing $\calQ$:
  \begin{align}
    \calQ
    =& -\iint_{\ell \geq \ell' \geq j - \dlt m} \Psi_{\ell}
    \Psi_{\ell'} \,
    (e^{- i \Psi_{<j - \dlt m}} - e^{- i \Psi_{< j - \dlt m}}_{\dless j - C})\, \ud \ell' \ud \ell \label{eq:Xsb:Q1} \\
    & -\iint_{\substack{\ell \geq \ell' \geq j - \dlt m \\ \ell \geq j
        + 10 \dlt m}} \Psi_{\ell} \Psi_{\ell'} \,
    e^{- i \Psi_{< j - \dlt m}}_{\dless j - C}\, \ud \ell' \ud \ell \label{eq:Xsb:Q2} \\
    & -\iint_{j - \dlt m \leq \ell' \leq \ell \leq j + 10 \dlt m}
    \Psi_{\ell} \Psi_{\ell'} \,
    (e^{- i \Psi_{< j - \dlt m}}_{\dless j - C} - 1) \, \ud \ell' \ud \ell  \label{eq:Xsb:Q3} \\
    & 	-\iint_{j - \dlt m \leq \ell' \leq \ell \leq j + 10 \dlt m} \Psi_{\ell} \Psi_{\ell'} \, \ud \ell' \ud \ell \label{eq:Xsb:Q4} \\
    = & \!\!: \calQ_{1} + \calQ_{2} + \calQ_{3} + \calQ_{4}. \notag
  \end{align}

  We treat each of these terms below.  \pfstep{Step 5.1: Contribution
    of $\calQ_{1}$} Proceeding as in Step 4.1, we have
  \begin{equation*}
    (\calQ_{1})_{\dless 0} 
    = \bb( - \iint_{\ell \geq \ell' \geq j - \dlt m} \Psi_{\ell} \Psi_{\ell'} 
    (e^{- i \Psi_{<j - \dlt m}}_{\dless C} - e^{-i \Psi_{<j - \dlt m}}_{\dless j-C}) \, \ud \ell' \ud \ell \bb)_{\dless 0}
  \end{equation*}
  and the outer $(\cdot)_{\dless 0}$ can be disposed by translation
  invariance as before. Next, by \eqref{eq:symbBnd:decomp:summed}
  (with $q = 6$ for $\Psi_{\ell}$, $q=\infty$ for $\Psi_{\ell'}$) and
  \eqref{eq:k>lPsi:<0} with $(p,q) = (\infty, 3)$, we have
  \begin{equation*}
    \nrm{\Psi_{\ell} \Psi_{\ell'} (e^{- i \Psi_{<j - \dlt m}}_{\dless C} - e^{-i \Psi_{<j - \dlt m}}_{\dless j-C})}_{L^{\infty}_{t} L^{2}_{x} \to L^{2}_{t,x}} 
    \aleq 2^{-\frac{1}{2} j} 2^{-(10 - \frac{1}{2}) \dlt m} 2^{-\frac{1}{6}(j-\dlt m - \ell)}.
  \end{equation*}
  Integrating over $\ell \geq \ell' \geq j- \dlt m$, we see the
  desired gain of $2^{-(10 - \frac{1}{2}) \dlt m}$.

  \pfstep{Step 5.2: Contribution of $\calQ_{2}$} As in Steps 4.2, 4.3
  and 4.4, $(\cdot)_{\dless 0}$ does nothing to $\calQ_{2}, \calQ_{3},
  \calQ_{4}$, and therefore can be removed.  Also, in this step we
  split $\Psi_{\ell}(t,x,s,y,\xi) = \psi_{\ell}(t,x,\xi) -
  \psi_{\ell}(s,y,\xi)$ and handle only the contribution of
  $\psi_{\ell}(t,x,\xi) \psi_{\ell'}(t,x,\xi)$, as the argument for
  the other parts is the same.

  As in Step 4.2, the contribution of $\psi_{\ell}(t,x,\xi)
  \psi_{\ell'}(t,x,\xi)$ in the integrand in \eqref{eq:Xsb:Q2} equals
  \begin{equation*}
    Q_{j} (\psi_{\ell} \psi_{\ell'} e^{- i \psi_{<j - \dlt m}}_{< j - C})(t,x,D) \widetilde{P}_{0} \widetilde{Q}_{<j-C} e^{i \psi_{<j - \dlt m}}_{< j - C} (D, y, s) P_{0} Q_{<j-C} \, .
  \end{equation*}
  We first split
  \begin{equation*}
    \psi_{\ell'} (t,x,\xi)= \oPi_{> 2^{-\frac{1}{2}(\ell'-j)_{+}-C'}} \psi_{\ell'} (t,x,\xi) + \oPi_{\leq 2^{-\frac{1}{2}(\ell'-j)_{+}-C'}} \psi_{\ell'} (t,x,\xi).
  \end{equation*}
  The first term is good, as we already see an angular separation. For
  the contribution of the second term, we can apply an argument
  similar to Step 4.2 to conclude that there is an angular separation
  between the spatial frequency of $\psi_{\ell}(t,x,\xi)$ and $\xi$ of
  size $\aeq 2^{-\frac{1}{2}(\ell-j)_{+}}$. Therefore, the preceding
  operator equals
  \begin{align*}
    &	Q_{j} (\psi_{\ell} \oPi_{> 2^{- \frac{1}{2}(\ell'-j)_{+}-C'}} \psi_{\ell'} e^{- i \psi_{<j - \dlt m}}_{< j - C})(t,x,D) e^{i \psi_{<j - \dlt m}}_{< j - C} (D, y, s) P_{0} Q_{<j-C} \\
    &\quad +
    Q_{j} ( \oPi_{\ageq 2^{-\frac{1}{2}(\ell - j)_{+}}} \psi_{\ell} \, \oPi_{\leq 2^{- \frac{1}{2}(\ell'-j)_{+} - C'}} \psi_{\ell'} e^{- i \psi_{<j - \dlt m}}_{< j - C})(t,x,D) e^{i \psi_{<j - \dlt m}}_{< j - C} (D, y, s) P_{0} Q_{<j-C} \\
    &\quad =: \widetilde{\calQ}_{2,1}(\ell, \ell') +
    \widetilde{\calQ}_{2,2}(\ell, \ell')
  \end{align*}

  For $\widetilde{\calQ}_{2,1}$ we use $\psi_{\ell} \in D L^{6}_{t}
  L^{\infty}_{x}$ and $\oPi_{\geq 2^{\frac{1}{2}(j-\ell') - C'}}
  \psi_{\ell'} \in D L^{3}_{t} L^{\infty}_{x}$, and vice versa for
  $\widetilde{\calQ}_{2,2}$; see \eqref{eq:symbBnd:decomp},
  \eqref{eq:symbBnd:decomp:summed}. We also use fixed-time $L^{2}_{x}$
  boundedness of $e^{i \psi_{<j-\dlt m}}_{<j - C} P_{0}$ in both
  cases. Then
  \begin{align*}
    \nrm{\widetilde{\calQ}_{2,1}(\ell, \ell')}_{L^{\infty}_{t}
      L^{2}_{x} \to L^{2}_{t,x}}
    \aleq & 2^{-\frac{1}{2} j} 2^{- \frac{4}{3} \dlt m} 2^{\frac{1}{6}(j + 10 \dlt m -\ell)} 2^{\frac{1}{3}(j - \dlt m -\ell')} \\
    \nrm{\widetilde{\calQ}_{2,2}(\ell, \ell')}_{L^{\infty}_{t}
      L^{2}_{x} \to L^{2}_{t,x}} \aleq & 2^{-\frac{1}{2} j} 2^{-
      \frac{19}{6} \dlt m} 2^{\frac{1}{3}(j + 10 \dlt m -\ell)}
    2^{\frac{1}{6}(j - \dlt m -\ell')}.
  \end{align*}
  which are good once integrated over $\set{\ell \geq \ell' \geq j -
    \dlt m} \cap \set{\ell \geq j + 10 \dlt m}$.

  \pfstep{Step 5.3: Contribution of $\calQ_{3}$} We again only
  consider $\psi_{\ell}(t,x,\xi) \psi_{\ell'}(t,x,\xi)$. Proceeding as
  in the previous step, the contribution of this term in the integrand
  in \eqref{eq:Xsb:Q3} equals
  \begin{align*}
    &	Q_{j} (\psi_{\ell} \oPi_{\ageq 2^{-\frac{1}{2}(\ell'-j)_{+}}} \psi_{\ell'} (e^{- i \Psi_{<j - \dlt m}}_{\dless j - C}-1))(t,x,D, y, s) P_{0} Q_{<j-C} \\
    &\quad +
    Q_{j} ( \oPi_{\ageq 2^{-\frac{1}{2}(\ell-j)_{+}}} \psi_{\ell} \, \oPi_{\aleq 2^{-\frac{1}{2}(\ell'-j)_{+}}} \psi_{\ell'} (e^{- i \Psi_{<j - \dlt m}}_{\dless j - C} -1))(t,x,D,y,s)P_{0} Q_{<j-C} \\
    & \quad =: \widetilde{\calQ}_{3,1}(\ell, \ell') +
    \widetilde{\calQ}_{3,2}(\ell, \ell').
  \end{align*}

  We proceed as in Step 5.2, but replace the use of $L^{2}_{x}$
  boundedness of $e^{i \psi_{<j-\dlt m}}_{<j- C} P_{0}$ by
  \eqref{eq:mEst:strongL2}. Integrating these bounds over $j - \dlt m
  \leq \ell' \leq \ell \leq j + 10 \dlt m$, we obtain
  \begin{align*}
    \nrm{\int_{j - \dlt m \leq \ell' \leq \ell \leq j + 10 \dlt m}
      \widetilde{\calQ}_{3,1}(\ell, \ell') \, \ud \ell \ud
      \ell'}_{L^{\infty}_{t} L^{2}_{x} \to L^{2}_{t,x}}
    \aleq & 2^{-\frac{1}{2} j} (2^{(1-\dlt_{0})(j - \dlt m)} + 2^{- 10 \dlt m}) \\
    \nrm{\int_{j - \dlt m \leq \ell' \leq \ell \leq j + 10 \dlt m}
      \widetilde{\calQ}_{3,2}(\ell, \ell') \, \ud \ell \ud
      \ell'}_{L^{\infty}_{t} L^{2}_{x} \to L^{2}_{t,x}} \aleq &
    2^{-\frac{1}{2} j} 2^{\frac{1}{2} \dlt m} (2^{(1-\dlt_{0})(j -
      \dlt m)} + 2^{- 10 \dlt m}).
  \end{align*}
  Taking $\dlt > 0$ sufficiently small and using the fact that $j <
  -\frac{1}{2} m$, the desired gain in $m$ follows.

  \pfstep{Step 5.4: Contribution of $\calQ_{4}$} Proceeding as in
  Steps 5.2 and 5.3 for every possible contribution of
  \begin{equation*}
    (\psi_{\ell} (t,x,\xi) - \psi_{\ell} (s,y,\xi)) (\psi_{\ell'} (t,x,\xi) - \psi_{\ell'} (s,y,\xi))
  \end{equation*}
  and recombining the expressions, it follows that
  \begin{align*}
    Q_{j} \Psi_{\ell} \Psi_{\ell'} (t,x,D,y,s) P_{0} Q_{<j-C}
    =& 	Q_{j} \Psi_{\ell} \Psi^{(\ageq 2^{-\frac{1}{2} (\ell'-j)_{+}})}_{\ell'} (t,x, D, y, s)P_{0} Q_{<j-C} \\
    &	+ Q_{j} \Psi_{\ell}^{(\ageq 2^{-\frac{1}{2}(\ell-j)_{+} } ) } \Psi^{(\aleq 2^{-\frac{1}{2} (\ell'-j)_{+}})}_{\ell'} (t,x,D,y,s) P_{0} Q_{<j-C} \\
    =&\!\!: \widetilde{\calQ}_{4, 1}(\ell, \ell') +
    \widetilde{\calQ}_{4, 2}(\ell, \ell')
  \end{align*}
  where
  \begin{equation*}
    \Psi_{\ell} = \Psi^{(>\tht)}_{\ell} + \Psi^{(\leq \tht)}, \qquad
    \Psi^{(>\tht)}_{\ell} (t,x,s,y,\xi) := \oPi_{>\tht} \psi_{\ell}(t,x,\xi) - \oPi_{>\tht} \psi_{\ell}(\xi, y, s).
  \end{equation*}
  Using \eqref{eq:Xsb:L4:pf} with $q = 3$ and summing up in $\tht
  \ageq 2^{-\frac{1}{2}(\ell'-j)_{+}}$, we obtain
  \begin{equation*}
    \nrm{\Psi^{(\ageq 2^{-\frac{1}{2} (\ell'-j)_{+}})}_{\ell'}(t,x,D,y,s) P_{0}}_{L^{\infty}_{t} L^{2}_{x} \to L^{3}_{t} L^{2}_{x}}
    \aleq 2^{-\frac{1}{3} \ell'} 2^{\ell'} 2^{\frac{C}{2} (\ell' - j)_{+}}.
  \end{equation*}
  By the decomposability bound \eqref{eq:symbBnd:decomp:summed} with
  $q = 6$ for $\Psi_{\ell}$, it follows that
  \begin{equation*}
    \nrm{\widetilde{\calQ}_{4,1}(\ell, \ell')}_{L^{\infty}_{t} L^{2}_{x} \to L^{2}_{t,x}}
    \aleq 2^{-\frac{1}{2} j} 2^{\ell'} 2^{\frac{1}{6}(j-\ell)} 2^{C (\ell'-j)_{+}} .
  \end{equation*}
  Then integrating over $j - \dlt m \leq \ell' \leq \ell \leq j + 10
  \dlt m$, we arrive at
  \begin{equation*}
    \nrm{\int_{j - \dlt m \leq \ell' \leq \ell \leq j + 10 \dlt m} \widetilde{\calQ}_{4,1}(\ell, \ell') \, \ud \ell \ud \ell'}_{L^{\infty}_{t} L^{2}_{x} \to L^{2}_{t,x}}
    \aleq 2^{-\frac{1}{2} j} 2^{j} 2^{C \dlt m}
  \end{equation*}
  which is acceptable for $\dlt > 0$ sufficiently small, since $j < -
  \frac{1}{2} m$. The term $\widetilde{\calQ}_{4,2}$ is treated
  similarly, with the roles of $\Psi_{\ell}$ and $\Psi_{\ell'}$
  swapped. This completes the proof of \eqref{eq:Xsb:Qest}.

  \pfstep{Step 6: Low modulation input, $ j < -\frac{1}{2} m$,
    contribution of $\calC$} In this step, we establish
  \begin{equation} \label{eq:Xsb:Cest} \nrm{Q_{j} \calC_{\ll
        0}(t,x,D,y,s) P_{0} Q_{<j-C}}_{N^{\ast} \to X^{0,
        1/2}_{\infty}} \aleq 2^{- \dlt_{1} m}.
  \end{equation}

  This step is easier than Steps 4 and 5, as we do not need to get the
  angle separation to apply the decomposability bound
  \eqref{eq:symbBnd:decomp}; instead, we can use
  \eqref{eq:symbBnd:decomp:summed}. Thanks to this fact, the gauge
  transform need not be as finely localized in frequency as $\calL$
  and $\calQ$. Accordingly, we make the following decomposition:
  \begin{align}
    \calC
    = & i \iiint_{\ell \geq \ell' \geq \ell'' \geq j - \dlt m}
    \Psi_{\ell} \Psi_{\ell'} \Psi_{\ell''} \,
    (e^{- i \Psi_{<\ell''}} - e^{- i \Psi_{<\ell''}}_{\dless -C})\, \ud \ell'' \ud \ell' \ud \ell \label{eq:Xsb:C1} \\
    & + i \iiint_{\substack{\ell \geq \ell' \geq \ell'' \geq j - \dlt
        m \\ \ell \geq j + 10 \dlt m}} \Psi_{\ell} \Psi_{\ell'}
    \Psi_{\ell''} \,
    e^{-i \Psi_{<\ell''}}_{\dless -C}\, \ud \ell'' \ud \ell' \ud \ell \label{eq:Xsb:C2} \\
    & + i \iiint_{j - \dlt m \leq \ell'' \leq \ell' \leq \ell \leq j +
      10 \dlt m} \Psi_{\ell} \Psi_{\ell'} \Psi_{\ell''} \,
    (e^{- i \Psi_{<\ell''}}_{\dless -C} - 1)\, \ud \ell'' \ud \ell' \ud \ell \label{eq:Xsb:C3} \\
    & + i \iiint_{j - \dlt m \leq \ell'' \leq \ell' \leq \ell \leq j +
      10 \dlt m} \Psi_{\ell} \Psi_{\ell'} \Psi_{\ell''} \, \, \ud
    \ell'' \ud \ell' \ud \ell
    \label{eq:Xsb:C4} \\
    = &\!\!: \calC_{1} + \calC_{2} + \calC_{3} + \calC_{4}. \notag
  \end{align}

  We treat $\calC_{1}, \ldots, \calC_{4}$ separately.

  \pfstep{Step 6.1: Contribution of $\calC_{1}$} Proceeding as in
  Steps 4.1 and 5.1, it follows that
  \begin{equation*}
    (\calC_{1})_{\dless 0} 
    = \bb( i \iiint_{\ell \geq \ell' \geq \ell'' \geq j - \dlt m} \Psi_{\ell} \Psi_{\ell'} \Psi_{\ell''} \, 
    (e^{- i \Psi_{<\ell''}}_{\dless C} - e^{- i \Psi_{<\ell''}}_{\dless -C})\, \ud \ell'' \ud \ell' \ud \ell \bb)_{\dless 0}
  \end{equation*}
  where the outer $(\cdot)_{\dless 0}$ may be easily disposed by
  translation invariance. Moreover, we have
  \begin{equation*}
    \nrm{\Psi_{\ell} \Psi_{\ell'} \Psi_{\ell''} \, 
      (e^{- i \Psi_{<\ell''}}_{\dless C} - e^{- i \Psi_{<\ell''}}_{\dless -C})}_{L^{\infty}_{t} L^{2}_{x} \to L^{2}_{t,x}}
    \aleq 2^{- \frac{1}{2} j} 2^{\frac{1}{3}(j - \dlt m - \ell'')} 2^{\frac{1}{6}(j - \dlt m - \ell)} 2^{10 \ell''} 2^{\frac{1}{2} \dlt m}
  \end{equation*}
  by \eqref{eq:symbBnd:decomp:summed} and
  \eqref{eq:k>lPsi:<0}. Integrating over $j - \dlt m \leq \ell'' \leq
  \ell' \leq \ell \leq -m$, this is acceptable.

  \pfstep{Step 6.2: Contribution of $\calC_{2}$} As before, by
  frequency localization properties, the double frequency projection
  $(\cdot)_{\dless 0}$ leaves $\calC_{2}$, $\calC_{3}$ and $\calC_{4}$
  unchanged. Using \eqref{eq:symbBnd:decomp:summed} with $q = 6$ for
  every factor of $\psi$ and $L^{2}_{x}$ boundedness of $e^{- i
    \Psi_{<\ell''}}_{\dless -C}$, it follows that
  \begin{equation*}
    \nrm{Q_{j} \Psi_{\ell} \Psi_{\ell'} \Psi_{\ell''} e^{- i \Psi_{< \ell''}}_{\dless -C} P_{0} Q_{<j-C}}_{L^{\infty}_{t} L^{2}_{x} \to L^{2}_{t,x}}
    \aleq 2^{-\frac{1}{2} j} 2^{\frac{1}{6} (j-\ell)} 2^{\frac{1}{6} (j-\ell')} 2^{\frac{1}{6} (j-\ell'')} .
  \end{equation*}
  Integrating over $\set{\ell \geq \ell' \geq \ell'' \geq j - \dlt m}
  \cap \set{\ell \geq j + 10 \dlt m}$, this is good.

  \pfstep{Step 6.3: Contribution of $\calC_{3}$} Here we use
  \eqref{eq:symbBnd:decomp:summed} with $q = 6$ for every factor of
  $\psi$ and \eqref{eq:mEst:strongL2}. Then we have
  \begin{equation*}
    \nrm{Q_{j} \Psi_{\ell} \Psi_{\ell'} \Psi_{\ell''} (e^{- i \Psi_{< \ell''}}_{\dless -C} -1)P_{0} Q_{<j-C}}_{L^{\infty}_{t} L^{2}_{x} \to L^{2}_{t,x}}
    \aleq 2^{-\frac{1}{2} j} 2^{\frac{1}{6} (j-\ell)} 2^{\frac{1}{6} (j-\ell')} 2^{\frac{1}{6} (j-\ell'')} 2^{(1-\dlt_{0}) \ell''}
  \end{equation*}
  Integrating over $\set{j - \dlt m \leq \ell'' \leq \ell' \leq \ell
    \leq j + 10 \dlt m}$ and using the fact that $j < - \frac{1}{2}
  m$, we obtain the desired gain in $m$.

  \pfstep{Step 6.4: Contribution of $\calC_{4}$} Summing up
  \eqref{eq:Xsb:L4:pf} with $q=6$ in $\tht \ageq 2^{\sgm k}$, we
  obtain
  \begin{equation} \label{eq:Xsb:C4:pf} \nrm{\psi_{\ell}(t,x,D) a(D) -
      a(D) \psi_{\ell}(D,y,s)}_{L^{\infty}_{t} L^{2}_{x} \to L^{6}_{t}
      L^{2}_{x}} \aleq 2^{-\frac{1}{6} \ell} 2^{(1-C \sgm) \ell},
  \end{equation}
  where $a(\xi)$ is any smooth bump function adapted to
  $\set{\abs{\xi} \aleq 1}$.  Applying the decomposability bound
  \eqref{eq:symbBnd:decomp:summed} twice with $q = 6$, it follows that
  \begin{equation*}
    \nrm{Q_{j} \Psi_{\ell} \Psi_{\ell'} \Psi_{\ell''}  P_{0} Q_{<j-C}}_{L^{\infty}_{t} L^{2}_{x} \to L^{2}_{t,x}}
    \aleq 2^{-\frac{1}{2} j} 2^{\frac{1}{6} (j-\ell)} 2^{\frac{1}{6} (j-\ell')} 2^{\frac{1}{6} (j-\ell'')} 2^{(1-C \sgm) \ell}.
  \end{equation*}
  We integrate this over $\set{j - \dlt m \leq \ell'' \leq \ell' \leq
    \ell \leq j + 10 \dlt m}$. Since $j < - \frac{1}{2} m$, the
  desired gain in $m$ follows provided that $\sgm > 0$ is sufficiently
  small.

%

  \pfstep{Step 7: Low modulation input, $ j < -\frac{1}{2} m$, low
    frequency phase} To establish \eqref{eq:Xsb}, it is only left to
  prove
  \begin{equation}
    \nrm{Q_{j} [ e^{- i \Psi_{<j - \dlt m}}_{\dless 0}(t,x,D,y,s) - 1]
      P_{0} Q_{< j - C}}_{N^{\ast} \to X^{0, 1/2}_{\infty}} 
    \aleq 2^{-\dlt_{1} m} .
  \end{equation}

  Since
  \begin{equation*}
    Q_{j} [ e^{- i \Psi_{< j - \dlt m}}_{\dless j -C} - 1] P_{0} Q_{< j-C} = 0
  \end{equation*}
  by modulation localization, it suffices to establish
  \begin{equation*}
    \nrm{Q_{j} [ e^{- i \Psi_{<j - \dlt m}}_{\dless 0}(t,x,D,y,s) - e^{- i \Psi_{< j - \dlt m}}_{\dless j - C}]
      P_{0} Q_{< j - C}}_{N^{\ast} \to X^{0, 1/2}_{\infty}} 
    \aleq 2^{-(10 + \frac{1}{2}) \dlt m} .
  \end{equation*}
  Proceeding as in Step 2, this estimate is reduced to
  \begin{equation*}
    \nrm{[e^{- i \Psi_{<j - \dlt m}}_{\dless 0}(t,x,D,y,s) - e^{- i \Psi_{< j - \dlt m}}_{\dless j - C}]
      P_{0} }_{L^{\infty}_{t} L^{2}_{x} \to L^{2}_{t,x}} 
    \aleq 2^{-\frac{1}{2} j} 2^{-(10 + \frac{1}{2}) \dlt m} .
  \end{equation*}
  The last estimate follows from \eqref{eq:k>lPsi:<0}. \qedhere
\end{proof}

\subsection{Parametrix error estimate}
Here we prove \eqref{eq:mEst:ptxError}. The argument here is essentially the same as in \cite{Krieger:2012vj}.

\pfstep{Step 1: Decomposition of the parametrix error}
At the level of left-quantized operators, we compute
\begin{align*}
&	\Box^{p}_{A} e^{- i \psi_{\pm}}_{<0} (t,x,D) - e^{- i \psi_{\pm}}_{<0}(t,x,D) \Box \\
 & \quad =		2 (\rd^{\mu} e^{-i \psi_{\pm}})_{<0} \rd_{\mu}  
 			+ (\Box e^{-i \psi_{\pm}})_{<0} 
			+ 2i A^{\ell}_{<-m} e^{-i \psi_{\pm}}_{<0} \rd_{\ell}
			+ 2 i A^{\ell}_{<-m} (\rd_{\ell}e^{-i \psi_{\pm}})_{<0} \\
& \quad =
 			2  (\omg \cdot \rd_{x} \psi_{\pm} e^{-i \psi_{\pm}})_{<0} \abs{D}
			+ 2 (\omg \cdot A_{<-m} e^{-i \psi_{\pm}}_{<0}) \abs{D} 
			- 2 (\rd_{t} \psi_{\pm} e^{-i \psi_{\pm}})_{<0} D_{t}	\\
& \qdeq		- (\rd^{\mu} \psi_{\pm} \rd_{\mu} \psi_{\pm} e^{-i \psi_{\pm}})_{<0}
			+ 2 A^{\ell}_{<-m} (\rd_{\ell} \psi_{\pm} e^{-i \psi_{\pm}})_{<0}, 
\end{align*}
where we are using the shorthand $\omg = \xi / \abs{\xi}$. This computation can be justified simply by using the direct definition of left-quantization, or by using the symbol calculus as in \cite{Krieger:2012vj}. On the last line, we used the fact that $\Box \psi_{\pm}(t,x,\xi) = 0$ as $\Box A = 0$. 

To see the cancellation between $A_{<-m}$ and $\oL_{\mp} \psi_{\pm}$, we add and subtract $2 (\pm \rd_{t} \psi_{\pm} - \omg \cdot A_{<-m}   e^{- i \psi_{\pm}})_{<0} \abs{D}$. Then we can write
\begin{align*}
\Box^{p}_{A} e^{- i \psi_{\pm}}_{<0}  - e^{- i \psi_{\pm}}_{<0} \Box 
 = &  - 2 \bb( (\pm \rd_{t} \psi_{\pm} - \omg \cdot \rd_{x} \psi_{\pm} - \omg \cdot A_{<-m}) e^{- i \psi_{\pm}}  \bb)_{<0} \abs{D} \\
& 	- 2(\rd_{t} \psi_{\pm} e^{- i \psi_{\pm}})_{<0} (D_{t} \mp \abs{D}) \\
&	- (-(\rd_{t} \psi_{\pm} \rd_{t} \psi_{\pm} + \rd_{x} \psi_{\pm} \cdot \rd_{x} \psi_{\pm}) e^{- i \psi_{\pm}})_{<0} \\
&	+ 2 A_{<-m} \cdot (\rd_{x} \psi_{\pm} e^{- i \psi_{\pm}})_{<0} \\
&	+ 2 [\omg \cdot A_{<-m}, S_{<0}] e^{- i \psi_{\pm}} \abs{D} \\
= & \!\!: \Diff_{1} +\Diff_{2} + \Diff_{3} + \Diff_{4} + \Diff_{5}. 
\end{align*}

\pfstep{Step 2: Estimate for $\Diff_{1}$}
Being highest order, this is a-priori the most dangerous term. This is precisely the point where we need $\sgm > 0$. In this step we prove
\begin{equation} \label{eq:ptxError:Diff1}
	\nrm{\Diff_{1} P_{0}}_{N^{\ast} \to N} \aleq 2^{- \frac{1}{2}\sgm m}  + 2^{-m}.
\end{equation}

\pfstep{Step 2.1: Preliminary reduction}
By \eqref{eq:defn4psi}, it follows that 
\begin{equation*}
\Diff_{1} = - 2 \bb( \sum_{k < -m} \oPi_{\leq 2^{\sgm k}} (\xi \cdot A_{k}) e^{- i \psi_{\pm}} \bb)_{<0} (t, x, D).
\end{equation*}

Note that $e^{- i \psi_{\pm}}$ can be replaced by $e^{- i \psi_{\pm}}_{<C}$ by the frequency localization of $A$. The outer $(\cdot)_{<0}$ can be easily disposed by translation invariance. Therefore, it suffices to consider
\begin{equation*}
	\calE_{1} := 2 \sum_{k < -m} (\oPi_{\leq 2^{\sgm k}} (\xi \cdot A_{k}) e^{-i \psi_{\pm}}_{<C} )(t, x, D) P_{0}
\end{equation*}

\pfstep{Step 2.2: Reduction to bilinear estimate}
Our next order of business is to remove $e^{- i \psi_{\pm}}_{<C}$. For this purpose, consider the operator
\begin{equation*}
	\calE_{2} := 2 \sum_{k < -m} (\oPi_{\leq 2^{\sgm k}} (\xi \cdot A_{k}))(t,x,D) e^{-i \psi_{\pm}}_{<C}(t, x, D) P_{0}
\end{equation*}
We claim that
\begin{equation} \label{eq:ptxError:Diff1:comm}
\nrm{\calE_{1} - \calE_{2}}_{L^{\infty}_{t} L^{2}_{x} \to L^{1}_{t} L^{2}_{x}}
\aleq 2^{-m}.
\end{equation}
This estimate contributes the term $2^{-m}$ in \eqref{eq:ptxError:Diff1}, as $N^{\ast} \subseteq L^{\infty}_{t} L^{2}_{x}$ and $L^{1}_{t} L_{2} \subseteq N$. 

Thanks to frequency localization of $e^{- i \psi_{\pm}}_{<C}$, we can harmlessly insert an operator $a(D)$ between the two pseudodifferential operators in $\calE_{2}$, where $a(\xi)$ is a smooth bump function adapted to $\set{\abs{\xi} \aeq 1}$. Then by Lemma~\ref{lem:decomp-prod}, it follows that
\begin{equation*}
	\nrm{\calE_{1} - \calE_{2}}_{L^{\infty}_{t} L^{2}_{x} \to L^{1}_{t} L^{2}_{x}} 
		\aleq \sum_{k < -m} \nrm{\rd_{\xi} (\oPi_{\leq 2^{\sgm k}} (\xi \cdot A_{k}) a(\xi))}_{D L^{2}_{t} L^{\infty}_{x}} \nrm{(-i\rd_{x} \psi_{\pm} e^{-i \psi_{\pm}})_{<C}}_{L^{\infty}_{t} L^{2}_{x} \to L^{2}_{t,x}} 
\end{equation*}

Note that 
\begin{equation*} 
\rd_{\xi} (\oPi_{\leq 2^{\sgm k}} (\xi \cdot A_{k}) a(\xi)) 
	= (\frac{\xi}{\abs{\xi}} a(\xi) + \abs{\xi} \rd_{\xi} a(\xi)) \oPi_{\leq 2^{\sgm k}} (\omg \cdot A_{k}) 
		+ \abs{\xi} a(\xi) \rd_{\xi} (\oPi_{\leq 2^{\sgm k}} (\omg \cdot A_{k}))
\end{equation*}
The factors involving only $\xi$ can easily be removed as they are bounded. Invoking \eqref{eq:symbBnd:decomp:A} and summing over $\tht \aleq 2^{\sgm k}$ and $k < -m$, it follows that
\begin{equation} \label{eq:decomp:ASummed}
	\sum_{k < -m} \nrm{\rd_{\xi} (\oPi_{\leq 2^{\sgm k}} (\xi \cdot A_{k}) a(\xi)) }_{DL^{2}_{t} L^{\infty}_{x}} 
	\aleq \sum_{k < -m} 2^{\frac{1}{2} k} 2^{\frac{1}{2} \sgm k }
	\aleq 2^{- \frac{1}{2} (1 + \sgm) m}.
\end{equation}

On the other hand, summing \eqref{eq:symbBnd:decomp} over $\tht \ageq 2^{\sgm k}$ and $k \leq -m$, we obtain
\begin{equation} \label{eq:symbBnd:decomp:dpsiSummed}
	\nrm{\nb \psi_{\pm}}_{D L^{2}_{t} L^{\infty}_{x}} \aleq 2^{-\frac{1}{2} (1-\sgm) m}.
\end{equation}

Now replacing $e^{-i \psi_{\pm}}$ by $e^{-i \psi_{\pm}}_{<2C}$, removing the outer $(\cdot)_{<C}$ by translation invariance as usual and using  \eqref{eq:symbBnd:decomp:dpsiSummed}, we obtain
\begin{equation*}
\nrm{(\rd_{x} e^{-i \psi_{\pm}})_{<C}}_{L^{\infty}_{t} L^{2}_{x} \to L^{2}_{t,x}} \aleq 2^{- \frac{1}{2} (1-\sgm) m}.
\end{equation*}
Combining \eqref{eq:decomp:ASummed} and \eqref{eq:symbBnd:decomp:dpsiSummed}, estimate \eqref{eq:ptxError:Diff1:comm} follows.

\pfstep{Step 2.3: Bilinear estimate}
It is now only left to treat $\calE_{2}$. Note that the operator $e^{-i \psi_{\pm}}_{<C} (t,x,D) P_{0}$ can be easily removed at this point, as it is bounded on $N^{\ast}_{0}$. 
Therefore, it suffices to show
\begin{equation*}
	\nrm{\sum_{k < -m} \oPi_{\leq 2^{\sgm k}} A_{k} (t,x, D) \cdot \rd_{x} \widetilde{P}_{0}}_{N^{\ast} \to N} \aleq 2^{- \frac{1}{2} \sgm m} 
\end{equation*}
where $\widetilde{P}_{0}$ is a slightly enlarged version of $P_{0}$. 

Recall that $\oPi_{\leq 2^{\sgm k}}$ localizes $A_{k}$ into angular sectors of size $\aeq 2^{\sgm k}$ centered at $\omg = \frac{\xi}{\abs{\xi}}$ (close-angle) and $- \omg$ (far-angle). Therefore, by a Whitney-type decomposition in angles, it suffices to consider the sum
\begin{align*}
\sum_{k < -m} \sum_{\ell < \sgm k} \sum_{\substack{\phi, \phi' \\ \dist (\phi, \phi') \aeq 2^{\ell}} } (P_{k} P^{\phi}_{\ell} A)  \cdot \rd_{x} \widetilde{P}_{0} P^{\phi'}_{\ell} 
 + \sum_{k < -m} \sum_{\substack{\phi, \phi' \\ \dist (\phi, \phi') \aeq 1} } (P_{k} P^{\phi}_{\sgm k} A)  \cdot \rd_{x} \widetilde{P}_{0} P^{\phi'}_{\sgm k} \, ,
\end{align*}
where the first sum corresponds to the close-angle interaction, and the second sum corresponds to the far-angle interaction. 

We begin by treating the close-angle interaction. We split this sum into two cases, depending on whether the input modulation is $>k + 2 \ell -C$ or otherwise. 

\pfstep{Step 2.3.1: Close-angle, high modulation input}
By the sharp $L^{2}_{t} L^{6}_{x}$ Strichartz estimate and Bernstein, we have
\begin{equation} \label{eq:ptxError:Str4A}
\nrm{P_{k} P^{\phi}_{\ell} A}_{L^{2}_{t} L^{\infty}_{x}} \aleq 2^{\frac{1}{2} k } 2^{\frac{1}{2} \ell}.
\end{equation}
We estimate the output in $L^{1}_{t} L^{2}_{x}$ and the input in $X^{0,1/2}_{\infty}$, using \eqref{eq:ptxError:Str4A} for $P_{k} P^{\phi}_{\ell} A$. Note that, thanks to the null structure in $A \cdot \rd_{x}$, we also gain a factor of $2^{\ell}$. Using $\ell^{2}$ summability in angles for $A$ and the input, we obtain 
\begin{equation*}
\sum_{\substack{\phi, \phi' \\ \dist(\phi, \phi') \aeq 2^{\ell}}} 
	\nrm{(P_{k} P^{\phi}_{\ell} A)  \cdot \rd_{x} \widetilde{P}_{0} P^{\phi'}_{\ell} Q_{> k + 2 \ell -C}}_{X^{0, 1/2}_{\infty} \to L^{1}_{t} L^{2}_{x}}
	\aleq 2^{\frac{1}{2} \ell}.
\end{equation*}
Summing over $\ell < \sgm k$ and then $k < -m$, the desired gain of $2^{-\frac{1}{2} \sgm m}$ follows.

\pfstep{Step 2.3.2: Close-angle, low modulation input}
In this case, by elementary geometry of the cone, the output modulation is $\aeq 2^{k + 2 \ell}$. Placing the output in $X^{0, -1/2}_{1}$ and the input in $L^{\infty}_{t} L^{2}_{x}$,
the numerology is the same as in Step 2.3.1 and we obtain a gain of $2^{- \frac{1}{2} \sgm m}$.

\pfstep{Step 2.3.3: Far-angle}
We proceed as in the case of close-angle interaction, this time splitting the input into $Q_{>k-C} + Q_{\leq k-C}$. In this case we do not gain from the null structure, but obtain the desired gain $2^{\frac{1}{2} \sgm k}$ from \eqref{eq:ptxError:Str4A}.

\pfstep{Step 3: Estimate for $\Diff_{2}$}
Here we need to use the $S^{\sharp}_{\pm}$ norm. We claim that
\begin{equation*}
	\nrm{\Diff_{2}}_{S^{\sharp}_{\pm} \to N} \aleq 2^{-m}.
\end{equation*}

This estimate follows from the obvious mapping property
\begin{equation*}
	D_{t} \mp \abs{D} : S^{\sharp}_{\pm} \to N.
\end{equation*}
and estimate \eqref{eq:mEst:DtPtx}.

\pfstep{Step 4: Estimate for $\Diff_{3}$ and $\Diff_{4}$}
Again, we replace $e^{-i \psi_{\pm}}$ by $e^{-i \psi_{\pm}}_{<C}$, and dispose the outer $(\cdot)_{<0}$ by translation invariance.
Summing up \eqref{eq:symbBnd:decomp} in $\tht \ageq 2^{\sgm k}$, we have
\begin{align*}
	\nrm{\nb \psi_{k, \pm}}_{D L^{2}_{t} L^{\infty}_{x}} \aleq  2^{\frac{1}{2} (1 - \sgm) k} \nrm{A[0]}_{\dot{H}^{1}_{x} \times L^{2}_{x}} 
\end{align*}

On the other hand, since $A_{k} = P_{k} A$ is independent of $\xi$, it follows from Strichartz that
\begin{equation*}
	\nrm{A_{k}}_{D L^{2}_{t} L^{\infty}_{x}} 
	\aleq \nrm{A_{k}}_{L^{2}_{t} L^{\infty}_{x}} 
	\aleq 2^{\frac{1}{2} k} \nrm{A_{k}[0]}_{\dot{H}^{1}_{x} \times L^{2}_{x}}
\end{equation*}

Then by decomposability and $L^{2}_{x}$ boundedness of $e^{-i \psi_{\pm}}_{<C}$, it follows that
\begin{equation*}
	\nrm{\Diff_{3} + \Diff_{4}}_{L^{\infty}_{t} L^{2}_{x} \to L^{1}_{t} L^{2}_{x}} \aleq 2^{- (1-\sgm) m}
\end{equation*}
which is enough.

\pfstep{Step 5: Estimate for $\Diff_{5}$} 
For each component, the commutator may be written as
\begin{equation*}
	[A_{<-m}, S_{<0}] (\phi) = L(\nb A_{<-m}, \phi)
\end{equation*}
where $L$ is a translation invariant bilinear operator with an integrable kernel. Using this expression, we now proceed as in Step 1. Summation in $k < -m$ is now possible thanks to the extra derivative $\nb$, and we obtain
\begin{equation*}
	\nrm{\Diff_{5}}_{N^{\ast} \to N} \aleq 2^{- m}.
\end{equation*}

Combining Steps 1--5, estimate \eqref{eq:mEst:ptxError} follows.

\bibliographystyle{amsplain}

\providecommand{\bysame}{\leavevmode\hbox to3em{\hrulefill}\thinspace}
\providecommand{\MR}{\relax\ifhmode\unskip\space\fi MR }
\providecommand{\MRhref}[2]{%
  \href{http://www.ams.org/mathscinet-getitem?mr=#1}{#2}
}
\providecommand{\href}[2]{#2}

%
%
%
%
%
%
%
%


\end{document}